\documentclass[10pt]{amsart}
\usepackage{amsmath,amssymb,amsfonts,amsthm,amsopn}
\usepackage{tgtermes}
\usepackage{latexsym,graphicx}
\usepackage{xcolor}
\usepackage{color, colortbl}
\usepackage{pb-diagram}
\usepackage{comment}
\usepackage[title]{appendix}
\usepackage{tikz-cd}
\usepackage{tikz}
\usepackage{mathtools}
\usepackage{enumerate}
\usepackage{hyperref}
\usepackage{mathtools}
\mathtoolsset{showonlyrefs}

\usepackage{multirow}

\newtheorem{theorem}{Theorem}[section]
\newtheorem{lemma}[theorem]{Lemma}
\newtheorem{corollary}[theorem]{Corollary}
\newtheorem{proposition}[theorem]{Proposition}

\newtheorem{definition}[theorem]{Definition}

\newtheorem{remark}[theorem]{Remark}
\theoremstyle{definition}
\newtheorem{example}[theorem]{Example}

\newcommand{\beqa}{\begin{eqnarray*}}
\newcommand{\eeqa}{\end{eqnarray*}}

\DeclareMathOperator*{\id}{id}
\DeclareMathOperator*{\Sp}{Sp}
\DeclareMathOperator*{\w}{w}
\DeclareMathOperator*{\GL}{GL}
\DeclareMathOperator*{\Mp}{Mp}
\DeclareMathOperator*{\Sym}{Sym}
\DeclareMathOperator*{\diag}{diag}

\newcommand{\field}[1]{\mathbb{#1}}

\newcommand{\bR}{\field{R}}        
\newcommand{\bZ}{\field{Z}}        
\newcommand{\bC}{\field{C}}        
\def\la{\lambda}

\def\cF{\mathcal{F}}              
\def\cS{\mathcal{S}}
\def\cD{\mathcal{D}}

\def\cB{\mathcal{B}}
\def\cE{\mathcal{E}}

\def\cM{\mathcal{M}}
\def\cK{\mathcal{K}}
\def\cU{\mathcal{U}}
\def\cA{\mathcal{A}}

\def\cI{\mathcal{I}}
\def\cC{\mathcal{C}}
\def\cQ{\mathcal{Q}}
\def\cW{\mathcal{W}}

\def\cR{\mathcal{R}}

\def\cZ{\mathcal{Z}}

\def\frT{\mathfrak{T}}
\def\frp{\mathfrak{p}}
\def\frR{\mathfrak{R}}

\def\rd{\bR^d}

\def\rdd{{\bR^{2d}}}

\def\R{\right)}

\def\<{\left<}
\def\>{\right>}

\def\mv1{M_v^1}

\def\mn{(m,n)}
\def\mn'{(m',n')}

\newcommand{\abs}[1]{\lvert#1\rvert}
\newcommand{\norm}[1]{\lVert#1\rVert}

\def\w{\mathrm{w}}

\def\R{\mathbb{R}}
\def\Ren{\mathbb{R}^d}

\def\f{\varphi}

\def\Sn2{S_{2}(L^{2}(\Ren))}
\def\S1{S_{1}(L^{2}(\Ren))}
\def\sig00{\sigma_{0,0}}

\def\la{\langle}
\def\ra{\rangle}

\newcommand{\Op}{\mathrm{Op}}

\begin{document}

\begin{abstract}
	We develop a systematic analysis of the metaplectic semigroup $\Mp_+(d,\bC)$ associated with positive complex symplectic matrices, a notion introduced almost simultaneously and independently by H\"ormander, Brunet, Kramer, and Howe, thereby extending the classical metaplectic theory beyond the unitary setting.
    While the existing literature has largely focused on propagators of quadratic evolution equations, where integral representations are available in specific cases thanks to Mehler formulas, our approach is operator-theoretic and symplectic in spirit and adapts techniques from the standard metaplectic group $\Mp(d,\bR)$ to a substantially broader framework that is not driven by differential problems or particular propagators. 
    This point of view provides deeper insight into the structure of the metaplectic semigroup, and allows us to investigate its generators, polar decomposition, and intertwining relations with complex conjugation and with the Wigner distribution. We then exploit these structural results to characterize, from a metaplectic perspective, classes of time-frequency representations satisfying prescribed structural properties. Moreover, we discuss further implications for evolution equations with complex quadratic Hamiltonians, we study the boundedness of their propagators on modulation spaces, we obtain estimates in time of their operator norms. 
    Finally, we apply our theory to the study of the propagation of concentration of Wigner distributions.
\end{abstract}

\title[The metaplectic semigroup and its applications to time-frequency analysis and evolution operators
]{The metaplectic semigroup and its applications to time-frequency analysis and evolution operators
}

\author{Gianluca Giacchi}
\address{Università della Svizzera Italiana, IDSIA, Dipartimento di informatica, Via La Santa 1, 6962 Lugano, Switzerland}
\email{gianluca.giacchi@usi.ch}
\author{Luigi Rodino}
\address{Università di Torino, Dipartimento di matematica, Via Carlo Alberto 10, 10123 Torino, Italia}
\email{luigi.rodino@unito.it}
\author{Davide Tramontana}
\address{Università di Bologna, Dipartimento di matematica, Piazza di Porta San Donato 5, 40126 Bologna, Italy}
\email{davide.tramontana4@unibo.it}

\subjclass[2020]{35S30, 58J35, 81S10, 81S30}
\keywords{Metaplectic operators, Fourier integral operators, Wigner distribution, time-frequency analysis, spectrograms , Pseudodifferential operators}
\maketitle

\tableofcontents

\section{Introduction}
Motivated by applications to time-frequency analysis and to evolution equations, the aim of this work is to present the metaplectic semigroup $\Mp_+(d,\bC)$ in the language of modern harmonic analysis, and to develop a theory analogous to that of the classical metaplectic group $\Mp(d,\bR)$. In contrast to most of the existing literature, where it is presented as a semigroup of bounded operators on the Fock space, we study the metaplectic semigroup as a semigroup of operators bounded on $L^2(\rd)$, in the spirit of H\"ormander.
In the first three subsections of this Introduction, we review known results about the basics of $\Mp(d,\bR)$. Applications to time-frequency analysis and equations of Schr\"odinger type are recalled. We emphasize that the extension to the ``complex" case appears as a natural project for both the issues. Finally, Subsection \ref{subsec:14} of this Introduction presents our setting of $\Mp_+(d,\bC)$, with the main results providing a unitary treatment of time-frequency and PDEs problems.

\subsection{The metaplectic representation}
Metaplectic operators are a cornerstone in modern harmonic analysis and quantum mechanics, for they describe the evolution of Schrödinger initial value problems with quadratic Hamiltonians:
\begin{equation}\label{intro.eq1}
	\begin{cases}
		i\frac{1}{2\pi}\partial_tu(t,x)=\Op^\w(a)u(t,x) & \text{for $t\in\bR$ and $x\in\rd$,}\\
		u(0,x)=u_0(x),
	\end{cases}
\end{equation}
where $u_0\in L^2(\rd)$, and $a$ is the Weyl quantization of a real quadratic form $a(z)=\frac 1 2 \cQ z\cdot z$, $z\in\rdd$ and $\cQ=\cQ^\top$, that is
\begin{equation}
	\Op^\w(a)f(x)=\int_{\rdd}f(y)a\left(\frac{x+y}{2},\xi\right)e^{2\pi i(x-y)\cdot\xi}dyd\xi, \qquad f\in\cS(\rd).
\end{equation}
The family $\{e^{-2\pi it\Op^\w(a)}\}_{t\in\bR}$ is indeed a one-parameter subgroup of the {\em metaplectic group} $\Mp(d,\bR)$, the double cover of the symplectic group $\Sp(d,\bR)$. In his seminal paper \cite{wigner1932quantum}, Wigner defines a quadratic time-frequency representation whereby reducing problems \eqref{intro.eq1} to transport equations. With a more modern notation, the {\em (cross-)Wigner distribution} of $f,g\in L^2(\rd)$ is
\begin{equation}\label{intro.defW}
	W(f,g)(x,\xi)=\int_{\rd}f\left(x+\frac y 2 \right)\overline{g\left(x-\frac y 2 \right)}e^{-2\pi iy\cdot\xi}dy, \qquad x,\xi\in\rd.
\end{equation}
If $f=g$, we write $Wf=W(f,f)$, the {\em Wigner distribution} of $f$. Then, if $U_t\in\Sp(d,\bR)$ describes the canononical transformation associated with {\em the Hamiltonian} $a$ in \eqref{intro.eq1}, the aforementioned cover of the symplectic group maps $e^{-2\pi it\Op^\w(a)}\in\Mp(d,\bR)\mapsto U_t\in\Sp(d,\bR)$ for every $t\in\bR$, and these evolution operators read, for every $t\in\bR$, as linear symplectic transformations of the phase-space:
\begin{equation}
	W(e^{-2\pi it\Op^\w(a)}u_0)(z)=Wu_0(U_t^{-1}z), \qquad z\in\rdd, \quad f,g\in L^2(\rd).
\end{equation}
More generally, if $\widehat U\in\Mp(d,\bR)$ and its {\em projection} onto $\Sp(d,\bR)$ is $U$, the {\em symplectic covariance} of the Wigner distribution
\begin{equation}\label{intro.WigCovarianceProp}
	W(\widehat Uf,\widehat Ug)(z)=W(f,g)(U^{-1}z), \qquad z\in\rdd, 
\end{equation}
is not only satisfied for every $f,g\in L^2(\rd)$, but it also characterizes metaplectic operators and of their projection onto $\Sp(d,\bR)$ \cite{MR1906251}. In a more algebraic fashion, the metaplectic operators associated to a symplectic matrix $U\in\Sp(d,\bR)$ are classically defined, up to a phase factor, through the intertwining relation with the Schrödinger representation of the Heisenberg group
\begin{equation}\label{intro.Schrodinger}
	\rho(x,\xi;\tau)f(y)=e^{2\pi i\tau}e^{-i\pi \xi\cdot x}e^{2\pi i\xi\cdot y}f(y-x), \qquad f\in L^2(\rd), \, x,\xi\in\rd, \, \tau\in\bR,
\end{equation}
that is
\begin{equation}\label{intro.intertSShar}
	\widehat U\rho(z;\tau)=\rho(Uz;\tau)\widehat U, \qquad z\in\rdd, \, \tau\in\bR,
\end{equation}
we refer to Section \ref{sec.pre} below for the details. For the purpose of this discussion, the phase factor $e^{2\pi i\tau}$ plays a minor role and we will therefore write $\rho(z)$ for the rest of the Introduction, when it is convenient. This dualism between operators and matrices is crucial: the aforementioned cover map is indeed a homomorphism, and this considerably simplifies the analysis of metaplectic operators by reducing operator calculus to linear algebra. This allows properties of metaplectic operators to be inferred and interpreted directly from the structure of their projection. Conversely, the mapping associating to each $U\in\Sp(d,\bR)$ the corresponding metaplectic operator, defined up to a phase factor, is a projective representation known as the {\em metaplectic representation}.

Alternatively, a more explicit description of $\Mp(d,\bR)$ can be given in terms of its generators: they are obtained by composing any number of the following unitary operators on $L^2(\rd)$, in any order: the Fourier transform $\cF$, the rescalings
\begin{equation}\label{intro.COV}
	\big\{\mathfrak{T}_Ef(x)=|\det(E)|^{1/2}f(Ex) : E\in \GL(d,\bR)\big\},
\end{equation}
and the product operators
\begin{equation}\label{intro.POps}
	\big\{\mathfrak{p}_Qf(x)=e^{i\pi Qx\cdot x}f(x) : Q\in\bR^{d\times d}, \, Q=Q^\top\big\}.
\end{equation}
In \eqref{intro.COV} the correct phase factors are omitted for simplicity of the discussion. We leave the details to Section \ref{sec.pre}. The interest in the metaplectic representation has increased in the last decade, as it is witnessed by the recent contributions of several authors, see e.g. \cite{Nittis,MR4748757,FS2024,MR4846323,Lerner}.

\subsection{Metaplectic Wigner distributions associated to \texorpdfstring{$\widehat\cU\in\Mp(2d,\bR)$}{Lg}} A more careful examination of the cross-Wigner distribution \eqref{intro.defW}, shows that 
\begin{equation}\label{intro.Wig}
	W(f,g)=\cF_2\mathfrak{T}_{E_w}(f\otimes\overline g), \qquad f,g\in L^2(\rd),
\end{equation}
where
\begin{equation}
	\cF_2F(x,\xi)=\int_{\rd}F(x,y)e^{-2\pi iy\cdot \xi}dy, \qquad F\in\cS(\rdd), \, x,\xi\in\rd,
\end{equation}
is the {\em partial Fourier transform with respect to the frequency variables}, and $\mathfrak{T}_{E_w}F(x,y)=F(x+y/2,x-y/2)$. 
Both the operators $\cF_2$ and $\mathfrak{T}_{E_w}$ are in $\Mp(2d,\bR)$, and so is their composition. The Wigner distribution shares this feature with many other popular time-frequency representations, such as the short-time Fourier transform (STFT)
\begin{equation}\label{intro.defSTFT}
	V_gf(x,\xi)=\int_{\rd}f(y)\overline{g(y-x)}e^{-2\pi i\xi\cdot y}dy, \qquad f,g\in L^2(\rd), \, x,\xi\in\rd,
\end{equation}
with the result that time-frequency analysis is strictly woven with metaplectic operators’ theory. Note that, in this fashion, the definitions of the Wigner distribution and of the STFT can also be generalized to $f,g\in\cS'(\rd)$.
\begin{definition}\cite{Cordero_Part_I}\label{intro.def11}
	Let $\widehat\cU$ be a metaplectic operator on $L^2(\rdd)$. The {\em (cross-)metaplectic Wigner distribution} associated to $\widehat\cU$, or (cross-)$\cU$-Wigner distribution, is the time-frequency representation
	\begin{equation}\label{intro.defWA}
		W_\cU(f,g)=\widehat\cU(f\otimes\overline g), \qquad f,g\in L^2(\rd).
	\end{equation}
	If $f=g$, we write $W_\cU f=W_\cU(f,f)$.
\end{definition}
By working in this setting, we can exploit the deep relation of the metaplectic group with symplectic matrices to express and explain the properties of an $\cU$-Wigner distribution in terms of the structure of the projection $\cU$. Nowadays, metaplectic Wigner distributions are a well-established subject of study in the time-frequency analysis community, their applications spanning from pseudodifferential calculus \cite{MR4182445,MR2843220} to frame theory \cite{CG02,MR3303672}, uncertainty principles \cite{GroSha}, quantum physics \cite{MR4880223} and signal processing \cite{MR4846323,MR4685284,MR4843048}. In a recent unpublished work \footnote{See K. Gr\"ochenig and I. Shafkulovska. More uncertainty principles for metaplectic time-Frequency representations. {\em arXiv:2503.13324}, 2025.}, it has been proven that, under minimal continuity assumptions, a time-frequency representation $Q:\cS(\rd)\times\cS(\rd)\to\cS'(\rdd)$ satisfies
\begin{equation}\label{intro.Gro}
	Q(\rho(z)f,\rho(z)g)=\rho(\phi(z,w))Q(f,g), \qquad z,w\in\rdd
\end{equation}
for some measurable function $\phi$ if and only if $Q=W_\cA$ for some $\widehat \cA\in\Mp(2d,\bR)$. On the one hand, \eqref{intro.Gro} shows that metaplectic Wigner distributions are the only time-frequency representation satisfying the property of preserving phase-space shifts. On the other hand, it also says that $\cA$-Wigner distributions, as they are defined so far, are too restrictive, excluding other representations from the pantheon of time-frequency distributions whose properties can be explained in terms symplectic geometry. A prototypical example is the Husimi distribution \cite{Husimi}
\begin{equation}
	H(f,g)=|V_gf|^2, \qquad f,g\in L^2(\rd),
\end{equation}
which is not an $\cU$-Wigner distribution. Property \eqref{intro.Gro} is very strong, but even within the metaplectic framework, one can construct a rich class of time–frequency representations by allowing $\cA\in\Sp(2d,\bC)$, the group of $4d\times4d$ complex symplectic matrices. These representations do not satisfy \eqref{intro.Gro}, yet they still enjoy useful structural features, such as covariance.
\begin{definition}\label{intro.defCov}
A quadratic time-frequency representation $Q:L^2(\rd)\times L^2(\rd)\to L^2(\rdd)$ is \emph{covariant} if
	\begin{equation}\label{intro.covariance}
		Q(\pi(z)f,\pi(z)g)=Q(f,g)(\cdot-z),
	\end{equation}
    where $\pi(x,\xi)f(y)=e^{2\pi i\xi\cdot y}f(y-x)$.
\end{definition}
Covariant time-frequency representations interpret phase-space shifts of signals as translations in phase space. Classical examples include the Wigner and Husimi distributions. Including the latter into a metaplectic framework amounts to extending the class of metaplectic operators to the so-called {\em metaplectic semigroup}, the main subject of this work.

\subsection{The metaplectic semigroup \texorpdfstring{$\Mp_+(d,\bC)$}{Lg}: former contributions}
The four leading figures who investigated this complex extension of the metaplectic group are Brunet and Kramer \cite{Brunet,BrunetKramer}, Howe \cite{Howe1}, and H\"ormander \cite{Hormander1,Hormander3}. Although they worked independently, they all arrived at the same mathematical object. Brunet and Kramer coined the term {\em metaplectic semigroup}, which was also used later by H\"ormander, while Howe defined the {\em oscillator semigroup}. Howe, Brunet and Kramer argued at the level of the Fock space, where the existence of a reproducing kernel facilitated the definition of operators in terms of their Schwartz kernels (as operators on the Fock space). These two approaches were shown to be equivalent by Hilgert in 1989 \cite{Hilgert}. On the contrary, H\"ormander's semigroup of {\em positive symplectic linear maps} consists essentially of operators on $L^2(\rd)$ with Gaussian kernel, which form a double cover of a semigroup of positive complex symplectic matrices, see Definition \ref{defPosSymp} in Section \ref{sec:HPC}. The equivalence between H\"ormander's theory and Brunet-Kramer-Howe's theory is discussed for the benefit of the reader in Remark \ref{rem-equivalences} below. It can be either inferred by comparing the positivity conditions \cite[Equation (5.10)]{Hormander3} and \cite[Page 214]{BrunetKramer}, or by comparing \cite[Proposition 5.10]{Hormander3} with \cite[Theorem 2.1]{Brunet} and \cite[Section 4]{Hormander1} with \cite[Section V]{Brunet}. 
We devote Section \ref{sec.thecomplexmeta} to reviewing the definition and structure of the metaplectic semigroup, denoted here by $\Mp_+(d,\bC)$, and of its projection onto the subsemigroup of positive symplectic matrices $\Sp_+(d,\bC)$.

H\"ormander's interest in the metaplectic semigroup arises from the fact that it contains one-parameter semigroups of operators of the form $\{e^{-2\pi it \Op^\w(a)}\}_{t\geq0}$, propagators of evolution problems \eqref{intro.eq1}
    with Hamiltonian $a$ satisfying
    \begin{equation}\label{intro.LR1}
        a(z)=\frac 1 2\cQ z\cdot z, \qquad z\in\rdd,
    \end{equation}
    with $\cQ^\top=\cQ$ a complex quadratic form with positive semi-definite imaginary part. 
    Outside of the caustics, the solution $u(t,x)=e^{-2\pi it \Op^\w(a)}u_0(x)$ can be expressed by Mehler formulas, proven by H\"ormander \cite[Theorem 4.1]{Hormander3}.
	
	The analysis of these problems has initiated a long series of recent and well-celebrated works, extending far beyond H\"ormander's original contributions. Building on H\"ormander’s analysis of quadratic operators, Hitrik and Pravda-Starov extended his framework to the non-elliptic setting in \cite{Hitrik} and further derive smoothing properties of heat semigroups, spectral discreteness under suitable ellipticity on this space, and the large-time behavior of the generated contraction semigroups. Importantly, they discover the singular space associated to a complex quadratic form, which describes geometrically the (symplectic orthogonal complement of the) directions where the corresponding heat semigroup is smoothing.
	Microlocal analysis of problems of the form \eqref{intro.eq1} with complex quadratic Hamiltonians was subsequently developed in a series of works \cite{PravdaStarov,PRW} and, more recently, in \cite{MPT2025,MR4442595}, within the framework of propagation of singularities, aimed at encoding the microlocal singularities of solutions of quadratic evolution equations 
    and relate them with those of the initial datum, inside the singular space. 
     
    In \cite{MR4598725} the authors derive a polar decomposition for one-parameter semigroups describing the propagators of the corresponding problem \eqref{intro.eq1}.
    Particularly noteworthy are the recent contributions of Hitrik, Pravda-Starov, and Viola \cite{HPSV}, Viola \cite{Viola2,Viola1}, and Alphonse and Bernier \cite{AlphonseBernier,Bernier}, which study the smoothing and subelliptic properties of quadratic non-selfadjoint differential operators, once again relying on the geometric structure of the singular space.
    A more time-frequency analysis approach is instead developed by Trapasso in \cite{Trapasso}, where he provides off-diagonal decay estimates for the Gabor matrices of these one-parameter semigroups.
    It seems natural to develop an operator-theoretic treatment of the general propagator $e^{-2\pi it\Op^\w(a)}$ of \eqref{intro.eq1}, with $a(z)$ satisfying \eqref{intro.LR1}, in the lines of a metaplectic calculus. This allows applications to the so-called Wigner wave front set and to the boundedness in time-frequency spaces.
	
	\subsection{Present contributions}\label{subsec:14}
	Since they are mostly concerned with the propagators of problems \eqref{intro.eq1}, which form subsemigroups of $\Mp_+(d,\bC)$, the aforementioned contributions do not aim at a systematic investigation of the metaplectic semigroup $\Mp_+(d,\bC)$. Instead, in the present work, we analyze deeper several, yet exhaustive, aspects of $\Mp_+(d,\bC)$ and $\Sp_+(d,\bC)$. We then apply our results to the analysis of time-frequency representations within a metaplectic framework, showing that properties which, in the standard setting, are satisfied only by a few trivial examples, here hold for a much broader class of {\em nonstandard} metaplectic Wigner distributions. This demonstrates that our approach provides a natural and necessary generalization of the classical theory. For the rest of this work, the term {\em metaplectic operator} will be used to indicate an operator in $\Mp_+(d,\bC)$.	
	\subsubsection{Generators}
	In parallel with the generators of $\Mp(d,\bR)$ discussed above, the symplectic group $\Sp(d,\bR)$ is generated by the matrix 
	\begin{equation}\label{intro.defJ}
		J=\begin{pmatrix}
			O_d & I_d\\
			-I_d & O_d
		\end{pmatrix}
	\end{equation}
	of the standard symplectic form of $\bC^d$, and by the subgroups of matrices in the form
	\begin{equation}\label{intro.defDEVQ}
		\cD_E=\begin{pmatrix}
			E^{-1} & O_d\\
			O_d & E^\top
		\end{pmatrix}
		\qquad \text{and} \qquad 
		V_Q=\begin{pmatrix}
			I_d & O_d\\
			Q & I_d
		\end{pmatrix}, 
	\end{equation}
	varying $E\in\GL(d,\bR)$ and $Q$ real and symmetric. By allowing $E$ and $Q$ to be complex, we obtain the generators of $\Sp(d,\bC)$. However, constructing a projective representation associating every $S\in\Sp(d,\bC)$ to an operator bounded on $L^2(\rd)$, such that it agrees with the metaplectic representation on $\Sp(d,\bR)$ and its image forms a group, is impossible. This also reflects the fact that, unlike the real case, problems \eqref{intro.eq1} with complex quadratic Hamiltonians are generally defined only for $t \geq 0$. Indeed, $\Sp_+(d,\bC)$ forms a semigroup rather than a group, and the question of its generators arises naturally.
	\begin{proposition}
		The semigroup $\Sp_+(d,\bC)$ is generated by $J$ and by the matrices $\cD_E$ and $V_Q$ in \eqref{intro.defDEVQ}, varying $E\in GL(d,\bR)$ and $Q\in\bC^{d\times d}$ symmetric and with positive semi-definite imaginary part.
	\end{proposition}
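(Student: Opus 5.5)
We need the definition of positive symplectic: $S\in\Sp(d,\bC)$ with $i(\sigma(\bar w,w)-\sigma(\overline{Sw},Sw))\ge 0$ for all $w\in\bC^{2d}$ (Hörmander). With this convention $V_Q$ for $\Im Q\ge0$ is positive, and so are the real symplectic matrices.

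The proof has two inclusions. The easy one is that the generators lie in $\Sp_+(d,\bC)$, and that $\Sp_+(d,\bC)$ is a semigroup (taken as known from Section \ref{sec:HPC}). $J$ and $\cD_E$ are real symplectic, so the positivity form is invariant under them. For $V_Q$ with $w=(x,\xi)$ one has $V_Qw=(x,\xi+Qx)$. A direct computation of $i\big(\sigma(\bar w,w)-\sigma(\overline{V_Qw},V_Qw)\big)$ gives, up to the sign convention, $2\,\Im Q\,x\cdot\bar x\ge0$.

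For the converse, I would use Hörmander's factorization of a positive symplectic matrix. The first step is to reduce to the case of a ``free'' $S$, i.e. one whose upper-right block $B$ is invertible. Since $\Sp(d,\bR)$ is generated by $J,\cD_E,V_Q$ (real $Q$), and real symplectic matrices preserve positivity, I may replace $S$ by $U_1SU_2$ with $U_1,U_2\in\Sp(d,\bR)$. The claim to prove is that for a suitable real $U$ (for instance $U=V_P$, or a partial Fourier-type matrix), the product $SU$ has invertible $B$-block. This follows because the set of $U$ for which this fails is a proper algebraic subset of $\Sp(d,\bR)$, as it fails to contain $J$ up to coordinate choices; equivalently, the Lagrangian $S(\{0\}\times\bC^d)$ can be made transversal to $\{0\}\times\bC^d$.

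For free $S$, the next step is to write $S=V_{P}\,\cD_{E}\,J\,V_{R}$ up to signs, with $E=B^{-1}$ or its transpose (complex), $P=DB^{-1}$ and $R=B^{-1}A$. These are symmetric by the symplectic relations. This is the standard generating-function decomposition, with the quadratic form $\Phi(x,y)=\frac12(Px\cdot x-2B^{-1}x\cdot y+Ry\cdot y)$. Positivity of $S$ translates into $\Im\Phi\ge0$ (Hörmander, Prop.~5.10 / \cite[Theorem 2.1]{Brunet}).

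The main obstacle is that $E=B^{-1}$ is complex, while the statement only allows real $E$. So I must show that a Gaussian kernel with $\Im\Phi\ge0$ factors through generators with real $\cD_E$. My plan is to write $\Im\Phi(x,y)$ as a positive semidefinite form in $(x,y)$ and split off the imaginary part using only $V_Q$'s and real matrices. The kernel $e^{i\pi\Phi}$ is realised as $\mathfrak p_{P}\circ(\text{real metaplectic})\circ\mathfrak p_{R}$, with the complex cross term absorbed by a doubling/Fourier trick. Concretely, $e^{-2\pi iB^{-1}x\cdot y}$ with complex $B^{-1}$ is handled by conjugating with $J$: write the block as $J V_{Q'} J^{-1}$ times real pieces, where $Q'$ carries $\Im$ and is positive semidefinite because the full $2d\times2d$ form $\Im\Phi$ is. One needs to check that a positive semidefinite $2d\times2d$ block form can be decomposed so that each factor's imaginary part is PSD. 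This is done by a Schur-complement argument, after perturbing to the positive-definite case and using that the semigroup's defining set is closed under the relevant limits, or by directly choosing the factorization. This is the step where I expect the real difficulty.
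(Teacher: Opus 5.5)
Your proposal has a genuine gap, and it sits exactly where you locate the ``real difficulty''. That step is the whole nontrivial content of the inclusion of $\Sp_+(d,\bC)$ in the semigroup generated by $J$, $\cD_E$ ($E$ real) and $V_Q$ ($\Im Q\geq0$), and it is not carried out.

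\begin{itemize}
\item The free factorization $S=V_{DB^{-1}}\cD_{B^{-1}}JV_{B^{-1}A}$ contains $\cD_{B^{-1}}$ with complex $B^{-1}$. This is not a generator, and it is not even positive: by the remark after Proposition \ref{prop52GG}, $\cD_E\in\Sp_+(d,\bC)$ only for real $E$.
\item Knowing $\Im\Phi\geq0$ as a $2d\times2d$ form at best gives positive semi-definite imaginary parts for the diagonal blocks $DB^{-1}$ and $B^{-1}A$. It gives no mechanism for producing the complex off-diagonal block $B^{-1}$ from factors with positive semi-definite imaginary parts. Factoring $S$ corresponds to composing Gaussian kernels, not to splitting $\Phi$ additively. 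The ``doubling/Fourier trick'' and the ``Schur-complement argument'' are never specified.
\item The perturbation fallback fails as stated. Closedness of $\Sp_+(d,\bC)$ only shows that limits of products lie in $\Sp_+(d,\bC)$, not in the semigroup generated by your set, which is what must be proved. You would need a bounded number of factors whose parameters converge inside the generating set (for instance, real dilations $E_n$ that do not degenerate), and nothing in the proposal controls this.
\end{itemize}

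Two secondary points. The reduction to invertible $B$ is plausible, but ``fails to contain $J$ up to coordinate choices'' is not an argument; Zariski density of $\Sp(d,\bR)$ in $\Sp(d,\bC)$ would supply one. Also, with $\sigma(z,w)=Jz\cdot w$ your positivity expression evaluates to $-2\Im Q\,x\cdot\bar x$ on $V_Q$. So the sign in your definition must be reversed to match Definition \ref{defPosSymp}.

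The missing idea is to use the normal form you cite but do not exploit. By Theorem \ref{theo.class} (H\"ormander's Proposition 5.10, Brunet's Theorem 2.1), every $S\in\Sp_+(d,\bC)$ can be written $S=U_1\Xi U_2$ with $U_1,U_2\in\Sp(d,\bR)$ and $\Xi$ purely positive. Since $U_1,U_2$ are products of real generators, it suffices to factor $V_{i\alpha}$, which is already a generator, and $\cR_\vartheta$. The latter is a free matrix with purely imaginary $B=-i\sinh\vartheta$, exactly your problematic case, and it factors as
\[
\cR_\vartheta=V_{i\tanh\vartheta}\,\cD_{1/\cosh\vartheta}\,J^{-1}V_{i\tanh\vartheta}J, \qquad J^{-1}V_{i\tanh\vartheta}J=V^\top_{-i\tanh\vartheta}.
\]
Here $J^{-1}=J^3$ lies in the generated semigroup. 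The complex off-diagonal entry is produced by a $J$-conjugate of a positive $V_Q$ together with a real dilation, not by a complex $\cD_E$. Tensorizing these one-dimensional identities handles purely positive $\Xi$ in dimension $d$. No reduction to free $S$ or generating-function analysis is needed.
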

	See also Proposition \ref{prop51} below. This result reflects on the generators of $\Mp_+(d,\bC)$, as discussed in Proposition \ref{prop52}.

	\subsubsection{Polar decomposition and its applications}
		The polar decomposition derived in \cite{MR4598725} for the propagators of  problems \eqref{intro.eq1} with complex quadratic Hamiltonians is actually a very general result holding for every metaplectic operator, see Theorem \ref{polar}. This yields the algebraic characterization of $\Mp_+(d,\bC)$ as isomorphic to $\Mp(d,\bR)\times \Mp_0(d,\bC)$, where $\Mp_0(d,\bC)$ is the subset of metaplectic operators that are positive and self-adjoint, and to a further characterization of the operators in $\Mp_0(d,\bC)$ as the $\Mp(d,\bR)$-conjugates of an explicit subsemigroup of metaplectic operators, here called {\em atomic metaplectic contractions}, see Section \ref{subsec:32} and Corollary \ref{CorPolar}. Let us briefly summarize these results.
		
		\begin{theorem}\label{intro.thm2}
			The polar decomposition $\widehat S=\widehat U\widehat Z$ of $\widehat S\in\Mp_+(d,\bC)$ satisfies 
			\begin{enumerate}[(i)]
			\item $\widehat U\in\Mp(d,\bR)$.
			\item $\widehat Z\in\Mp_0(d,\bC)$.
			\end{enumerate}
		\end{theorem}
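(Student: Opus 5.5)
The plan is to work at the level of the projections onto $\Sp_+(d,\bC)$ and then lift to operators. Let $\widehat S\in\Mp_+(d,\bC)$ have projection $S\in\Sp_+(d,\bC)$, and let $\widehat S=\widehat U\widehat Z$ be its polar decomposition as a bounded operator on $L^2(\rd)$. Here $\widehat Z=(\widehat S^*\widehat S)^{1/2}$ and $\widehat U$ is a partial isometry. The argument rests on three facts about H\"ormander's semigroup, which I take from Section \ref{sec.thecomplexmeta}:
\begin{enumerate}[(a)]
\item $\Mp_+(d,\bC)$ is closed under adjoints, with $\widehat S^*$ projecting onto $\overline{S}^{-1}$ (equivalently $J\overline{S}^{\top}J^{-1}$ up to sign conventions), which is again positive;
\item every metaplectic operator is injective with dense range, since its Gaussian kernel is nondegenerate;
\item the intersection of $\Mp_+(d,\bC)$ with the unitary operators is exactly $\Mp(d,\bR)$, corresponding to $S$ real.
\end{enumerate}

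First I show that $\widehat Z\in\Mp_0(d,\bC)$. The operator $\widehat S^*\widehat S$ is metaplectic by (a), and it is self-adjoint and positive. Its projection $P=\overline S^{-1}S$ satisfies $\overline P=P^{-1}$ and is "positive" in H\"ormander's sense. This is the main obstacle: proving that a positive self-adjoint metaplectic operator has a metaplectic square root.

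My approach is to diagonalize. Using the symplectic eigenvalue theory of positive complex symplectic matrices with $\overline P=P^{-1}$, I would find $V\in\Sp(d,\bR)$ such that $V^{-1}PV$ is block-diagonal of the form associated with the atomic contractions. Concretely, each block is the projection of $e^{-t_j\pi(x_j^2+\xi_j^2)}$-type operators, i.e.\ harmonic-oscillator heat semigroups in each coordinate, possibly degenerating to the identity.

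Then $\widehat V^{-1}\widehat S^*\widehat S\widehat V$ is a tensor product of operators $e^{-t_jH_j}$, with $H_j$ the one-dimensional Hermite operators. The choice of sign in the double cover is fixed by positivity. The square root $\widehat Z$ is therefore $\widehat V\bigl(\bigotimes_j e^{-t_jH_j/2}\bigr)\widehat V^{-1}$, which is metaplectic because $\Sp_+(d,\bC)$ contains these one-parameter semigroups and $\Mp(d,\bR)$ normalizes $\Mp_+(d,\bC)$. Since the square root of a positive operator is unique, this gives $\widehat Z\in\Mp_0(d,\bC)$.

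If the spectral diagonalization proves awkward, a fallback is to write $P=e^{M}$ with $M$ in the Lie algebra, take $e^{M/2}$, and verify positivity via H\"ormander's criterion, which is phrased in terms of the Hermitian form $i(\overline{\sigma}(Sz,Sz)-\overline{\sigma}(z,z))$.

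Second, I show that $\widehat U\in\Mp(d,\bR)$. Since $\widehat Z$ is metaplectic, it is injective with dense range by (b). Hence $\widehat U$ is an isometry defined on a dense range, and $\widehat S$ also has dense range, so $\widehat U$ is unitary. To see that it is metaplectic, consider the projection $U:=SZ^{-1}\in\Sp(d,\bC)$. The intertwining relation $\widehat S\rho(z)=\rho(Sz)\widehat S$ on the appropriate complex domain, combined with the analogous relation for $\widehat Z$, gives $\widehat U\rho(Z z)\widehat Z=\rho(Sz)\widehat Z$. Using the density of the range of $\widehat Z$, this yields $\widehat U\rho(w)=\rho(Uw)\widehat U$ for complex $w$ on Gaussian/Schwartz test functions. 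A unitary operator that intertwines complex phase-space shifts must preserve the unitarity of $\rho(w)$ for real $w$, which forces $U$ to be real symplectic. By the Stone--von Neumann/Schur argument, $\widehat U$ is then the metaplectic operator of $U$ up to a phase, and that phase is fixed by comparing with $\widehat S=\widehat U\widehat Z$ in the double cover.

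Alternatively, one can avoid the intertwining computation by noting that $\widehat U=\widehat S\widehat Z^{-1}$. Here $\widehat Z^{-1}$ is conjugate to $\bigotimes_j e^{t_jH_j/2}$, and a Gaussian-kernel computation shows that the product is a metaplectic operator which is unitary; then (c) concludes.
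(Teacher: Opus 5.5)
Your proposal has a genuine gap at exactly the step you flag as the main obstacle. You assert the normal form for $P=\overline S^{-1}S$ but do not prove it, and it cannot follow from the two properties you invoke. Every real symplectic involution is positive in H\"ormander's sense and satisfies $\overline P=P^{-1}$; examples are $P=-I_{2d}$ or, with a nontrivial block, $\cR_\vartheta\otimes(-I_2)$. Yet these have eigenvalue $-1$, whereas purely positive matrices only have eigenvalues $1$ and $e^{\pm\vartheta_j}$.

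Your concrete list of blocks is also incomplete. It omits the unipotent blocks $V_{i\alpha}$ (Gaussian multipliers $\frp_{i\alpha}$), which genuinely occur: for $\widehat S=\frp_i$ one gets $P=V_{2i}\neq I$, which is conjugate neither to the identity nor to any $\cR_\vartheta$.

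The fallback through $P=e^M$ does not repair this. It needs the same normal form to produce a good logarithm. Moreover, H\"ormander positivity of $e^{M/2}$ does not make its lift a positive self-adjoint operator: real symplectic matrices are H\"ormander-positive and lift to unitaries.

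The missing ingredient is the H\"ormander--Brunet factorization $S=U_1\Xi U_2$ with $U_1,U_2$ real and $\Xi$ purely positive (Theorem \ref{theo.class}, lifted in Theorem \ref{thmOscillator}(ii)), which is what the paper uses. It gives your normal form in one line: since $\overline\Xi=\Xi^{-1}$, we have $\overline S^{-1}S=U_2^{-1}\overline\Xi^{-1}\Xi U_2=U_2^{-1}\Xi^2U_2$. It also makes the square-root detour unnecessary. Indeed, $\widehat S=(\widehat U_1\widehat U_2)(\widehat U_2^{-1}\widehat\Xi\widehat U_2)$ is already a unitary element of $\Mp(d,\bR)$ times a positive self-adjoint operator. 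Positivity of $\widehat\Xi$ holds because $\widehat\Xi=(\widehat\Xi^{1/2})^2$ with $\widehat\Xi^{1/2}=\frR_{\Theta/2}\frp_{i\Delta/2}$ self-adjoint. Uniqueness of the polar decomposition then concludes.

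Your identification of $\widehat U$ is not established as written either. The relation you actually derive is $\widehat U\rho(Zz)g=\rho(Sz)\widehat Ug$ for real $z$ and $g\in\widehat Z(\mathrm{span}\{\text{Gaussians}\})$. So $w=Zz$ only ranges over the real subspace $Z\bR^{2d}$, which is neither $\bR^{2d}$ nor $\bC^{2d}$, and $\rho(w)$, $\rho(Uw)$ are unbounded there. The sentence that unitarity ``forces $U$ to be real'' is not an argument, and the Stone--von Neumann step requires real $w$. Your alternative via a ``Gaussian-kernel computation'' of $\widehat S\widehat Z^{-1}$ is equally unsubstantiated, since $\widehat Z^{-1}$ is unbounded and not in the semigroup.

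Once $Z=W\Xi W^{-1}$ with $W$ real is available, this part is purely algebraic. The identity $\overline{SZ^{-1}}=\overline S\,W\Xi W^{-1}=SZ^{-1}$ is equivalent to $\overline S^{-1}S=W\Xi^2W^{-1}$, so $U=SZ^{-1}\in\Sp(d,\bR)$. The two-fold cover then gives $\widehat S=\pm\widehat U\widehat Z$ with $\pm\widehat U\in\Mp(d,\bR)$.
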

		
	In 1932, Wigner introduced a form of phase-space analysis of operators via the Wigner distribution \eqref{intro.Wig}. The core of his {\em Wigner analysis} is the observation that for a linear and continuous operator $T:\cS(\rd)\to\cS'(\rd)$ there exists an operator $K:\cS(\rdd)\to\cS'(\rdd)$ such that
			\begin{equation}\label{intro.defKmale}
				W(Tf)=KWf, \qquad f\in\cS(\rd).
			\end{equation}
			Nowadays, we know that $K$ is not uniquely determined by $T$ under the sole condition \eqref{intro.defKmale}. To retrieve uniqueness, we must consider the cross-Wigner distribution. In fact, there exists a unique operator $K:\cS(\rdd)\to\cS'(\rdd)$ such that
			\begin{equation}
				W(Tf,Tg)=KW(f,g), \qquad f,g\in\cS(\rd).
			\end{equation}
			We refer to $K$ as the {\em Wigner operator} associated to $T$.
			
			By \eqref{intro.WigCovarianceProp}, the Wigner operator of $\widehat U\in\Mp(d,\bR)$ is a symplectic rescaling, specifically $K=\mathfrak{T}_{U^{-1}}$. It is worth to observe that 
            \begin{equation}
                W(Tf,Tg)=W(f,g)(M^{-1}z), \quad z\in\rdd,
            \end{equation}
            can hold for $M\in\GL(2d,\bR)$ if and only if the corresponding operator (possibly anti-linear) $T$ is either a metaplectic operator or the composition of a metaplectic operator with complex conjugation, as it was proven by Dias and Prata \cite{DiasPrata}. 
			
			Thanks to the polar decomposition of metaplectic operators, we are now able to generalize \eqref{intro.WigCovarianceProp} to $\Mp_+(d,\bC)$. Specifically, if $\widehat S=\widehat U\widehat Z$ as in Theorem \ref{intro.thm2}, then
			\begin{equation}\label{intro.WignerPolar}
				W(\widehat Sf,\widehat Sg)(z)=K_ZW(f,g)(\widehat U^{-1}z), \qquad f,g\in L^2(\rd), \, z\in\rdd,
			\end{equation}
			where $K_Z\in\Mp_0(2d,\bC)$, see Theorem \ref{thmS1S2S3W} for its explicit expression and further details.
			
			Since the polar decomposition of $\widehat S$ is unique, formula \eqref{intro.WignerPolar} provides an unambiguous separation between the contribution of what can be regarded as a sort of {\em real part} of $\widehat S$, namely $\widehat U\in\Mp(d,\bR)$, and that of its {\em imaginary part}, represented by $\widehat Z\in\Mp_0(d,\bC)$. However inappropriate, these phraseologies provide a clear picture of how the positive self-adjoint operator $\widehat Z$, absent for metaplectic operators with real projections, affects the phase-space concentration of $\widehat S$. Stated differently, when $\widehat U\in\Mp(d,\bR)$ we recover the classical symplectic covariance property of the Wigner distribution \eqref{intro.WigCovarianceProp}, whereas in the general case \eqref{intro.WignerPolar} yields an extension in which the action of $\widehat U$ is disentangled from the additional contribution induced by $\widehat Z$.
	
	\subsubsection{Schr\"odinger representation}
	For $(z,w)\in\bC^{2d}$ and $\tau\in\bR$, we might consider
	\begin{equation}\label{intro.complexTFshift}
		\rho(z,w;\tau)f(s)=e^{2\pi i\tau}e^{-i\pi z\cdot w}e^{2\pi iw\cdot s}\int_{\rd}\widehat f(\eta)e^{2\pi i(s-z)\cdot\eta}d\eta,
	\end{equation}
	for suitably regular $f$ whose Fourier transform $\widehat{f}$ has suitable decay properties (e.g. $\widehat{f}$ with compact support). However, this complexification of the Schr\"odinger representation maps $L^2(\rd)$ to itself if and only if $z,w\in\rd$. This follows by the fact that its Schwartz kernel is not in $\cS'(\rdd)$ if $S\notin\Sp(d,\bR)$. Consequently, if $z\in\rdd$ and $S\in\Sp_+(d,\bC)\setminus\Sp(d,\bR)$, the corresponding term $\rho(Sz;\tau)$ in \eqref{intro.intertSShar} is only densely defined on $L^2(\rd)$. Nevertheless, we retrieve \eqref{intro.intertSShar} on a dense subspace.
	\begin{theorem}\label{intro.thm34}
	Let $\widehat S\in\Mp_+(d,\bC)$ and $S\in\Sp_+(d,\bC)$ be its projection. Then, identity \eqref{intro.intertSShar} holds for a dense subset of $L^2(\rd)$. Specifically, it holds for 
	\begin{equation}
	f\in \mathrm{span}\big\{\text{$\rho(z;\tau)e^{-\pi Mx\cdot x}$ : $z\in\rdd$, $\tau\in\bR$, and $M=M^\top$ real and positive-definite}\big\}.
	\end{equation}
	\end{theorem}
	We address Section \ref{subsec:43} for the details. A similar result appears in \cite[Section 5]{Viola2}, although the link between the metaplectic operator and its symplectic projection is not discussed there. Here, we present an explicit version of this relation.
    Theorem \ref{intro.thm34} is used in Section \ref{sec.apptimefreq} to characterize covariant metaplectic Wigner distributions.
	
	\subsubsection{Intertwining with complex conjugation and its applications}
	For operators in $\Mp(d,\bR)$, the following result was proven in \cite[Appendix A]{CG02}.
			\begin{proposition}\label{PropCG02}
			If $\widehat U\in\Mp(d,\bR)$, then the operator $\widehat{\widetilde U}:L^2(\rd)\to L^2(\rd)$ given by
			\begin{equation}\label{defRealConjMetap}
				\widehat{\widetilde U}f=\overline{\widehat U\bar f}
			\end{equation}
			is metaplectic, and if $U$ has blocks \eqref{blockS}, then
			\begin{equation}\label{projRealConj}
				\pi^{Mp}(\widehat{\widetilde U})=\widetilde U=\begin{pmatrix}
					A & -B\\
					-C & D
				\end{pmatrix}.
			\end{equation}
			\end{proposition}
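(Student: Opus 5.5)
The plan is to verify the intertwining relation \eqref{intro.intertSShar} for $\widehat{\widetilde U}$ with the matrix $\widetilde U$ in \eqref{projRealConj}. The characterization of metaplectic operators through this relation, recalled in the Introduction, will then identify $\widehat{\widetilde U}$ as metaplectic with projection $\widetilde U$.

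The first step is to record the elementary facts about $\widehat{\widetilde U}$.
\begin{itemize}
\item It is linear: writing $Cf=\bar f$ for the antilinear involution, $\widehat{\widetilde U}=C\widehat U C$ is a composition of two antilinear maps and one linear map.
\item It is unitary on $L^2(\rd)$, because $C$ is an antiunitary involution and $\widehat U$ is unitary.
\end{itemize}

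The second step is to compute how $C$ conjugates the Schr\"odinger representation \eqref{intro.Schrodinger}. Taking complex conjugates in the definition gives
\begin{equation}
\overline{\rho(x,\xi;\tau)\bar f}(y)=e^{-2\pi i\tau}e^{i\pi\xi\cdot x}e^{-2\pi i\xi\cdot y}f(y-x)=\rho(x,-\xi;-\tau)f(y).
\end{equation}
Hence $C\rho(z;\tau)C=\rho(Rz;-\tau)$ with $R=\diag(I_d,-I_d)$.

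The third step is the intertwining computation itself. Using $C^2=I$ and the relation for $\widehat U$,
\begin{align}
\widehat{\widetilde U}\rho(z;\tau)&=C\widehat U C\rho(z;\tau)CC=C\widehat U\rho(Rz;-\tau)C\\
&=C\rho(URz;-\tau)\widehat U C=\rho(RURz;\tau)\widehat{\widetilde U}.
\end{align}
If $U$ has blocks $A,B,C,D$, conjugating by $R$ flips the signs of the off-diagonal blocks, so $RUR=\widetilde U$.

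It remains to check that $\widetilde U\in\Sp(d,\bR)$. Since $R^\top JR=-J$, we get $\widetilde U^\top J\widetilde U=RU^\top(-J)UR=R(-J)R=J$.

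The last step, which is the only delicate one, is the identification with a metaplectic operator. The metaplectic operator $\widehat V$ with $\pi^{Mp}(\widehat V)=\widetilde U$ satisfies the same intertwining relation. Then $\widehat V^{-1}\widehat{\widetilde U}$ commutes with every $\rho(z;\tau)$. By irreducibility of the Schr\"odinger representation (Schur's lemma), it is a scalar $c$ with $|c|=1$.

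So $\widehat{\widetilde U}=c\widehat V$. The subtlety is that $\Mp(d,\bR)$ fixes the phase only up to sign, and we must show $c=\pm1$. I would handle this through the generators: for $\mathfrak T_E$, $\mathfrak p_Q$ and $\cF$, directly check that $C\mathfrak T_E C=\mathfrak T_E$, $C\mathfrak p_QC=\mathfrak p_{-Q}$ and $C\cF C=\cF^{-1}$, all of which are metaplectic. Since $C(\widehat U_1\widehat U_2)C=(C\widehat U_1C)(C\widehat U_2C)$, conjugation by $C$ preserves $\Mp(d,\bR)$. This settles the phase ambiguity and gives metaplecticity outright, while the computation in the third step gives the projection.
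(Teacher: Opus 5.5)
Your proof is correct. The paper gives no proof of this proposition: it quotes it from \cite[Appendix A]{CG02} and then uses it as a black box in the proof of Theorem~\ref{thmComplexConj}. The closest in-paper argument is that proof. It works entirely through a factorization $S=U_1\Xi U_2$, checks each factor against complex conjugation, and identifies the projection by multiplying block matrices and comparing real and imaginary parts.

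Your route differs in how the projection is obtained. A single structural identity, $C\rho(z;\tau)C=\rho(Rz;-\tau)$, gives $\widehat{\widetilde U}\rho(z;\tau)=\rho(RUR z;\tau)\widehat{\widetilde U}$ directly, with no block bookkeeping. The generator check, which is essentially what the factorization approach amounts to, is used only to fix the phase. It places $\widehat{\widetilde U}$ in $\Mp(d,\bR)$ rather than merely among the unitaries implementing $\widetilde U$.

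Once you do the generator check, the Schur's lemma step is redundant. The relations $R\cD_ER=\cD_E$, $RV_QR=V_{-Q}$, $RJR=J^{-1}$, together with multiplicativity of $\pi^{Mp}$, already give both membership and the projection.

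One small correction. With the Maslov phase in $\frT_E$ you get $C\frT_EC=i^{-2m}\frT_E=(-1)^m\frT_E$, so $C\frT_EC=-\frT_E$ when $\det E<0$. This is harmless, since $-\id_{L^2}\in\Mp(d,\bR)$. Your claims $C\frp_QC=\frp_{-Q}$ and $C\cF C=\cF^{-1}$, the latter using $\overline{i^{-d/2}}=i^{d/2}$, are exact.

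A side benefit of your approach: on Gaussians the identity extends to complex arguments as $C\rho(\zeta;\tau)C=\rho(R\bar\zeta;-\tau)$ for $\zeta\in\bC^{2d}$. Running your third step with Theorem~\ref{thmGG1} then recovers the projection $R\overline{S}R$ of Theorem~\ref{thmComplexConj}, not $RSR$. In that setting, membership in $\Mp_+(d,\bC)$ still has to come from the factorization.
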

			Aside of its applications in characterizing time-frequency spaces, the connection between complex conjugation and metaplectic operators is much deeper. Following the same notation in \cite{DiasPrata}, we can write 
            \begin{equation}\label{intro.RSR}
                \widetilde U=RUR,
            \end{equation} 
            with
			\begin{equation}\label{intro.defRmatrix}
				R=\begin{pmatrix}
					I_d & O_d\\
					O_d & -I_d
				\end{pmatrix}.
			\end{equation}
			Recall that the {\em anti-symplectic group} $R\Sp(d,\bR)$ is the group of real $2d\times2d$ matrices $M$ such that $M^\top JM=-J$. This choice of notation is consistent with the fact that every matrix in $M\in R\Sp(d,\bR)$ can be written as $M=RU$ for some $U\in\Sp(d,\bR)$. If we set $\widehat Rf=\overline f$, equation \eqref{defRealConjMetap} reads as $\widehat{\widetilde{U}}=\widehat R\widehat U\widehat R$, consistently with \eqref{intro.RSR}. By \cite[Theorem 25]{DiasPrata}, the intertwining relation \eqref{intro.WigCovarianceProp}, holds for $M\in\GL(2d,\bR)$ if and only $M\in \Sp(d,\bR)\cup R\Sp(d,\bR)$. In conclusion, one may be tempted to extend the class of metaplectic operators up to include complex conjugation (as an anti-linear operator), setting $\pi^{Mp}(\widehat R)=R$. We mention \cite{MR4846323} as a recent contribution where this perspective has been adopted in the context of (Gabor) frames theory.
			
			Having said so, the question arises of whether Proposition \ref{PropCG02} generalizes to complex symplectic matrices. However, in the complex case, the matter is more delicate.
			
			\begin{theorem}\label{thmComplexConj}
				For $S\in\Sp_+(d,\bC)$ with blocks \eqref{blockS}, let
				\begin{equation}\label{defStildecomplex}
					\widetilde S=\begin{pmatrix}
						\overline A & -\overline B\\
						-\overline C & \overline D
					\end{pmatrix},
				\end{equation}
				then the operator
				\begin{equation}
					Tf(x)=\overline{\widehat S\bar f}, \qquad f\in L^2(\rd),
				\end{equation}
				is in $\Mp_+(d,\bC)$ and $\pi^{Mp}_+(T)=\widetilde S$.
			\end{theorem}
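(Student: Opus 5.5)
The plan is to reduce everything to the generators of the semigroup, using the result announced above and made precise in Proposition \ref{prop52}. Every $\widehat S\in\Mp_+(d,\bC)$ is, up to a sign, a finite composition of three kinds of operators:
\begin{itemize}
\item[(a)] the Fourier transform $\cF$ (projection $J$);
\item[(b)] the rescalings $\mathfrak{T}_E$ with $E\in\GL(d,\bR)$ (projection $\cD_E$);
\item[(c)] the product operators $\mathfrak{p}_Q f(x)=e^{i\pi Qx\cdot x}f(x)$ with $Q=Q^\top\in\bC^{d\times d}$ and $\operatorname{Im}Q\geq0$ (projection $V_Q$).
\end{itemize}
Set $\widehat R f=\bar f$. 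Since $\widehat R^2=\id$, conjugation is multiplicative:
\begin{equation}
\widehat R\,\widehat S_1\widehat S_2\,\widehat R=(\widehat R\widehat S_1\widehat R)(\widehat R\widehat S_2\widehat R).
\end{equation}
On the matrix side, the map $S\mapsto\widetilde S$ of \eqref{defStildecomplex} is $S\mapsto R\overline S R$ with $R$ as in \eqref{intro.defRmatrix}. This map is also multiplicative, because complex conjugation respects products and $R^2=I$. Finally, $\pi^{Mp}_+$ is a semigroup homomorphism. It therefore suffices to check that, for each generator $G$, the operator $\widehat R G\widehat R$ lies in $\Mp_+(d,\bC)$ and projects to $R\,\overline{\pi^{Mp}_+(G)}\,R$. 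Membership in $\Mp_+(d,\bC)$ and the projection formula then follow for arbitrary products. The overall sign ambiguity is harmless, since $\widehat R(-\widehat S)\widehat R=-\widehat R\widehat S\widehat R$.

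Each generator is checked by a direct computation.
\begin{itemize}
\item[(a)] We have $\overline{\cF\bar f}=\cF^{-1}f$, which is metaplectic with projection $J^{-1}=-J=R J R$, and $\overline J=J$.
\item[(b)] For $E$ real, $\overline{\mathfrak{T}_E\bar f}=\mathfrak{T}_E f$, and $R\cD_E R=\cD_E$.
\item[(c)] $\overline{\mathfrak{p}_Q\bar f}(x)=e^{-i\pi\overline Qx\cdot x}f(x)=\mathfrak{p}_{-\overline Q}f(x)$. Here $\operatorname{Im}(-\overline Q)=\operatorname{Im}Q\geq0$, so this generator is again admissible. Its projection is
\begin{equation}
V_{-\overline Q}=\begin{pmatrix} I_d & O_d\\ -\overline Q & I_d\end{pmatrix}=R\,\overline{V_Q}\,R.
\end{equation}
\end{itemize}
This step is where positivity is preserved. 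It is the key observation: conjugation sends $Q\mapsto\overline Q$, which would flip the sign of the imaginary part, but the extra sign coming from $R$ restores it.

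The expected main obstacle is not the algebra but the precise statement of the generator result in the paper's normalization. One must confirm that Proposition \ref{prop52} really expresses every element of $\Mp_+(d,\bC)$ as such a product, including the correct phase factors in $\mathfrak{T}_E$ and $\cF$, which are real and therefore commute with conjugation. One must also confirm that the identification of $\pi^{Mp}_+$ on the generators matches the conventions of \eqref{intro.intertSShar}.

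If the generator description were only available up to closure, or only on a dense set, I would instead argue through the intertwining relation of Theorem \ref{intro.thm34}. Applying $\widehat R$ to $\widehat S\rho(z;\tau)f=\rho(Sz;\tau)\widehat Sf$ and using
\begin{equation}
\widehat R\rho(x,\xi;\tau)\widehat R=\rho(x,-\xi;-\tau)=\rho(Rz;-\tau)
\end{equation}
(extended holomorphically to complex arguments in \eqref{intro.complexTFshift}, where conjugation also conjugates $z$) yields the intertwining relation for $T$ with matrix $R\overline S R$ on the dense Gaussian span, which is invariant under conjugation. Then, using the Gaussian-kernel description of $\Mp_+(d,\bC)$ in Hörmander's form, $T$ has kernel $\overline{K_S}$. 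Conjugating the quadratic phase of $K_S$ flips the sign of its imaginary part in the correct way, so $\overline{K_S}$ is again an admissible positive Gaussian kernel, and this gives membership of $T$ in $\Mp_+(d,\bC)$.
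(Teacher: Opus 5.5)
Your proof is correct, but it takes a genuinely different route from the paper.

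\textbf{The paper's route.} It writes $\widehat S=\widehat U_1\widehat\Xi\widehat U_2$ using H\"ormander's factorization. It checks from \eqref{defRtau} that the atoms $\frp_{i\alpha}$ and $\frR_\vartheta$ commute with complex conjugation. It applies Proposition \ref{PropCG02} to the two real factors to obtain $T=\widehat{\widetilde U}_1\widehat\Xi\widehat{\widetilde U}_2$. Finally, it identifies $\widetilde U_1\Xi\widetilde U_2$ with $\widetilde S$ by computing real and imaginary parts block by block.

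\textbf{Your route.} You decompose into the generators of Proposition \ref{prop52}. The only genuinely complex generator, $\frp_Q$, is sent to $\frp_{-\overline Q}$, and $\Im(-\overline Q)=\Im Q$, so positivity is preserved. Your key observation is $\widetilde S=R\overline SR$. This makes $S\mapsto\widetilde S$ multiplicative, so the projection formula follows from the homomorphism property of $\pi^{Mp}_+$ with no block computation. The same observation would also shorten the paper's last step, because $\widetilde\Xi=\Xi$ for every purely positive $\Xi$.

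\textbf{Trade-off.} Your route relies on Proposition \ref{prop52}, which the paper states without proof. It does hold, in both directions:
\begin{itemize}
\item By Definition \ref{defOscillatorSemigroup}, every element is a product of real metaplectic operators and atomic contractions. Here $\frp_{i\Delta}$ is already a generator, and $\frR_\Theta=\frp_{iP}\mathfrak{T}_E\cF^{-1}\frp_{iP}\cF$ with $P=\diag(\tanh\vartheta_j)$ and $E=\diag(1/\cosh\vartheta_j)$.
\item Conversely, for $Q\in\Sigma_{\geq0}(d)$, write $\Im Q=O^\top\Lambda O$ with $O$ orthogonal. Then $\frp_Q=\frp_{\Re Q}\,\mathfrak{T}_O\frp_{i\Lambda}\mathfrak{T}_O^{-1}\in\Mp_+(d,\bC)$.
\end{itemize}
The paper's route, by contrast, only ever conjugates atoms, which are fixed by conjugation, and reuses the real-case result directly.

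\textbf{A small correction.} The phase factors $i^{-d/2}$ and $i^m$ are not real in general. Your computation $\overline{\cF\bar f}=\cF^{-1}f$ already accounts for the first. For the second, $\overline{\mathfrak{T}_E\bar f}=(-1)^m\mathfrak{T}_Ef$, which is still in $\Mp(d,\bR)$ over $\cD_E$, so nothing breaks.

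\textbf{On the fallback.} Your alternative argument via the intertwining relation is unnecessary. As sketched, it would also need a uniqueness or characterization result for $\Mp_+(d,\bC)$ that the paper does not provide.
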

			For the rest of this work, the operator $T$ of Theorem \ref{thmComplexConj} will be denoted by $\widehat{\widetilde S}$, consistently with Proposition \ref{PropCG02}. The reader may observe that in the complex case it is not true anymore that $\widetilde S=RSR$, so complex conjugation alone cannot be included consistently among metaplectic operators in the complex framework. The subsemigroup of metaplectic operators so that $\widehat{\widetilde S}=\widehat S$, that are precisely those commuting with complex conjugation, are then characterized in Theorem \ref{StructuralTheorem}. Remarkably, its proof requires a preliminary result on the structure of $\Sp(d,\bR)$, see Lemma \ref{structuralLemma}.
			
			The Wigner distribution satisfies the following intertwining relation with complex conjugation
			\begin{equation}\label{intro.WigCompConj}
				W(g,f)=\overline{W(f,g)}, \qquad f,g\in L^2(\rd),
			\end{equation}
			which in turns implies that $Wf$ is always real-valued for $f\in L^2(\rd)$. Theorem \ref{thmComplexConj} will be applied in Section \ref{sec.apptimefreq} to characterize metaplectic Wigner distributions satisfying a properly formulated generalization of \eqref{intro.WigCompConj}, that, in the case of metaplectic Wigner distributions defined from operators in $\Mp(2d,\bR)$, reduce to rescaled cross-Wigner distributions.
	
	\subsubsection{A new metaplectic setting for time-frequency representations}
		By allowing $\widehat\cA\in\Mp(2d,\bC)$ in Definition \ref{intro.def11}, we obtain a generalization of metaplectic Wigner distributions of \cite{Cordero_Part_I}, that we present in Section \ref{sec.apptimefreq}. We thereby transport the techniques resulted advantageous in the framework of the metaplectic group $\Mp(2d,\bR)$ to the metaplectic semigroup $\Mp_+(2d,\bC)$. Importantly, we characterize the following classes of metaplectic Wigner distributions in terms of the symplectic projection $\cA$ of the metaplectic operator $\widehat\cA$ defining $W_\cA$, and provide the explicit expressions of the time-frequency representations satisfying these properties. 
		\begin{enumerate}
			\item The Cohen's class, which incidentally coincides with the class of covariant metaplectic Wigner distributions. Here, we exploit the intertwining relation \eqref{intro.intertSShar} for $\Mp_+(2d,\bC)$.
			\item {\em Generalized spectrograms} \cite{BDO1,BDO}, time-frequency representations $Q$ in the form
			\begin{equation}
				Q(f,g)=V_\phi f\overline{V_\psi g}, \qquad f,g\in\cS'(\rd),
			\end{equation}
			for suitable distributions $\phi,\psi\in\cS'(\rd)$, appearing in the time-frequency analysis of the so-called {\em localization operators}, see \cite{localization}. As a by-product we also characterize {\em spectrograms}, i.e., the case $\phi=\psi$. In our proofs, we consider a further invertibility assumption on the $d\times d$ block $A_{13}$ of $\cA$. Without this hypothesis, the characterization of generalized spectrograms becomes unnecessarily more involved.
			\item Metaplectic Wigner distributions satisfying
			\begin{equation}
				W_\cA(g,f)=\overline{W_\cA(f,g)},
			\end{equation}
			which also entails that $W_\cA f$ is real-valued. This property shows up also in the study of Wigner wave front sets.
		\end{enumerate} 

    \subsubsection{Evolution equations with complex quadratic Hamiltonian.} Finally, the (complex) metaplectic theory that we study applies to evolution equations with quadratic non-selfadjoint Hamiltonian. 
    In this direction, we consider Cauchy problems of the form
    \begin{equation}\label{evolutionintro}
		\begin{cases}
		\frac{1}{2\pi}\partial_t u+\mathrm{Op}^\mathrm{w}(a)u=0 \quad \text{in} \ \ \R^+ \times \R^d \\
		u(0,\cdot)=u_0 \in L^2(\R^d),
		\end{cases}
		\end{equation}
		with 
        \[
        a(z)=  \frac 1 2 \cQ z\cdot z, \quad z \in \R^{2d},
        \]
        \textit{complex} quadratic form associated with a matrix $\cQ \in \mathbb{C}^{2d\times 2d}$ satisfying $\Re(\cQ) \geq 0$.
        As aforementioned, the propagator
        \[
        e^{-2\pi t\mathrm{Op}^{\mathrm{w}}(a)}:L^2(\R^d) \rightarrow L^2(\R^d), \qquad u_0\mapsto u(t,\cdot)= e^{-2\pi t\mathrm{Op}^{\mathrm{w}}(a)}u_0,
        \]
        is a one-parameter semigroup of metaplectic operators $e^{-2\pi t\mathrm{Op}^{\mathrm{w}}(a)} \in \Mp_+(d,\mathbb{C})$ (cf. \cite{Hormander3}).
        Hence, in Section \ref{sec.evo} we study the evolution of the microlocal concentration of $\cA$-Wigner distributions, with $\cA \in \Sp(2d,\R)$ covariant, induced by \eqref{evolutionintro}. In such a section, relying on the evolution of a mixed heat–Schr\"odinger type induced by harmonic oscillators, we prove boundedness results on modulation spaces and propagation of micro-singularities, we address Section \ref{sec.evo}.
	
	\subsection*{Outline} In Section \ref{sec.pre}, we introduce the notation and the preliminaries necessary for this work, while in Section \ref{sec.thecomplexmeta} we introduce the metaplectic semigroup $\Mp_+(d,\bC)$ and its main known properties. Section \ref{subsec:anp} is devoted to extend the main properties of the metaplectic group $\Mp(d,\bR)$ in the context of $\Mp_+(d,\bC)$. In Section \ref{sec:Subsemigroups}, we characterize the semigroups metaplectic operators in $\Mp_+(d,\bC)$ with upper and lower block triangular projections, we obtain the intertwining relations with the Wigner distribution and complex conjugation. Applications to time-frequency analysis are described in Section \ref{sec.apptimefreq}, and applications to evolution equations are presented in Section \ref{sec.evo}.

\section{Preliminaries and notation}\label{sec.pre}
We denote by $xy=x\cdot y$ the standard inner product of $x,y\in\rd$. For a $d\times d$ complex matrix $S\in\bC^{d\times d}$, we write $\Re(S)$ and $\Im(S)$ to denote its real and imaginary parts, respectively, while $\overline S$ denotes its complex conjugate. We denote by $\GL(d,\bR)$, respectively $\GL(d,\bC)$, the group of $d\times d$ real, respectively complex, invertible matrices. Analogously, $\Sym(d,\bR)$ and $\Sym(d,\bC)$, denote the class of symmetric $d\times d$ real and complex matrices, i.e., satisfying $S^\top=S$. If $S\in\Sym(d,\bR)$, we write $S\geq0$, respectively $S>0$, if $S$ is positive semi-definite, respectively positive-definite. We also write $S\leq0$ and $S<0$ if $-S\geq0$ and $-S>0$, respectively. We denote by $\Sigma_{\geq0}(d)$ the (closure of the) Siegel upper-half space, i.e., the set of complex symmetric matrices $S$ with $\Im(S)\geq0$. We denote by $I_d$ and $O_d$ the $d\times d$ identity and null matrices, respectively. We also write $O_{m\times n}$ for the $m\times n$ null matrix. 
If $S_1,S_2$ are $2d\times 2d$ matrices, with
	\begin{equation}
		S_j=\begin{pmatrix}
			A_j & C_j\\
			B_j & D_j
		\end{pmatrix}, \qquad A_j,B_j,C_j,D_j\in\bC^{d\times d}, \quad j=1,2,
	\end{equation}
	we shall denote by 
	\begin{equation}\label{tensmat}
	S_1\otimes S_2=\left(\begin{array}{cc|cc}
		A_1 & O & B_1 & O\\
		O & A_2 & O & B_2\\
		\hline
		C_1 & O & D_1 & O\\
		O & C_2 & O & D_2
	\end{array}\right).
	\end{equation}
		
	We denote by $\cS(\rd)$ the Schwartz class of smooth rapidly decreasing functions, its topological dual is the space of tempered distributions. The sesquilinear inner product of $f,g\in L^2(\rd)$ is $\la f,g\ra =\int_{\rd}f(x)\overline{g(x)}dx$, which extends uniquely to a duality pairing $\cS'(\rd)\times\cS(\rd)$, antilinear in the second component. If $X$ and $Y$ are Banach spaces and $T:X\to Y$ is a bounded linear operator, $\norm{T}_{X\to Y}$ denotes its operator norm. Whenever it does not cause confusion, we write $\norm{T}_{op}$.
    
    The {\em Fourier transform} of $f\in L^1(\rd)$ is
	\begin{equation}\label{hatf}
		\widehat f(\xi)=\int_{\rd}f(x)e^{-2\pi i\xi \cdot x}dx, \qquad \xi\in\rd.
	\end{equation}
	The translation and the modulation of a function $f\in L^2(\rd)$ are $T_xf(y)=f(y-x)$ and $M_\xi f(y)=e^{2\pi i\xi \cdot y}f(y)$, respectively. We also write $\pi(x,\xi)f=M_\xi T_x$. For $M>0$, we write $\f_M(t)=e^{-\pi Mt\cdot t}$ for the Gaussian with covariance matrix $M$.
	 
	In what follows, we present briefly the background needed for our discussion. We will mainly follow the notation and the terminology of \cite{CorderoBook} for time-frequency analysis and of \cite{hormander3book,lernermetrics} for microlocal analysis. Equally valid sources are \cite{Pre-Iwasawa,MauriceDeGosson,Folland,Grochenig}.

    \subsection{Pseudo-inverses and Schur complements}
    Schur complements are used in this work to study semi-definiteness of symmetric matrices. Let us recall the definition of Moore-Penrose inverse of a real-valued matrix $M$. There exist $U,V$ orthogonal and $\Sigma$ diagonal such that $M=U\Sigma V^\top$ (singular value decomposition) and the diagonal elements of $\Sigma$ are the singular values of $M$, that are the eigenvalues of $(M^\top M)^{1/2}$. Without loss of generality, we may assume that $\Sigma=\diag(\sigma_1,\ldots,\sigma_d)$, with $\sigma_1\geq\ldots\geq \sigma_r>0$, $r=\mathrm{rank}(M)$. The Moore-Penrose inverse of $M$ is $M^+=V\Sigma^+U^\top$, where $\Sigma^+=\diag(\lambda_1,\ldots,\lambda_d)$, $\lambda_j=\sigma_j^{-1}$ for $1\leq j\leq r$ and $\sigma_j=0$ if $r<j\leq d$. We recall that $M^+M$ is the orthogonal projection onto $\ker(M)^\perp$ and $MM^+$ is the orthogonal projection onto $\mathrm{range}(M)$. Both $M^+M$ and $MM^+$ are symmetric and positive semi-definite. Finally, we recall that $(M^+)^\top=(M^\top)^+$. 
    
    We will use the following result, see \cite[Theorem 16.1]{Schur}. 
    \begin{theorem}\label{theoremSchur}
        Let 
        \begin{equation}
            M=\begin{pmatrix}
                A & B\\
                B^\top & C
            \end{pmatrix}\in\Sym(2d,\bR).
        \end{equation}
        The following statements are equivalent.
        \begin{enumerate}[(i)]
            \item $M\geq0$.
            \item $A\geq0$, $(I-AA^+)B=O_d$ and $C-B^\top A^+ B\geq0$.
            \item $C\geq0$, $(I-CC^+)B^\top=O_d$ and $A-BC^+B^\top\geq0$.
        \end{enumerate}
    \end{theorem}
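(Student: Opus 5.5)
The plan is to prove the equivalences (i)$\Leftrightarrow$(ii) and (i)$\Leftrightarrow$(iii). Since (iii) follows from (ii) by conjugating $M$ with the permutation matrix swapping the two $d$-blocks, it suffices to prove (i)$\Leftrightarrow$(ii). The key tool is the block factorization of $M$ through the Moore--Penrose inverse of $A$. We use the standard properties recalled above: $A^+$ is symmetric when $A$ is, $AA^+A=A$, and $AA^+=A^+A$ is the orthogonal projection onto $\mathrm{range}(A)$ when $A\in\Sym(d,\bR)$.

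For (ii)$\Rightarrow$(i), assume $(I-AA^+)B=O_d$, i.e. $B=AA^+B$. Set
\begin{equation}
L=\begin{pmatrix} I_d & O_d\\ B^\top A^+ & I_d\end{pmatrix}.
\end{equation}
Using $AA^+B=B$ and $A^+AA^+=A^+$, a direct multiplication gives
\begin{equation}
M=L\begin{pmatrix} A & O_d\\ O_d & C-B^\top A^+B\end{pmatrix}L^\top .
\end{equation}
This is the one computation to check carefully. The middle block-diagonal matrix is positive semi-definite by the hypotheses $A\geq0$ and $C-B^\top A^+B\geq0$, and congruence preserves semi-definiteness, so $M\geq0$.

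For (i)$\Rightarrow$(ii), suppose $M\geq0$. Testing $M$ on vectors $(x,0)$ gives $A\geq0$. For the range condition, take $y\in\ker A$ and test on $(ty,w)$ with $t\in\bR$:
\begin{equation}
0\leq 2t\,y\cdot Bw+Cw\cdot w \qquad\text{for all } t\in\bR.
\end{equation}
Letting $t\to\pm\infty$ forces $y\cdot Bw=0$. Hence $\mathrm{range}(B)\subseteq(\ker A)^\perp=\mathrm{range}(A)$, which is exactly $(I-AA^+)B=O_d$. Once this holds, the factorization above is valid. Inverting $L$ (it is unipotent) shows the middle block-diagonal matrix is congruent to $M$, hence positive semi-definite, so its lower block satisfies $C-B^\top A^+B\geq0$.

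The main obstacle is the range condition in the forward direction: without invertibility of $A$, the Schur complement can only be formed once $\mathrm{range}(B)\subseteq\mathrm{range}(A)$ is known, and the scaling argument on $\ker A$ is what supplies this. Everything else is bookkeeping with the pseudo-inverse identities.
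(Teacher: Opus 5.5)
Your proof is correct. The paper does not prove this statement; it takes it from \cite[Theorem 16.1]{Schur} and uses it as a black box, so there is no internal argument to compare against.

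What you give is the standard self-contained proof of the generalized Schur complement criterion:
\begin{enumerate}[(1)]
\item The congruence $M=L\,\mathrm{diag}(A,\,C-B^\top A^+B)\,L^\top$ needs only $AA^+B=B$, the identities $AA^+A=A$ and $A^+AA^+=A^+$, and the symmetry of $A^+$. It gives (ii)$\Rightarrow$(i) directly. After inverting $L$, it also gives the positivity of the Schur complement in (i)$\Rightarrow$(ii).
\item The scaling test on vectors $(ty,w)$ with $y\in\ker A$ supplies the range inclusion $\mathrm{range}(B)\subseteq\mathrm{range}(A)$. This is exactly the reformulation of $(I-AA^+)B=O_d$ that the paper records in the remark after the theorem, so your argument also justifies that remark.
\item Conjugating by the block-swap permutation correctly reduces (iii) to (ii).
\end{enumerate}

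Compared with the citation, your route costs a few lines but makes the paper independent of the reference. It also visibly works for blocks of arbitrary, even unequal, sizes.
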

    Since $AA^+$ is the orthogonal projection onto $\mathrm{range}(A)$, the reader may observe that
    $(I-AA^+)B=O_d$ is then equivalent to $\mathrm{range}(B)\subseteq \mathrm{range}(A)$. Similarly, $(I-CC^+)B^\top=O_d$ is equivalent to $\mathrm{range}(B^\top)\subseteq \mathrm{range}(C)$.
        
        \subsection{Gaussian integrals}
		We will use the following well-established formula for Gaussian integrals. The reader may refer to \cite[Appendix A]{Folland} for a more detailed discussion.
		\begin{proposition}\label{GaussianIntegrals}
			Let $M\in \Sym(d,\bC)\cap\GL(d,\bC)$ be such that $\Re(M)>0$. Then, for every $z\in\bC^d$,
			\begin{equation}\label{GaussInt}
				\int_{\rd}e^{-\pi Mx\cdot x}e^{-2\pi iz\cdot x}dx=\det(M)^{-1/2}e^{-\pi M^{-1}z\cdot z},
			\end{equation}
			where $\det(M)^{-1/2}$ is chosen so that $\det(M)^{-1/2}>0$ if $M$ is real. Moreover, if $\Re(M) \geq 0$, \eqref{GaussInt} still holds in the sense of distribution.
        \end{proposition}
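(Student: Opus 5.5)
The plan is to prove the identity first in the classical case $\Re(M)>0$, and then pass to $\Re(M)\geq0$ by a limiting argument in $\cS'$.

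\textbf{Step 1: $M$ real and positive-definite, $z\in\rd$.} Diagonalize $M=O^\top\Lambda O$ with $O$ orthogonal and change variables $x=O^\top y$. The integral then factors into $d$ one-dimensional integrals
\begin{equation}
\int_{\bR}e^{-\pi\lambda t^2}e^{-2\pi i w t}\,dt=\lambda^{-1/2}e^{-\pi w^2/\lambda}.
\end{equation}
Each of these follows from the Fourier invariance of $e^{-\pi t^2}$ and a dilation. Multiplying the factors gives $\det(M)^{-1/2}e^{-\pi M^{-1}z\cdot z}$ with the positive root.

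\textbf{Step 2: extension by analyticity.} Let $\Omega=\{M\in\Sym(d,\bC):\Re(M)>0\}$ and fix $z\in\bC^d$. Both sides of the identity are holomorphic in the entries of $M\in\Omega$ and in $z$.
\begin{itemize}
\item[(a)] For the left side, note $|e^{-\pi Mx\cdot x}e^{-2\pi iz\cdot x}|\le e^{-\pi\lambda_{\min}|x|^2+2\pi|\Im z||x|}$, where $\lambda_{\min}>0$ is the smallest eigenvalue of $\Re M$. This bound is locally uniform, so one may differentiate under the integral sign.
\item[(b)] For the right side, $\Re M>0$ implies that $M$ is invertible: if $Mv=0$, then $\Re(\bar v^\top Mv)=\Re(M)\bar v\cdot\bar v\cdot$-type positivity forces $v=0$. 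Hence $M^{-1}$ is holomorphic on $\Omega$.
\item[(c)] Since $\Omega$ is convex, it is simply connected and $\det M\neq0$ there. So $\det(M)^{-1/2}$ admits a holomorphic branch, fixed uniquely by requiring positivity on the real positive-definite matrices.
\end{itemize}
The two sides agree on the totally real submanifold $\{M \text{ real}, z\in\rd\}$. Since $\Omega$ is connected, the identity theorem gives equality on all of $\Omega\times\bC^d$.

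\textbf{Step 3: the boundary case $\Re(M)\geq0$.} Here the identity should be read in $\cS'$, as a Fourier transform in $z\in\rd$. Approximate $M$ by $M_\varepsilon=M+\varepsilon I$, which lies in $\Omega$.
\begin{itemize}
\item[(a)] On the left, $e^{-\pi M_\varepsilon x\cdot x}\to e^{-\pi Mx\cdot x}$ in $\cS'$ by dominated convergence, since these functions are bounded by $1$. Continuity of $\cF$ on $\cS'$ then gives convergence of the left-hand sides.
\item[(b)] On the right, with $M$ invertible as in the statement, $\det(M_\varepsilon)^{-1/2}\to\det(M)^{-1/2}$ along the chosen branch.
\item[(c)] Also $e^{-\pi M_\varepsilon^{-1}z\cdot z}\to e^{-\pi M^{-1}z\cdot z}$ pointwise, with $|e^{-\pi M_\varepsilon^{-1}z\cdot z}|\le1$. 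This bound holds because $\Re(M_\varepsilon^{-1})=M_\varepsilon^{-1*}\Re(M_\varepsilon)M_\varepsilon^{-1}\geq0$, after symmetrizing. Dominated convergence then gives $\cS'$ convergence.
\end{itemize}

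The main obstacle is the branch bookkeeping: one must make sure the holomorphic branch of $\det^{-1/2}$ chosen in Step 2 extends continuously to the boundary point $M$, and that the identity theorem is applied on a genuinely connected domain. Convexity of $\Omega$ handles both points.
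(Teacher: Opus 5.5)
Your proof is correct. It follows the standard route: the real positive-definite case by diagonalization, analytic continuation in $(M,z)$ over the convex domain $\{\Re(M)>0\}$, and then the regularization $M+\varepsilon I$ with $\cS'$-limits for $\Re(M)\geq0$. The paper gives no proof of its own and defers to Folland's \emph{Harmonic Analysis in Phase Space}, Appendix A, which takes essentially this route.

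Two points could be tightened. In Step 2(b) the intended computation is $\Re(\bar v^\top Mv)=\bar v^\top\Re(M)v>0$ for $v\neq0$. In Step 3 the boundary value $\det(M)^{-1/2}$ should be \emph{defined} as the limit of the branch along $M+\varepsilon I$. That limit exists because $\varepsilon\mapsto\det(M+\varepsilon I)$ is continuous and nonvanishing on $[0,1]$, not because of convexity of $\Omega$ alone.
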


	\subsection{The real and complex symplectic groups}\label{subsec:23}
    We denote by
		\begin{equation}\label{defJ}
			J_d=\begin{pmatrix}
				O_d & I_d\\
				-I_d & O_d
			\end{pmatrix}
		\end{equation}
		the matrix of the canonical symplectic form of $\rdd$, 
		\begin{equation}\label{defsigmaR}
			\sigma(z,w)=J_dz\cdot w, \qquad z,w\in\rdd.
		\end{equation}
		We shall omit the subscript $d$ whenever it shall not cause confusion. A matrix $U\in\bR^{2d\times2d}$ is {\em symplectic} if $U^\top JU=J$. We denote by $\Sp(d,\bR)$ the group of $2d\times 2d$ real symplectic matrices. If
		\begin{equation}\label{blockS}
			\begin{pmatrix}
				A & B\\
				C & D
			\end{pmatrix}, \qquad A,B,C,D\in\bR^{d\times d},
		\end{equation}
		is the expression of $U$ in terms of its $d\times d$ blocks, then $U\in \Sp(d,\bR)$ if and only if
		\begin{align}
			\label{symp-rel1}
			& A^\top C=C^\top A,\\
			\label{symp-rel2}
			& B^\top D = D^\top B,\\
			\label{symp-rel3}
			& A^\top D-C^\top B=I.
		\end{align} 
		Similarly, the complex symplectic group $\Sp(d,\bC)$ is the group of matrices $S\in\bC^{2d\times 2d}$ satisfying $S^\top JS=J$. Again, $S\in\Sp(d,\bC)$ if and only if $S$ has block decomposition \eqref{blockS} with $A,B,C,D\in\bC^{d\times d}$ satisfying \eqref{symp-rel1}, \eqref{symp-rel2} and \eqref{symp-rel3}. If $S\in\Sp(d,\bC)$, $\det(S)=1$, cf. \cite{mackey2003determinant}. 
		We stress that there is another definition of complex symplectic group in the literature, where $S^\top$ is replaced by the Hermitian $S^\ast=\overline{S^\top}$, see e.g. \cite{Folland}. However, when discussing spaces of functions defined on $\rd$, as we do, the definition with the transpose is more natural.

        We recall that every symplectic matrix $U\in\Sp(d,\bR)$ admits a singular values decomposition with symplectic factors. Specifically, there exist $V,W\in\Sp(d,\bR)$ orthogonal and $\delta\in\Sp(d,\bR)$ diagonal such that $U=W\delta V^\top$. The diagonal elements of $\delta$ are the singular values of $U$. Recall that if $\lambda$ is a singular value of $U$, then also $1/\lambda$ is a singular value of $U$. Observe that this also implies that $U$ and $U^{-1}$ have the same singular values, and that the largest singular value of $U$ is $\geq1$. We refer to \cite{serafini} for the details. Moreover, by imposing $V^{-1}=V^\top$, it is easy to see that $V\in\Sp(d,\bR)$ is orthogonal if and only if
        \begin{equation}\label{blockS-orth}
            V=\begin{pmatrix}
                A & B\\
                -B & A
            \end{pmatrix}, \qquad A,B\in\bR^{d\times d},\, A^\top B\in\Sym(d,\bR), \, A^\top A+B^\top B=I_d.
        \end{equation}

		With an abuse of notation, we shall denote by $\sigma$ the extension of the canonical symplectic form \eqref{defsigmaR} to $\bC^{2d}$, i.e., 
		\begin{equation}
			\sigma(z,w)=Jz\cdot w, \qquad z,w\in\bC^{2d}.
		\end{equation}
		If $S\in\Sp(d,\bC)$ has blocks \eqref{blockS}, then
		\begin{equation}\label{blockS-1}
			S^{-1}=\begin{pmatrix}
				D^\top & -B^\top\\
				-C^\top & A^\top
			\end{pmatrix}.
		\end{equation}
		For the purpose of this work we also introduce the symplectic matrix
		\begin{equation}\label{blockShash}
			S^\#=\begin{pmatrix}
				D^\ast & -B^\ast\\
				-C^\ast & A^\ast
			\end{pmatrix},
		\end{equation}
		which is also in $\Sp_+(d,\bC)$.
        
		Now, we discuss the generators of the real and the complex symplectic groups. For $E\in\GL(d,\bC)$ and $Q\in\Sym(d,\bC)$, we set
		\begin{equation}\label{defVQ}
			V_Q=\begin{pmatrix}
				I & O\\
				Q & I
			\end{pmatrix}
		\end{equation} 
		and
		\begin{equation}\label{defDE}
			\cD_E=\begin{pmatrix}
				E^{-1} & O\\
				O & E^\top
			\end{pmatrix}.
		\end{equation} 
		
		The symplectic group $\Sp(d,\bC)$ is path-connected \cite[Theorem 4.7]{mackey2003determinant}. Hence, the very same argument proving \cite[Proposition 4.9]{Folland} shows that $\Sp(d,\bC)$ is generated by the complex analogues of the real generators. Precisely,
		
		\begin{proposition}\label{prop-gen-Sp}
			Let $\mathbb{F}$ be either $\bR$ or $\bC$. The symplectic group $\Sp(d,\mathbb{F})$ is generated by $J$, and by the matrices in \eqref{defVQ} and \eqref{defDE} with $E\in\GL(d,\mathbb{F})$ and $Q\in\Sym(d,\mathbb{F})$.
		\end{proposition}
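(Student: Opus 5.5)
The plan is to follow the argument for \cite[Proposition 4.9]{Folland}, as the text suggests, and to treat the two cases $\mathbb{F}=\bR$ and $\mathbb{F}=\bC$ simultaneously. Let $G\subseteq\Sp(d,\mathbb{F})$ be the subgroup generated by $J$, the $V_Q$ with $Q\in\Sym(d,\mathbb{F})$, and the $\cD_E$ with $E\in\GL(d,\mathbb{F})$. First I check that these matrices lie in $\Sp(d,\mathbb{F})$, using the block relations \eqref{symp-rel1}--\eqref{symp-rel3}; this is immediate. Since $J^{-1}=-J=J^3$, $V_Q^{-1}=V_{-Q}$ and $\cD_E^{-1}=\cD_{E^{-1}}$, the group generated coincides with the semigroup generated. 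By conjugating with $J$ I also get the upper triangular matrices
\begin{equation}
J V_{-Q} J^{-1}=\begin{pmatrix} I & Q\\ O & I\end{pmatrix}\in G .
\end{equation}
It is cleanest to check this product directly rather than rely on the sign.

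The core step is a factorization on an open dense set. Take $S\in\Sp(d,\mathbb{F})$ with blocks \eqref{blockS} and suppose $A$ is invertible. Put $P=CA^{-1}$. By \eqref{symp-rel1}, $A^\top C=C^\top A$, hence $P^\top=A^{-\top}C^\top=A^{-\top}(A^\top C)A^{-1}=CA^{-1}=P$, so $P$ is symmetric. Then
\begin{equation}
V_{-P}S=\begin{pmatrix} A & B\\ O & D-PB\end{pmatrix},
\end{equation}
which is symplectic and block upper triangular. Relation \eqref{symp-rel3} forces $D-PB=A^{-\top}$, and \eqref{symp-rel2} (or symplecticity of the product) makes $A^{-1}B$ symmetric. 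This gives
\begin{equation}
V_{-P}S=\cD_{A^{-1}}\begin{pmatrix} I & A^{-1}B\\ O & I\end{pmatrix}.
\end{equation}
Consequently every $S$ with $\det A\neq0$ belongs to $G$.

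The main obstacle is removing the assumption $\det A\neq 0$. In general I would argue directly, without invoking connectedness. For arbitrary $S$, the first block column $\binom{A}{C}$ has rank $d$, because $S$ is invertible, and its columns span a Lagrangian subspace. Choose $Q\in\Sym(d,\mathbb{F})$ such that $A+QC$ is invertible; equivalently, multiply $S$ on the left by the upper triangular generator $\begin{pmatrix} I&Q\\O&I\end{pmatrix}$. Such a $Q$ exists by the standard Lagrangian transversality argument. Over $\bC$ an easier route is available: $\det(A+tC)$ is a polynomial in $t$ that is not identically zero, which I justify below. So $Q=tI$ works for all but finitely many $t$, and the same holds over $\bR$. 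The nonvanishing comes from the following observation. If $\det(A+tC)=0$ for every $t$, then the rank-$d$ pencil $\binom{A}{C}$ would have to be degenerate. To exclude this I would use the isotropy $A^\top C=C^\top A$: on $\ker C$ the matrix $A$ is injective, and one can choose $Q$ as the orthogonal projection onto a complement, exactly as in Folland's proof.

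With such a $Q$, the first case applies to $\begin{pmatrix} I&Q\\O&I\end{pmatrix}S$, which therefore lies in $G$. Multiplying by the inverse of the upper triangular generator then gives $S\in G$. Alternatively, path-connectedness \cite[Theorem 4.7]{mackey2003determinant} combined with the open-dense factorization shows that $G$ is an open subgroup of a connected group, hence the whole group. I would present the direct argument and keep the connectedness argument as a backup.
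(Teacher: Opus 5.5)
Your factorization on $\{\det A\neq 0\}$ is correct. Your backup argument is exactly the paper's proof: the paper combines path-connectedness of $\Sp(d,\bC)$ with Folland's argument, namely that $\{\det A\neq0\}$ is an open neighbourhood of $I$ contained in $G$, so $G$ is an open, hence closed, subgroup of a connected group.

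The gap is in the direct argument you intend to present as the main proof. The claim that $\det(A+tC)$ is not identically zero is false for $\mathbb{F}=\bC$, which is exactly the case you call easier. Let $u=(1,i)^\top$ and $\tilde u=\frac12(1,-i)^\top$, so that $u\cdot u=\tilde u\cdot\tilde u=0$ and $u\cdot\tilde u=1$. Consider
\[
S=\begin{pmatrix} 0 & -1 & -1/2 & 0\\ 0 & -i & i/2 & 0\\ 1 & 0 & 0 & -1/2\\ i & 0 & 0 & i/2\end{pmatrix},
\]
that is, $A=(0\mid -u)$, $C=(u\mid 0)$, $B=(-\tilde u\mid 0)$, $D=(0\mid -\tilde u)$. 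Then $A^\top C=O$, $B^\top D=O$ and $A^\top D-C^\top B=I$, so $S\in\Sp(2,\bC)$. Yet $A+tC=(tu\mid -u)$ has rank one for every $t\in\bC$, so no choice $Q=tI$ helps. Over $\bR$ your claim is true, but for a reason that does not survive complexification. For real $A,C$ one has $(A+iC)^\ast(A+iC)=A^\top A+C^\top C>0$. This uses $A^\top C=C^\top A$ together with the fact that the adjoint of a real matrix is its transpose. For complex blocks the cross term involves $A^\ast C$, about which the bilinear isotropy says nothing.

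Your sketched justification does not repair this. Over $\bR$ an explicit choice does work, but it is the orthogonal projection $Q$ onto $\mathrm{range}(A)^\perp=\ker A^\top$, not onto a complement of $\ker C$. Indeed, if $(A+QC)v=0$, then $Av=-QCv$ lies in $\mathrm{range}(A)\cap\mathrm{range}(A)^\perp$, so $Av=0$. Since $C$ maps $\ker A$ injectively into $\ker A^\top$ (by $A^\top C=C^\top A$ and the rank condition), it follows that $Cv=QCv=0$, whence $v=0$.

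Over $\bC$ this breaks down. A symmetric $Q$ only interacts with the bilinear annihilator $\ker A^\top$, which need not be a complement of $\mathrm{range}(A)$. In the example $\ker A^\top=\mathrm{range}(A)=\bC u$, so every symmetric $Q$ with $Qu=0$ gives $QC=O$. The Hermitian orthogonal projection is not complex symmetric, so it is not admissible either. A symmetric $Q$ with $A+QC$ invertible does exist (here $Q=\diag(1,0)$ gives $\det(A+QC)=-i$). In general, however, this is the complex Lagrangian transversality lemma, which you only assert.

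As written, your main argument does not prove the case $\mathbb{F}=\bC$. Either prove that lemma, or promote the connectedness argument, with \cite[Theorem 4.7]{mackey2003determinant}, to the actual proof, as the paper does.
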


        The blocks relations \eqref{symp-rel1}--\eqref{symp-rel3} defining $\Sp(d,\bR)$ conceal deeper structural properties of real symplectic matrices, that were studied in \cite{MR1906251}. We gather the ones we need in the following result.
        \begin{proposition}\label{propMO}
            Let $U\in\Sp(d,\bR)$ have blocks as in \eqref{blockS}. Then the maps $A:\ker(C)\to \mathrm{range}(C)^\perp$ and $B^\top:\mathrm{range}(A)^\perp\to\ker(A)$ are isomorphisms.
        \end{proposition}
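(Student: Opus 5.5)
Both statements come down to linear algebra on the blocks of $U$. The tools are the relations \eqref{symp-rel1}--\eqref{symp-rel3}, together with the analogous relations for $U^\top$. These hold because $\Sp(d,\bR)$ is closed under transposition, so $U^\top JU=J$ implies $UJU^\top=J$, which gives
\begin{equation}
AB^\top=BA^\top,\qquad CD^\top=DC^\top,\qquad AD^\top-BC^\top=I.
\end{equation}
Since $\dim\ker(C)=d-\mathrm{rank}(C)=\dim\mathrm{range}(C)^\perp$, and likewise $\dim\mathrm{range}(A)^\perp=\dim\ker(A)$, it is enough in each case to show that the map is well defined (it lands in the stated target) and injective.

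For $A:\ker(C)\to\mathrm{range}(C)^\perp$, take $x\in\ker(C)$ and any $y\in\rd$. By \eqref{symp-rel1},
\begin{equation}
Ax\cdot Cy=C^\top Ax\cdot y=A^\top Cx\cdot y=0,
\end{equation}
so $Ax\in\mathrm{range}(C)^\perp$. For injectivity, suppose $Ax=0$ and $Cx=0$. Then $Ux=0$, because the first column block of $U$ is $\binom{A}{C}$. Since $U$ is invertible, $x=0$. Equivalently, \eqref{symp-rel3} gives $x=(D^\top A-B^\top C)x=0$.

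For $B^\top:\mathrm{range}(A)^\perp\to\ker(A)$, take $y\perp\mathrm{range}(A)$, i.e. $A^\top y=0$. Using $AB^\top=BA^\top$ we get $AB^\top y=BA^\top y=0$, so $B^\top y\in\ker(A)$. For injectivity, suppose $A^\top y=0$ and $B^\top y=0$. Then $U^\top\binom{y}{0}=0$, and invertibility of $U^\top$ gives $y=0$. Equivalently, $AD^\top-BC^\top=I$ transposes to $DA^\top-CB^\top=I$, which gives $y=DA^\top y-CB^\top y=0$.

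No step is a real obstacle. The only point needing care is the second part, which uses the row relations of $U$ (those coming from $U^\top\in\Sp(d,\bR)$) rather than the column relations \eqref{symp-rel1}--\eqref{symp-rel3} themselves. I would justify this in one line: from $U^\top JU=J$ we get $U^{-1}=-JU^\top J$, hence $UJU^\top=J$.
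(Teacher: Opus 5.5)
Your proof is correct and complete. The column relations \eqref{symp-rel1} and \eqref{symp-rel3}, together with the row relations $AB^\top=BA^\top$ and $DA^\top-CB^\top=I$ that follow from $UJU^\top=J$, show that each map lands in its stated target and is injective. The dimension counts $\dim\ker(C)=\dim\mathrm{range}(C)^\perp$ and $\dim\mathrm{range}(A)^\perp=\dim\ker(A)$ then upgrade injectivity to bijectivity.

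The paper gives no proof of this proposition; it quotes it from \cite{MR1906251}, so there is no internal argument to compare yours with. Your version is self-contained and elementary. It also makes explicit that the surjectivity used later in Lemma \ref{structuralLemma} (the existence of $w\in\ker(C)$ with $Aw=v$, and the identity $\ker(A)=B^\top(\mathrm{range}(A)^\perp)$) comes purely from the dimension count combined with invertibility of $U$ and $U^\top$.
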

        
		\subsection{The metaplectic group}\label{sub.meta}
		For $x,\xi\in\rd$ and $\tau\in\bR$, we consider the operator
		\begin{equation}\label{intertMetapDef}
			\rho(x,\xi;\tau)f(t)=e^{2\pi i\tau}e^{-i\pi x\cdot \xi}e^{2\pi i\xi\cdot t}f(t-x), \qquad f\in L^2(\rd).
		\end{equation}
		Then, $\rho$ is a unitary irreducible representation of the Heisenberg group on $L^2(\rd)$, and it is called the {\em Schrödinger representation}. For every $U\in\Sp(d,\bR)$, the representation $\rho_U(x,\xi;\tau)=\rho(U(x,\xi);\tau)$ is another irreducible representation of the Heisenberg group, with $\rho_U(0;\tau)=e^{2\pi i\tau}\id_{L^2}$. By Stone-von Neumann theorem, $\rho$ and $\rho_U$ are unitarily equivalent, i.e., there exists $\widehat U:L^2(\rd)\to L^2(\rd)$ unitary so that
		\begin{equation}\label{defMetap}
			\widehat U\circ\rho\circ \widehat U^{-1}=\rho_U.
		\end{equation} 
		The operator $\widehat U$ is unitary on $L^2(\rd)$ and it is called {\em metaplectic operator}. For any fixed $U\in\Sp(d,\bR)$, the operator $\widehat U$ satisfying \eqref{defMetap} is never unique, but if $\widehat U'$ is another such operator, then there exists $c\in\bC$, $|c|=1$ ({\em phase factor}), such that $\widehat U'=c\widehat U$. The group $\{\widehat U:U\in\Sp(d,\bR)\}$ has a subgroup containing exactly two operators for each symplectic matrix. This subgroup is called {\em metaplectic group}, and it is denoted by $\Mp(d,\bR)$. The cover $\pi^{Mp}:\widehat U\in\Mp(d,\bR)\mapsto U\in\Sp(d,\bR)$ is a group homomorphism with kernel $\{\pm\id_{L^2}\}$. For this reason, Proposition \ref{prop-gen-Sp} reflects on $\Mp(d,\bR)$ as follows.
		
		\begin{proposition}\label{propMPgen}
			The metaplectic group $\Mp(d,\bR)$ is generated by
			\begin{equation}\label{not.FT}
				\cF f(\xi)=i^{-d/2}\int_{\rd}f(x)e^{-2\pi i\xi \cdot x}dx=i^{-d/2}\widehat f(\xi), \qquad f\in L^1(\rd), \quad \xi\in\rd,
			\end{equation}
			and by the families of operators
			\begin{align}
				\label{defpQ}
				& \frp_Q f(x)=\Phi_Q(x)f(x), \qquad Q\in\Sym(d,\bR),\; f\in L^2(\rd),\\
				\label{defTE}
				& \frT_E f(x)= i^m |\det(E)|^{1/2}f(Ex), \qquad E\in\GL(d,\bR),\; f\in L^2(\rd),
			\end{align}
			where 
			\begin{equation}\label{chirp}
			\Phi_Q(x)=e^{i\pi Qx\cdot x}, \qquad x\in\rd,
		\end{equation}
			and $m\in\bZ$ is the argument of $\det(E)$ ({\em Maslov index}). Moreover, $\pi^{Mp}(\cF)=J$, $\pi^{Mp}(\frp_Q)=V_Q$ and $\pi^{Mp}(\frT_E)=\cD_E$, being these matrices defined as in \eqref{defJ}, \eqref{defVQ} and \eqref{defDE}, respectively.
		\end{proposition}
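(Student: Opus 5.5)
The plan is to check the intertwining relation \eqref{defMetap} directly for each generator, and then transfer the generation result of Proposition \ref{prop-gen-Sp} (with $\mathbb F=\bR$) through the covering homomorphism $\pi^{Mp}$. Throughout I write $\rho(z)$ for $\rho(z;0)$; the central variable $\tau$ is harmless.

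\textbf{Step 1: projections of the generators.} For each of the three families I will verify $\widehat U\rho(z)\widehat U^{-1}=\rho(Uz)$ for the claimed matrix, using $\rho(x,\xi)=e^{-i\pi x\cdot\xi}M_\xi T_x$.
\begin{enumerate}[(i)]
\item For $\cF$, the rules $\cF T_x=M_{-x}\cF$ and $\cF M_\xi=T_\xi\cF$ give $\cF\rho(x,\xi)\cF^{-1}=e^{-i\pi x\cdot\xi}T_\xi M_{-x}$. Commuting $T_\xi$ past $M_{-x}$ produces exactly the phase that turns this into $\rho(\xi,-x)=\rho(J(x,\xi))$.
\item For $\frp_Q$, the operator commutes with $M_\xi$. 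Moreover $\Phi_Q(t)/\Phi_Q(t-x)=e^{2\pi i Qx\cdot t}e^{-i\pi Qx\cdot x}$, so $\frp_Q T_x\frp_Q^{-1}$ equals $T_x$ followed by modulation by $Qx$, times a phase. Collecting the phases should give $\rho(x,\xi+Qx)=\rho(V_Q(x,\xi))$.
\item For $\frT_E$, I use $\frT_E T_x=T_{E^{-1}x}\frT_E$ and $\frT_E M_\xi=M_{E^\top\xi}\frT_E$, together with $E^{-1}x\cdot E^\top\xi=x\cdot\xi$. This gives $\rho(\cD_E(x,\xi))$.
\end{enumerate}
All three operators are unitary on $L^2(\rd)$ (Plancherel, $|\Phi_Q|=1$, change of variables), so they are metaplectic operators in the sense of \eqref{defMetap}, with projections $J$, $V_Q$ and $\cD_E$.

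\textbf{Step 2: generation.} Let $G$ be the group generated by $\cF$, $\frp_Q$ and $\frT_E$. The map sending an operator to its matrix is a homomorphism, because the intertwining relations compose. By Step 1 and Proposition \ref{prop-gen-Sp}, the image of $G$ is all of $\Sp(d,\bR)$. If $\widehat U,\widehat U'\in G$ have the same projection, then $\widehat U^{-1}\widehat U'$ commutes with every $\rho(z)$. By irreducibility (Schur's lemma) it is a scalar of modulus one. Hence $G$ is an extension of $\Sp(d,\bR)$ by the group of scalars in $G$. The statement therefore reduces to showing that $G\cap\{c\,\id:|c|=1\}=\{\pm\id\}$, so that $G$ is exactly the two-fold cover $\Mp(d,\bR)$ and $\pi^{Mp}$ has kernel $\{\pm\id\}$.

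\textbf{Step 3: the scalar part (main obstacle).} Two facts are needed.
\begin{enumerate}[(a)]
\item $-\id\in G$. Using the normalization $i^{-d/2}$ one gets $\cF^2=i^{-d}\,\frT_{-I}$ up to the Maslov factor. Choosing that factor appropriately yields $\cF^4=\id$, and a suitable product of generators yields $-\id$, for instance $\frT_E$ with $\det E<0$ taken with the two admissible values of the Maslov index $m$.
\item No other scalar occurs in $G$. This is the delicate point, since a priori the chosen phases could generate a larger subgroup of scalars. My first attempt is to identify $G$ with the standard construction in \cite[Chapter 4]{Folland}. There $\Mp(d,\bR)$ is realized by operators with explicit quadratic-phase kernels whose square-root factors carry exactly these Maslov normalizations, and it is shown that the quadratic Fourier transforms are closed under composition up to the sign $\pm1$. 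Since every generator above is such a quadratic Fourier transform, or a composition of them with the correct determinant branch, $G$ lies in that double cover, and its scalars lie in $\{\pm\id\}$.
\end{enumerate}
As a fallback argument: the families $\{\frp_Q\}$ and $\{\frT_E:\det E>0\}$ are continuous and pass through the identity. The scalar part of the group they generate together with $\cF$ is then controlled by the fundamental group of $\Sp(d,\bR)$, which is $\bZ$, and the normalizations select precisely the connected two-fold quotient. I expect making this phase bookkeeping rigorous to be the only nontrivial step; Steps 1 and 2 are routine.
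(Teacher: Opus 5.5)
Your argument follows the paper's route. The paper gives no separate proof of this proposition: it notes that $\pi^{Mp}$ is a homomorphism onto $\Sp(d,\bR)$ with kernel $\{\pm\id_{L^2}\}$, so Proposition \ref{prop-gen-Sp} lifts. The fact that these specifically normalized operators lie in $\Mp(d,\bR)$ is taken from the standard literature (e.g.\ \cite{Folland}). That is what your Steps 1--2 together with the citation in Step 3(b) amount to, and your computation of the projections in Step 1 is correct.

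There are two points to fix.

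\textbf{The claim $\cF^4=\id$ in 3(a).} The operator $\cF$ has no free phase to choose. One has $\cF^2f(x)=i^{-d}f(-x)$, hence $\cF^4=(-1)^d\id$, so $\cF^4=\id$ is false for odd $d$. This is harmless, because $-\id\in G$ anyway, for instance as $\frT_I$ with $m=2$.

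\textbf{The topological fallback in 3(b).} This is not an argument. The map $Q\mapsto e^{i\,\mathrm{tr}\,Q}\frp_Q$ is an equally continuous homomorphism through $\id$ lifting $V_Q$, yet it leaves $\Mp(d,\bR)$. Continuity also says nothing about the single operator $\cF$; what decides membership is an algebraic relation.

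If you want to avoid the citation, use the homomorphism $c\widehat U\mapsto c^2$ (with $\widehat U\in\Mp(d,\bR)$, $|c|=1$) on the group of all unitary intertwiners. Its kernel is exactly $\Mp(d,\bR)$, so a generator belongs to $\Mp(d,\bR)$ precisely when this homomorphism equals $1$ on it.
\begin{enumerate}[(1)]
\item It kills commutators, and $\frp_Q=\frT_{\sqrt2 I}\frp_Q\frT_{\sqrt2 I}^{-1}\frp_Q^{-1}$ is one, since $\frT_{\sqrt2 I}\frp_Q\frT_{\sqrt2 I}^{-1}=\frp_{2Q}$.
\item On $\cF$ it is fixed by one Gaussian computation. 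Tracking $e^{-\pi x^2}$ through three steps gives Weil's relation $(\cF\frp_{-I})^3=(-1)^d\id$. Together with $\cF^4=(-1)^d\id$, the value on $\cF$ is both a cube root and a fourth root of unity, hence equals $1$.
\item The operators $\frT_E$ can be treated the same way.
\end{enumerate}
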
 
        When it does not cause confusion, we shall omit the phase factors in \eqref{not.FT} and \eqref{defTE}. 
        
		The operator $\cF:f\in L^2(\rd)\mapsto\widehat f\in L^2(\rd)$ is, up to the phase factor $i^{-d/2}$, the {\em Fourier transform} operator \eqref{hatf}. Real metaplectic operators restrict to homeomorphisms of $\cS(\rd)$ and extend to homeomorphisms of $\cS'(\rd)$ as follows:
		\begin{equation}
			\la \widehat Uf,g\ra = \la f,\widehat U^{-1}g\ra, \qquad f\in\cS'(\rd), \; g\in\cS(\rd).
		\end{equation}

        We will use the following result, see \cite[Proposition 105]{MauriceDeGosson} with $\hbar=1/2\pi$.
        \begin{lemma}\label{lemmaUGauss}
            Let $\widehat U\in\Mp(d,\bR)$ with orthogonal projection as in \eqref{blockS-orth}. Let $\varphi(x)=e^{-\pi |x|^2}$. Then,
            \begin{equation}
                \widehat U\f(x)=e^{2\pi i\tau}\f(x)
            \end{equation}
            for some $\tau\in\bR$.
        \end{lemma}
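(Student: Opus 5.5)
We plan to use the symplectic covariance of the Wigner distribution \eqref{intro.WigCovarianceProp} together with the fact that the Wigner distribution determines a function up to a unimodular constant. No explicit integral representation of $\widehat U$ is needed.

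First, we compute $W\f$ for $\f(x)=e^{-\pi|x|^2}$. A direct Gaussian integral (Proposition \ref{GaussianIntegrals} with $M=I$) gives
\begin{equation}
W\f(x,\xi)=2^{d/2}e^{-2\pi(|x|^2+|\xi|^2)}=2^{d/2}e^{-2\pi|z|^2}, \qquad z=(x,\xi)\in\rdd.
\end{equation}
Thus $W\f$ depends only on $|z|$ and is invariant under every orthogonal transformation of $\rdd$. Since $U$, and hence $U^{-1}=U^\top$, is orthogonal, \eqref{intro.WigCovarianceProp} yields
\begin{equation}
W(\widehat U\f)(z)=W\f(U^{-1}z)=W\f(z), \qquad z\in\rdd.
\end{equation}

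Second, we show that $Wf=Wg$ with $f,g\in L^2(\rd)$ forces $g=cf$ for some $c\in\bC$ with $|c|=1$. By \eqref{intro.Wig}, $Wf=\cF_2\frT_{E_w}(f\otimes\overline f)$, and $\cF_2\frT_{E_w}$ is unitary, hence injective, on $L^2(\rdd)$. Therefore $f(x)\overline{f(y)}=g(x)\overline{g(y)}$ for a.e. $(x,y)$. Fix $y_0$ with $f(y_0)\neq0$, which is possible after modifying on null sets and using Fubini. Then $g(x)=cf(x)$ with $c=\overline{g(y_0)}/\overline{f(y_0)}$. Setting $x=y$ in the identity gives $|f|^2=|g|^2$, so $|c|=1$. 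Writing $c=e^{2\pi i\tau}$ and taking $f=\f$, $g=\widehat U\f$ gives the claim.

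The only step requiring care is this injectivity-up-to-phase argument, specifically handling the a.e. identity of the tensor products. It can be done cleanly by pairing both sides against $h\otimes\overline{k}$ with $h,k\in\cS(\rd)$: this gives $\la f,k\ra^{-}\la f,h\ra=\la g,k\ra^{-}\la g,h\ra$, i.e. $\overline{\la f,k\ra}\,\la f,h\ra=\overline{\la g,k\ra}\,\la g,h\ra$. Choosing $k=\f$, for which $\la f,\f\ra=\|\f\|_2^2\neq0$, gives $\la g,h\ra=c\la f,h\ra$ for all $h\in\cS(\rd)$ and a constant $c$ independent of $h$, hence $g=cf$.
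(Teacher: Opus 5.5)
Your proof is correct, but it is not the paper's route. The paper gives no argument of its own and simply quotes \cite[Proposition 105]{MauriceDeGosson}, which describes explicitly how metaplectic operators act on Gaussians in terms of the blocks of the projection. In the present conventions, $\widehat U$ sends $e^{i\pi Zx\cdot x}$, $\Im(Z)>0$, to a multiple of $e^{i\pi (C+DZ)(A+BZ)^{-1}x\cdot x}$. For $Z=iI_d$ and $U$ as in \eqref{blockS-orth} one gets $(-B+iA)(A+iB)^{-1}=iI_d$, and unitarity makes the constant unimodular.

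You avoid any formula for $\widehat U$. Covariance \eqref{intro.WigCovarianceProp} together with the radial symmetry of $W\f=2^{d/2}e^{-2\pi|z|^2}$ gives $W(\widehat U\f)=W\f$, and the phase-retrieval property of $W$ finishes. Your argument is coordinate-free and yields the converse at no cost: if $\widehat U\f\in\bC\f$, then $W\f(U^{-1}z)=W\f(z)$, i.e.\ $|U^{-1}z|=|z|$, so $U$ is orthogonal. The explicit formula instead describes $\widehat U\f$ for \emph{every} $U\in\Sp(d,\bR)$, which the paper exploits later when it expresses $|\la \widehat Ug,g\ra|$ through the Cayley transform of $U$.

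Two small points in your last step need fixing:
\begin{itemize}
\item Before dividing by $\overline{\la g,\f\ra}$, note that the choice $h=k=\f$ gives $|\la g,\f\ra|=\norm{\f}_2^2\neq0$.
\item $|c|=1$ follows from that same choice, or simply from $\norm{\widehat U\f}_2=\norm{\f}_2$. ``Setting $x=y$'' in an a.e.\ identity is not legitimate as stated, although here both functions are Schwartz and hence continuous.
\end{itemize}

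You can in fact bypass the phase-retrieval step altogether. Moyal's identity gives $|\la \widehat U\f,\f\ra|^2=\la W(\widehat U\f),W\f\ra=\norm{W\f}_2^2=\norm{\f}_2^4$, and equality in Cauchy--Schwarz forces $\widehat U\f\in\bC\f$.
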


		\subsection{(Classical) metaplectic Wigner distributions}
		Metaplectic Wigner distributions were defined in \cite{Cordero_Part_I} as in Definition \ref{intro.def11}. For the general theory, we address \cite{giacchi2025metaplecticTF} and the literature therein.
		For the $\cU$-Wigner distribution $W_\cU$, the following continuity properties follow from those of metaplectic operators.
		\begin{proposition}
			\begin{enumerate}[(i)]
				\item $W_\cU:\cS(\rd)\times\cS(\rd)\to\cS(\rdd)$ is continuous.
				\item For every $f_1,f_2,g_1,g_2\in L^2(\rd)$, Moyal's identity holds
				\begin{equation}\label{Moyal}
					\la W_\cU(f_1,g_1),W_\cU(f_2,g_2)\ra = \la f_1,f_2\ra\overline{\la g_1,g_2\ra}.
				\end{equation}
				In particular, $\norm{W_\cU(f,g)}_2=\norm{f}_2\norm{g}_2$ for every $f,g\in L^2(\rd)$.
				\item $W_\cU:\cS'(\rd)\times\cS'(\rd)\to\cS'(\rdd)$ is continuous.
			\end{enumerate}
		\end{proposition}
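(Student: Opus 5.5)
The three claims follow from the factorization $W_\cU(f,g)=\widehat\cU(f\otimes\overline g)$ together with the mapping properties of $\widehat\cU\in\Mp(2d,\bR)$. All the analytic content sits in the metaplectic operator. The tensor product map contributes only elementary continuity and an $L^2$ identity.

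\emph{Item (ii).} I would prove this first, since it is the cleanest. Since $\widehat\cU$ is unitary on $L^2(\rdd)$,
\[
\la W_\cU(f_1,g_1),W_\cU(f_2,g_2)\ra=\la f_1\otimes\overline{g_1},f_2\otimes\overline{g_2}\ra_{L^2(\rdd)}.
\]
By Fubini, the right-hand side factors as $\la f_1,f_2\ra\la \overline{g_1},\overline{g_2}\ra=\la f_1,f_2\ra\overline{\la g_1,g_2\ra}$. Taking $f_1=f_2=f$ and $g_1=g_2=g$ gives the norm identity.

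\emph{Item (i).} The bilinear map $(f,g)\mapsto f\otimes\overline g$ is continuous from $\cS(\rd)\times\cS(\rd)$ to $\cS(\rdd)$. This is checked directly on the Schwartz seminorms: each seminorm of $f\otimes\overline g$ is bounded by the product of a seminorm of $f$ and a seminorm of $g$, since $\partial^\alpha_x\partial^\beta_y$ and polynomial weights split across the two variables. Since $\widehat\cU$ restricts to a homeomorphism of $\cS(\rdd)$, continuity of the composition follows. An alternative route is to verify continuity on the generators $\cF$, $\frp_Q$, $\frT_E$ of Proposition \ref{propMPgen} and then compose.

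\emph{Item (iii).} The tensor product extends to a separately continuous bilinear map $\cS'(\rd)\times\cS'(\rd)\to\cS'(\rdd)$, and by the kernel theorem / nuclearity of $\cS$ it is in fact hypocontinuous. Complex conjugation is continuous on $\cS'$. The extension of $\widehat\cU$ to $\cS'(\rdd)$, defined by $\la\widehat\cU F,G\ra=\la F,\widehat\cU^{-1}G\ra$, is continuous because $\widehat\cU^{-1}$ is continuous on $\cS$.

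The main obstacle is the precise topological meaning of ``continuous'' on $\cS'\times\cS'$: joint continuity fails in general for the strong topology, so the statement should be read as separate continuity or hypocontinuity. I would prove the statement in that form, which is the standard convention for such time-frequency representations.
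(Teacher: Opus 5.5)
Your argument is the one the paper intends. The paper gives no separate proof and only notes that these properties follow from $W_\cU(f,g)=\widehat\cU(f\otimes\overline g)$ and the mapping properties of $\widehat\cU$ on $L^2(\rdd)$, $\cS(\rdd)$ and $\cS'(\rdd)$. Items (i) and (ii) are correct as you wrote them.

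The one thing to fix is your caveat in (iii). You do not need to weaken the statement: with the strong topology on $\cS'$, the map $(S,T)\mapsto S\otimes T$ from $\cS'(\rd)\times\cS'(\rd)$ to $\cS'(\rdd)$ is jointly continuous.

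The nuclearity you already invoke gives exactly this, not merely hypocontinuity. Since $\cS(\rd)$ is a nuclear Fr\'echet space and $\cS(\rdd)\cong\cS(\rd)\hat\otimes\cS(\rd)$, passing to strong duals gives a topological isomorphism $\cS'(\rdd)\cong\cS'(\rd)\hat\otimes_\pi\cS'(\rd)$. Under this isomorphism, $S\otimes T$ is the canonical bilinear map into the projective tensor product, which is continuous by the definition of the projective topology.

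Alternatively, $\cS'(\rd)$ is a barrelled (DF)-space, and Grothendieck's theorem says a hypocontinuous bilinear map on a product of (DF)-spaces is continuous. So your separately continuous map is automatically jointly continuous.

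To conclude, compose with complex conjugation, which is a real-linear homeomorphism of $\cS'(\rd)$. Then compose with $\widehat\cU$, which is continuous on $\cS'(\rdd)$ as the adjoint of a continuous map of $\cS(\rdd)$. This gives joint continuity of $W_\cU$.

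Joint continuity does fail for the weak-$*$ topology, and for pairings such as $\cS'\times\cS\to\bC$. It does not fail for this map with the strong topology.
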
 
		
		The main properties of a metaplectic Wigner distribution can be related to the structure of the projection of $\cU$, written in terms of its $d\times d$ blocks as
		\begin{equation}\label{blockA2}
			\begin{pmatrix}
				U_{11} & U_{12} & U_{13} & U_{14}\\
				U_{21} & U_{22} & U_{23} & U_{24}\\
				U_{31} & U_{32} & U_{33} & U_{34}\\
				U_{41} & U_{42} & U_{43} & U_{44}
			\end{pmatrix}, \qquad U_{i,j}\in\bR^{d\times d}, \quad i,j=1,\ldots,4.
		\end{equation}
        A systematic treatment of metaplectic Wigner distributions from this perspective was initiated in \cite{CR02}. We will discuss the other properties and relative references in Section \ref{sec.apptimefreq}.

        \subsection{Pseudodifferential operators} If $f,g\in\cS(\rd)$, then $W(f,g)\in\cS(\rdd)$. If $a\in\cS'(\rdd)$, we may consider the {\em Weyl-quantized pseudodifferential operator} (or, {\em Weyl operator}) with {\em symbol} $a$, that is the linear operator continuous from $\mathcal{S}(\R^d)$ to $\mathcal{S}'(\R^d)$ defined by
\begin{equation}\label{defOpwSp}
\langle \mathrm{Op}^\w(a)f,g \rangle=\langle a, W(g,f) \rangle, \quad f,g \in \mathcal{S}(\R^d).
\end{equation}

Here, we focus on symbols without decay, belonging to so-called $S_{0,0}^0$-H\"ormander class, for which we recall the definition here. 
\begin{definition}
Let $a\in C^\infty(\mathbb{R}^{2d})$. We say that {\em $a$ is a symbol of order $0$ without decay}, and write $a \in S_{0,0}^0(\mathbb{R}^{2d})$, if for each $\alpha \in \mathbb{N}_0^{2d}$ there exists a constant $C_\alpha>0$ such that 
\[
\abs{\partial_z^\alpha a(z)} \leq C_\alpha, \quad \forall z \in \mathbb{R}^{2d}. 
\]
\end{definition}

If $a\in S^0_{0,0}(\rdd)$, formula \eqref{defOpwSp} can be restated as the oscillatory integral

\begin{equation}
	\Op^{\mathrm{w}}(a)f(x)=\int_{\rdd}e^{2\pi i(x-y)\xi}a\Big(\frac{x+y}{2},\xi\Big)f(y)dyd\xi, \qquad f\in\cS(\rd).
\end{equation}
    The Weyl symbol of $\Op^\w(a)$ is related to its Schwartz kernel as follows. Recall that the Schwartz kernel of $\Op^\w(a)$ is the unique tempered distribution $k\in\cS'(\rdd)$ such that
    \begin{equation}\label{defkappa}
        \la \Op^\w(a)f,g\ra=\la k,g\otimes\bar f\ra, \qquad f,g\in\cS(\rd).
    \end{equation}
    Then,
    \begin{equation}\label{relak}
        a(x,\xi)=\int_{\rd} k(x+y/2,x-y/2)e^{-2\pi iy\cdot\xi}dy,
    \end{equation}
    where the integral is interpreted in a distributional sense, see \cite[Chapter 4]{CorderoBook} for the details.

We conclude this subsection by recalling the metaplectic invariance of Weyl H\"ormander calculus, stated as follows (see \cite[Theorem 2.1.1]{lernermetrics}).
\begin{theorem}\label{theo.metaweyl}
Let $\widehat{U} \in \Mp(d,\mathbb{R})$ with projection $U \in \Sp(d,\mathbb{R})$ and let $a \in \mathcal{S}'(\R^{2d})$. Then
\begin{equation}\label{intertOpwMetap}
	\widehat U^{-1}\Op^{\mathrm{w}}(a)\widehat U=\Op^{\mathrm{w}}(a\circ U).
\end{equation}
\end{theorem}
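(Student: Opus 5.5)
The statement is Theorem \ref{theo.metaweyl}: for $\widehat U\in\Mp(d,\bR)$ with projection $U$ and $a\in\cS'(\rdd)$, we want $\widehat U^{-1}\Op^\w(a)\widehat U=\Op^\w(a\circ U)$. The plan is to use the weak definition \eqref{defOpwSp} of Weyl operators together with the symplectic covariance \eqref{intro.WigCovarianceProp} of the cross-Wigner distribution. This turns the operator identity into a change of variables on phase space.

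Fix $f,g\in\cS(\rd)$. Real metaplectic operators are unitary and restrict to homeomorphisms of $\cS(\rd)$, so both $\widehat Uf$ and $\widehat Ug$ lie in $\cS(\rd)$. Hence
\begin{equation}
\la \widehat U^{-1}\Op^\w(a)\widehat Uf,g\ra=\la \Op^\w(a)\widehat Uf,\widehat Ug\ra=\la a,W(\widehat Ug,\widehat Uf)\ra .
\end{equation}
By \eqref{intro.WigCovarianceProp}, $W(\widehat Ug,\widehat Uf)=W(g,f)\circ U^{-1}$. We then change variables $z=Uw$ in the pairing. Since $\det U=1$, no Jacobian appears, and we get
\begin{equation}
\la a,W(g,f)\circ U^{-1}\ra=\la a\circ U,W(g,f)\ra=\la \Op^\w(a\circ U)f,g\ra .
\end{equation}
Because $f,g$ are arbitrary, the two operators agree as maps $\cS(\rd)\to\cS'(\rd)$.

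The only delicate step is justifying the change of variables for a tempered distribution $a$. We define $a\circ U$ by $\la a\circ U,\Phi\ra=\la a,\Phi\circ U^{-1}\ra$, which is consistent with the integral formula for $a\in\cS(\rdd)$ because $|\det U|=1$. This is well defined because $\Phi\mapsto\Phi\circ U^{-1}$ is a topological isomorphism of $\cS(\rdd)$. The identity for general $a$ then follows either directly from this definition or by density of $\cS$ in $\cS'$, since both sides depend continuously on $a$.

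If one instead wants to prove the covariance \eqref{intro.WigCovarianceProp} from scratch rather than quote it, it suffices to check it on the generators of Proposition \ref{propMPgen}. For $\frT_E$ and $\frp_Q$ it is a direct substitution in \eqref{intro.defW}. The Fourier transform is the main computation: using $W(\hat f,\hat g)(x,\xi)=W(f,g)(-\xi,x)$, the result then follows from the homomorphism property of $\pi^{Mp}$.
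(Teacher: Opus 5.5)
Your proof is correct. The paper gives no argument of its own for this statement; it quotes it from Lerner's monograph (Theorem 2.1.1 there), so there is nothing in the paper to match step by step.

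Your route is the standard duality argument, and it is self-contained within the paper's setup. You use only three facts the paper already states:
\begin{itemize}
\item the weak definition \eqref{defOpwSp};
\item the extension of real metaplectic operators to $\cS'(\rd)$ by duality, which is also what gives the left-hand side of \eqref{intertOpwMetap} a meaning for $a\in\cS'(\rdd)$, so proving equality as maps $\cS(\rd)\to\cS'(\rd)$ is exactly what is required;
\item the symplectic covariance \eqref{intro.WigCovarianceProp}.
\end{itemize}
The only analytic point is defining $a\circ U$ for distributional $a$. You handle it correctly via $|\det U|=1$, and no density argument is needed. Your conventions are also consistent with the paper's: $J^{-1}(x,\xi)=(-\xi,x)$ matches $W(\widehat f,\widehat g)(x,\xi)=W(f,g)(-\xi,x)$.

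Standard textbook proofs of this covariance often go differently. One way is to check the identity on the generators $\cF$, $\frp_Q$, $\frT_E$. Another is to write $\Op^\w(a)$ as a weakly defined superposition of the Heisenberg operators $\rho(z)$ (or of phase-space symmetries) and conjugate term by term using \eqref{defMetap}. These routes do not invoke Wigner covariance as a separate ingredient. The cost is either three explicit computations, or an operator-valued integral that, for distributional symbols, must again be read in the weak sense. Since Wigner covariance is itself a consequence of \eqref{defMetap}, all routes rest on the same Stone--von Neumann intertwining. Yours packages it most economically, turning the operator identity into a measure-preserving change of variables on phase space.
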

        \subsection{Modulation spaces}
        Modulation spaces were defined by Feichtinger in his celebrated work \cite{Feichtinger}. For $F:\rdd\to\bC$ measurable and $1\leq p,q\leq\infty$, we consider the mixed-Lebesgue norm
        \begin{equation}
            \norm{F}_{L^{p,q}}=\norm{y\mapsto\norm{F(\cdot,y)}_p}_q.
        \end{equation}
        Fix a {\em window} $g\in\cS(\rd)\setminus\{0\}$. Then, for every $f\in\cS'(\rd)$, the STFT $V_gf$ in \eqref{intro.defSTFT} defines a continuous function on $\rdd$. For $s\geq0$, we set $v_s(x,\xi)=(1+|(x,\xi)|^2)^{s/2}$, $x,\xi\in\rd$. The modulation space $M^{p,q}_{v_s}(\rd)$ is the space of tempered distribution $f\in\cS'(\rd)$ such that
        \begin{equation}\label{ModSpaceNorm}
            \norm{f}_{M^{p,q}_{v_s}}=\norm{V_gf}_{L^{p,q}_{v_s}}:=\norm{v_sV_gf}_{L^{p,q}}<\infty.
        \end{equation}
        Modulation spaces are Banach spaces and different choices of the window $g$ yield equivalent norms. For every $1\leq p,q\leq1$ and $s\geq0$,
        \begin{equation}
            \cS(\rd)\hookrightarrow M^{p,q}_{v_s}(\rd)\hookrightarrow\cS'(\rd).
        \end{equation}
        We abbreviate $M^{p,p}_{v_s}(\rd)=M^p_{v_s}(\rd)$ and $M^{p,q}_{v_0}(\rd)=M^{p,q}(\rd)$.

        \begin{remark}\label{Shubinrem}
        $M^2(\rd)=L^2(\rd)$ with equivalence of norms. Moreover, for $s\in\bR$, the modulation space $M^2_{v_s}(\R^d)$ coincides with the Shubin-Sobolev space which is defined as 
        \begin{equation}
            Q_s(\rd)=\Big\{f\in \cS'(\rd) ; \ \Lambda^sf \in L^2(\R^d) \Big\}, \qquad s\in\bR,
        \end{equation}
        with norm, 
        \[
        \|u\|_{Q_s}:=\|\Lambda^su \|_{2}, \quad u \in Q_s(\R^d),
        \]
        for $\Lambda^s=(1+|x|^2-\Delta)^{s/2}$ defined through the functional calculus, see \cite[Section 4.4.3.1] {CorderoBook} and \cite{Helffer}.
        More simplicity, for $k \in \mathbb{N}$, the Shubin-Sobolev space of order $k$ can be equivalently defined as  
        \[
        Q_k(\mathbb{R}^d)=\lbrace u \in L^2(\mathbb{R}^d); \quad x^\alpha \partial_x^\beta u \in L^2(\mathbb{R}^d), \ \forall \abs{\alpha}+\abs{\beta}\leq k \rbrace
        \]
        and
        \[
        \|u\|_{Q_k}=\Bigl(\sum_{\abs{\alpha}+\abs{\beta} \leq k}\|x^\alpha \partial_x^\beta u\|^2_{2}\Bigr)^2.
        \]
        \end{remark}
        
        For the general theory of modulation spaces, we refer to \cite{CorderoBook}. Among modulation spaces, we mention weighted Sj\"ostrand classes $M^{\infty,1}_{1\otimes v_s}(\rd)$. Their intersection is the H\"ormander class of symbols of order 0, i.e.,
        \begin{equation}
            S^0_{0,0}(\rdd)=\bigcap_{s\geq0}M^{\infty,1}_{1\otimes v_s}(\rd).
        \end{equation}
        Importantly, pseudodifferential operators with symbols in $S_{0,0}^0(\rdd)$, preserve the time-frequency concentration of signals, measured by the modulation spaces' norms.
        \begin{theorem}\label{thm20}
            Let $a\in S_{0,0}^0(\rdd)$. Then, $\Op^\w(a):M^{p,q}_{v_s}(\rd)\to M^{p,q}_{v_s}(\rd)$ for every $1\leq p,q\leq\infty$ and $s\geq0$, with 
            \begin{equation}
                \norm{\Op^\w(a)f}_{M^{p,q}_{v_s}}\lesssim \norm{a}_{M^{\infty,1}_{1\otimes v_s}}\norm{f}_{\cM^{p,q}_{v_s}}.
            \end{equation}
        \end{theorem}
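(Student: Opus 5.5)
The statement is Theorem \ref{thm20}. I would prove it by the standard time-frequency route: conjugate $\Op^\w(a)$ by the STFT with a Gaussian window. This yields an integral operator on $\rdd$ with an off-diagonal decaying kernel, controlled by a $\mathrm{sup}$-in-time, $L^1$-in-frequency norm of the symbol. Fix $g=\f$, the standard Gaussian, as window. By the reproducing formula,
\begin{equation}
V_g(\Op^\w(a)f)(z)=\int_{\rdd}\la \Op^\w(a)\pi(w)g,\pi(z)g\ra\, V_gf(w)\,dw ,
\end{equation}
valid for $f\in\cS(\rd)$ and extended by density or weak-$*$ arguments. So everything reduces to estimating the Gabor matrix $M(z,w)=\la \Op^\w(a)\pi(w)g,\pi(z)g\ra$.

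By \eqref{defOpwSp}, $M(z,w)=\la a, W(\pi(z)g,\pi(w)g)\ra$. By the covariance of the cross-Wigner distribution, $W(\pi(z)g,\pi(w)g)$ is a time-frequency shift of $W(g,g)$, namely $M_{J(z-w)}T_{(z+w)/2}Wg$ up to a unimodular phase. Hence
\begin{equation}
|M(z,w)|=\big|V_{\Phi}a\big(\tfrac{z+w}{2},J(w-z)\big)\big|, \qquad \Phi=Wg\in\cS(\rdd).
\end{equation}
Set $H(\zeta)=\sup_{u\in\rdd}|V_\Phi a(u,\zeta)|$. Then $|M(z,w)|\le H(J(w-z))$, and by definition $\norm{H}_{L^1}=\norm{a}_{M^{\infty,1}}$, with weights inserted as appropriate. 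This gives the pointwise domination
\begin{equation}
|V_g(\Op^\w(a)f)(z)|\le \big(|V_gf|\ast \widetilde H\big)(z), \qquad \widetilde H(\zeta)=H(-J\zeta).
\end{equation}

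Next come the weights. We have $v_s(z)\lesssim v_s(w)v_s(z-w)$ by Peetre's inequality, and $v_s$ is invariant under the orthogonal map $J$. Hence $v_s|V_g(\Op^\w(a)f)|\le (v_s|V_gf|)\ast(v_s\widetilde H)$. Young's inequality for mixed norms, $L^{p,q}\ast L^1\subseteq L^{p,q}$, then gives
\begin{equation}
\norm{\Op^\w(a)f}_{M^{p,q}_{v_s}}\lesssim \norm{v_s\widetilde H}_{L^1}\norm{f}_{M^{p,q}_{v_s}}.
\end{equation}
Finally, $\norm{v_s\widetilde H}_{L^1}=\int\sup_u|V_\Phi a(u,\zeta)|\,v_s(\zeta)\,d\zeta$, which is exactly the $M^{\infty,1}_{1\otimes v_s}$ norm of $a$ with window $\Phi$. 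It is finite because $S^0_{0,0}\subseteq M^{\infty,1}_{1\otimes v_s}$ for every $s$. One sees this by integrating by parts in $V_\Phi a(u,\zeta)=\int a(y)\overline{\Phi(y-u)}e^{-2\pi i\zeta y}dy$: the derivatives of $a$ are bounded and $\Phi$ is Schwartz, so $|V_\Phi a(u,\zeta)|\lesssim_N (1+|\zeta|)^{-N}$ uniformly in $u$.

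The main obstacle is the Wigner covariance computation. One has to track the exact shift and frequency arguments, $\tfrac{z+w}{2}$ and $J(z-w)$, so that the decay variable is precisely $z-w$ and the sup is taken over the other variable. A second technical point is justifying the estimate for $p$ or $q$ equal to $\infty$, where $\cS$ is not dense. For those cases I would argue by duality, or work directly with the integral representation for $f\in\cS'$, since $M(z,w)$ decays rapidly in $z-w$.
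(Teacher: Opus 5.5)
Your proof is correct. The paper gives no argument of its own beyond citing \cite[Theorem 4.4.15]{CorderoBook}, and your argument is the standard almost-diagonalization proof behind that cited result: you control the Gabor matrix $\langle \Op^\w(a)\pi(w)g,\pi(z)g\rangle$ through Wigner covariance by $\sup_u |V_{Wg}a(u,J(z-w))|$, then apply Peetre's inequality and mixed-norm Young. In effect you are unpacking the citation. Your handling of $p$ or $q=\infty$, via the weak integral representation for $f\in\cS'$, and your integration-by-parts check that $S^0_{0,0}\subseteq M^{\infty,1}_{1\otimes v_s}$ for every $s\geq0$ are both sound.
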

        \begin{proof}
            It follows directly from Theorem 4.4.15 in \cite{CorderoBook}.
            
        \end{proof}

        The boundedness of metaplectic operators in $\Mp(d,\bR)$ on modulation spaces has been studied in \cite{FS2024}. A more quantitative approach, more similar to ours, was used in in \cite[Corollary 3.5]{CNTdispersion} for the unweighted $M^p$-spaces. Here, we obtain estimates for $\norm{\widehat U}_{op}$ in terms of the singular values of the projection $U$, for $M^{p,q}_{v_s}$-spaces. 

        \begin{theorem}\label{thmFS}
           Let $\widehat U\in \Mp(d,\bR)$ have projection $U$ with blocks as in \eqref{blockS}. Let $1\leq p,q\leq\infty$ and $s\geq0$.
            \begin{enumerate}[(i)]
                \item $\widehat U:M^p_{v_s}(\rd)\to M^p_{v_s}(\rd)$ is an isomorphism.
                \item If $C=O_d$ in \eqref{blockS}, then $\widehat U:M^{p,q}_{v_s}(\rd)\to M^{p,q}_{v_s}(\rd)$ is an isomorphism.
            \end{enumerate}
            In any case, if $\sigma_1,\ldots,\sigma_d$ are the singular values of $U$ that are $\geq1$, and $\sigma_{\mathrm{max}}(U)$ is the largest of them, then 
            \begin{equation}\label{normUB}
                \norm{\widehat U f}_{M^{p,q}_{v_s}}\leq c\cdot |\det(A)|^{1/p-1/q}{\sigma_{\mathrm{max}}(U)^{2s}}\prod_{j=1}^d\Big(\frac{\sigma_j}{1+\sigma_j^2}\Big)^{-1/2}\norm{f}_{M^{p,q}_{v_s}}
            \end{equation}
            where $c=c(g,s,d,p,q)>0$, being $g$ the window chosen to compute the $M^{p,q}_{v_s}$-norm, and we understand $1/\infty=1$ and $|\det(A)|^{1/p-1/q}=1$ if $p=q$ (also when $A=O_d$). 
        \end{theorem}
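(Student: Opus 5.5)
We reduce everything to the metaplectic covariance of the STFT together with the symplectic singular value decomposition $U=W\delta V^\top$ of Section \ref{subsec:23}, where $V,W\in\Sp(d,\bR)$ are orthogonal and $\delta=\diag(\lambda_1,\ldots,\lambda_d,\lambda_1^{-1},\ldots,\lambda_d^{-1})$. We first fix the window $g=\f(x)=e^{-\pi|x|^2}$. We use the standard identity
\begin{equation}
V_g(\widehat Uf)(z)=e^{i\phi(z)}\,V_{\widehat U^{-1}g}f(U^{-1}z),
\end{equation}
with a unimodular phase $e^{i\phi(z)}$. It follows from $V_gf(z)=e^{i\pi x\xi}\la f,\rho(z)g\ra$ and the intertwining \eqref{defMetap}. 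Taking moduli removes the phase. By Lemma \ref{lemmaUGauss}, the orthogonal factors $\widehat W,\widehat V$ fix $\f$ up to a phase. Hence $\widehat U^{-1}\f$ coincides, up to a phase, with $\widehat V\widehat\delta^{-1}\f$, which is a Gaussian with covariance determined by $\delta$. The problem thus splits into three estimates: a change of window, the weight, and the change of variables $z\mapsto U^{-1}z$ in the $L^{p,q}$ norm.

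\textbf{Change of window.} We use the pointwise inequality $|V_{\gamma}f|\le \norm{g}_2^{-2}|V_gf|*|V_\gamma g|$ together with Young's inequality in $L^{p,q}_{v_s}$, with $v_s$ submultiplicative. This bounds $\norm{V_{\gamma}f}_{L^{p,q}_{v_s}}$ by $\norm{V_\gamma g}_{L^1_{v_s}}\norm{f}_{M^{p,q}_{v_s}}$, where $\gamma=\widehat U^{-1}\f$. Since $\gamma$ and $g$ are Gaussians, $V_\gamma g$ is computed explicitly by Proposition \ref{GaussianIntegrals}. Reducing by orthogonal invariance to the diagonal $\delta$ and factorizing into one-dimensional pieces, each coordinate gives a Gaussian of amplitude $(2\sigma_j/(1+\sigma_j^2))^{1/2}$ and variance of order $1+\sigma_j^2$. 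Its $L^1_{v_s}$ norm is therefore $\lesssim \big(\sigma_j/(1+\sigma_j^2)\big)^{1/2}(1+\sigma_j^2)\,\sigma_{\max}^{s}$ per coordinate. After the Jacobians and normalizations are collected, this should reproduce the product $\prod_j(\sigma_j/(1+\sigma_j^2))^{-1/2}$ in \eqref{normUB}. This bookkeeping of Gaussian constants is routine but must be done carefully.

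\textbf{Weight and change of variables.} For the weight we use $v_s(z)\le \norm{U}^s v_s(U^{-1}z)=\sigma_{\max}(U)^s v_s(U^{-1}z)$. Together with the factor $\sigma_{\max}^s$ coming from the window, this gives $\sigma_{\max}^{2s}$. It remains to bound $\norm{F\circ U^{-1}}_{L^{p,q}}$ by a multiple of $\norm{F}_{L^{p,q}}$. In case (i), $p=q$, and since $\det U=1$ the map $F\mapsto F\circ U^{-1}$ is an isometry, so no determinant factor appears. This proves (i); the inverse $\widehat U^{-1}$ is handled the same way, which yields the isomorphism. In case (ii), $C=O_d$, so $U^{-1}=\begin{pmatrix}A^{-\top}&-B^\top\\O&A\end{pmatrix}$ by \eqref{blockS-1}, with $D=A^{-\top}$. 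We first integrate in $x$ for fixed $\xi$: the substitution $x\mapsto A^{-\top}x-B^\top\xi$ yields $|\det A|^{1/p}$. Integrating next in $\xi\mapsto A\xi$ yields $|\det A|^{-1/q}$. Together these give the factor $|\det A|^{1/p-1/q}$.

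The main obstacle is the general case in the final estimate when $p\neq q$ and $C\neq O_d$. There $\widehat U$ is in general unbounded on $M^{p,q}$, so the bound must be read as holding for the cases where it is meaningful, that is $p=q$ or $C=O_d$. The delicate point is the exact matching of the Gaussian constants in the window-change step with the displayed product of singular values.
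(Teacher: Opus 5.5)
\textbf{Overall.} Your argument is structurally correct but takes a different route from the paper at the change-of-window step. As written, however, the Gaussian constant at that step is miscomputed, so the displayed bound is not yet reached; details below.

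\textbf{Comparison with the paper.} The paper applies the change-of-window lemma with auxiliary function $\widehat U^{-1}g$. The normalizing factor is then $|\langle\widehat Ug,g\rangle|^{-1}$, and the convolution kernel is $|V_{\widehat U^{-1}g}\widehat U^{-1}g|=|V_gg(U\,\cdot)|$. Since $\det U=1$, its weighted $L^1$ norm depends on $U$ only through the weight. The product $\prod_j(\sigma_j/(1+\sigma_j^2))^{-1/2}$ is then just the reciprocal of the single number $\langle\widehat\delta g,g\rangle$, obtained from the symplectic SVD and Lemma \ref{lemmaUGauss}.

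You take $g$ itself as auxiliary function. The normalizing factor $\norm{g}_2^{-2}$ is then trivial, and all the $\sigma$-dependence sits in $\norm{V_\gamma g}_{L^1_{v_s}}$, which requires the full STFT of two Gaussians. The two bookkeepings must agree. For centered (generalized) Gaussians $g,h$, $|V_gh|$ is a centered Gaussian on $\bR^{2d}$. Combining $|V_gh(0)|=|\langle h,g\rangle|$ with Moyal's identity gives $\norm{V_gh}_1\,|\langle h,g\rangle|=2^d\norm{h}_2^2\norm{g}_2^2$. Hence, in the unweighted case, your constant $\norm{g}_2^{-2}\norm{V_\gamma g}_1$ coincides exactly with the paper's $\norm{V_gg}_1/|\langle\widehat Ug,g\rangle|$.

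The paper's choice is shorter: it needs one Gaussian integral rather than a full STFT. Yours proves the isomorphism claims (i)--(ii) directly, which the paper only cites from \cite{FS2024}. Your reading of ``in any case'' as ``under (i) or (ii)'' is exactly what the paper's proof covers.

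\textbf{The slip in the Gaussian computation.} After the orthogonal reduction, for $g(x)=e^{-\pi|x|^2}$ and up to exchanging $x_j$ and $\xi_j$, the $j$-th factor of $|V_\gamma g|$ is
\[
\Big(\frac{\sigma_j}{1+\sigma_j^2}\Big)^{1/2}\exp\Big(-\frac{\pi\sigma_j^2x_j^2}{1+\sigma_j^2}\Big)\exp\Big(-\frac{\pi\xi_j^2}{1+\sigma_j^2}\Big).
\]
So the variance is of order $1+\sigma_j^2$ in one variable only, and only $(1+\sigma_j^2)/\sigma_j^2\le2$ in the conjugate one. Its $L^1(\bR^2)$ norm is therefore exactly $((1+\sigma_j^2)/\sigma_j)^{1/2}=(\sigma_j/(1+\sigma_j^2))^{-1/2}$. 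Your per-coordinate bound $(\sigma_j/(1+\sigma_j^2))^{1/2}(1+\sigma_j^2)$ is too large by a factor $\sigma_j$. It would only yield $\prod_j\sigma_j^{1/2}(1+\sigma_j^2)^{1/2}$, which is weaker than \eqref{normUB}.

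\textbf{The weight.} Do not insert the weight coordinatewise: $v_s$ does not factorize, and doing so would produce $\sigma_{\max}^{ds}$. Instead write $|V_\gamma g(z)|=c\,e^{-\pi Mz\cdot z}$ with $\norm{M^{-1}}\le1+\sigma_{\max}^2\le2\sigma_{\max}^2$. Then estimate $\int c\,e^{-\pi Mz\cdot z}v_s(z)\,dz\le\norm{M^{-1/2}}^s\norm{V_\gamma g}_1\int e^{-\pi|u|^2}v_s(u)\,du$. This gives the single factor $\sigma_{\max}(U)^s$ you need to reach $\sigma_{\max}(U)^{2s}$.

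\textbf{Minor points.} For $C=O_d$, one has $U^{-1}=\begin{pmatrix}A^{-1}&-B^\top\\O_d&A^\top\end{pmatrix}$, not the blocks $A^{-\top}$ and $A$ you wrote; the determinant count is unaffected. For the isomorphism in (ii), note that $U^{-1}$ again has vanishing lower-left block, so $\widehat U^{-1}$ is covered by the same estimate.
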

        \begin{proof}
            The statements concerning the isomorphisms were proven in \cite{FS2024}. We need to check the upper bound \eqref{normUB}. 
            Let $f\in M^{p,q}_{v_s}(\rd)$ and $g\in \cS(\rd)\setminus\{0\}$ be a window function. Similarly to \eqref{intro.WigCovarianceProp}, 
            \begin{equation}\label{step0}
                |V_g\widehat Uf(z)|=|V_{\widehat U^{-1}g}f(U^{-1}z)|, \qquad z\in\rdd,
            \end{equation}
            see e.g. \cite[Proposition 1.2.14]{CorderoBook}. Therefore,
            \begin{align}\label{step05}
                \norm{\widehat Uf}_{M^{p,q}_{v_s}}&=\norm{V_g\widehat Uf}_{L^{p,q}_{v_s}}=\norm{(V_{\widehat U^{-1}g}f)(U^{-1}\cdot)}_{L^{p,q}_{v_s}}.
            \end{align}
            If $U$ has blocks \eqref{blockS} with $C=O_d$ and $F=V_{\widehat U^{-1}g}f$, then    
            \begin{align}\label{step1}
                \norm{v_s(\cdot) F(U^{-1}\cdot)}_{L^{p,q}}&=\norm{((v_s(U\cdot) F)(U^{-1}\cdot)}_{L^{p,q}}.
            \end{align}
            In the following, we argue for $p,q\neq\infty$, the discussion for the case $\max\{p,q\}=\infty$ is similar. For $G=v_s(U\cdot)F$, if $p=q$ we obviously have $\norm{G\circ U^{-1}}_p=\norm{G}_p$, since $\det(U)=1$. Otherwise, using the expression \eqref{blockS-1} for the inverse of $U$ (which is now assumed to be upper-block triangular), we find
            \begin{align}
                \norm{G\circ U^{-1}}_{L^{p,q}}&=\Big(\int_{\rd}\Big(\int_{\rd}|G(U^{-1}(x,\xi))|^pdx\Big)^{q/p}d\xi\Big)^{1/q}\\
                &=\Big(\int_{\rd}\Big(\int_{\rd}|G(D^\top x-B^\top \xi,A^\top \xi)|^pdx\Big)^{q/p}d\xi\Big)^{1/q}\\
                &=|\det(D)|^{-1/p}\Big(\int_{\rd}\Big(\int_{\rd}|G(y,A^\top \xi)|^pdy\Big)^{q/p}d\xi\Big)^{1/q}\\
                &=|\det(D)|^{-1/p}|\det(A)|^{-1/q}\norm{G}_{L^{p,q}}.
            \end{align}
            If $C=O_d$, then $A=D^{-\top}$, from which we infer:
            \begin{align}
                \norm{G\circ U^{-1}}_{L^{p,q}}&=|\det(A)|^{1/p-1/q}\norm{G}_{L^{p,q}}.
            \end{align}
            By plugging this information in \eqref{step1}, we obtain
            \begin{align}
                \norm{v_s(\cdot)F(U^{-1}\cdot)}_{L^{p,q}}&=|\det(A)|^{1/p-1/q}\norm{v_s(U\cdot) F}_{L^{p,q}},
            \end{align}
            where we interpret $|\det(A)|^{1/p-1/q}=1$ whenever $p=q$. Since $v_s(Uz)\leq\sigma_{\mathrm{max}}(U)^sv_s(z)$, where $\sigma_{\mathrm{max}}(U)$ is the largest singular value of $U$, we obtain
            \begin{align}
                \norm{ F(U^{-1}\cdot)}_{L^{p,q}_{v_s}}&\leq |\det(A)|^{1/p-1/q}\sigma_{\mathrm{max}}(U)^s\norm{ F}_{L^{p,q}_{v_s}}.
            \end{align}
            Let us return to \eqref{step05}. We have
            \begin{align}
                \norm{\widehat Uf}_{M^{p,q}_{v_s}}&\leq |\det(A)|^{1/p-1/q}\sigma_{\mathrm{max}}(U)^s \norm{V_{\widehat U^{-1}g}f}_{L^{p,q}_{v_s}}.
            \end{align}
            Next, we replace the window $\widehat U^{-1}g$ with $g$ by resorting to the standard convolution inequality in \cite[Lemma 1.2.29]{CorderoBook} 
            \begin{align}
                 \norm{\widehat Uf}_{M^{p,q}_{v_s}}&\leq |\det(A)|^{1/p-1/q} \frac{\sigma_{\mathrm{max}}(U)^s}{|\la \widehat Ug,g\ra|}\norm{|V_gf|\ast |V_{\widehat U^{-1}g}\widehat U^{-1}g|}_{L^{p,q}_{v_s}}.
            \end{align}
            By Young's inequality for weighted mixed norm spaces, see e.g. \cite[Theorem 2.2.3]{CorderoBook},
            \begin{align}
                \norm{|V_gf|\ast |V_{\widehat U^{-1}g}\widehat U^{-1}g|}_{L^{p,q}_{v_s}}&\leq c' \norm{V_gf}_{L^{p,q}_{v_s}}\norm{V_{\widehat U^{-1}g}\widehat U^{-1}g}_{L^1_{w_s}}\\
                &=c'\norm{V_gf}_{L^{p,q}_{v_s}}\norm{V_{ g} g(U\cdot)}_{L^1_{w_s}}\\
                &\leq c'\sigma_{\mathrm{max}}(U^{-1})^s \norm{V_gf}_{L^{p,q}_{v_s}}\norm{V_gg}_{L^1_{w_s}}\\
                &=c'\sigma_{\mathrm{max}}(U)^s\norm{V_gg}_{L^1_{w_s}}\norm{V_gf}_{L^{p,q}_{v_s}},
            \end{align}
            where $w_s(z)=(1+|z|)^s$, $c'>0$ is the constant appearing in Young's inequality, depending only on $s$, $p$ and $q$.
            Here, we are using that if $U$ is symplectic and if $\lambda$ is a singular value of $U$, then also $\lambda^{-1}$ is a singular value of $U$, and as a general fact the singular values of $U^{-1}$ are the inverses of the singular values of $U$.  
            Now, we choose the Gaussian window $g(y)=e^{-\pi |y|^2}$ and write the singular values decomposition of $U$ as $U=W^\top \delta V$, where $V$ and $W$ are orthogonal and symplectic, and 
            \begin{equation}
                \delta=\cD_E=\begin{pmatrix}
                    E^{-1} & O_d\\
                    O_d & E^\top
                \end{pmatrix}, \qquad E=\diag(\sigma_1^{-1},\ldots,\sigma_d^{-1}),
            \end{equation}
            where $\sigma_1,\ldots,\sigma_d$ are the singular values of $U$ that are $\geq1$. Then,
            \begin{equation}
                \la \widehat U g,g \ra=\la \widehat\delta \widehat{V}g,\widehat{W}g\ra.
            \end{equation}
            Since $V$ and $W$ are orthogonal, by Lemma \ref{lemmaUGauss}, $\widehat{V}g=e^{2\pi i\tau}g$ and $\widehat{W}g=e^{2\pi i\tau'}g$ for some $\tau,\tau'\in\bR$. So, 
            \begin{equation}\label{Ugg2}
                |\la \widehat U g,g\ra |=|\la \widehat \delta g,g\ra|=\la \widehat \delta g,g\ra.
            \end{equation}
            Let $\sigma_1,\ldots,\sigma_d$ be the eigenvalues of $\delta$ that are $\geq1$, i.e., the singular values of $U$ that are $\geq1$. Recall that the other eigenvalues are $1/\sigma_j$ for $j=1,\ldots,d$. Then, up to a phase factor,
            \begin{align}\label{Ugg}
                \la \widehat U g,g\ra&=\int_{\rd}\widehat \delta g(x)g(x)dx=\prod_{j=1}^d\sigma_j^{-1/2}\int_{-\infty}^\infty e^{-\pi(1+\sigma_j^{-2})x_j^2}dx_j=\prod_{j=1}^d\Big(\frac{\sigma_j}{1+\sigma_j^2}\Big)^{1/2}.
            \end{align}
            We thereby conclude that
            \begin{equation}
                \norm{\widehat U f}_{M^{p,q}_{v_s}}\leq c' |\det(A)|^{1/p-1/q}{\sigma_{\mathrm{max}}(U)^{2s}}\prod_{j=1}^d\Big(\frac{\sigma_j}{1+\sigma_j^2}\Big)^{-1/2}\norm{V_gg}_{L^1_{w_s}}\norm{f}_{M^{p,q}_{v_s}}.
            \end{equation}
            If $g$ is not a Gaussian, there exists a constant $b>0$ depending only on the new window, such that
             \begin{equation}
                \norm{\widehat U f}_{M^{p,q}_{v_s}}\leq c' b |\det(A)|^{1/p-1/q}\sigma_{\mathrm{max}}(U)^{2s}\prod_{j=1}^d\Big(\frac{\sigma_j}{1+\sigma_j^2}\Big)^{-1/2}\norm{V_gg}_{L^1_{w_s}}\norm{f}_{M^{p,q}_{v_s}}.
            \end{equation}
            This concludes the proof.
            
        \end{proof}

        \begin{remark} 
            The estimates in the previous result is not optimal. For $p=q=2$ and $s=0$, we indeed obtain
            \begin{equation}
               \norm{\widehat Uf}_2\leq c\prod_{j=1}^d\Big(\frac{\sigma_j}{1+\sigma_j^2}\Big)^{-1/2}\norm{f}_2,
            \end{equation}
            while we know that $\norm{\widehat Sf}_2=\norm{f}_2$.
            A more precise estimate for $p=q=2$ and $s=0$ was obtained in \cite[Corollary 3.5]{CNTdispersion} using interpolation results. There, the authors prove 
            \begin{equation}
                \norm{\widehat Sf}_{M^p}\lesssim \Big(\prod_{j=1}^d\sigma_j\Big)^{|1/2-1/p|}\norm{f}_{M^p}.
            \end{equation}
            The price we pay for obtaining more general estimates for $p \neq q$ and $s>0$ via Young's inequality is the appearance of the factor
            \begin{equation}
                \prod_{j=1}^d \Bigl(\frac{\sigma_j}{1+\sigma_j^2}\Bigr)^{-1/2},
            \end{equation}
            with exponent $-1/2$, which arises from the term $\langle \widehat{U} g, g \rangle$, and does not depend on $p$ and $q$. We expect sharper estimates can be obtained through Wigner analysis, a finer investigation which is beyond the scope of this manuscript.
        \end{remark}

        \begin{remark}
            Another formulation of \eqref{normUB} can be obtained by using \cite[Proposition 105]{MauriceDeGosson} in \eqref{Ugg2}. This gives
            \begin{equation}
                |\la \widehat Ug,g\ra| = |\det(I_d-i\cC(U)^{-1})|^{-1/2},
            \end{equation}
            where $\cC(U)=(C+iD)(A+iB)^{-1}$ is the Cayley transform of $U$, see also \cite{Folland}. Consequently,
            \begin{equation}
                \norm{\widehat Uf}_{M^{p,q}_{v_s}}\lesssim |\det(A)|^{1/p-1/q}\sigma_{\mathrm{max}}(U)^{2s}|\det(I_d-i\cC(U)^{-1})|^{1/2}\norm{f}_{M^{p,q}_{v_s}}.
            \end{equation}
        \end{remark}

		\section{The metaplectic semigroup}\label{sec.thecomplexmeta}
		In this section, we collect the main facts about the metaplectic semigroup from the works of Howe, Brunet, Kramer and H\"ormander. 
		
        \subsection{The positivity condition}\label{sec:HPC}
		It is impossible to extend $\Mp(d,\bR)$ to a group of operators that are bounded on $L^2(\rd)$, so that $\pi^{Mp}$ extends consistently to a double cover of $\Sp(d,\bC)$. A selection of {\em admissible} complex symplectic matrices is needed a priori.
		\begin{definition}\label{defPosSymp}
			A matrix $S\in\Sp(d,\bC)$ is {\em positive} if 
		\begin{equation}\label{cond.pos}
       			i^{-1}\big(\sigma(Sz,\overline{Sz})-\sigma(z,\overline z)\big)\geq0, \qquad z\in\bC^{2d},
  		\end{equation}
  		where $\sigma$ is the complexified standard symplectic form. 
		We denote by $\Sp_+(d,\bC)$ the semigroup of positive symplectic matrices. Instead, we say that $S$ is {\em strictly positive} if \eqref{cond.pos} holds with $>0$. 
		\end{definition}
		The semigroup $\Sp_+(d,\bC)$ is denoted as $\mathcal{C}_+$ in \cite{Hormander3}. 
		We refer to \cite[Lemma 3.2]{Hormander3} for a proof of the semigroup property, in the more general framework of {\em positive linear canonical relations}. 
		
		\noindent
		It will be useful to express condition \eqref{cond.pos} more explicitly, as we shall do promptly.
		
		\begin{proposition}\label{SmathcalM}
			Let $S\in\Sp(d,\bC)$. Then, by writing $S=S_R+iS_I$, $S_R,S_I\in\bR^{2d\times2d}$, one has that $S$ is positive (respectively strictly positive) if and only if the matrix 
			\begin{equation}\label{defMS}
				\mathcal{M}_S=\begin{pmatrix}
					S_R^\top JS_I & S_I^\top JS_I \\ 
					-S_I^\top JS_I & S_R^\top JS_I
				\end{pmatrix} \in \bR^{4d\times4d}
			\end{equation}
			is positive semi-definite (respectively positive-definite).
		\end{proposition}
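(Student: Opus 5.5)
We prove the statement by expanding the quadratic form in \eqref{cond.pos} in real coordinates. Write $z=x+iy$ with $x,y\in\bR^{2d}$, so that $\overline z=x-iy$. Since $\sigma(z,w)=Jz\cdot w$ is bilinear (not sesquilinear), and $S=S_R+iS_I$ with $S_R,S_I$ real, we have $\overline{Sz}=\overline S\,\overline z$. Hence
\begin{equation}
\sigma(Sz,\overline{Sz})=JSz\cdot\overline S\,\overline z=\overline S^\top JS z\cdot \overline z .
\end{equation}
We will first compute $\overline S^\top J S$ in terms of $S_R,S_I$. Using $S^\top JS=J$ (and its conjugate $S_R^\top JS_R - S_I^\top J S_I + i(S_R^\top JS_I+S_I^\top JS_R)=J$, which gives $S_R^\top JS_R-S_I^\top JS_I=J$ and $S_R^\top JS_I=-S_I^\top JS_R$), we expect
\begin{equation}
\overline S^\top JS=(S_R^\top-iS_I^\top)J(S_R+iS_I)=J+2S_I^\top JS_I+2iS_R^\top JS_I .
\end{equation}
Therefore $i^{-1}(\sigma(Sz,\overline{Sz})-\sigma(z,\overline z))=i^{-1}\big(2S_I^\top JS_I+2iS_R^\top JS_I\big)z\cdot\overline z$.

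Next we expand $Hz\cdot\overline z$ for a real matrix $H$ with $z=x+iy$:
\begin{equation}
Hz\cdot\overline z=Hx\cdot x+Hy\cdot y+i(Hy\cdot x-Hx\cdot y).
\end{equation}
We apply this with $H=S_I^\top JS_I$, which is antisymmetric, so $Hx\cdot x=Hy\cdot y=0$ and $Hy\cdot x-Hx\cdot y=2Hy\cdot x$. We also apply it with $K=S_R^\top JS_I$. By the relation $S_R^\top JS_I=-S_I^\top JS_R=(S_R^\top J S_I)^\top$, the matrix $K$ is symmetric, so its imaginary contribution vanishes. Combining these, the left side of \eqref{cond.pos} becomes
\begin{equation}
2\big(Kx\cdot x+Ky\cdot y\big)+4\,Hy\cdot x ,
\end{equation}
which is real, as it should be. This equals $2\,\mathcal M_S(x,y)\cdot(x,y)$ once the off-diagonal blocks are matched: the cross term of $\mathcal M_S$ gives $Hy\cdot x-Hx\cdot y=2Hy\cdot x$, using the antisymmetry of $H$. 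Since $z\mapsto(x,y)$ is a bijection $\bC^{2d}\to\bR^{4d}$, nonnegativity (respectively strict positivity for $z\neq0$) of the form is equivalent to $\mathcal M_S\ge0$ (respectively $>0$).

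The only delicate point is sign bookkeeping, namely the sign of the $H$ cross term relative to the block placement $\pm S_I^\top JS_I$ in \eqref{defMS}. We check this by using the antisymmetry of $H$. Because $\mathcal M_S$ is not itself symmetric, positivity refers to the associated quadratic form, that is, to its symmetric part $\mathrm{diag}(K,K)$ plus the cross terms. The statement should be read in this sense.
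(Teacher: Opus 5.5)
Your computation is correct and is essentially the paper's argument. Both expand the positivity form in real coordinates $z=x+iy$ and reduce it using the real and imaginary parts of $S^\top JS=J$. You package this as the matrix identity $\overline S^\top JS=J+2S_I^\top JS_I+2iS_R^\top JS_I$, whereas the paper first splits $Sz=\widetilde x+i\widetilde\xi$ and uses the antisymmetry of $\sigma$.

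Your closing paragraph, however, is wrong: $\mathcal M_S$ is symmetric. Since $K=S_R^\top JS_I$ is symmetric and $H=S_I^\top JS_I$ is antisymmetric, the lower-left block $-H=H^\top$ is exactly the transpose of the upper-right block $H$. The paper checks $\mathcal M_S^\top=\mathcal M_S$ explicitly, and this matters later when Schur-complement criteria are applied to $\mathcal M_S$. In fact, if the off-diagonal part were antisymmetric it would drop out of the quadratic form, which contradicts your own (correct) cross term $2Hy\cdot x$. So the statement holds literally, and no reinterpretation via a ``symmetric part'' is needed.
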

		\begin{remark}\label{S_ReS_I}
			Notice that the condition $S^\top JS=J$ defining $\Sp(d,\bC)$ can be rephrased in terms of $\cM_S$ as
			\[
				S_R^\top JS_R-S_I^\top JS_I=J, \quad S_I^\top JS_R+S_R^\top JS_I=0.
			\]
		\end{remark}
		\begin{proof}[Proof of Proposition \ref{SmathcalM}]
			During the proof we denote by $z=x+i\xi \in \mathbb{C}^{2d}$ and 
			\begin{align}
				Sz&=S_Rx+iS_I\xi+iS_Ix-S_I\xi=(S_Rx-S_I\xi)+i(S_I\xi+S_Ix)=:\widetilde{x}+i\widetilde{\xi}.
			\end{align}
			Under this notation, we have that $S$ is positive if and only if  
			\[
				i^{-1}\big(\sigma(\widetilde{x}+i\widetilde{\xi}, \widetilde{x}-i \widetilde{\xi})-\sigma(x+i\xi,x-i\xi)\big) \geq 0, \qquad  x,\xi \in \mathbb{R}^{2d},
			\]
			which is equivalent to (recall that $\sigma$ is antisymmetric)
			\[
				i^{-1}\big(2\sigma(i\widetilde{\xi}, \widetilde{x})-2\sigma(i\xi,x)\big)\geq 0, \qquad x,\xi \in \mathbb{R}^{2d}.
			\]
			Therefore, $S$ is positive if and only if 
			\[
				\sigma(\widetilde{\xi},\widetilde{x})-\sigma(\xi,x) \geq 0, \qquad x,\xi \in \mathbb{R}^{2d},
			\]
			that is 
			\[
				\sigma(S_R\xi+S_Ix,S_Rx-S_I\xi)-\sigma(\xi,x) \geq 0, \qquad x,\xi \in \mathbb{R}^{2d}.
			\]
			Equivalently,
			\[
				J(S_R\xi+S_Ix)\cdot(S_Rx-S_I\xi) - J \xi\cdot x \geq 0, \qquad x,\xi \in \mathbb{R}^{2d},
			\]
			and so
			\[
				 S_R^\top JS_R \xi\cdot x +  S_R^\top JS_I x\cdot x  -  S_I^\top JS_R \xi\cdot \xi - S_I^\top JS_I x\cdot \xi - J \xi\cdot x \geq 0,
			\]
			for every $x,\xi \in \mathbb{R}^{2d}$. By Remark \ref{S_ReS_I}, $S\in\Sp_+(d,\bC)$ if and only if 
			\begin{equation}\label{defposMS}
				\mathcal{M}_S=\begin{pmatrix}
					S_R^\top JS_I & S_I^\top JS_I \\ 
					-S_I^\top JS_I & S_R^\top JS_I
				\end{pmatrix} 
				\begin{pmatrix}
					x \\
					\xi
				\end{pmatrix}\cdot \begin{pmatrix}
					x \\
					\xi
				\end{pmatrix} \geq0, \qquad (x,\xi) \in \mathbb{R}^{4d},
			\end{equation}
			where $\mathcal{M}_S$ is symmetric since, by Remark \ref{S_ReS_I}, 
			\[
				\mathcal{M}_S^\top =\begin{pmatrix}
					-S_I^\top JS_R & S_I^\top JS_I\\
					-S_I^\top JS_I & -S_I^\top J S_R
				\end{pmatrix}=\begin{pmatrix}
					S_R^\top JS_I & S_I^\top JS_I\\
					-S_I^\top JS_I & S_R^\top JS_I
				\end{pmatrix}=\mathcal{M}_S.
			\] 
			The proof for strict positivity is identical. 
			
		\end{proof}
		
		Our next goal is to obtain a description of $\Sp_+(d,\bC)$, in terms of its generators, as explicit as possible. To this end, for $\vartheta\geq0$, let
		\begin{equation}\label{Rt}
			\cR_\vartheta=\begin{pmatrix}
				\cosh \vartheta & -i\sinh \vartheta \\
				i\sinh \vartheta & \cosh \vartheta
				\end{pmatrix}\in\Sp(1,\bC)
		\end{equation}
		and for $\alpha>0$ let  
		\begin{equation}\label{Va}
			V_{i\alpha}=\begin{pmatrix}
				1 & 0 \\
				i\alpha & 1
			\end{pmatrix}\in\Sp(1,\bC).
		\end{equation}
		
		Positive symplectic matrices admit a very beneficial polar-type decomposition, see \cite[Proposition 5.10]{Hormander3} and \cite[Theorem 2.1]{Brunet}. To simplify the next statement and the discussion that follows, let us define the following class of positive symplectic matrices (recall \eqref{tensmat} for the definition of $\otimes$).
		\begin{definition}\label{defPurePos}
			A {\em purely positive} symplectic matrix is any matrix $\Xi\in\Sp_+(d,\bC)$ in the form 
			\begin{equation}
				\Xi=\Xi_1\otimes\ldots\otimes \Xi_d,
			\end{equation} 
			with $\Xi_j=I_{2\times 2}$, or $\Xi_j=\cR_{\vartheta_j}$ ($\vartheta_j>0$), or $\Xi_j=V_{i\alpha_j}$ ($\alpha_j>0$), $j=1,\ldots,d$.
		\end{definition}
				
		\begin{theorem}\label{theo.class}
			The semigroup $\Sp_+(d,\bC)$ is generated by $\Sp(d,\bR)$ and by purely positive symplectic matrices. Moreover, every matrix $S\in\Sp_+(d,\bC)$ can be factorized as 
			\begin{equation}\label{factorization}
				S=U_1\Xi U_2,
			\end{equation}
			where $U_1,U_2\in\Sp(d,\bR)$ and $\Xi$ is purely positive.
		\end{theorem}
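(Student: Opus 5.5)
The plan is to reduce the statement to a normal form for the real symmetric matrix $\mathcal{M}_S$ of Proposition \ref{SmathcalM} under the action of $\Sp(d,\bR)\times\Sp(d,\bR)$. The generation claim follows from the factorization \eqref{factorization}: each purely positive $\Xi$ lies in $\Sp_+(d,\bC)$ by a direct check of \eqref{cond.pos}, and $\Sp_+(d,\bC)$ is a semigroup containing $\Sp(d,\bR)$. So everything rests on proving $S=U_1\Xi U_2$.

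First I record two invariances.
\begin{itemize}
\item If $U\in\Sp(d,\bR)$, then $U$ preserves $\sigma(z,\overline z)$, since it is real and $U^\top JU=J$.
\item Hence the Hermitian form
\begin{equation}
h_S(z)=i^{-1}\big(\sigma(Sz,\overline{Sz})-\sigma(z,\overline z)\big)
\end{equation}
transforms as $h_{U_1SU_2}(z)=h_S(U_2z)$.
\end{itemize}
Left multiplication by $U_1$ does not change $h_S$. This freedom is what lets us normalize the image side.

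Next I use the ordinary polar decomposition in $\Sp(d,\bC)$ relative to the Hermitian structure. I write $S=UP$ with $U$ unitary-symplectic and $P$ positive Hermitian symplectic, and try to show that, up to real symplectic factors, $S$ may be taken of the form $S=\exp(iH)$ with $H$ a real Hamiltonian matrix $JQ$, $Q=Q^\top$ real. Such an $S$ commutes with the symplectic structure conveniently, and positivity of $S$ should amount to $Q\ge0$. Concretely, for the one-parameter group $S_t=\exp(itJQ)$, differentiating $h_{S_t}$ gives $2\,Q$-type terms, which suggests positivity is equivalent to $Q\geq0$. Then Williamson-type normal forms for positive semidefinite quadratic forms under $\Sp(d,\bR)$ diagonalize $Q$ into blocks:
\begin{itemize}
\item $\lambda(x_j^2+\xi_j^2)$, whose exponential gives $\cR_\vartheta$ up to real orthogonal-symplectic conjugation (one checks $\exp$ of $i\vartheta J$ restricted to a $2\times2$ block equals $\cR_{\vartheta}$ up to sign conventions);
\item $x_j^2$, whose exponential gives $V_{i\alpha}$;
\item $0$, giving the identity.
\end{itemize}
This yields $S=U_1\Xi U_1^{-1}$ in the special case $S=\exp(iJQ)$.

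To handle a general $S$, I would argue as follows. The set $\{S:h_S\ge0\}$ and the double coset space $\Sp(d,\bR)\backslash\Sp_+(d,\bC)/\Sp(d,\bR)$ are described by the pair consisting of $\ker h_S$ and the induced positive form on $\bC^{2d}/\ker h_S$, relative to $\sigma$. Concretely, I would show that $\mathcal{M}_S$ (equivalently $h_S$) can be brought by real symplectic changes $U_2$ into a block-diagonal normal form. Then, using Remark \ref{S_ReS_I}, I would show that two positive matrices with the same $h$ differ by a left real symplectic factor. The last point holds because $S'S^{-1}$ preserves both $\sigma$ and the form $\sigma(w,\overline w)$ on the image, hence is real symplectic. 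This must be checked carefully, and it is where positivity enters: one needs that $\sigma(Sz,\overline{Sz})$ determines $S$ up to a real symplectic map, i.e.\ that the real structure can be recovered. The normal form for $h$ then matches that of $h_\Xi$ for a suitable purely positive $\Xi$.

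The main obstacle is the simultaneous normal form: Hermitian $h_S\ge0$ together with the symplectic form, under real symplectic transformations, including degenerate directions producing $V_{i\alpha}$ (parabolic) versus $\cR_\vartheta$ (elliptic) blocks. I would first treat the strictly positive case, where only $\cR_\vartheta$ appears, and then handle the kernel by splitting off the symplectic radical, as in Hörmander's argument \cite[Proposition 5.10]{Hormander3}.
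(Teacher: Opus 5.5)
Your reduction is correct and is the useful part of the proposal. First, $h_{U_1SU_2}=h_S\circ U_2$. Second, if $h_{S'}=h_S$, then $T=S'S^{-1}$ preserves the Hermitian form $i^{-1}\sigma(w,\overline w)$. By polarization $\overline{T}^{\top}JT=J=T^{\top}JT$, hence $T=\overline T\in\Sp(d,\bR)$. Positivity plays no role in this step, contrary to your remark; it holds for all $S,S'\in\Sp(d,\bC)$. The special case $S=\exp(-iJQ)$, $Q\geq0$, handled via Williamson's normal form, is also fine once the sign is fixed: $\exp(i\vartheta J)=\cR_{-\vartheta}$, which is not positive for $\vartheta>0$.

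\textbf{The gap.} After your reduction, the theorem is equivalent to this assertion: for every $S\in\Sp_+(d,\bC)$, some real symplectic substitution carries $h_S$ to $h_\Xi$ with $\Xi$ purely positive. You do not prove this; you call it the main obstacle and defer it to H\"ormander. Describing the orbits through $\ker h_S$ and the induced form relative to $\sigma$ is only a restatement, since by your own reduction the orbit of $h_S$ is exactly the double coset. Kernel data also cannot see the continuous invariants. For instance, $h_{\cR_\vartheta}$ is positive definite for every $\vartheta>0$, yet distinct $\vartheta$ lie in distinct double cosets.

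Those invariants are carried by $T_S=\overline S^{{-1}}S$, which satisfies
\begin{itemize}
\item $h_S(z)=i^{-1}\sigma((T_S-I)z,\overline z)$,
\item $T_{U_1SU_2}=U_2^{-1}T_SU_2$,
\item $T_\Xi=\Xi^2$, with eigenvalues $e^{\pm2\vartheta_j}$ from the $\cR_{\vartheta_j}$ and unipotent blocks from the $V_{i\alpha_j}$.
\end{itemize}
The missing idea is an argument that positivity forces $T_S=\exp(-2iJQ)$ for some real $Q=Q^\top\geq0$, including the non-diagonalizable boundary case. Given that, $h_S=h_{\exp(-iJQ)}$, your reduction yields $S=U\exp(-iJQ)$ with $U\in\Sp(d,\bR)$, and Williamson gives $S=U_1\Xi U_2$.

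Your first route does not lead there. The Cartan polar decomposition $S=UP$ of $\Sp(d,\bC)$ is not adapted to the real structure. Its factor $U$ lies in the compact group $\Sp(d,\bC)\cap U(2d)$ and is in general not real, so it cannot be absorbed into $\Sp(d,\bR)$. Nothing in the proposal explains how to pass from the Hermitian factor $P$ to some $\exp(-iJQ)$ with $Q$ real.

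The decomposition you actually want, $S=U\exp(-iJQ)$ with $U\in\Sp(d,\bR)$ and $Q\geq0$, is itself equivalent to the theorem via Williamson, so it cannot be a starting point. For comparison, the paper does not prove the statement either. It imports it from H\"ormander's Proposition 5.10 and Brunet's Theorem 2.1, and Remark~\ref{rem-equivalences} only reconciles their conventions. As written, your proposal likewise rests on H\"ormander for exactly the nontrivial step.
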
 
		Let us compare the statements in the works of H\"ormander and Brunet that, together, give Theorem \ref{theo.class}.
		\begin{remark}\label{rem-equivalences}
			Although apparently different, \cite[Proposition 5.10]{Hormander3} and \cite[Theorem 2.1]{Brunet} are equivalent. To see it, three points must be discussed.
						
			First, Brunet's approach uses the embedding $\Sp(d,\bR)\hookrightarrow\Sp(d,\bC)$ given by the conjugation with 
			\begin{equation}
				\cW=\frac{1}{\sqrt{2}}\begin{pmatrix}
					I_d & iI_d\\
					I_d & -iI_d
				\end{pmatrix},
			\end{equation}
			see also \cite{Folland}, where the very same notation is used, and the Examples in Brunet's \cite[Section V]{Brunet}, where the matrices $\cR_\vartheta$ and $V_{i\alpha}$ are derived explicitly. Precisely, $U\in\Sp(d,\bR)$ is identified with $\cW U\cW^{-1}\in\Sp(d,\bC)$. In contrast, H\"ormander does not resort to this identification. Moreover, in his construction, Brunet uses
			\begin{equation}\label{matBrunet}
				\cW^{-1}\begin{pmatrix}
					1-\alpha & -\alpha\\
					\alpha & 1+\alpha
				\end{pmatrix}\cW=V_{2i\alpha},
			\end{equation}
			rather than $V_{i\alpha}$. This difference is irrelevant, since in turns $V_{i\alpha}$ and $V_{2i\alpha}$ can be both replaced by $V_{i}$, as we hereby explain.
			
			In H\"ormander's \cite[Proposition 5.10]{Hormander3}, only $\alpha=1$ in \eqref{Va} is allowed, but this is not restrictive: if $\alpha>0$,
			\begin{equation}
				V_{i\alpha}=\cD_{\alpha^{1/2}}V_{i}\cD_{\alpha^{-1/2}},
			\end{equation}
			and the matrices $\cD_{\alpha^{1/2}},\cD_{\alpha^{-1/2}}$ are in $\Sp(1,\bR)$, so the contributions of every $\alpha_j\neq1$ can be embodied in $U_1$ and $U_2$ in \eqref{factorization}, respectively.
			
			Finally, in \cite[Proposition 5.10]{Hormander3}, the matrices $\cR_{\tau_j}$ and $V_{i\alpha_j}$ constructing $\Xi$ can be arranged in any order. However, in \cite[Theorem 2.1]{Brunet}, the matrix $\Xi$ is constructed by assembling the (conjugate via $\cW$ of the blocks) $V_{i\alpha}$ firstly, and the blocks $\cR_{\vartheta_j}$ secondly. Again, the two constructions are equivalent, since the matrices $\cR_\vartheta\otimes V_i$ and $V_i\otimes\cR_\vartheta$ are conjugate through the real symplectic matrix $\cD_L$, with
			\begin{equation}
				L=\begin{pmatrix}
				0 & 1\\
				1 & 0
				\end{pmatrix},
			\end{equation}
			i.e., (observe that $\cD_L^{-1}=\cD_L$)
			\begin{equation}
				\begin{pmatrix}
					1 & 0 & 0 & 0\\
					0 & \cosh\vartheta & 0 & -i\sinh\vartheta\\
					i & 0 & 1 & 0\\
					0 & i\sinh\vartheta & 0 & \cosh\vartheta
				\end{pmatrix}=\cD_L 
				\begin{pmatrix}
					\cosh\vartheta & 0 & -i\sinh\vartheta & 0\\
					0 & 1 & 0 & 0\\
					i\sinh\vartheta & 0 & \cosh\vartheta & 0\\
					0 & i & 0 & 1
				\end{pmatrix}
				\cD_L.
			\end{equation}
			The equivalence for $d>2$ is just a mere generalization of the preceding argument.
		\end{remark}
        
		We conclude this section by stating the role of $\Sp(d,\bR)$ as a subgroup of $\Sp_+(d,\bC)$ and the topological properties of $\Sp_+(d,\bC)$.
		
		\begin{theorem}\cite[Theorem 5.12]{Hormander3} 
			A matrix $S\in\Sp_+(d,\bC)$ is invertible if and only if it is a real symplectic matrix.
		\end{theorem}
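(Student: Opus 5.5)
The plan is to read ``invertible'' as invertible within the semigroup, i.e.\ $S^{-1}\in\Sp_+(d,\bC)$. Every element of $\Sp(d,\bC)$ is already invertible as a matrix, so this is the only meaningful reading. I then translate positivity of both $S$ and $S^{-1}$ into the statement that $S$ preserves a Hermitian form. Combined with the defining relation $S^\top JS=J$, this should force $S=\overline S$.

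\emph{Easy direction.} If $S\in\Sp(d,\bR)$, then $S^{-1}\in\Sp(d,\bR)$ as well. For any real symplectic $U$ and $z\in\bC^{2d}$ we have $\overline{Uz}=U\overline z$, hence $\sigma(Uz,\overline{Uz})=\sigma(Uz,U\overline z)=\sigma(z,\overline z)$. So the quantity in \eqref{cond.pos} vanishes identically, and both $S$ and $S^{-1}$ are positive.

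\emph{Converse, step one: the positivity form vanishes.} Let $S,S^{-1}\in\Sp_+(d,\bC)$ and set
\[
q_S(z)=i^{-1}\big(\sigma(Sz,\overline{Sz})-\sigma(z,\overline z)\big),
\]
which is real-valued and, by \eqref{cond.pos}, satisfies $q_S\ge 0$. Apply \eqref{cond.pos} to $S^{-1}$ at the point $z=Sw$. This gives
\[
i^{-1}\big(\sigma(w,\overline w)-\sigma(Sw,\overline{Sw})\big)=-q_S(w)\geq 0 .
\]
Together with $q_S\geq 0$ this yields $q_S\equiv 0$.

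\emph{Converse, step two: $S$ preserves a Hermitian form.} Consider the sesquilinear form $h(z,w)=i^{-1}\sigma(z,\overline w)=i^{-1}Jz\cdot\overline w=w^\ast(-iJ)z$. It is Hermitian because $(-iJ)^\ast=-iJ$. Step one says $h(Sz,Sz)=h(z,z)$ for all $z$. By the polarization identity for Hermitian forms, $h(Sz,Sw)=h(z,w)$ for all $z,w$, that is, $S^\ast(-iJ)S=-iJ$. Equivalently, $\overline S^{\,\top}JS=J$.

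\emph{Converse, step three: conclude $S$ is real.} Compare with $S^\top JS=J$. Since $S$ is invertible, both relations give $S^\top J=JS^{-1}=\overline S^{\,\top}J$. Because $J$ is invertible, $S^\top=\overline S^{\,\top}$, so $S=\overline S$ is real and lies in $\Sp(d,\bR)$. The rewriting of \eqref{cond.pos} in Proposition \ref{SmathcalM} offers a real-coordinates check of the same argument: $\mathcal M_S\ge0$ and $\mathcal M_{S^{-1}}\ge0$ should force $S_I^\top JS_I=0$ and $S_R^\top JS_I=0$, hence $S_I=0$ by Remark \ref{S_ReS_I}. The Hermitian-form route above is cleaner, so I would present that one.

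The only delicate point is step two. One has to check that $h$ is genuinely Hermitian, so that polarization applies with complex coefficients, and that the quantity in \eqref{cond.pos} is exactly $h(Sz,Sz)-h(z,z)$. Both follow from the antisymmetry of $\sigma$ and a direct computation; the rest is routine linear algebra.
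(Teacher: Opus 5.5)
Your proof is correct, and your reading of ``invertible'' as ``$S^{-1}\in\Sp_+(d,\bC)$'' is the right one, since every element of $\Sp(d,\bC)$ has determinant $1$. The paper does not prove this statement; it quotes it from H\"ormander's Theorem 5.12, so there is no in-paper argument to compare against.

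Your argument uses only Definition~\ref{defPosSymp}, in three moves:
\begin{itemize}
\item Testing positivity of $S^{-1}$ at $z=Sw$ forces $q_S\equiv 0$.
\item Polarizing the Hermitian form $w^\ast(-iJ)z$ upgrades this to $\overline S^{\,\top}JS=J$.
\item Comparing with $S^\top JS=J$ then gives $\overline S=S$.
\end{itemize}
Your real-coordinates variant also closes. Indeed $q_S(x+i\xi)=2\,\mathcal M_S(x,\xi)\cdot(x,\xi)$ with $\mathcal M_S$ symmetric, so $q_S\equiv0$ gives $\mathcal M_S=0$. Then $S_R^\top JS_R=J$ by Remark~\ref{S_ReS_I}, so $S_R$ is invertible, and $S_R^\top JS_I=0$ yields $S_I=0$.

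With the paper's own machinery, one would instead factor $S=U_1\Xi U_2$ as in Theorem~\ref{theo.class}.
\begin{itemize}
\item Since $U_1^{\pm1},U_2^{\pm1}\in\Sp(d,\bR)\subseteq\Sp_+(d,\bC)$, one has $S^{-1}\in\Sp_+(d,\bC)$ if and only if $\Xi^{-1}\in\Sp_+(d,\bC)$.
\item $\Xi^{-1}$ is a tensor product of factors $I$, $\mathcal R_{-\vartheta_j}$ or $V_{-i\alpha_j}$.
\item Restricting \eqref{cond.pos} to the $j$-th coordinate pair shows that $V_{-i\alpha}$ ($\alpha>0$) and $\mathcal R_{-\vartheta}$ ($\vartheta>0$) violate positivity. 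Hence $\Xi=I$ and $S=U_1U_2$ is real.
\end{itemize}

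That route is short once the factorization is available. However, the factorization is the deep input (H\"ormander's Proposition 5.10, Brunet's Theorem 2.1). Your argument is elementary and independent of it. It also proves slightly more: any $S\in\Sp(d,\bC)$ for which the quantity in \eqref{cond.pos} vanishes identically is already real.
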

		
		For the following result we refer to \cite[Theorem 2.5]{Brunet} and \cite[Section 5]{Hormander3}.
		
		\begin{theorem}
			The semigroup $\Sp_+(d,\bC)$ is connected and closed in $\Sp(d,\bC)$. Moreover, the interior of $\Sp_+(d,\bC)$ is the set (semigroup) of {strictly positive symplectic matrices}, and its boundary contains $\Sp(d,\bR)$.
		\end{theorem}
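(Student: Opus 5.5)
The plan is to argue directly from Definition \ref{defPosSymp} and the structure of $\Sp_+(d,\bC)$ described above (Theorem \ref{theo.class}). We establish three claims in turn: closedness, connectedness, and the identification of the interior and of part of the boundary.

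\emph{Closedness.} Fix $z\in\bC^{2d}$. The map
\begin{equation}
S\mapsto i^{-1}\big(\sigma(Sz,\overline{Sz})-\sigma(z,\overline z)\big)
\end{equation}
is continuous and real-valued on $\Sp(d,\bC)$. Hence $\Sp_+(d,\bC)$ is an intersection of preimages of $[0,\infty)$, and is therefore closed. Equivalently, one can use Proposition \ref{SmathcalM}: the map $S\mapsto\cM_S$ is continuous, and the cone of positive semi-definite matrices is closed.

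\emph{Connectedness.} We use the factorization $S=U_1\Xi U_2$ of Theorem \ref{theo.class}. First, $\Sp(d,\bR)$ is path-connected. Second, each purely positive $\Xi$ can be joined to the identity inside $\Sp_+(d,\bC)$. To do this, deform each block along $\vartheta_j\to0$ or $\alpha_j\to0$; the matrices $\cR_{s\vartheta_j}$ and $V_{is\alpha_j}$ remain positive for $s\in[0,1]$. Positivity of these one-dimensional blocks follows from the computation of $\cM_S$ in Proposition \ref{SmathcalM}. Tensor products of positive blocks are positive because the form in \eqref{cond.pos} splits as a sum over the $d$ coordinate planes. Composing the three paths, $U_1(t)\Xi(t)U_2(t)$ is a path inside the semigroup, since it is a product of elements of $\Sp_+(d,\bC)$.

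\emph{Interior and boundary.} Let $S$ be strictly positive. Then $\cM_S>0$, and by continuity $\cM_{S'}>0$ for $S'$ close to $S$ in $\Sp(d,\bC)$. So the strictly positive matrices are interior points. Conversely, suppose $S$ is interior but not strict. Then there is $z\neq0$ at which \eqref{cond.pos} vanishes. We perturb $S$ to $S'=SV$, where $V$ is a small element of $\Sp(d,\bC)$ close to the identity, chosen to make \eqref{cond.pos} negative at $z$. A natural choice is $V=V_{-i\epsilon Q}$, possibly conjugated by a real symplectic matrix $W$ with $W$ moving $z$ into a convenient position. For such $V$ the quantity \eqref{cond.pos} is negative at $z$ (a reverse contraction). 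The form for $SV$ at $z$ equals the form for $S$ evaluated at $Vz$ plus the form for $V$ evaluated at $z$, i.e.
\begin{equation}
q_S(Vz)+q_V(z),
\end{equation}
where $q_T(w)=i^{-1}\big(\sigma(Tw,\overline{Tw})-\sigma(w,\overline w)\big)$. One needs this to be negative for small $\epsilon$. This is the main obstacle. The term $q_S(Vz)$ is nonnegative, vanishes at $\epsilon=0$, and is minimized there, so it is $O(\epsilon^2)$. Meanwhile $q_V(z)$ is exactly linear in $\epsilon$ with a negative coefficient, once $Q$ is chosen so that this coefficient is nonzero on $z$. Hence the sum is negative for small $\epsilon>0$, contradicting interiority. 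Thus the interior is exactly the set of strictly positive matrices.

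Finally, for real $U$ one has $q_U\equiv0$, so $U$ is not strict and hence not interior. Since $U\in\Sp_+(d,\bC)$ and the semigroup is closed, $U$ lies on the boundary. Alternatively, $U$ is the limit of the strictly positive matrices $U\cR_\epsilon^{\otimes d}$ as $\epsilon\to0$. The semigroup property of the interior follows from the elementary identity $q_{ST}(z)=q_S(Tz)+q_T(z)$, which shows that a product of strictly positive matrices is again strictly positive.
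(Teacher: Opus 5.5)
Your proof is correct. The paper itself gives no proof of this statement; it refers to Brunet [Theorem 2.5] and H\"ormander [Section 5], where these properties come out of a general structure theory: positive Lagrangian planes and canonical relations in H\"ormander, the Fock-space picture in Brunet.

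You instead work directly with the Hermitian form $q_S(z)=z^\ast\big(-i(S^\ast JS-J)\big)z$ and the cocycle identity $q_{ST}(z)=q_S(Tz)+q_T(z)$. This makes several parts one-line consequences: closedness, openness of strict positivity, the semigroup property of the interior, and the fact that real $U$ (for which $q_U\equiv 0$) are not interior points. The only external input is the factorization of Theorem~\ref{theo.class}, which you use for path-connectedness.

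The delicate step is showing that a non-strict $S$ is not interior, and you handle it correctly. Since $q_S\ge 0$ on all of $\bC^{2d}$ and $q_S(z)=0$, the vector $z$ lies in the kernel of the positive semidefinite matrix $-i(S^\ast JS-J)$. Hence $q_S(V_\epsilon z)=q_S(V_\epsilon z-z)=O(\epsilon^2)$. Take $V_\epsilon=W^{-1}V_{-i\epsilon I}W$, with $W=I$ or $W=J$ chosen so that the first half $a'$ of $Wz$ is nonzero. Then $q_{V_\epsilon}(z)=-2\epsilon|a'|^2$ exactly, so $q_{SV_\epsilon}(z)<0$ for small $\epsilon>0$. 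Your "minimum at $\epsilon=0$" justification is also valid, because $\epsilon$ may range over both signs.

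Your route yields a self-contained, elementary proof with explicit perturbations that leave $\Sp_+(d,\bC)$. The cited route places these facts inside a broader theory that also produces the factorization itself, but that material is not reproduced in the paper.
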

		
		We now turn to the definition of the \emph{metaplectic semigroup} $\Mp_+(d,\bC)$.
		
		\subsection{Definition and main properties of the metaplectic semigroup}\label{subsec:32}
		
		Following \cite{Hormander3}, for $\vartheta\geq0$, we consider
		\begin{equation}\label{defRtau}
			\mathfrak{R}_\vartheta f(x) =(\cosh \vartheta)^{-1/2} \int_{-\infty}^\infty e^{i\pi (2x\eta +i(x^2+\eta^2)\sinh\vartheta)/\cosh\vartheta}\widehat{f}(\eta) d\eta, \quad f \in \cS(\mathbb{R}),
		\end{equation}
 		whereas for $\alpha>0$,
		\begin{equation}\label{defpialpha}
			\frp_{i\alpha} f(x)=e^{-\pi \alpha x^2}f(x), \quad f \in \cS(\mathbb{R}).
		\end{equation}
		These operators are contractions of $L^2(\bR)$. This is trivial for $\frp_{i\alpha}$, whereas a simple computation shows that 
		\begin{equation}
			\mathfrak{R}_\vartheta=\frp_{i\tanh\vartheta}\mathfrak{T}_{1/\cosh\vartheta}\cF^{-1}\frp_{i\tanh\vartheta}\cF, 
		\end{equation}
		which is a composition of contractions. 
		
		For every $j=1,\ldots,d$, let $\widehat \Xi_j=\mathfrak{R}_{\vartheta_j}$ ($\vartheta_j\geq0$) or $\widehat \Xi_j=\frp_{i\alpha_j}$ ($\alpha_j>0$). Then, there exists a unique operator
		\begin{equation}\label{tensorOp}
			\widehat \Xi=\widehat \Xi_1\otimes\ldots\otimes \widehat \Xi_d,
		\end{equation}
		whose action on $L^2(\rd)$ is characterized by the action on tensor products
		\begin{equation}
			\widehat \Xi(f_1\otimes\ldots\otimes f_d)=\bigotimes_{j=1}^d\widehat \Xi_j f_j, \qquad f_1,\ldots,f_d\in L^2(\bR),
		\end{equation}
		see e.g. \cite[Theorem 2.6.12]{Kadison}. This operator is bounded on $L^2(\rd)$. For every $j=1,\ldots,d$, let 
		\begin{equation}
			\Xi_j=\begin{cases}
				\cR_{\vartheta_j} & \text{if $\widehat \Xi_j=\mathfrak{R}_{\vartheta_j}$},\\
				V_{i\alpha_j} & \text{if $\widehat \Xi_j=\frp_{i\alpha_j}$}.
			\end{cases}
		\end{equation}
		Let $\Xi=\Xi_1\otimes\ldots\otimes \Xi_d$. By construction, $\Xi$ is a purely positive symplectic matrix.
        \begin{definition}
            The operators $\widehat \Xi$ in \eqref{tensorOp} are called {\em atomic metaplectic contractions}.
        \end{definition}
		\begin{definition}\label{defOscillatorSemigroup}
			The {\em metaplectic semigroup} $\Mp_+(d,\bC)$ is the semigroup generated by $\Mp(d,\bR)$ and atomic metaplectic contractions. Precisely, $\widehat S\in \Mp_+(d,\bC)$ if $\widehat S=\widehat S_1\circ\ldots\circ\widehat S_N$ for some $N\geq 1$ and each $\widehat S_j$ ($j=1,\ldots,N$) is either an atomic metaplectic contraction or in $\Mp(d,\bR)$. We call any operator $\widehat S\in \Mp_+(d,\bC)$ {\em complex metaplectic operator}. 
		\end{definition}
		Under the notation above, we define a homomorphism $\pi^{Mp}_+:\Mp_+(d,\bC)\to\Sp_+(d,\bC)$ by its action on the generators of $\Mp_+(d,\bR)$ as follows. We set:
		\begin{equation}
			\pi^{Mp}_+(\widehat U)=\pi^{Mp}(\widehat U)
		\end{equation}
		if $\widehat U\in \Mp(d,\bR)$, while if $\widehat \Xi=\widehat \Xi_1\otimes\ldots\otimes \widehat \Xi_d$ is an atomic metaplectic contraction, we set
		\begin{equation}
		\pi^{Mp}_+(\widehat \Xi)=\Xi_1\otimes\ldots\otimes \Xi_d. 
		\end{equation}
		
		The contents of \cite[Theorem 5.12]{Hormander3} and \cite[Proposition 4.2]{Brunet} are summarized in the following Theorem.
		
		\begin{theorem}\label{thmOscillator}
			\begin{enumerate}[(i)]
			\item The homomorphism $\pi^{Mp}_+:\Mp_+(d,\bC)\to \Sp_+(d,\bC)$ is a two-fold cover of  $\Sp_+(d,\bC)$. The mapping $S\in\Sp_+(d,\bC)\mapsto \widehat S\in\Mp_+(d,\bC)$ is a projective representation. 
			\item For every $\widehat S\in\Mp_+(d,\bC)$ there exist $\widehat U_1,\widehat U_2\in\Mp(d,\bR)$ and $\widehat \Xi$ atomic metaplectic contraction such that 
            \begin{equation}\label{HFactoriz}
                \widehat S=\widehat U_1\widehat \Xi\widehat U_2.
            \end{equation}
			\item Every complex metaplectic operator is an injective contraction.
			\item $\widehat S\in \Mp_+(d,\bC)$ is invertible if and only if $\widehat S\in\Mp(d,\bR)$.
			\item The image of $\widehat S\in\Mp_+(d,\bC)$ is dense in $L^2(\rd)$, and it coincides with $L^2(\rd)$ if and only if $\widehat S\in \Mp(d,\bR)$.
			\item If $\widehat S\in\Mp_+(d,\bC)$, then $\widehat S^\ast\in\Mp_+(d,\bC)$, and $\widehat S^\ast = \widehat{S^\#}$, where $\pi^{Mp}_+(\widehat S^\ast)=S^\#$ has blocks as in \eqref{blockShash}.
			\item The inverses of the elements of $\Mp_+(d,\bC)$ are closed densely defined operators on $L^2(\rd)$.
			\item Every operator in $\Mp_+(d,\bC)$ restricts to a continuous operator from $\cS(\rd)$ to itself. Moreover, if $\widehat S\in\Mp_+(d,\bC)$, for $f\in\cS'(\rd)$
			\begin{equation}
				\langle \widehat Sf,g\rangle=\langle f,\widehat S^\ast g\rangle, \qquad g\in\cS(\rd),
			\end{equation}
			extends $\widehat S$ to a continuous operator from $\cS'(\rd)$ to itself.
			\end{enumerate}
		\end{theorem}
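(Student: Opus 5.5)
The plan is to derive every item from the factorization of Theorem \ref{theo.class} and from the explicit form of the generators. The only genuinely nontrivial point is that $\pi^{Mp}_+$ is well defined. The generators are elements of $\Mp(d,\bR)$, the multiplications $\frp_{i\alpha}$ by Gaussians, and the operators $\mathfrak{R}_\vartheta=\frp_{i\tanh\vartheta}\mathfrak{T}_{1/\cosh\vartheta}\cF^{-1}\frp_{i\tanh\vartheta}\cF$.

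\textbf{Step 1: well-definedness of $\pi^{Mp}_+$.} Following H\"ormander, I would attach to each operator of the form $c\, e^{i\pi q(x,y)}$ its twisted canonical relation. Here $q$ is a complex quadratic form with $\Im q\ge0$, possibly degenerate, in which case the operator is understood as a limit or partial-Fourier version. The twisted canonical relation is the graph $\{(x,\partial_xq;\,y,-\partial_yq)\}\subset\bC^{4d}$, which is the graph of a matrix $S$. I would then prove two facts:
\begin{itemize}
\item every generator has a Gaussian kernel whose graph is the prescribed matrix ($J$, $\cD_E$, $V_Q$, $V_{i\alpha}$, $\cR_\vartheta$);
\item composing two such kernels with the Gaussian integral formula of Proposition \ref{GaussianIntegrals} produces a Gaussian kernel whose graph is the product matrix.
\end{itemize}
The second fact is a stationary-phase identity that is exact for quadratic phases. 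From these, $S$ depends only on the operator, so $\pi^{Mp}_+$ is a homomorphism onto the semigroup generated by $\Sp(d,\bR)$ and the purely positive matrices. By Theorem \ref{theo.class}, this semigroup is $\Sp_+(d,\bC)$.

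The kernel of $\pi^{Mp}_+$ consists of operators with the identity relation. These are multiples $c\,\id$, and $c^2=1$ follows by tracking the square-root normalizations $\det(\cdot)^{-1/2}$ in Proposition \ref{GaussianIntegrals}. This gives the two-fold cover, and the projective representation follows. I expect this step, in particular the sign bookkeeping when $q$ degenerates, to be the main obstacle. The way around it is to continue analytically from the real case $\Mp(d,\bR)$ along the connected semigroup $\Sp_+(d,\bC)$.

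\textbf{Step 2: items (ii), (iii), (vi).} Item (ii) follows by applying Theorem \ref{theo.class} to $\pi^{Mp}_+(\widehat S)$, lifting each factor, and absorbing the sign into $\widehat U_1$.

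For (iii), each generator is a contraction. Injectivity holds because $\frp_{i\alpha}$ multiplies by a nonvanishing function, $\mathfrak{R}_\vartheta$ is a composition of injective maps, tensor products of injective maps are injective, and unitaries are injective.

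For (vi), a direct check gives $\frp_{i\alpha}^\ast=\frp_{i\alpha}$ and $\mathfrak{R}_\vartheta^\ast=\mathfrak{R}_\vartheta$, the latter because its kernel is symmetric with respect to the required conjugation. A matrix computation gives $V_{i\alpha}^\#=V_{i\alpha}$ and $\cR_\vartheta^\#=\cR_\vartheta$, and for real $U$ one has $U^\#=U^{-1}$. Since $(ST)^\#=T^\#S^\#$, the map $\widehat S\mapsto\widehat S^\ast$ matches $S\mapsto S^\#$ on all of $\Mp_+(d,\bC)$.

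\textbf{Step 3: items (iv), (v), (vii), (viii).}

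For (v), the range of $\widehat S$ is dense because $\ker\widehat S^\ast=\{0\}$, by (vi) and (iii).

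For (iv), suppose $\widehat S=\widehat U_1\widehat\Xi\widehat U_2$ is invertible. Then $\widehat\Xi$ is surjective. If some factor $\widehat\Xi_j$ is nontrivial, then $\widehat\Xi$ is not surjective:
\begin{itemize}
\item $\frp_{i\alpha}$ has range inside the set of functions $e^{-\pi\alpha x^2}g$ with $g\in L^2$, which misses $L^2$ functions decaying slower than this Gaussian;
\item $\mathfrak{R}_\vartheta$ with $\vartheta>0$ maps into entire functions of Gaussian type.
\end{itemize}
Hence $\Xi=I$ and $\widehat S\in\Mp(d,\bR)$. This argument also gives the ``if and only if'' part of (v). It is consistent with \cite[Theorem 5.12]{Hormander3}, since $S$ is then invertible and therefore real.

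For (vii), the inverse of an injective bounded operator is closed, and its domain is the range of $\widehat S$, which is dense by (v).

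For (viii), each generator maps $\cS(\rd)$ continuously into itself. The adjoint $\widehat S^\ast=\widehat{S^\#}$ does too, so the duality formula defines a continuous extension to $\cS'(\rd)$.
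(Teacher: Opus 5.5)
\textbf{What holds up.} The paper gives no proof of this theorem; it quotes all eight items from \cite[Theorem 5.12]{Hormander3} and \cite[Proposition 4.2]{Brunet}. Your derivations of (iii), (v), (vi), (vii), (viii) from the explicit generators are correct and more self-contained than that citation. This covers contractivity and injectivity of $\frp_{i\alpha}$ and $\frR_\vartheta=\frp_{i\tanh\vartheta}\frT_{1/\cosh\vartheta}\cF^{-1}\frp_{i\tanh\vartheta}\cF$, self-adjointness of the atoms, and $S^\#=J^{-1}S^\ast J$ (which gives $(ST)^\#=T^\#S^\#$ and $U^\#=U^{-1}$ for real $U$). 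It also covers dense range from $\ker\widehat S^\ast=\{0\}$, closedness of the inverse, and the duality extension to $\cS'$. Two small points remain. In (iv) you still have to pass from a non-surjective one-dimensional atom to $\widehat\Xi=\frp_{i\Delta}\frR_\Theta$ on $L^2(\rd)$; this is routine, since the range lies in $e^{-\pi\Delta x\cdot x}L^2$, or in functions entire in $x_j$ for a.e.\ value of the other variables. Also, (iv)--(v) need not go through (ii). If $G_1\cdots G_N$ is onto, so is $G_1$, hence $G_1\in\Mp(d,\bR)$; you can then peel it off and induct.

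\textbf{The gap is in (i), and (ii) inherits it.} You get ``two-fold cover'' from the fibre over $I$ being $\{\pm\id\}$, but $\Mp_+(d,\bC)$ is only a semigroup. If $\widehat S$ and $\widehat S'$ are two non-invertible lifts of the same $S$, you cannot form $(\widehat S')^{-1}\widehat S$, so the fibre over $I$ says nothing about $(\pi^{Mp}_+)^{-1}(S)$. The fibre over $I$ is actually the easy case and needs no normalization bookkeeping. Indeed, $h(z)=i^{-1}\sigma(z,\bar z)$ is non-decreasing under positive matrices, so $S_1\cdots S_N=I$ forces every $S_k$ to preserve $h$, hence to be real, and you are back in $\Mp(d,\bR)$. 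What you need for every $S\in\Sp_+(d,\bC)$ is two things:
\begin{enumerate}
\item[(a)] any two lifts of $S$ are proportional. This does follow from your kernel picture, since the distributions annihilated by the linear relations of a positive Lagrangian form a one-dimensional space.
\item[(b)] the constant is $\pm1$. That is, the scalar produced when normalized Gaussian kernels are composed via Proposition \ref{GaussianIntegrals} must agree up to sign with a normalization depending only on $S$. This includes the degenerate, merely oscillatory compositions on the boundary.
\end{enumerate}
Step (b) is exactly the substance of \cite[Theorem 5.12]{Hormander3}. ``Tracking the square-root normalizations'' or ``continuing analytically along $\Sp_+(d,\bC)$'' names it but does not prove it. Moreover, $\Sp_+(d,\bC)$ contains $\Sp(d,\bR)$, where the metaplectic square root cannot be chosen continuously, so such a continuation only makes sense on the double cover you are trying to construct. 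Your proof of (ii) lifts $S=U_1\Xi U_2$ factor by factor and ``absorbs the sign'', so it uses (b) for non-invertible $S$. Without (b) you only get $\widehat S=c\,\widehat U_1\widehat\Xi\widehat U_2$ for some $c\neq0$. Either prove (b), or take (i)--(ii) from H\"ormander and Brunet as the paper does.
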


        We stress that, in view of item $(i)$ of the preceding Theorem \ref{thmOscillator}, $S\in\Sp_+(d,\bC)$ identifies $\widehat S$ up to a constant $c\in\bC\setminus\{0\}$, with $|c|\leq1$, in order to obtain contractions. In absence of ambiguity, we will omit the choice of $c$ in the following discussion.

        \begin{remark}\label{remAMC}
            Let us summarize the properties of atomic metaplectic contractions in this remark. Every such operator is the tensor product of $d$ ``atoms": \begin{equation}
                \widehat\Xi=\bigotimes_{j=1}^d\widehat\Xi_j,
            \end{equation}
            where for each $j=1,\ldots,d$, 
            \begin{equation}
                \widehat\Xi_j=\begin{cases}
                    \mathfrak{p}_{i\alpha_j} & \text{for some $\alpha_j>0$},\\
                    \id_{L^2} \\
                    \frR_{\vartheta_j} & \text{for some $\vartheta_j>0$}.
                \end{cases}
            \end{equation}
            The corresponding projection is
            \begin{equation}
                \Xi=\bigotimes_{j=1}^d\Xi_j
            \end{equation}
            where
            \begin{equation}
                \Xi_j=\begin{cases}
                    V_{i\alpha_j} & \text{if $\widehat\Xi_j=\frp_{i\alpha_j}$,}\\
                    I & \text{if $\widehat\Xi_j=\id_{L^2}$,}\\
                    \mathcal{R}_{\vartheta_j} & \text{if $\widehat\Xi_j=\frR_{\vartheta_j}$}.
                \end{cases}
            \end{equation}
            Observe that, by the equivalence between H\"ormander's and Brunet's approaches, we may restrict to consider $\alpha_j=1$, when it is convenient.
            We can divide the contributions of the submatrices in the form $\cR_{\vartheta_j}$ and those in the form $V_{i\alpha_j}$ as follows
		\begin{equation}\label{subdivision}
			\Xi=\underbrace{\begin{pmatrix}
				\diag(a_1,\ldots,a_d) & -\diag(b_1,\ldots,b_d)\\
				\diag(b_1,\ldots,b_d) & \diag(a_1,\ldots,a_d)
			\end{pmatrix}}_{=:\Xi_1}
			\underbrace{\begin{pmatrix}
				I & O\\
				i\diag(\lambda_1,\ldots,\lambda_d) & I
			\end{pmatrix}}_{=:\Xi_2},
		\end{equation}
		where
		\begin{equation}
			a_j=\begin{cases}
				\cosh\vartheta_j & \text{if $\Xi_j=\cR_{\vartheta_j}$},\\
				1 & \text{otherwise},
			\end{cases}\qquad 
			b_j=\begin{cases}
				i\sinh\vartheta_j & \text{if $\Xi_j=\cR_{\vartheta_j}$},\\
				0 & \text{otherwise},
			\end{cases}\qquad 
			\lambda_j=\begin{cases}
				\alpha_j & \text{if $\Xi_j=V_{i\alpha_j}$},\\
				0 & \text{otherwise}.
			\end{cases}
		\end{equation}
        Observe that by construction $\Xi_1$ and $\Xi_2$ commute, and $\alpha_j\vartheta_j=0$ for every $j=1,\ldots,d$. 
        We write $\Xi_1=\mathcal{R}_\Theta$ and $\Xi_2=V_{i\Delta}$, with $\Theta=(\vartheta_1,\ldots,\vartheta_d)$, where we imply $\vartheta_j=0$ if $\Xi_j$ is the identity or in the form $V_{i\alpha_j}$ ($\alpha_j>0$), and $\Delta=\diag(\alpha_1,\ldots,\alpha_d)$ where, again, we assume $\alpha_j=0$ if $\Xi_j=I$ or in the form $\Xi_j=\mathcal{R}_{\vartheta_j}$ ($\vartheta_j>0$). Consequently, if $\widehat\Xi$ is an atomic metaplectic contraction, we can write
        \begin{equation}\label{S2Vp}
            \widehat\Xi=\frp_{i\Delta}\frR_{\Theta}=\frR_{\Theta}\frp_{i\Delta},
        \end{equation}
        here $\frp_{i\Delta}f=\bigotimes_{j=1}^d\frp_{i\alpha_j}$ and $\frR_{\Theta}f=\bigotimes_{j=1}^d\frR_{\vartheta_j}$, where again we imply $\alpha_j=0$ and $\vartheta_j=0$ if $\Xi_j=I$. 
        \end{remark}
        
	\section{Analysis of the metaplectic semigroup \texorpdfstring{$\Mp_+(d,\bC)$}{Lg}}\label{subsec:anp}
	In this section, we establish the generators of $\Mp_+(d,\bC)$, we study their polar decomposition and the validity of the intertwining relation with the Schr\"odinger representation in this context.
	\subsection{Generators} 
	Now that the metaplectic semigroup has been extensively introduced, we turn to a description of $\Sp_+(d,\bC)$ and $\Mp_+(d,\bC)$ that does not resort to atomic metaplectic contractions. At the level of positive symplectic matrices, we shall see that $\Sp_+(d,\bC)$ is simply obtained by adding matrices $V_Q$ with $Q\in\Sigma_{\geq0}(d)$ to the generators of $\Sp(d,\bR)$.
	\begin{proposition}\label{prop51}
		The semigroup $\Sp_+(d,\bC)$ is generated by $J$ and by the matrices in the form $\cD_E$, $E\in\GL(d,\bR)$, and $V_Q$, $Q\in\Sigma_{\geq0}(d)$.
	\end{proposition}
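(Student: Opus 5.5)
Let $\mathcal G$ be the semigroup generated by $J$, the $\cD_E$ with $E\in\GL(d,\bR)$, and the $V_Q$ with $Q\in\Sigma_{\geq0}(d)$. The plan is to prove the two inclusions $\mathcal G\subseteq\Sp_+(d,\bC)$ and $\Sp_+(d,\bC)\subseteq\mathcal G$ separately.

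For $\mathcal G\subseteq\Sp_+(d,\bC)$, it suffices to check that each generator is positive, because $\Sp_+(d,\bC)$ is a semigroup \cite[Lemma 3.2]{Hormander3}. The matrices $J$ and $\cD_E$ are real symplectic. For a real $S$ we have $\sigma(Sz,\overline{Sz})=\sigma(Sz,S\bar z)=\sigma(z,\bar z)$, so the left-hand side of \eqref{cond.pos} vanishes and $S$ is positive. For $V_Q$ with $Q=Q_R+iQ_I$, write $z=(x,\xi)\in\bC^{2d}$. Then $V_Qz=(x,Qx+\xi)$, and a direct computation gives
\begin{equation}
\sigma(V_Qz,\overline{V_Qz})-\sigma(z,\bar z)=\overline{Q}\bar x\cdot x-Qx\cdot\bar x=-2i\,Q_I x\cdot\bar x,
\end{equation}
with signs to be fixed according to the convention $\sigma(z,w)=Jz\cdot w$. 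Multiplying by $i^{-1}$ yields a nonnegative multiple of $Q_Ix\cdot\bar x\geq0$. As a cross-check, the criterion of Proposition \ref{SmathcalM} can be applied with $S_I=\begin{pmatrix}O&O\\Q_I&O\end{pmatrix}$; then $\cM_S$ has diagonal blocks $\diag(Q_I,O)$ and vanishing off-diagonal blocks, since $S_I^\top JS_I=0$. So $\cM_S\geq0$ exactly when $Q_I\geq0$.

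For $\Sp_+(d,\bC)\subseteq\mathcal G$, apply Theorem \ref{theo.class}: every $S\in\Sp_+(d,\bC)$ factors as $S=U_1\Xi U_2$ with $U_1,U_2\in\Sp(d,\bR)$ and $\Xi$ purely positive. By Proposition \ref{prop-gen-Sp} with $\mathbb F=\bR$, the group $\Sp(d,\bR)$ is generated by $J$, $\cD_E$ and $V_Q$ with $Q$ real symmetric. It is generated as a group, but inverses stay in $\mathcal G$ because $J^{-1}=J^3$, $\cD_E^{-1}=\cD_{E^{-1}}$ and $V_Q^{-1}=V_{-Q}$. Hence $\Sp(d,\bR)\subseteq\mathcal G$.

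It remains to show $\Xi\in\mathcal G$. I will use the factorization \eqref{subdivision}, $\Xi=\cR_\Theta V_{i\Delta}$. The factor $V_{i\Delta}$ is itself a generator, since $i\Delta\in\Sigma_{\geq0}(d)$. For $\cR_\Theta$ I need a factorization into real symplectic matrices and matrices $V_{iP}$ with $P\geq0$. The natural candidate comes from the operator identity $\frR_\vartheta=\frp_{i\tanh\vartheta}\frT_{1/\cosh\vartheta}\cF^{-1}\frp_{i\tanh\vartheta}\cF$, which suggests
\begin{equation}
\cR_\vartheta=V_{i\tanh\vartheta}\,\cD_{1/\cosh\vartheta}\,J^{-1}V_{i\tanh\vartheta}J
\end{equation}
in each coordinate, up to orientation conventions. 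Here $J^{-1}V_{i t}J$ is lower/upper triangular of the form $\begin{pmatrix}I&-it\\O&I\end{pmatrix}$. Verifying this identity entrywise, with the correct signs, is the main obstacle. If the signs come out wrong, the fallback is to solve directly for real symplectic $A,B$ and $t\geq0$ with $\cR_\vartheta=V_{it}\,A\,V_{it}\,B$. The blockwise $d$-dimensional version then follows by taking diagonal matrices $\diag(\tanh\vartheta_j)\geq0$ and $\diag(1/\cosh\vartheta_j)$, so $\cR_\Theta\in\mathcal G$, which completes the proof.
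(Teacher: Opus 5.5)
Your proposal is correct and is essentially the paper's proof. You show the generators are positive; the paper uses exactly your cross-check $\cM_{V_Q}=\diag(Q_I,O,Q_I,O)$. For the reverse inclusion you use Theorem \ref{theo.class} together with the same factorization $\cR_\vartheta=V_{i\tanh\vartheta}\cD_{1/\cosh\vartheta}J^{-1}V_{i\tanh\vartheta}J$, which you left unverified. It does hold entrywise, since $J^{-1}V_{it}J=\begin{pmatrix}1&-it\\0&1\end{pmatrix}$ and $\tanh\vartheta\sinh\vartheta+1/\cosh\vartheta=\cosh\vartheta$.

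Two small corrections:
\begin{enumerate}
\item With the paper's convention the direct computation gives $\sigma(V_Qz,\overline{V_Qz})-\sigma(z,\bar z)=+2i\,Q_Ix\cdot\bar x$. Your displayed sign $-2i\,Q_Ix\cdot\bar x$ would yield the opposite inequality after multiplying by $i^{-1}$.
\item In the $d$-dimensional step one must take $\tanh\vartheta_j=0$ and $1/\cosh\vartheta_j=1$ in the non-rotation coordinates, which your $\diag(\tanh\vartheta_j)$ choice already does. The paper's ``$\mu_j=1$ otherwise'' is a slip and would not give the identity in those coordinates.
\end{enumerate}
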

	\begin{proof}
		Let
		\begin{align}
			&\cE(d)=\{J\}\cup \{\cD_E:E\in\GL(d,\bR)\}\cup\{V_Q:Q\in\Sym(d,\bR)\}\cup \{S:\text{$S$ is purely positive}\}, \\
			&\cE'(d)=\{J\}\cup\{\cD_E:E\in\GL(d,\bR)\}\cup\{V_Q:Q\in\Sigma_{\geq0}(d)\}.
		\end{align}
		Observe that $\cE(d)$ generates $\Sp_+(d,\bC)$ according to Theorem \ref{theo.class}. Let us denote by $\Sp_+'(d,\bC)$ the semigroup generated by $\cE'(d)$.
		
		First, we show that $\cE'(d)\subseteq \Sp_+(d,\bC)$ using H\"ormander's positivity condition. It is clear that we only need to verify the inclusion of the matrices $V_Q$, with $Q\in\Sigma_{\geq0}(d)$, in $\Sp_+(d,\bC)$. For any such a matrix, let us write $Q=Q_1+iQ_2$, $Q_2\geq0$. In this case, the matrix $\cM_{V_Q}$ in \eqref{defMS}, is
		\begin{equation}
			\cM_{V_Q}=\left(\begin{array}{cc|cc}
				Q_2 & O & O & O\\
				O & O & O & O\\
				\hline
				O & O & Q_2 & O\\
				O & O & O & O
			\end{array}\right).
		\end{equation}
		Clearly, $\cM_{V_Q}\geq0$ if and only if $Q_2\geq0$, which proves the first inclusion. 
		
		Conversely, to prove that $\cE(d)\subseteq\Sp_+'(d,\bC)$, we need to show that every purely positive symplectic matrix can be decomposed into a product of factors in $\cE'(d)$. Observe that for every $\alpha>0$, the matrix $V_{i\alpha}$ defined in \eqref{Va} is trivially in $\Sp_+'(1,\bC)$. Moreover, for $\vartheta\geq0$, the factorization
		\begin{equation}\label{factorRtau}\begin{split}
			\cR_\vartheta&=\begin{pmatrix}
				1 & 0\\
				i\tanh\vartheta & 1
			\end{pmatrix}
			\begin{pmatrix}
				\cosh\vartheta & 0\\
				0 & 1/\cosh\vartheta
			\end{pmatrix}
			J^{-1}
			\begin{pmatrix}
				1 & 0\\
				i\tanh\vartheta & 1
			\end{pmatrix}J\\
			&=V_{i\tanh\vartheta}\cD_{1/\cosh\vartheta}J^{-1}V_{i\tanh\vartheta}J,
		\end{split}
		\end{equation}
		implies that the $\cR_\vartheta$'s are all in $\Sp_+'(1,\bC)$. Let $\Xi=\Xi_1\otimes\ldots\otimes \Xi_d$ be a purely positive matrix, with $\Xi=V_{i\Delta}\mathcal{R}_\Theta$ as in Remark \ref{remAMC}. 
        
		The matrix $V_{i\Delta}\in\mathcal{E}'(d)$, so we just need to show that $\cR_\Theta\in\Sp_+'(d,\bC)$. By \eqref{factorRtau},
		\begin{equation}
			\cR_\Theta=\begin{pmatrix}
				I & O\\
				iQ & I
			\end{pmatrix}
			\begin{pmatrix}
				\diag(a_1,\ldots,a_d) & O\\
				O & \diag(a_1,\ldots,a_d)^{-1}
			\end{pmatrix}
			J^{-1}
			\begin{pmatrix}
				I & O\\
				iQ & I
			\end{pmatrix}J,
		\end{equation}
		with $Q=\diag(\mu_1\ldots,\mu_d)$, $\mu_j=\tanh\vartheta_j$ if $\Xi_j=\cR_{\vartheta_j}$, $\mu_j=1$ otherwise. This concludes the proof. 

	\end{proof}
	
	From the perspective of $\Mp_+(d,\bC)$, recall the operator
	\begin{equation}
		\frp_Qf(x)=e^{i\pi Qx\cdot x}f(x), \qquad f\in L^2(\rd).
	\end{equation}
	If $\Im(Q)\geq0$, then $\frp_Q$ is a contraction of $L^2(\rd)$, and we may restate Proposition \ref{propMPgen} as follows.
	
	\begin{proposition}\label{prop52}
			The metaplectic semigroup $\Mp_+(d,\bC)$ is generated by
			\begin{equation}
				\cF f(\xi)=i^{-d/2}\int_{\rd}f(x)e^{-2\pi i\xi \cdot x}dx, \qquad f\in \cS(\rd), \quad \xi\in\rd,
			\end{equation}
			and by the family of $L^2$-contractions
			\begin{align}
				& \frp_Q f(x)=e^{iQx\cdot x}f(x), \qquad Q\in\Sigma_{\geq0}(d),\; f\in L^2(\rd),\\
				& \frT_E f(x)= i^m |\det(E)|^{1/2}f(Ex), \qquad E\in\GL(d,\bR),\; f\in L^2(\rd),
			\end{align}
			where $m\in\bZ$ is the argument of $\det(E)$. Moreover, $\pi_+^{Mp}(\cF)=J$, $\pi_+^{Mp}(\frp_Q)=V_Q$ and $\pi_+^{Mp}(\frT_E)=\cD_E$, the matrices being defined as in \eqref{defJ}, \eqref{defVQ} and \eqref{defDE}, respectively.
		\end{proposition}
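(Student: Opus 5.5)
The plan is to transfer Proposition \ref{prop51} to the operator level through the two-fold cover $\pi^{Mp}_+$ of Theorem \ref{thmOscillator}. Let $\Mp_+'(d,\bC)$ denote the semigroup generated by $\cF$, by the operators $\frT_E$ with $E\in\GL(d,\bR)$, and by the operators $\frp_Q$ with $Q\in\Sigma_{\geq0}(d)$. We must show $\Mp_+'(d,\bC)=\Mp_+(d,\bC)$ and identify the projections of the generators. For $\cF$ and $\frT_E$, and for $\frp_Q$ with $Q$ real, these operators lie in $\Mp(d,\bR)$ and their projections are given by Proposition \ref{propMPgen}.

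\textbf{Step 1: $\frp_Q\in\Mp_+(d,\bC)$ with $\pi^{Mp}_+(\frp_Q)=V_Q$ for $Q\in\Sigma_{\geq0}(d)$.} Write $Q=Q_1+iQ_2$ with $Q_2\geq0$. Then $\frp_Q=\frp_{Q_1}\frp_{iQ_2}$, and $\frp_{Q_1}\in\Mp(d,\bR)$. Diagonalize $Q_2=O^\top\Lambda O$ with $O$ orthogonal and $\Lambda=\diag(\lambda_1,\ldots,\lambda_d)$, $\lambda_j\geq0$. Then $\frp_{iQ_2}=\frT_{O}^{-1}\frp_{i\Lambda}\frT_O$ up to a phase, and $\frp_{i\Lambda}=\bigotimes_j\frp_{i\lambda_j}$, with $\frp_{i0}=\id$. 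This is an atomic metaplectic contraction, with projection $V_{i\Lambda}$ by the definition of $\pi^{Mp}_+$. Since $\pi^{Mp}_+$ is a homomorphism,
\[
\pi^{Mp}_+(\frp_Q)=V_{Q_1}\cD_O^{-1}V_{i\Lambda}\cD_O=V_{Q_1}V_{iO^\top\Lambda O}=V_Q,
\]
by a direct block computation. Hence $\Mp_+'(d,\bC)\subseteq\Mp_+(d,\bC)$.

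\textbf{Step 2: the reverse inclusion.} By Definition \ref{defOscillatorSemigroup}, it suffices to show that $\Mp(d,\bR)$ and every atomic metaplectic contraction lie in $\Mp_+'(d,\bC)$. For $\Mp(d,\bR)$ this is Proposition \ref{propMPgen}; the sign $-\id$ is also obtained, for instance as $\cF^4$ composed with suitable $\frT_E$, or simply because $\Mp(d,\bR)$ is generated by these operators. For $\widehat\Xi=\frp_{i\Delta}\frR_\Theta$ as in \eqref{S2Vp}, the factor $\frp_{i\Delta}$ is already a generator. For $\frR_\Theta$, use the identity $\frR_\vartheta=\frp_{i\tanh\vartheta}\frT_{1/\cosh\vartheta}\cF^{-1}\frp_{i\tanh\vartheta}\cF$ from Section \ref{subsec:32}, tensorized over $j$:
\[
\frR_\Theta=\frp_{iQ}\frT_{E}\cF^{-1}\frp_{iQ}\cF,
\]
where $Q=\diag(\tanh\vartheta_j)$, $E=\diag(1/\cosh\vartheta_j)$, and $\cF$ is the full $d$-dimensional Fourier transform. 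This works because each factor is itself a tensor product, and the factors with $\vartheta_j=0$ reduce to $\cF_j^{-1}\cF_j=\id$ in those coordinates. Note that we should take $\mu_j=0$, not $1$, in the untouched coordinates at the operator level. This reproduces \eqref{factorRtau} for the projections.

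\textbf{Main obstacle: phase factors.} The operators $\widehat S$ are determined by $S$ only up to the constants discussed after Theorem \ref{thmOscillator}. The factorizations above are exact operator identities, possibly with an unimodular constant $\pm1$ or a power of $i$, which can be absorbed using $\Mp(d,\bR)\subseteq\Mp_+'(d,\bC)$. Verifying the tensorized identity for $\frR_\Theta$ exactly, including the normalization $(\cosh\vartheta)^{-1/2}$, amounts to the one-dimensional Gaussian computation, which is the only genuinely computational step. The rest follows from Theorem \ref{thmOscillator}(i) and Proposition \ref{prop51}.
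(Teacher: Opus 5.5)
Your proof is correct and follows the paper's route. The paper presents this result as the operator-level counterpart of Proposition \ref{prop51}, built on $\Xi=V_{i\Delta}\cR_\Theta$, the factorization \eqref{factorRtau} and its lift $\frR_\vartheta=\frp_{i\tanh\vartheta}\frT_{1/\cosh\vartheta}\cF^{-1}\frp_{i\tanh\vartheta}\cF$. You additionally supply the check that $\frp_Q\in\Mp_+(d,\bC)$ with projection $V_Q$, which the paper leaves implicit, and you correctly take $\mu_j=0$ rather than $1$ in the coordinates with $\vartheta_j=0$.

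There are two cosmetic slips. First, your conjugation is reversed: with $Q_2=O^\top\Lambda O$ one has $\frp_{iQ_2}=\frT_O\frp_{i\Lambda}\frT_O^{-1}$ and $\cD_O V_{i\Lambda}\cD_O^{-1}=V_{iO^\top\Lambda O}$. Second, both factorizations are exact with no phase. This matters, because a stray factor $i$ could not be absorbed into $\Mp(d,\bR)$, whose only scalar elements are $\pm\id$.
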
 
		
		\begin{remark}\label{remConvolution}
			Remarkably, the $2d\times2d$ complex symplectic matrices in the form
			\begin{equation}
				V_P^\top=\begin{pmatrix}
					I & P\\
					O & I
				\end{pmatrix}, \qquad P\in\Sym(d,\bC), \quad \Im(P)\leq0
			\end{equation}
			are in $\Sp_+(d,\bC)$, since $V_P^\top = J^{-1}V_{-P}J$. In analogy with the real case, the corresponding complex metaplectic operators are the Fourier multipliers
			\begin{equation}\label{defmP}
				\mathfrak{m}_{-P} f=\cF^{-1}(\Phi_{-P} \widehat f), \qquad f\in L^2(\rd),
			\end{equation} 
			where $\Phi_{-P}(x)=e^{-i\pi Px\cdot x}$. We will return to these operators in Section \ref{sec:Subsemigroups}.
		\end{remark}

        \subsection{The polar decomposition}
        For every linear and bounded operator $T$ on $L^2(\rd)$ there exist unique operators $T_1$ and $T_2$, such that $T=T_1T_2$, with the following properties \cite[Theorem VI.10]{ReedAndSimon}:
        \begin{enumerate}[(1)]
            \item $T_1$ is a partial isometry, i.e., $T_1|_{\ker( T_1)^\perp}$ is an isometry.
            \item $T_2$ is positive and self-adjoint.
            \item $\ker(T_1)=\ker(T)$.
        \end{enumerate}
        In our case, $T=\widehat S$ is injective, so that conditions (1)--(3) simplify as
        \begin{enumerate}[(1)]
            \item $T_1$ is an isometry.
            \item $T_2$ is positive and self-adjoint.
        \end{enumerate}
        Now, we prove that in the case of $\widehat S\in\Mp_+(d,\bC)$, the operators $T_1,T_2$ are themselves metaplectic. 

        \begin{theorem}\label{polar}
            Let $\widehat S\in \Mp_+(d,\bC)$. There exist unique $\widehat U\in\Mp(d,\bR)$ and $\widehat Z\in\Mp_+(d,\bC)$ such that \begin{equation}\label{PolarDecomposition}
                \widehat S=\widehat U\widehat Z
            \end{equation}
            and $\widehat Z$ is positive and self-adjoint.
        \end{theorem}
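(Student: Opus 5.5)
We will combine the factorization \eqref{HFactoriz} of Theorem \ref{thmOscillator}(ii) with the uniqueness of the abstract polar decomposition. By Theorem \ref{thmOscillator}(ii), write $\widehat S=\widehat U_1\widehat\Xi\widehat U_2$ with $\widehat U_1,\widehat U_2\in\Mp(d,\bR)$ and $\widehat\Xi$ an atomic metaplectic contraction. Since $\widehat U_2$ is unitary, we rewrite
\begin{equation}
\widehat S=(\widehat U_1\widehat U_2)\,(\widehat U_2^{-1}\widehat\Xi\widehat U_2).
\end{equation}
The first factor $\widehat U:=\widehat U_1\widehat U_2$ lies in $\Mp(d,\bR)$, so it is unitary, in particular an isometry. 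The second factor $\widehat Z_0:=\widehat U_2^{-1}\widehat\Xi\widehat U_2$ lies in $\Mp_+(d,\bC)$ because it is a product of generators. The plan is therefore to arrange that $\widehat\Xi$ is positive self-adjoint. Then $\widehat Z_0$ is positive self-adjoint as a unitary conjugate of such an operator, and uniqueness of the polar decomposition \cite[Theorem VI.10]{ReedAndSimon} for the injective operator $\widehat S$ (Theorem \ref{thmOscillator}(iii)) identifies $\widehat U$ and $\widehat Z=\widehat Z_0$.

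The key step is to show that each atom, hence by \eqref{S2Vp} the tensor product $\widehat\Xi=\frp_{i\Delta}\frR_\Theta$ of commuting factors, is positive and self-adjoint. For $\frp_{i\alpha}$ this is immediate, since it is multiplication by $e^{-\pi\alpha x^2}>0$.

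For $\frR_\vartheta$, we first check self-adjointness via Theorem \ref{thmOscillator}(vi): $\frR_\vartheta^\ast=\widehat{\cR_\vartheta^\#}$. Computing from \eqref{blockShash} with $A=D=\cosh\vartheta$, $B=-i\sinh\vartheta$, $C=i\sinh\vartheta$ gives $\cR_\vartheta^\#=\cR_\vartheta$, so $\frR_\vartheta^\ast=\pm\frR_\vartheta$. The sign can be fixed directly from the kernel in \eqref{defRtau}: after substituting $\widehat f$, the kernel is $K(x,y)=(\cosh\vartheta)^{-1/2}e^{i\pi(2xy+i(x^2+y^2)\sinh\vartheta)/\cosh\vartheta}$ composed with $\cF$, i.e. a Mehler kernel which is real and symmetric in $(x,y)$.

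For positivity, we would use that $\frR_\vartheta$ is the Hermite semigroup $e^{-\vartheta H}$ with $H=\pi(x^2-(2\pi)^{-2}\partial_x^2)-\tfrac12$ (up to the normalization constant). Equivalently, $\frR_\vartheta=(\frR_{\vartheta/2})^2$ with $\frR_{\vartheta/2}$ self-adjoint, hence $\frR_\vartheta=\frR_{\vartheta/2}^\ast\frR_{\vartheta/2}\ge0$. The identity $\cR_{\vartheta/2}^2=\cR_\vartheta$ follows from the hyperbolic addition formulas, and the projective representation property then gives equality of operators up to a constant. That constant is fixed to be positive by evaluating both sides on the Gaussian $e^{-\pi x^2}$, which is an eigenfunction with positive eigenvalue $(\text{const})\cdot e^{-\vartheta/2}$-type.

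The main obstacle is precisely this bookkeeping of phase/constant factors in the two-fold cover: ensuring the chosen representative is genuinely positive rather than $-1$ times a positive operator. Once the Gaussian test pins down the sign, uniqueness of the polar decomposition concludes. Tensor products of commuting positive self-adjoint operators remain positive self-adjoint, which passes the result from the atoms to $\widehat\Xi$.
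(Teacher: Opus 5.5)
Your proposal is correct and follows essentially the same route as the paper. You factor $\widehat S=(\widehat U_1\widehat U_2)(\widehat U_2^{-1}\widehat\Xi\widehat U_2)$, show $\widehat\Xi$ is positive self-adjoint by writing it as the square of the self-adjoint operator $\frR_{\Theta/2}\frp_{i\Delta/2}$, and conclude by uniqueness of the polar decomposition of the injective operator $\widehat S$. Your extra step of fixing the sign ambiguity from the two-fold cover (via the real symmetric Mehler kernel and the Gaussian eigenvalue $\frR_\vartheta e^{-\pi x^2}=e^{-\vartheta/2}e^{-\pi x^2}$) makes rigorous a point the paper's proof passes over silently.
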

        \begin{proof}
            There exist (non-unique) $\widehat U_1,\widehat U_2\in\Mp(d,\bR)$ and $\widehat \Xi$ atomic metaplectic contraction such that $\widehat S=\widehat U_1\widehat \Xi \widehat U_2$. Let $\widehat U=\widehat U_1\widehat U_2$ and let $\widehat Z=\widehat U_2^{-1}\widehat \Xi\widehat U_2$. By Theorem \ref{thmOscillator} $(vi)$, $\widehat \Xi$ is self-adjoint. We may factorize $\widehat \Xi=\mathfrak{R}_\Theta \mathfrak{p}_{i\Delta}$ as in Remark \ref{remAMC}.
            Correspondingly, $\Xi=\mathcal{R}_\Theta V_{i\Delta}$ and the two factors also commute. 
            Since $\mathcal{R}_{\vartheta_j}^{1/2}=\mathcal{R}_{\vartheta_j/2}$ and $V_{i\alpha_j}^{1/2}=V_{i\alpha_j/2}$, by tensorization we obtain that $\widehat \Xi^{1/2}=\mathfrak{R}_{\Theta/2}\mathfrak{p}_{i\Delta/2}$, where we use here that $\mathfrak{R}_{\Theta/2}$ and $\mathfrak{p}_{i\Delta/2}$ commute as well by construction. 
            Since they are also self-adjoint, we infer that for every $f\in L^2(\rd)$,
            \begin{align}
                \la \widehat \Xi f,f \ra=\la \widehat \Xi^{1/2} f,\widehat \Xi^{1/2}f \ra=\norm{\widehat \Xi^{1/2}f}_2^2,
            \end{align}
            so that $\widehat \Xi$ is positive, other than self-adjoint. Since $\widehat U_2$ is unitary, it follows that $\widehat Z=\widehat U_2^{-1}\widehat \Xi\widehat U_2$ is self-adjoint and positive. 
            By its definition, $\widehat U=\widehat U_1\widehat U_2\in \Mp(d,\bR)$ is unitary, and in particular a partial isometry. 
            We thereby conclude that $\widehat S=\widehat U\widehat Z$ is a polar decomposition of $\widehat S$ with $\ker(\widehat S)=\ker(\widehat U)$. Since this polar decomposition is unique, we have done.
            
        \end{proof}
        
        Generally speaking, the polar decomposition of $\widehat S$ is rather implicit and not always easy to compute. 
        Specifically, 
        \begin{equation}
            \widehat Z=(\widehat S^\ast \widehat S)^{1/2}, \qquad \widehat U=\widehat S( \widehat S^\ast \widehat S)^{-1/2}.
        \end{equation}
        In contrast, atomic metaplectic contractions are explicit. For this reason, resorting to the non-unique factorization in \eqref{HFactoriz} remains useful when proving general statements about metaplectic operators. However, this is no longer sufficient for their phase-space analysis. Indeed, if \eqref{PolarDecomposition} is the polar decomposition of $\widehat S$, then $\widehat U$ acts as a sort of {\em real part} of $\widehat S$, and its action is reflected in a symplectic linear transformation of phase space, as in the classical case. This property fails for $\widehat Z$, whose action on phase space must be interpreted differently, see Section \ref{subsec:WignerAnalysis} below.

        For the rest of this work, we denote
        \begin{equation}
            {\Mp}_0(d,\bC)=\{\text{$\widehat Z\in{\Mp}_+(d,\bC)$ : $\widehat Z$ is positive and self-adjoint}\}.
        \end{equation}

        Thanks to Theorem \ref{polar} we obtain a characterization of $\Mp_0(d,\bC)$ and, with it, a full description of $\Mp_+(d,\bC)$.

        \begin{corollary}\label{CorPolar}
            Under the notation of Theorem \ref{polar}, let us consider the mapping $\iota:\widehat S\in \Mp_+(d,\bC)\mapsto (\widehat U,\widehat Z)\in \Mp(d,\bR)\times\Mp_0(d,\bC)$.
            \begin{enumerate}[(i)]
                \item $\iota$ is a bijection.
                \item $\widehat Z\in \Mp_0(d,\bC)$ if and only if $\widehat Z=\widehat V^{-1}\widehat \Xi\widehat V$ for some (non-unique) atomic metaplectic contraction $\widehat \Xi$ and $\widehat V\in\Mp(d,\bR)$.
                \item It holds $\widehat S=\widehat Z'\widehat U$ for a suitable $\widehat Z'\in\Mp_0(d,\bC)$.
            \end{enumerate}
        \end{corollary}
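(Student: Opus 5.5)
The plan is to deduce all three items from Theorem \ref{polar}, the factorization \eqref{HFactoriz} of Theorem \ref{thmOscillator}, and the uniqueness of the polar decomposition of bounded operators on $L^2(\rd)$.

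For (i), well-definedness and injectivity of $\iota$ are immediate: Theorem \ref{polar} assigns to each $\widehat S$ a unique pair $(\widehat U,\widehat Z)$ with $\widehat U\in\Mp(d,\bR)$ and $\widehat Z\in\Mp_0(d,\bC)$, and $\widehat S=\widehat U\widehat Z$ recovers $\widehat S$ from the pair. For surjectivity, take any pair $(\widehat U,\widehat Z)\in\Mp(d,\bR)\times\Mp_0(d,\bC)$ and set $\widehat S:=\widehat U\widehat Z$. Then $\widehat S\in\Mp_+(d,\bC)$, since $\Mp_+(d,\bC)$ is a semigroup containing $\Mp(d,\bR)$. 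Moreover $\widehat U$ is unitary and $\widehat Z$ is positive and self-adjoint, so $\widehat U\widehat Z$ is a polar decomposition of $\widehat S$. Uniqueness of the polar decomposition (recalled before Theorem \ref{polar}) forces $\iota(\widehat S)=(\widehat U,\widehat Z)$.

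For (ii), the direction ``$\Leftarrow$'' was essentially shown in the proof of Theorem \ref{polar}. If $\widehat \Xi=\frR_\Theta\frp_{i\Delta}$, it is self-adjoint by Theorem \ref{thmOscillator}(vi) and positive because $\widehat\Xi=(\widehat\Xi^{1/2})^2$ with $\widehat\Xi^{1/2}=\frR_{\Theta/2}\frp_{i\Delta/2}$ self-adjoint. Conjugating by the unitary $\widehat V$ preserves both properties, and the product $\widehat V^{-1}\widehat\Xi\widehat V$ stays in $\Mp_+(d,\bC)$.

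For ``$\Rightarrow$'' in (ii), let $\widehat Z\in\Mp_0(d,\bC)$. By \eqref{HFactoriz}, write $\widehat Z=\widehat U_1\widehat\Xi\widehat U_2$. The proof of Theorem \ref{polar} gives the polar decomposition $\widehat Z=(\widehat U_1\widehat U_2)(\widehat U_2^{-1}\widehat\Xi\widehat U_2)$. But $\widehat Z=\id\cdot\widehat Z$ is also a polar decomposition, since $\widehat Z$ is positive and self-adjoint. By uniqueness, $\widehat U_1\widehat U_2=\id$ and $\widehat Z=\widehat U_2^{-1}\widehat\Xi\widehat U_2$, which is the claim with $\widehat V=\widehat U_2$. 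The only delicate point is this uniqueness step. Here injectivity of $\widehat Z$ (Theorem \ref{thmOscillator}(iii)) guarantees that the isometric factor is determined on all of $L^2(\rd)$, so no partial-isometry ambiguity arises.

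For (iii), the idea is to move the real part to the other side. Writing $\widehat S=\widehat U\widehat Z$, set $\widehat Z':=\widehat U\widehat Z\widehat U^{-1}$. Then $\widehat S=\widehat Z'\widehat U$, and $\widehat Z'$ lies in $\Mp_+(d,\bC)$ as a product of elements of the semigroup. It is positive and self-adjoint because conjugation by the unitary $\widehat U$ preserves $\la \widehat Z f,f\ra\ge 0$ and self-adjointness. Equivalently, by (ii), $\widehat Z'=(\widehat V\widehat U^{-1})^{-1}\widehat\Xi(\widehat V\widehat U^{-1})$ is again of the conjugated atomic form. No step is expected to be a real obstacle; the uniqueness argument in (ii) is the only point requiring care.
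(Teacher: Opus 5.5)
Your proposal is correct and follows the paper's proof essentially step by step. For (ii) you compare the decomposition $(\widehat U_1\widehat U_2)(\widehat U_2^{-1}\widehat\Xi\widehat U_2)$ with the trivial one $\id\cdot\widehat Z$, exactly as the paper does. For (iii) your $\widehat Z'=\widehat U\widehat Z\widehat U^{-1}$ coincides with the paper's $\widehat U_1\widehat\Xi\widehat U_1^{-1}$. Your explicit surjectivity argument for (i), and your remark that injectivity of $\widehat Z$ pins down the unitary factor, only spell out what the paper calls a trivial restatement of the uniqueness of the polar decomposition.
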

        \begin{proof}
            Observe that $\widehat S\in\Mp_0(d,\bC)\cap\Mp(d,\bR)$ if and only if $\widehat S=c\cdot\id_{L^2}$ for some phase factor $c\in\bC$, $|c|=1$.
            Item $(i)$ is just a trivial restatement of the uniqueness of the polar decomposition, using Theorem \ref{polar}. 
            To prove item $(ii)$, just observe that if $\widehat Z\in\Mp_0(d,\bC)$, then $\widehat Z=\widehat U_1\widehat\Xi\widehat U_2$ as in \eqref{HFactoriz}. By basically repeating the argument of Theorem \ref{polar}, we obtain the polar decomposition of $\widehat Z$ 
            \begin{equation}
                \widehat Z=(\widehat U_1\widehat U_2)(\widehat U_2^{-1}\widehat \Xi \widehat U_2).
            \end{equation}
            On the other hand, $\widehat Z$ is the polar decomposition of itself. By uniqueness, we infer $\widehat U_1\widehat U_2=\id_{L^2}$. Therefore,
            \begin{equation}
                {\Mp}_0(d,\bR)\subseteq\{\text{$\widehat V^{-1}\widehat \Xi\widehat V$ : $\widehat V\in\Mp(d,\bR)$ and $\widehat \Xi$ atomic metaplectic contraction}\}.
            \end{equation}
            The other side of the equivalence was already noted in the proof of Theorem \ref{polar}. Item $(iii)$ follows by associativity: set $\widehat Z'=\widehat U_1\widehat \Xi\widehat U_1^{-1}$, then
            \begin{align}
                \widehat S=\widehat U\widehat Z&=\widehat U_1\widehat \Xi\widehat U_2=(\widehat U_1\widehat \Xi\widehat U_1^{-1})(\widehat U_1\widehat U_2)=\widehat Z'\widehat U,
            \end{align}
            and we are done.
        \end{proof}
		
		\subsection{Intertwining with the Schr\"odinger representation}\label{subsec:43}
		The purpose of this section is to prove the intertwining property, characterizing metaplectic operators in $\Mp(d,\bR)$, between the metaplectic semigroup $\Mp_+(d,\bC)$ and Gaussian states. In other words, the goal is to prove Theorem \ref{thmGG1} below. 

		
		\subsubsection{Complex translations and modulations}
		Modulations are complexified by setting
		\begin{equation}
			M_wf(y)=e^{2\pi i w\cdot y}f(y), \qquad y\in\rd, \quad w \in \mathbb{C}^d.
		\end{equation} 
		On the other hand, translations are complexified as pseudodifferential operators as follows. If $z \in \mathbb{C}^d$ and $\widehat f$ decays as a Gaussian, the following integral converges (and it coincides with translations when $z\in\rd$):
		\begin{equation}
			T_zf(y)=\int_{\rd}\widehat f(\eta)e^{2\pi i\eta\cdot (y-z)}d\eta, \qquad y\in\rd.
		\end{equation}
		Let $z,w\in\bC^d$ and $\tau\in\bR$. We formally define
			\begin{equation}
				\rho(z,w;\tau)f(y)=e^{2\pi i\tau}e^{-i\pi z\cdot w}e^{2\pi iw\cdot y}\int_{\rd}\widehat f(\eta)e^{2\pi i\eta\cdot (y-z)}d\eta, \qquad y\in\rd.
			\end{equation}

		\begin{remark}
			Using the Gaussian integral formula, we obtain
			\begin{equation}
				\int_{\rd}\widehat{\f_M}(\eta)e^{2\pi i\eta\cdot(y-z)}d\eta=e^{-\pi M(y-z)\cdot(y-z)},
			\end{equation}
			and, therefore,
			\begin{equation}
				\rho(z,w;\tau)\f_M(y)=e^{2\pi i\tau}e^{-i\pi z\cdot w}e^{2\pi iw\cdot y}e^{-\pi M(y-z)(y-z)}, \qquad y\in\rd.
			\end{equation}
			In particular, $\rho(z,w;\tau)\f_M\in L^2(\rd)$ for every $z,w\in\bC^d$.
		\end{remark}
		
		\subsubsection{The interwining relation} Under this notation, our aim is to prove the following theorem. 
        \begin{theorem}\label{thmGG1}
			Let $x,\xi\in\rd$, $\tau\in\bR$. Let $M>0$ and $\f_M(t)=e^{-\pi Mt\cdot t}$ be the corresponding Gaussian state. For every $\widehat S\in \Mp_+(d,\bC)$,
			\begin{equation}\label{intertOsc}
				\widehat S\rho(x,\xi;\tau)\f_M=\rho(S(x,\xi);\tau)\widehat S\f_M,
			\end{equation}
			with $S=\pi^{Mp}_+(\widehat S)$.
\end{theorem}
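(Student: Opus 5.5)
The plan is to reduce \eqref{intertOsc} to generators, using Proposition \ref{prop52}. By Proposition \ref{prop52}, every $\widehat S\in\Mp_+(d,\bC)$ is a finite composition of $\cF$, $\frT_E$ ($E\in\GL(d,\bR)$) and $\frp_Q$ ($Q\in\Sigma_{\geq0}(d)$). Moreover, $\pi^{Mp}_+$ is a homomorphism (Theorem \ref{thmOscillator}(i)), so it is enough to prove a slightly stronger statement that is stable under composition.

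The class on which the induction runs is the Gaussian family
\[
\mathcal G=\{c\,e^{-\pi Nt\cdot t+2\pi i b\cdot t}: c\in\bC,\ b\in\bC^d,\ N\in\Sym(d,\bC),\ \Re N>0\}.
\]
The stronger claim is the following: for every generator $\widehat G$, every $g\in\mathcal G$ and every $(z,w)\in\bC^{2d}$,
\[
\widehat G\rho(z,w;\tau)g=\rho(G(z,w);\tau)\widehat G g ,
\]
where all terms are well defined because $\rho(z,w;\tau)$ maps $\mathcal G$ into $\mathcal G$ (this follows from the Remark computing $\rho(z,w;\tau)\f_M$, since completing the square keeps us inside $\mathcal G$). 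We also need that each generator maps $\mathcal G$ into $\mathcal G$. For $\frp_Q$ and $\frT_E$ this is immediate. For $\cF$ it follows from Proposition \ref{GaussianIntegrals}, since $\Re N>0$ gives $\Re N^{-1}>0$.

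Once the claim holds for generators, the general case follows by composition. Write $\widehat S=\widehat G_1\cdots\widehat G_N$ and apply the claim repeatedly, each time to the Gaussian $\widehat G_{k+1}\cdots\widehat G_N\f_M\in\mathcal G$ and to the complex point $G_{k+1}\cdots G_N(x,\xi)$. This gives $\widehat S\rho(x,\xi;\tau)\f_M=\rho(S(x,\xi);\tau)\widehat S\f_M$. Phase-factor ambiguities are harmless, since both sides are linear in $\widehat S$.

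For the generators, the checks are algebraic identities that hold for real parameters and extend by analyticity in the complex ones.
\begin{enumerate}[(a)]
\item For $\frp_Q$ one has $e^{i\pi Qy\cdot y}$ multiplying $e^{-i\pi z\cdot w}e^{2\pi iw\cdot y}g(y-z)$. On Gaussians, $T_z$ really is the analytic continuation $g(\cdot-z)$. Expanding $Q(y-z)\cdot(y-z)$ produces exactly $\rho(z,w+Qz)$, matching $V_Q(z,w)=(z,Qz+w)$.
\item For $\frT_E$ with $E$ real, a change of variables gives $\rho(E^{-1}z,E^\top w)$.
\item For $\cF$, the Fourier transform of $\rho(z,w)g$ is computed by the Gaussian integral formula \eqref{GaussInt}, giving $\rho(w,-z)\cF g$, that is, the point $J(z,w)$.
\end{enumerate}
In each case both sides are entire functions of $(z,w)\in\bC^{2d}$ (and of the entries of $Q$) with values in $L^2$. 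This is because all expressions are explicit Gaussians with $\Re N>0$. Since they agree for real $(z,w)$, by the classical real metaplectic covariance (or by direct computation), they agree everywhere.

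The main obstacle is step (c) together with the well-definedness of $T_z$ for complex $z$. One must make sure that the Fourier-side definition of $T_z$ coincides with the analytic continuation $g(y-z)$ on $\mathcal G$, and that the Gaussian integral formula applies with complex shifts. I would settle this first by an explicit computation on $e^{-\pi Nt\cdot t+2\pi ib\cdot t}$ using Proposition \ref{GaussianIntegrals}. This verifies that $\rho(z,w;\tau)$ acts on $\mathcal G$ by the expected parameter update. After that, the intertwining for $\cF$ reduces to identities between quadratic exponents.
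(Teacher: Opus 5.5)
Your proof is correct, and it takes a genuinely different route from the paper's.

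\textbf{The paper's route.} The paper starts from H\"ormander's factorization $\widehat S=\widehat U_1\widehat\Xi\widehat U_2$ (Theorem \ref{thmOscillator}(ii)). Lemma \ref{lemma1} handles the real generators at complex points. Lemma \ref{lemma2} handles atomic contractions at real points, in three stages: a lengthy one-dimensional Gaussian computation for $\frR_\vartheta$, tensorization over diagonal Gaussians, and a rotation to reach general $M>0$.

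\textbf{What you do differently.} You bypass $\frR_\vartheta$ entirely. Since $\frR_\vartheta=\frp_{i\tanh\vartheta}\frT_{1/\cosh\vartheta}\cF^{-1}\frp_{i\tanh\vartheta}\cF$, with projection factoring accordingly as in \eqref{factorRtau}, three elementary identities suffice. The case of $\frp_Q$ with complex $Q$ is the same purely algebraic cancellation as in \eqref{computVQ}. In fact you only need this factorization of atomic contractions plus the homomorphism property of $\pi^{Mp}_+$, not the full Proposition \ref{prop52}.

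\textbf{Why your formulation matters.} The decisive feature is that your statement is stable under composition. It covers all points $(z,w)\in\bC^{2d}$ and the whole class $\mathcal{G}$ of complex Gaussians with linear terms, which is closed under the generators and under $\rho(z,w;\tau)$. The paper's chaining needs exactly this generality but does not state it:
\begin{itemize}
\item Its lemmas are formulated for $\f_M$ with $M>0$ real. Yet they are applied to $\widehat U_2\f_M$ and $\widehat\Xi\widehat U_2\f_M$, which are in general Gaussians $e^{-\pi Nt\cdot t}$ with complex $N$.
\item Claim 3 of Lemma \ref{lemma2} invokes Claim 2 for $\widehat\Xi\frT_\Sigma$, which is not an atomic contraction.
\end{itemize}
Your class $\mathcal{G}$ removes these issues at once.

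\textbf{What the paper's route buys.} It yields explicit formulas for how atomic contractions act on Gaussians. It is also organized around the factorization \eqref{HFactoriz}, which the paper reuses for the polar decomposition and the Wigner analysis.
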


        For the sake of simplicity, we divide the proof of Theorem \ref{thmGG1} into two parts. First, we check that metaplectic operators in $\Mp(d,\bR)$ satisfy the intertwining relation \eqref{intertOsc} even when $x,\xi\in\bC^d$. Precisely:
		
		\begin{lemma}\label{lemma1}
			Let $z,w\in\bC^d$, $\tau\in\bR$, and $\widehat S\in\Mp(d,\bR)$. Then, 
			\begin{equation}
				\widehat S\rho(z,w;\tau)\f_M=\rho(S(z,w);\tau)\widehat S\f_M.
			\end{equation}
		\end{lemma}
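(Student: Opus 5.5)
Since $\Mp(d,\bR)$ is generated by $\cF$, $\frp_Q$ ($Q\in\Sym(d,\bR)$) and $\frT_E$ ($E\in\GL(d,\bR)$) (Proposition \ref{propMPgen}), and $\pi^{Mp}$ is a homomorphism, the plan is to reduce to the generators. The set of $\widehat S$ for which the identity holds for all $z,w\in\bC^d$, $\tau\in\bR$ and all $M>0$ is closed under composition, but this needs care: if $\widehat S_2$ satisfies the relation for $\f_M$, we get $\widehat S_1\widehat S_2\rho(z,w;\tau)\f_M=\widehat S_1\rho(S_2(z,w);\tau)\widehat S_2\f_M$. Since $\widehat S_2\f_M$ is in general a Gaussian $c\,e^{-\pi N t\cdot t}$ with $N\in\Sym(d,\bC)$, $\Re N>0$, not necessarily real, the natural statement to induct on is the stronger one: the identity holds for all generalized Gaussians $\f_N(t)=e^{-\pi Nt\cdot t}$ with $N$ complex symmetric and $\Re N>0$. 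For such $N$, $\widehat{\f_N}$ still decays as a Gaussian, so $\rho(z,w;\tau)\f_N(y)=e^{2\pi i\tau}e^{-i\pi z\cdot w}e^{2\pi iw\cdot y}e^{-\pi N(y-z)\cdot(y-z)}$ by Proposition \ref{GaussianIntegrals}. This class is preserved by real metaplectic operators, and we can apply $\widehat S_1$ to $\widehat S_2\f_N$, which is again of this type. I therefore prove the lemma for this larger class, which contains $\f_M$.

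For the generators, the computation is explicit. Let $\phi=\rho(z,w;\tau)\f_N$.

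For $\frp_Q$, we compute $\frp_Q\phi(y)=e^{i\pi Qy\cdot y}\phi(y)$. We must show this equals $\rho(z,w+Qz;\tau)\frp_Q\f_N$. Expanding $Qy\cdot y=Q(y-z)\cdot(y-z)+2Qz\cdot y-Qz\cdot z$ and comparing phases, this reduces to $e^{-i\pi z\cdot w}e^{-i\pi Qz\cdot z}=e^{-i\pi z\cdot(w+Qz)}$, which holds as a polynomial identity in $z,w$.

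For $\frT_E$, we have $\frT_E\phi(y)=|\det E|^{1/2}\phi(Ey)$. We check it equals $\rho(E^{-1}z,E^\top w;\tau)\frT_E\f_N$ by substituting $Ey-z=E(y-E^{-1}z)$ and $w\cdot Ey=E^\top w\cdot y$.

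For $\cF$, we compute the Fourier transform of $e^{2\pi iw\cdot y}e^{-\pi N(y-z)\cdot(y-z)}$ via Proposition \ref{GaussianIntegrals}, with the complex shift absorbed by the formula for complex $z$. The result is $e^{-2\pi i(\eta-w)\cdot z}\det(N)^{-1/2}e^{-\pi N^{-1}(\eta-w)\cdot(\eta-w)}$, and we match it with $\rho(w,-z;\tau)\cF\f_N$, since $J(z,w)=(w,-z)$. The prefactors $e^{-i\pi z\cdot w}$ on the two sides agree after a short check, and the constants $i^{-d/2}$ cancel.

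An alternative route avoids the computations: for fixed $\f_N$, both sides of the identity are entire functions of $(z,w)\in\bC^{2d}$ with values in $L^2$. This uses Gaussian decay of $\f_N$ and $\widehat{\f_N}$, so that $\rho(z,w;\tau)\f_N$ is holomorphic in $(z,w)$, together with boundedness of $\widehat S$. The two sides coincide for real $(z,w)$ by \eqref{defMetap}, hence everywhere by the identity theorem. I would present this as the main argument, since it bypasses the Fourier computation. The main obstacle is verifying holomorphy in $L^2$, i.e. local uniform domination of complex derivatives of $e^{2\pi iw\cdot y}e^{-\pi N(y-z)\cdot(y-z)}$ in $L^2$; this is straightforward because $\Re N>0$ controls the factor $e^{2\pi\Re(N y)\cdot \Im z}$ locally uniformly. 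The generator computation serves as a backup if this domination step becomes delicate.
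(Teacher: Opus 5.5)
Your proposal is correct. Your backup route is essentially the paper's proof: the paper also reduces to the generators $\cF$, $\frT_E$, $\frp_Q$ and checks each one with the Gaussian integral formula, exactly as you sketch. Your refinement there is substantive. The paper says it is ``clearly'' enough to treat generators while working with real $M>0$. But closing under composition requires the identity for $\widehat S_2\f_M$, which is a Gaussian with complex covariance (e.g.\ $\frp_Q\f_M=\f_{M-iQ}$). So the induction only closes on the class $\f_N$ with $N\in\Sym(d,\bC)$ and $\Re N>0$. The paper's three computations hold verbatim in that class, since they only use $N^{\top}=N$, $\det(N)^{-1/2}$, $N^{-1}$ and $\Re N>0$. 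This stronger form is also what the proof of Theorem \ref{thmGG1} actually invokes, where Lemma \ref{lemma1} is applied with $\widehat\Xi\widehat U_2\f_M$ in place of $\f_M$.

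Your main route is genuinely different. You obtain the lemma as the holomorphic continuation of \eqref{defMetap}: both sides are entire $L^2$-valued functions of $(z,w)\in\bC^{2d}$ that agree on $\bR^{2d}$. Pairing with $h\in L^2(\rd)$ reduces this to the scalar identity theorem. The justification you give is the right one: the Gaussian decay coming from $\Re N>0$ dominates the exponentials of linear forms in $y$, locally uniformly in $(z,w)$. This buys a computation-free proof that handles all of $\Mp(d,\bR)$ at once and explains why the complex-argument formula is forced by the real one.

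It does not dispense with generators entirely, though. For the right-hand side $\rho(S(z,w);\tau)\widehat S\f_N$ even to be defined for complex arguments, you need $\widehat S\f_N$ to be again a generalized Gaussian. You assert this without proof. The quickest proof is on generators: up to phase factors, $\frp_Q\f_N=\f_{N-iQ}$, $\frT_E\f_N=|\det E|^{1/2}\f_{E^\top NE}$ and $\cF\f_N=\det(N)^{-1/2}\f_{N^{-1}}$, with $\Re(N^{-1})>0$. So the trade-off is this: the paper obtains the intertwining by explicit phase bookkeeping on each generator, whereas your argument needs only the action of $\Mp(d,\bR)$ on Gaussians together with analyticity.
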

		\begin{proof}
			Clearly, it is enough to prove the assertion for the generators of $\Mp(d,\bR)$. First, we consider the Fourier transform. By Proposition \ref{GaussianIntegrals},
			\begin{align}
				\cF\rho(z,w;\tau)\f_M(x)&=i^{-d/2}\int_{\rd}\rho(z,w;\tau)\f_M(y)e^{-2\pi iy\cdot x}dy\\
				&=i^{-d/2}e^{2\pi i\tau}e^{-i\pi z\cdot w}\int_{\rd}e^{2\pi i w \cdot y}e^{-\pi M(y-z)\cdot(y-z)}e^{-2\pi ix\cdot y}dy\\
				& =i^{-d/2}e^{2\pi i\tau}e^{-i\pi z\cdot w}e^{-\pi Mz\cdot z}\int_{\rd} e^{-\pi My\cdot y}e^{-2\pi iy\cdot (x-w+iMz)}dy \\
				& = i^{-d/2}\det(M)^{-1/2}e^{2\pi i\tau}e^{-i\pi z\cdot w}e^{-\pi Mz\cdot z}e^{-\pi M^{-1}(x-w+iMz)\cdot (x-w+iMz)}\\
				&= i^{-d/2}\det(M)^{-1/2}e^{2\pi i\tau}e^{-i\pi z\cdot w}e^{-\pi M^{-1}(x-w)\cdot(x-w)}e^{-2\pi i(x-w)\cdot z}\\
				&=i^{-d/2}e^{2\pi i\tau}e^{i\pi z\cdot w}e^{-2\pi ix\cdot z}\int_{\R^d}e^{-\pi Mu\cdot u}e^{-2\pi i (x-w)\cdot u}du\\
				&= \rho(w,-z;\tau)\cF \f_M(x)=\rho(J(z,w);\tau)\cF \f_M(x).
			\end{align}
			This proves the assertion for the Fourier transform. Let $E\in \GL(d,\bR)$ and $\frT_E$ be the rescaling defined as in \eqref{defTE}, 
			\begin{align}
				\frT_E\rho(z,w;\tau)\f_M(x)&=i^m|\det(E)|^{1/2}[\rho(z,w;\tau)\f_M](Ex)\\
				&=i^m|\det(E)|^{1/2}e^{2\pi i\tau}e^{-i\pi z\cdot w}e^{2\pi iEx\cdot w}e^{-\pi M(Ex-z)\cdot(Ex-z)}\\
				&=i^m|\det(E)|^{1/2}e^{2\pi i\tau}e^{-i\pi (EE^{-1})z\cdot w}e^{2\pi ix\cdot E^\top w}e^{-\pi M[E(x-E^{-1}z)]\cdot[E(x-E^{-1}z)]}\\
				&=i^m|\det(E)|^{1/2}e^{2\pi i\tau}e^{-i\pi E^{-1}z\cdot E^\top w}e^{2\pi ix\cdot E^\top w}e^{-\pi M[E(x-E^{-1}z)]\cdot[E(x-E^{-1}z)]}\\
				&=\rho(E^{-1}z,E^\top w;\tau)\frT_E\f_M(x)=\rho(\cD_E(z,w);\tau)\frT_E\f_M(x),
			\end{align}
			where $\cD_E$ is defined as in \eqref{defDE}. Finally, we need to prove the assertion for $\frp_Q$, defined for $Q\in\Sym(d,\bR)$ as in \eqref{defpQ}. Let $V_Q$ be the lower triangular matrix defined as in \eqref{defVQ}. Then,
			\begin{equation}\label{computVQ}\begin{split}
				\rho(V_Q(z,w);\tau)\frp_Q\f_M(x)&=\rho(z,w+Qz;\tau)\frp_Q\f_M(x)\\
				&=e^{2\pi i\tau}e^{-i\pi z\cdot (w+Qz)}e^{2\pi i(w+Qz)\cdot x}e^{i\pi Q(x-z)\cdot (x-z)}e^{-\pi M(x-z)\cdot(x-z)}\\
				&=e^{i\pi Qx\cdot x}e^{2\pi i\tau}e^{-i\pi z\cdot w}e^{2\pi iw\cdot x}e^{-\pi M(x-z)\cdot(x-z)}\\
				&=\frp_Q\rho(z,w;\tau)\f_M(x).
			\end{split}\end{equation}
			This concludes the proof.
			
		\end{proof}
		
		Next, we check the validity of \eqref{intertOsc} for atomic metaplectic contractions and $x,\xi\in\rd$.
		
		\begin{lemma}\label{lemma2}
			Let $\widehat \Xi$ be an atomic metaplectic contraction. For every $x,\xi\in\rd$, $\tau\in\bR$
			\begin{equation}
				\widehat \Xi\rho(x,\xi;\tau)\f_M=\rho(\Xi(x,\xi);\tau)\widehat \Xi\f_M.
			\end{equation}
		\end{lemma}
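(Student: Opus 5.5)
The plan is to reduce to dimension one and then verify the identity directly on Gaussians, using the two one-dimensional atoms. Both sides of the claimed identity are compatible with tensor products. Write $\widehat\Xi=\bigotimes_j\widehat\Xi_j$ and $\Xi=\bigotimes_j\Xi_j$. First take $M$ diagonal, so that $\f_M=\bigotimes_j\f_{m_j}$. Then $\rho(x,\xi;\tau)\f_M$ factors as $e^{2\pi i\tau}\bigotimes_j\rho(x_j,\xi_j;0)\f_{m_j}$, and $\rho(\Xi(x,\xi);\tau)$ factors the same way because $\Xi$ acts coordinatewise on the pairs $(x_j,\xi_j)$, by the definition \eqref{tensmat} of $\otimes$. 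Once the one-dimensional identity is known, the $d$-dimensional identity follows by applying $\widehat\Xi_j$ factor by factor.

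For general $M>0$ the matrix need not be diagonal. Here I would avoid diagonalizing and instead argue directly with the factorization \eqref{S2Vp}: $\widehat\Xi=\frp_{i\Delta}\frR_\Theta$, with $\frR_\Theta$ written through \eqref{factorRtau} as $\frp_{iQ}\frT_{E}\cF^{-1}\frp_{iQ}\cF$ (tensorized, with the identity on the untouched coordinates). This reduces everything to checking the relation for generators. For $\cF$, $\frT_E$ and $\cF^{-1}$, Lemma \ref{lemma1} already gives the intertwining with complex arguments $(z,w)\in\bC^{2d}$, and on any Gaussian $\f_{M'}$ with $\Re M'>0$; the same computation goes through verbatim for complex symmetric $M'$ with positive-definite real part. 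It then remains to treat $\frp_{iP}$ with $P\geq0$ real diagonal. The computation \eqref{computVQ} is purely algebraic in $Q$, so it holds for complex symmetric $Q$ as well: $\frp_{Q}\rho(z,w;\tau)\f_{M'}=\rho(V_Q(z,w);\tau)\frp_Q\f_{M'}$ for all $z,w\in\bC^d$.

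The chaining proceeds as follows. Each generator maps a generalized Gaussian $\f_{M'}$ (with $\Re M'>0$) to a constant multiple of another such Gaussian. Each generator also intertwines $\rho(z,w;\tau)$ with $\rho(S_k(z,w);\tau)$ for complex $(z,w)$. Composing the identities therefore gives the result with $S=\prod S_k$, which equals $\Xi$ by \eqref{factorRtau} and Remark \ref{remAMC}. This is exactly why complex phase-space points are needed: after the first factor $\frp_{iQ}$, the point $(x,\xi)$ becomes complex. It also explains why the extension of Lemma \ref{lemma1} to complex arguments was prepared beforehand.

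The main obstacle is making the "generalized Gaussian" bookkeeping rigorous. Specifically, I must check that the Gaussian integral formula of Proposition \ref{GaussianIntegrals} applies at every step, i.e. that the intermediate covariance matrices keep $\Re M'>0$. For $\frp_{iP}$ this is clear, since $M'\mapsto M'+P$ preserves the condition. For $\cF$ it follows because $M'\mapsto M'^{-1}$ preserves positive-definite real part for invertible complex symmetric matrices. I would also check that the integrals defining $\rho(z,w;\tau)$ on such Gaussians agree with the closed form in the Remark after the definition of complex translations. The only nontrivial point in that check is the branch of $\det(M')^{-1/2}$, and this branch cancels on both sides, since identical constant factors appear on each side of the identity.
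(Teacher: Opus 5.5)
Your argument is correct, but it follows a different route from the paper's. The paper proceeds in three steps:
\begin{enumerate}
\item it proves the one-dimensional identity for $\frR_\vartheta$ by a direct Gaussian-integral computation on real $\f_a$;
\item it tensorizes to diagonal $M$;
\item it passes to general $M>0$ by writing $\f_M=\frT_\Sigma\f_\Delta$ with $\Sigma$ orthogonal.
\end{enumerate}
You instead factor $\widehat\Xi=\frp_{i\Delta}\frp_{iP}\frT_E\cF^{-1}\frp_{iP}\cF$, with $P=\diag(\tanh\vartheta_j)$ and $E=\diag(1/\cosh\vartheta_j)$. You then chain the generator relations of Lemma \ref{lemma1}, extended in three ways: to complex phase-space points, to complex $Q=iP$, and to Gaussians whose complex covariance has positive-definite real part. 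The tensorized form of \eqref{factorRtau} identifies the product of the generators' matrices with $\Xi$.

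Your route buys two things.
\begin{itemize}
\item It needs no diagonalization, so your opening tensorization paragraph is in fact superfluous. The paper's reduction to diagonal $M$ invokes the tensorized claim for the operator $\widehat\Xi\frT_\Sigma$, which is not an atomic contraction. Your chaining treats every $M$ uniformly.
\item It proves the identity for Gaussians with complex covariance, and that is what the proof of Theorem \ref{thmGG1} actually uses. There the lemma is applied to $\widehat U_2\f_M$, which is in general a Gaussian with complex covariance. Such a covariance need not be diagonalizable by a real orthogonal matrix.
\end{itemize}
The price is relying on the operator factorization of $\frR_\vartheta$ and re-running the computations of Lemma \ref{lemma1} for complex covariance. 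As you note, the latter is routine, and your bookkeeping that each generator preserves the condition $\Re M'>0$ is correct.

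One small correction to your last paragraph concerns the constant $\det(M')^{-1/2}\det(M'^{-1})^{-1/2}$ that appears in $T_z\f_{M'}$. It does not cancel between the two sides, since the two sides involve different covariances (for instance $M'$ and $M'^{-1}$ in the $\cF$ step). What is true is that this constant does not depend on $z$. At $z=0$, Fourier inversion forces it to equal $1$, so $T_z\f_{M'}=\f_{M'}(\cdot-z)$ for all $z\in\bC^d$.
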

		\begin{proof}
			{\bf Claim 1.} {The assertion holds in dimension $d=1$ for $\widehat \Xi=\frp_i$ and $\widehat \Xi=\frR_\vartheta$, $\vartheta>0$}. 
			
			The very same computations in \eqref{computVQ} show that
			\begin{equation}
				\frp_i\rho(x,\xi;\tau)\f_a=\rho(x,\xi+ix;\tau)\frp_i\f_a=\rho(V_i(x,\xi);\tau)\frp_i\f_a
			\end{equation}
			when $\f_a(y)=e^{-\pi ay^2}$, $(z,w)=(x,\xi)\in\bR^2$ and $Q=i$. Let $\vartheta>0$, we show that
			\begin{align}
				\frR_\vartheta \rho(x,\xi;\tau)\f_a =  \rho(\mathcal{R}_\vartheta(x,\xi);\tau)\frR_\vartheta\f_a=\rho(\cosh\vartheta-i\sinh\vartheta,i\sinh\vartheta+\cosh\vartheta;\tau)\frR_\vartheta\f_a.
			\end{align}
			By the Gaussian integral formula of Proposition \ref{GaussianIntegrals}, it follows easily that
			\begin{equation}
				\frR_\vartheta\f_a=\left(\frac{1}{\cosh\vartheta +a\sinh\vartheta}\right)^{1/2}\f_b, \qquad b=\frac{a+\tanh\vartheta}{1+a\tanh\vartheta}.
			\end{equation}
			Therefore $\frR_\vartheta\f_a$ is a Gaussian and $T_z\frR_\vartheta \f_a(y)=\frR_\vartheta \f_a(y-z)$ for every $z\in\bC$. Then, by using the identity $\cosh\vartheta^2-\sinh\vartheta^2=1$ and its surrogates, we obtain
			\begin{align}
				\frR_\vartheta &\rho(x,\xi;\tau)\f_a(y) = (\cosh\vartheta)^{-1/2}\int_{-\infty}^\infty (\rho(x,\xi;\tau)\f_a){\text{\textasciicircum}}(\eta)e^{i\pi (2y\eta +i(y^2+\eta^2)\sinh\vartheta)/\cosh\vartheta}d\eta\\
				&= (\cosh\vartheta)^{-1/2}\int_{-\infty}^\infty \rho(\xi,-x;\tau)\widehat{\f_a}(\eta)e^{i\pi (2y\eta +i(y^2+\eta^2)\sinh\vartheta)/\cosh\vartheta}d\eta\\
				&=(\cosh\vartheta)^{-1/2}e^{2\pi i\tau}e^{i\pi x\cdot \xi}\int_{-\infty}^\infty e^{-2\pi i\eta \cdot x}\widehat{\f_a}(\eta-\xi)e^{i\pi (2y\eta +i(y^2+\eta^2)\sinh\vartheta)/\cosh\vartheta}d\eta\\
				&=(\cosh\vartheta)^{-1/2}e^{2\pi i\tau}e^{i\pi x\cdot \xi}\int_{-\infty}^\infty e^{-2\pi i(\omega+\xi) \cdot x}\widehat{\f_a}(\omega)e^{i\pi (2y(\omega+\xi)+i(y^2+(\omega+\xi)^2)\sinh\vartheta)/\cosh\vartheta}d\omega\\
				&=(\cosh\vartheta)^{-1/2}e^{2\pi i\tau}e^{-i\pi x\cdot \xi}e^{-\pi\tanh\vartheta y^2}e^{2\pi iy\xi/\cosh\vartheta}\\
				&\qquad\qquad\qquad\qquad\qquad\qquad\qquad\times\int_{-\infty}^\infty e^{-2\pi i\omega \cdot x}\widehat{\f_a}(\omega)e^{i\pi (2y\omega +i(\omega+\xi)^2\sinh\vartheta)/\cosh\vartheta}d\omega\\
				&=(\cosh\vartheta)^{-1/2}e^{2\pi i\tau}e^{-i\pi x\cdot \xi}e^{-\pi\tanh\vartheta y^2}e^{2\pi iy\xi/\cosh\vartheta}e^{-\pi\tanh\vartheta\xi^2}\times\\
				&\qquad\qquad\qquad\qquad\qquad\qquad\qquad\times\int_{-\infty}^\infty \widehat{\f_a}(\omega)e^{2\pi i\omega  (y-x\cosh\vartheta+i\xi\sinh\vartheta)/\cosh\vartheta}e^{-\pi\tanh\vartheta\omega^2}d\omega\\
				&=(\cosh\vartheta)^{-1/2}e^{2\pi i\tau}e^{-i\pi x\cdot \xi}e^{-\pi\tanh\vartheta y^2}e^{-\pi\tanh\vartheta\xi^2}e^{2\pi iy\xi/\cosh\vartheta}e^{\pi\tanh\vartheta(y-x\cosh\vartheta+i\xi\sinh\vartheta)^2}\times\\
				&\times\underbrace{\int_{-\infty}^\infty \widehat{\f_a}(\omega)e^{2\pi i\omega  (y-x\cosh\vartheta+i\xi\sinh\vartheta)/\cosh\vartheta}e^{-\pi\tanh\vartheta(\omega^2+(y-x\cosh\vartheta+i\xi\sinh\vartheta)^2)}d\omega}_{=\frR_{\vartheta}\f_a(y-x\cosh\vartheta+i\xi\sinh\vartheta)=T_{x\cosh\vartheta-i\xi\sinh\vartheta}\frR_\vartheta \f_a}\\
				&=e^{2\pi i\tau}e^{-i\pi(x\cosh\vartheta-i\xi\sinh\vartheta)(ix\sinh\vartheta+\xi\cosh\vartheta)}e^{2\pi iy(ix\sinh\vartheta+\xi\cosh\vartheta)}T_{x\cosh\vartheta-i\xi\sinh\vartheta}\frR_\vartheta \f_a(y)\\
				&=\rho(x\cosh\vartheta-i\xi\sinh\vartheta,ix\sinh\vartheta+\xi\cosh\vartheta;\tau)\frR_\vartheta \f_a(y).
			\end{align}
			This proves Claim 1.
				
			{\bf Claim 2.} The assertion holds in dimension $d\geq1$ for Gaussian states $\f_\Delta(y)=e^{-\pi\Delta y\cdot y}$, $\Delta>0$ diagonal.
			
			Observe that for $z\in\bC^{2d}$, $S_1,S_2\in\Sp_+(1,\bC)$,
			\begin{equation}
				\rho((S_1\otimes S_2)(x,y,\xi,\eta);\tau)(f\otimes g)=e^{-2\pi i\tau}\rho(S_1(x,\xi);\tau)f\otimes \rho(S_2(y,\eta);\tau)g
			\end{equation}
			for Gaussian functions $f,g$. Then, a standard inductive argument shows that
			\begin{equation}
				\rho((S_1\otimes\ldots\otimes S_d)(x,\xi);\tau)(f_1\otimes\ldots\otimes f_d)=e^{-2\pi i\tau(d-1)}\rho(S_1(x_1,\xi_1);\tau)f_1\otimes\ldots\otimes \rho(S_d(x_d,\xi_d);\tau)f_d
			\end{equation}
			for Gaussians $f_1,\ldots,f_d$, $x,\xi\in\rd$ and $\tau\in\bR$, and $S_1,\ldots,S_d\in\Sp_+(1,\bC)$. In particular, if $\Xi$ is purely positive, $ \Xi= \Xi_1\otimes\ldots\otimes \Xi_d$, with either $ \Xi_j=V_i$ or $ \Xi_j=\mathcal{R}_{\vartheta_j}$ ($\vartheta_j\geq0$), $j=1,\ldots,d$, we have
			\begin{equation}
				\rho(\Xi(x,\xi);\tau)(f_1\otimes\ldots\otimes f_d)=e^{-2\pi i\tau(d-1)}\rho(\Xi_1(x_1,\xi_1);\tau)f_1\otimes\ldots\otimes \rho(\Xi_d(x_d,\xi_d);\tau)f_d.
			\end{equation}
			Let $\widehat \Xi$ be an atomic metaplectic contraction with $\widehat \Xi=\bigotimes_{j=1}^d\widehat \Xi_j$, where either $\widehat \Xi_j=\frp_i$ or $\widehat \Xi_j=\frR_{\vartheta_j}$ for some $\vartheta_j\geq0$ ($j=1,\ldots,d$). Then, by tensorizing $\f_\Delta=\bigotimes_{j=1}^d\f_{a_j}$, $a_1,\ldots,a_d>0$,
			\begin{align}
				\widehat \Xi\rho(x,\xi;\tau)\f_\Delta(y)&=e^{-2\pi i\tau(d-1)}\Big(\bigotimes_{j=1}^d\widehat \Xi_j\bigotimes_{j=1}^d\rho(x_j,\xi_j;\tau)\Big)\f_\Delta(y)\\
				&=e^{-2\pi i\tau(d-1)}\bigotimes_{j=1}^d\widehat \Xi_j\rho(x_j,\xi_j;\tau)\f_{a_j}(y_j)\\
				&=e^{-2\pi i\tau(d-1)}\bigotimes_{j=1}^d\rho(\Xi_j(x_j,\xi_j);\tau)\widehat \Xi_j\f_{a_j}(y_j)\\
				&=e^{-2\pi i\tau(d-1)}\rho\Big(\bigotimes_{j=1}^d\Xi_j(x_j,\xi_j);\tau\Big)\bigotimes_{j=1}^d\widehat \Xi_j\f_{a_j}(y_j)\\
				&=\rho(\Xi(x,\xi);\tau)\widehat \Xi\f_\Delta(y).
			\end{align}
			This concludes the proof of Claim 2.
			
			{\bf Claim 3.} Finally, we prove the assertion for $M$ symmetric and positive-definite. 
			
			Let $M=\Sigma^\top\Delta\Sigma$, with $\Sigma$ orthogonal and $\Delta>0$ diagonal. Then, for $z\in\rdd$ and $\tau\in\bR$, by writing $\f_M=\mathfrak{T}_\Sigma\f_\Delta$ and by applying Lemma \ref{lemma1} to $\mathfrak{T}_{\Sigma}$,
			\begin{align}
				\widehat S\rho(z;\tau)\f_M&=\widehat S\rho(z;\tau)\mathfrak{T}_\Sigma \f_\Delta=\widehat S\mathfrak{T}_\Sigma\rho(\cD_{\Sigma}^{-1}z;\tau) \f_\Delta.
			\end{align}
			Then, Claim 2 applied to $\widehat S\mathfrak{T}_\Sigma$ and $\f_\Delta$ yields
			\begin{align}
				\widehat S\rho(z;\tau)\f_M&=\rho(S\cD_\Sigma\cD_{\Sigma}^{-1}z;\tau)\widehat S\mathfrak{T}_\Sigma \f_\Delta=\rho(Sz;\tau)\widehat S\f_M.
			\end{align}
			This concludes the proof.
			
		\end{proof}
        
		\begin{proof}[Proof of Theorem \ref{thmGG1}] The proof of the Theorem follows by combining Lemmas \ref{lemma1} and \ref{lemma2}. To be precise, let $\widehat S\in \Mp_+(d,\bC)$ and observe that both sides of \eqref{intertOsc} are well-defined in $L^2(\rd)$. By item $(ii)$ of Theorem \ref{thmOscillator}, we can write $\widehat S=\widehat U_1\widehat \Xi\widehat U_2$, where $\widehat U_1,\widehat U_2\in\Mp(d,\bR)$ and $\widehat \Xi$ is an atomic metaplectic contraction. Then, by applying \eqref{defMetap}, Lemma \ref{lemma2} and Lemma \ref{lemma1} in the order,
			\begin{align}
				\widehat S\rho(x,\xi;\tau)\f_M&=\widehat U_1\widehat \Xi\widehat U_2\rho(x,\xi;\tau)\f_M=\widehat U_1\widehat \Xi\rho(U_2(x,\xi);\tau)\widehat U_2\f_M=\widehat U_2\rho(\Xi U_2(x,\xi);\tau)\widehat \Xi\widehat U_2\f_M\\ 
				&=\rho(U_1\Xi U_2(x,\xi);\tau)\widehat U_1\widehat \Xi\widehat U_2\f_M=\rho(S(x,\xi);\tau)\widehat S\f_M,
			\end{align}
		and we are done.
		\end{proof}
        
		We conclude this section with the following corollary, extending the intertwining relation \eqref{intertOsc} to a dense subset of $L^2(\rd)$.
			
		\begin{corollary}\label{cor510}
			Let $S\in\Sp_+(d,\bC)$. 
            The intertwining relation
			\begin{equation}\label{intertOsc2}
				\widehat S\rho(z;\tau)f=\rho(Sz;\tau)\widehat Sf, \qquad z\in\rdd
			\end{equation}
			holds on a dense subset of $L^2(\rd)$, namely on functions in the form
			\begin{equation}\label{defTFshiftsGauss}
				f(t)=\sum_{j=1}^Nc_j\rho(z_j;\tau_j)\f_{M_j}(t)
			\end{equation}
			where $N\geq1$ and for every $j=1,\ldots,N$, $c_j\in\bC$, $z_j\in\rdd$, $\tau_j\in\bR$ and $M_j>0$.  
		\end{corollary}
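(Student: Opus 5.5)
The corollary has two parts: the identity \eqref{intertOsc2} on every $f$ of the form \eqref{defTFshiftsGauss}, and the density of the span of such functions in $L^2(\rd)$. Both sides of \eqref{intertOsc2} are linear in $f$, so the identity only needs to be checked on a single term $\rho(z_j;\tau_j)\f_{M_j}$. Also, $\rho(Sz;\tau)$ is only densely defined, so we must check that each side makes sense on such $f$.

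\textbf{The identity.} Fix one term $g=\rho(w;\sigma)\f_M$ with $w\in\rdd$, $\sigma\in\bR$, $M>0$.

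1. Use the composition law of the Schr\"odinger representation. For real points $z,w$,
\[
\rho(z;\tau)\rho(w;\sigma)=\rho\big(z+w;\tau+\sigma+\tfrac12\sigma(z,w)\big),
\]
where the phase follows the usual Heisenberg convention. Hence $\rho(z;\tau)g=\rho(z+w;\tau')\f_M$ for a suitable real $\tau'$.

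2. Apply Theorem \ref{thmGG1} to the left-hand side:
\[
\widehat S\rho(z;\tau)g=\rho\big(S(z+w);\tau'\big)\widehat S\f_M.
\]

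3. Apply Theorem \ref{thmGG1} once more: $\widehat S g=\rho(Sw;\sigma)\widehat S\f_M$.

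4. It remains to show
\[
\rho(Sz;\tau)\rho(Sw;\sigma)\widehat S\f_M=\rho\big(Sz+Sw;\tau'\big)\widehat S\f_M .
\]
For this we need the composition law for complex arguments when acting on Gaussians. We also need the phase $\sigma(z,w)$ to equal $\sigma(Sz,Sw)$, which holds because $S^\top JS=J$.

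The Gaussians involved are of the form $\widehat S\f_M=c\,e^{-\pi N t\cdot t}$ with $\Re N>0$ (H\"ormander). On such Gaussians, complex translations and modulations act by explicit formulas like those in the Remark after the definition of $\rho(z,w;\tau)$. The composition law then follows by direct computation, or by analytic continuation in $(z,w)\in\bC^{2d}$ from the real case, since both sides are entire in these variables when applied to a fixed Gaussian.

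\textbf{Density.} It suffices to prove density of the span with the single choice $M=I$. Let $h\in L^2(\rd)$ be orthogonal to every $\rho(z;\tau)\f_I$. Up to phases, this says $V_{\f_I}h(z)=0$ for all $z\in\rdd$. Since the STFT with window $\f_I$ is injective on $L^2(\rd)$ (for instance by the Moyal identity), $h=0$. Allowing all $M>0$ can only enlarge the span.

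\textbf{Main obstacle.} The delicate step is step 4: justifying the composition law for complex phase-space points, including the phase. I would handle it by the analytic continuation argument, checking holomorphy in $(z,w)$ of $\rho(z;\tau)$ applied to a Gaussian with $\Re N>0$, so that no issue arises from $\rho(Sz;\tau)$ being unbounded on $L^2(\rd)$.
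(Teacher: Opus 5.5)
Your proposal is correct and is essentially the paper's proof. The paper writes the chain $\widehat S\rho(z;\tau)\rho(z_1;\tau_1)\f_M=\widehat S\rho(z+z_1;\tau'')\f_M=\rho(S(z+z_1);\tau'')\widehat S\f_M=\rho(Sz;\tau)\rho(Sz_1;\tau_1)\widehat S\f_M=\rho(Sz;\tau)\widehat S\rho(z_1;\tau_1)\f_M$, using Theorem~\ref{thmGG1} twice and $\sigma(Sz,Sz_1)=\sigma(z,z_1)$, and then concludes by linearity. It leaves implicit both the composition law for complex phase-space points acting on Gaussians and the density of the span; your analytic-continuation argument and your STFT-injectivity argument supply these two missing steps.
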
	
		\begin{proof}
			Observe that for $z=(x,\xi),z_1=(x_1,\xi_1)\in\rdd$ and $\tau,\tau_1\in\bR$, by Theorem \ref{thmGG1},
			\begin{align}
				\widehat S\rho(z;\tau)\rho(z_1;\tau_1)\f_M&=\widehat S\rho\left(z+z_1;\tau+\tau_1-\frac 1 2 \sigma(z,z_1)\right)\f_M\\
				&=\rho\left(S(z+z_1);\tau+\tau_1-\frac 1 2 \sigma(Sz,Sz_1)\right)\widehat S\f_M\\
				&=\rho(Sz;\tau)\rho(Sz_1;\tau_1)\widehat S\f_M=\rho(Sz;\tau)\widehat S\rho(z_1;\tau_1)\f_M.
			\end{align}
			Consequently, \eqref{intertOsc} holds for functions in the form \eqref{defTFshiftsGauss} with $N=1$, and the assertion follows by linearity.
            
		\end{proof}
		
		\section{Subsemigroups and Wigner analysis}\label{sec:Subsemigroups}
		In this section, we characterize three subsemigroups of $\Mp_+(d,\bC)$ in terms of the structures of their projections onto $\Sp_+(d,\bC)$. We also investigate their intertwining relation with the Wigner distribution and with complex conjugation.
			\subsection{Two subsemigroups of \texorpdfstring{$\Mp_+(d,\bC)$}{Lg}}
			Consider the Cauchy problem associated to the complex heat equation
			\begin{equation}\label{HeatEq}
				\begin{cases}
					i\partial_t u=(\alpha+i\beta)\Delta_x u, & \text{$t>0$, $x\in\rd$}\\
					u(0,x)=u_0(x),
				\end{cases}
			\end{equation}
			where $\alpha,\beta\in\bR$, $|\alpha|+|\beta|\neq0$, $\beta\geq0$ and $u_0\in\cS(\rd)$. The propagator of \eqref{HeatEq} consists of a one-parameter semigroup of Fourier multipliers
			\begin{equation}
				u(t,x)=\cF^{-1}(e^{4\pi^2 i(\alpha+i\beta)t|\cdot|^2}\widehat{u_0})(x), \qquad t\geq0.
			\end{equation}
			With the notation of the present paper,
			\begin{equation}\label{propEqCalore}
				u(t,x)=\mathfrak{m}_{P_t}u_0(x),
			\end{equation}
			where for $t\geq0$, 
			\begin{equation}\label{subsemigroup1}
				\pi^{Mp}_+(\mathfrak{m}_{P_t})=V_{-P_t}^\top=\begin{pmatrix}
					I_d & -4\pi (\alpha+i\beta)t\cdot I_d\\
					O_d & I_d
				\end{pmatrix}, \qquad P_t=4\pi (\alpha+i\beta)tI_d,
			\end{equation}
			The reader may observe that the conditions $t,\beta\geq0$ not only guarantee the well-posedness of \eqref{HeatEq} for $t\geq0$, but also the validity of the condition of Definition \ref{defPosSymp} for $V_{-P_t}^\top$. Stated differently, $V_{-P_t}^\top\in\Sp_+(d,\bC)$ and $\mathfrak{m}_{P_t}\in\Mp_+(d,\bC)$ for every $t\geq0$. 
			
			One-parameter subsemigroups of $\Sp_+(d,\bC)$ were characterized in \cite{Brunet} in terms of their infinitesimal generators. In this section, we focus on the subsemigroups of matrices $S\in \Sp_+(d,\bC)$ with blocks \eqref{blockS} satisfying one of the following conditions: $B=O_d$ and $C=O_d$. In view of \eqref{propEqCalore} and \eqref{subsemigroup1}, the projections of the propagators \eqref{propEqCalore} are of the second type. Let us focus on the condition $B=O_d$ first, and observe that it entails $D=A^{-\top}$. A simple factorization shows then that
			\begin{equation}
				S=\begin{pmatrix}
					A & O_d\\
					C & D
				\end{pmatrix}=
				\begin{pmatrix}	
					A & O_d\\
					O_d & A^{-\top}
				\end{pmatrix}
				\begin{pmatrix}	
					I_d & O_d\\
					A^\top C & I_d
				\end{pmatrix},
			\end{equation}
			and it is straightforward to check that the two factors separately belong to $\Sp_+(d,\bC)$ if $A\in\GL(d,\bR)$ and $\Im(A^\top C)\geq0$. Nevertheless, since $\Sp_+(d,\bC)$ is a semigroup, this decomposition is not sufficient to obtain the characterization
			\begin{equation}
				\text{$S\in{\Sp}_+(d,\bC)$ if and only if $A\in\GL(d,\bR)$ and $\Im(A^\top C)\geq0$},
			\end{equation}
			which is actually true.
			\begin{proposition}\label{prop51GG}
			Let $S\in\Sp(d,\bC)$ with blocks as in \eqref{blockS} satisfying $B=O_d$. Then, $S\in \Sp_+(d,\bC)$ if and only if $A\in\GL(d,\bC)$ and $\Im(A^\top C)\geq0$. 
		\end{proposition}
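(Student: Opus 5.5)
The plan is to test the positivity condition \eqref{cond.pos} of Definition \ref{defPosSymp} directly on the block structure, rather than through the $4d\times4d$ matrix $\cM_S$ of Proposition \ref{SmathcalM}. Invertibility of $A$ comes for free. With $B=O_d$, relation \eqref{symp-rel3} reads $A^\top D=I_d$, so $A\in\GL(d,\bC)$ and $D=A^{-\top}$ for every $S\in\Sp(d,\bC)$ of this shape. Relation \eqref{symp-rel1} gives $A^\top C\in\Sym(d,\bC)$, so the condition $\Im(A^\top C)\geq0$ is meaningful. The whole content of the statement therefore lies in the sign condition.

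For $z=(x,\xi)\in\bC^{2d}$ we have $Sz=(Ax,Cx+D\xi)$. I will expand
\[
i^{-1}\big(\sigma(Sz,\overline{Sz})-\sigma(z,\overline z)\big)
\]
using $\sigma(u,v)=Ju\cdot v$ and sort the result into $x$–$x$, $x$–$\xi$ and $\xi$–$\xi$ terms. Because $B=O_d$, the $\xi$–$\xi$ term should cancel identically. The $x$–$\xi$ terms carry the coefficient $\overline{A}^{\,\top}D-I_d$ together with its conjugate transpose, and $\overline{A}^{\,\top}D-I_d=\overline{A}^{\,\top}A^{-\top}-I_d$. The $x$–$x$ term should come out as $2\,\Im(\overline{A}^{\,\top}C)\,x\cdot\overline x$, up to the normalization fixed by \eqref{cond.pos}.

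The decisive step is this. A Hermitian form with vanishing $\xi$–$\xi$ block is nonnegative only if its off-diagonal block vanishes. To see this, take $\xi=t\eta$ with $t\to\pm\infty$ and fixed $x$: the form is then affine in $t$, so the cross coefficient must be zero. Hence positivity forces $\overline{A}^{\,\top}A^{-\top}=I_d$, that is $\overline A=A$. Once the cross term is gone, the $x$–$x$ block becomes $\Im(A^\top C)$, and its nonnegativity is exactly the stated condition. Conversely, when the cross term vanishes and $\Im(A^\top C)\geq0$, the form is $\Im(A^\top C)x\cdot\overline x\geq0$, which is condition \eqref{cond.pos}.

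I expect the main obstacle to be the cross term, which the statement as worded, with $A\in\GL(d,\bC)$, does not mention. The computation shows that positivity silently forces $A$ to be real. So in the direction $S\in\Sp_+(d,\bC)\Rightarrow$ (conditions) I will record both facts: $A$ is invertible, by \eqref{symp-rel3}, and $\Im(A^\top C)\geq0$. In the converse direction the vanishing of the cross term is needed, and it must be read off from the form of the matrix, namely $\overline A=A$. As a consistency check, this is compatible with the factorization $S=\cD_{A^{-1}}V_{A^\top C}$ preceding the proposition, since the first factor lies in $\Sp(d,\bR)$ exactly when $A$ is real. The algebra itself, tracking conjugates and transposes and using $D=A^{-\top}$ together with the symmetry of $A^\top C$, is routine.
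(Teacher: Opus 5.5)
You have found the real content, but you should resolve it rather than hedge. The hypothesis $A\in\GL(d,\bC)$ in the statement is a misprint for $A\in\GL(d,\bR)$. The sentence immediately preceding the proposition states the characterization with $\GL(d,\bR)$, and the paper's proof ends with ``it must be $A\in\GL(d,\bR)$''.

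Read literally, the ``if'' direction is false. Take $d=1$, $A=i$, $C=0$, $D=A^{-\top}=-i$. Then $S=\diag(i,-i)$ lies in $\Sp(1,\bC)$, $A\in\GL(1,\bC)$ and $\Im(A^\top C)=0$. Yet your cross coefficient is $\overline{A}^{\,\top}D-1=-2$, and the quantity in \eqref{cond.pos} equals $-4\,\Im(\xi\bar x)$, which is indefinite. So $\overline A=A$ cannot be ``read off from the form of the matrix''; it has to be a hypothesis. With that correction, your two directions prove exactly the intended equivalence: $S\in\Sp_+(d,\bC)$ if and only if $A\in\GL(d,\bR)$ and $\Im(A^\top C)\geq0$.

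A small point. Before $A$ is known to be real, $H=\overline{A}^{\,\top}C$ need not be symmetric, so the $x$--$x$ coefficient is the Hermitian part $(H-H^\ast)/(2i)$, not the entrywise $\Im(H)$. This is harmless: your affine-in-$t$ argument never uses it, and the two coincide with $\Im(A^\top C)$ once $A$ is real.

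Your route differs from the paper's in execution. The paper passes to the real $4d\times4d$ matrix $\cM_S$ of Proposition~\ref{SmathcalM}. It conjugates it by $\cK$ from \eqref{defKpermConj}, so that a $2d\times2d$ diagonal block becomes $O_{2d}$, and then invokes the Schur-complement criterion of Theorem~\ref{theoremSchur}. This forces the off-diagonal block (the realification of $A^\top$ times $\diag(D_2,D_2)$) to vanish, hence $D_2=O_d$, and leaves the condition $\diag(A_1^\top C_2,A_1^\top C_2)\geq0$.

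You use the same principle: a positive semi-definite form with a vanishing diagonal block has a vanishing off-diagonal block. But you apply it directly to the $2d\times2d$ Hermitian matrix $i^{-1}(S^\ast JS-J)$. That is shorter, avoids pseudo-inverses and realification, and yields $\overline{A}^{\,\top}A^{-\top}=I_d$ at once. The paper's choice buys uniformity. The same $\cM_S$-plus-Schur machinery handles Proposition~\ref{prop52GG} and, more substantially, Theorem~\ref{StructuralTheorem}, where neither diagonal block vanishes and the Moore--Penrose Schur complement is genuinely needed.
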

		\begin{proof}
			Recall the definition of $\cM_S$ in \eqref{defMS}. Let us write $A=A_1+iA_2$, $C=C_1+iC_2$ and $A^{-\top}=D_1+iD_2$. Of course, $A\in\GL(d,\bR)$ if and only if $A_2=O_d$ or, equivalently, $D_2=O_d$. A straightforward computation allows us to write
			\begin{equation}\label{MSprop51}
				\cM_S=\cK\left(\begin{array}{cc|cc}
					-C_1^\top A_2+A_1^\top C_2& -C_2^\top A_2+A_2^\top C_2 & A_1^\top D_2 & A_2^\top D_2\\
					C_2^\top A_2-A_2^\top C_2 & -C_1^\top A_2+A_1^\top C_2 & -A_2^\top D_2 & A_1^\top D_2\\
					\hline
					-D_1^\top A_2 & -D_2^\top A_2 & O_d & O_d\\
					D_2^\top A_2 & -D_1^\top A_2 & O_d & O_d
				\end{array}\right)\cK,
			\end{equation}
			where $\cK$, defined by
            \begin{equation}\label{defKpermConj}
                \cK=\begin{pmatrix}
                    I_d & O_d & O_d & O_d\\
                    O_d & O_d & I_d & O_d\\
                    O_d & I_d & O_d & O_d\\
                    O_d & O_d & O_d & I_d
                \end{pmatrix},
            \end{equation}
            permutes the two central columns/rows. 
            Clearly, $\cM_S\geq0$ if and only if $\cK\cM_S\cK\geq0$. Theorem \ref{theoremSchur} implies that $\cM_S\geq0$ if and only if both
			\begin{equation}\label{cond12conj}
				\begin{pmatrix}
					A_1^\top D_2 & A_2^\top D_2\\
					-A_2^\top D_2 & A_1^\top D_2
				\end{pmatrix}=
				\begin{pmatrix}
					A_1^\top & A_2^\top\\
					-A_2^\top & A_1^\top
				\end{pmatrix}
				\begin{pmatrix}
					D_2  & O_d \\
					O_d  & D_2
				\end{pmatrix}=O_{2d}
			\end{equation}
			and 
			\begin{equation}\label{cond22conj}
				\begin{pmatrix}
					A_1^\top C_2 & O_d\\
					O_d & A_1^\top C_2
				\end{pmatrix}\geq 0
			\end{equation}
			hold. The matrix 
			\begin{equation}
			\begin{pmatrix}
					A_1^\top & A_2^\top\\
					-A_2^\top & A_1^\top
				\end{pmatrix}
			\end{equation}
			is the realification of $A^{\top}$, and it is therefore invertible, whence \eqref{cond12conj} is equivalent to $D_2=O_d$. Consequently, it must be $A\in\GL(d,\bR)$. Condition \eqref{cond22conj} is then equivalent to $A^\top C_2\geq0$, i.e., $\Im(A^\top C)\geq0$. 
This concludes the proof.
			
		\end{proof}

		The following result is the analogue for the subsemigroup of upper-block-triangular matrices, including the projections of the heat semigroup \eqref{subsemigroup1}. The proof is completely analogous to that of Proposition \ref{prop51GG}, with \eqref{MSprop51} being replaced by
		\begin{equation}\label{MSprop52}
				\cM_S=\cK\left(\begin{array}{cc|cc}
					O & O & A_1^\top D_2 & A_2^\top D_2\\
					O & O & -A_2^\top D_2 & A_1^\top D_2\\
					\hline
					-D_1^\top A_2 & -D_2^\top A_2 & -D_1^\top B_2+B_1^\top D_2 & -D_2^\top B_2+B_2^\top D_2\\
					D_2^\top A_2 & -D_1^\top A_2 & D_2^\top B_2 -B_2^\top D_2 & -D_1^\top B_2 +B_1^\top D_2
				\end{array}\right)\cK,
			\end{equation}
		here, $A=A_1+iA_2$, $B=B_1+iB_2$, $A^{-\top}=D_1+iD_2$ and $\cK$ is as in \eqref{defKpermConj}. 
		\begin{proposition}\label{prop52GG}
			Let $S\in\Sp(d,\bC)$ with blocks as in \eqref{blockS} with $C=O_d$. Then, $S\in \Sp_+(d,\bC)$ if and only if $A\in\GL(d,\bC)$ and $\Im(A^{-1} B)=\Im(D^\top B)\leq0$.
		\end{proposition}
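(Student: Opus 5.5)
The plan mirrors the proof of Proposition \ref{prop51GG}. The argument uses the matrix $\cM_S$ from Proposition \ref{SmathcalM} together with the Schur complement criterion of Theorem \ref{theoremSchur}. Since $C=O_d$, the relation \eqref{symp-rel3} gives $A^\top D=I$, so $A$ is invertible and $D=A^{-\top}$. Relation \eqref{symp-rel2} says $D^\top B$ is symmetric, and $A^{-1}B=D^\top B$, which explains the equality $\Im(A^{-1}B)=\Im(D^\top B)$. Write $A=A_1+iA_2$, $B=B_1+iB_2$ and $A^{-\top}=D_1+iD_2$.

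\textbf{Step 1.} Compute $S_R$, $S_I$ and then $S_R^\top JS_I$ and $S_I^\top JS_I$ blockwise. After conjugating by the permutation $\cK$ of \eqref{defKpermConj}, this should give the form \eqref{MSprop52}. The top-left $2d\times2d$ block vanishes, and the off-diagonal blocks are built from $A_1^\top D_2$, $A_2^\top D_2$ and their transposes. The bottom-right block $N$ has diagonal entries $-D_1^\top B_2+B_1^\top D_2$ and antisymmetric off-diagonal entries $\pm(D_2^\top B_2-B_2^\top D_2)$. Symmetry of the entries uses Remark \ref{S_ReS_I}. This computation is routine but sign-sensitive, and it is the step I expect to cost the most care.

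\textbf{Step 2.} Apply Theorem \ref{theoremSchur}(ii) to $\cK\cM_S\cK$ with the zero upper-left block. Since $A^+=O$, the range condition $(I-AA^+)B=O$ forces the off-diagonal block to vanish. That block factors as the realification of $A^\top$ times $\diag(D_2,D_2)$. The realification is invertible because $A$ is, so positivity forces $D_2=O_d$. Hence $A^{-\top}$ is real, so $A\in\GL(d,\bR)$ and $A_2=O_d$.

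\textbf{Step 3.} With $D_2=O$ and $A_2=O$, the remaining condition is $N\geq0$ with $N=\diag(-D_1^\top B_2,-D_1^\top B_2)$. Since $D=D_1$ is real, $D^\top B_2=\Im(D^\top B)$, and this matrix is symmetric because $D^\top B$ is. So $N\geq0$ holds if and only if $\Im(D^\top B)\leq0$.

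The converse runs through the same chain. If $A$ is real and invertible and $\Im(D^\top B)\leq0$, then $\cK\cM_S\cK=\diag(O,N)\geq0$, and Proposition \ref{SmathcalM} gives $S\in\Sp_+(d,\bC)$.

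For reading the statement, "$A\in\GL(d,\bC)$" is automatic, so the substantive conclusion, as in Proposition \ref{prop51GG}, is $A\in\GL(d,\bR)$. Alternatively, one could deduce the result from Proposition \ref{prop51GG} by conjugating with $J$. Indeed, $J^{-1}SJ$ is lower block triangular with blocks $D$, $-B$, $A$, and positivity is preserved under conjugation by real symplectic matrices. This yields the condition $\Im(D^\top(-B))\geq0$, and I would mention it as a cross-check.
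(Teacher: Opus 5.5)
Your proposal is correct and is essentially the paper's proof: the paper only says the argument is that of Proposition \ref{prop51GG} with $\cK\cM_S\cK$ given by \eqref{MSprop52}, and your Steps 1--3 carry this out exactly (vanishing upper-left block, Schur criterion forcing $D_2=O_d$ and hence $A$ real, then $-D_1^\top B_2\geq0$). Reading the conclusion as $A\in\GL(d,\bR)$, since $A\in\GL(d,\bC)$ is automatic, is the right interpretation and is what the argument proves, and your $J$-conjugation cross-check via Proposition \ref{prop51GG} is also valid.
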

		\begin{remark}
		Let us mention that a straightforward corollary of both Propositions \ref{prop51GG} and \ref{prop52GG} gives that matrices in the form
		\begin{equation}
			\cD_E=\begin{pmatrix}
				E^{-1} & O_d\\
				O_d & E^\top
			\end{pmatrix}
		\end{equation}
		are in $\Sp_+(d,\bC)$ if and only if $E\in\GL(d,\bR)$, i.e., if and only if $\cD_E\in\Sp(d,\bR)$.
        \end{remark}

        \subsection{Wigner analysis}\label{subsec:WignerAnalysis}

            Let $\widehat S\in\Mp_+(d,\bC)$ and consider its polar decomposition $\widehat S=\widehat U\widehat Z$, with $\widehat U\in\Mp(d,\bR)$ and $\widehat Z\in\Mp_0(d,\bC)$. If $U$ and $Z$ denote the projections of $\widehat U$ and $\widehat Z$, respectively, then for $f,g\in L^2(\rd)$ we obtain
            \begin{align}\label{WignerPolar}
                W(\widehat Sf,\widehat Sg)(z)= W(\widehat Zf,\widehat Zg)\big(U^{-1}z\big)= K_ZW(f,g)\big(U^{-1}z\big),
            \end{align}
            for a suitable Wigner operator $K_Z:\cS(\rdd)\to\cS'(\rdd)$. Unless $Z=\pm I_{2d}$, we have $\widehat Z\notin\Mp(d,\bR)$, and therefore $K_Z$ does not correspond to a symplectic change of variables.
        
		    In this subsection, we compute the Wigner operator of a general $\widehat S\in\Mp_+(d,\bC)$ and, by the above argument, this amounts to computing $K_Z$ in \eqref{WignerPolar}.
            
            As in the previous sections, if $\widehat S\in\Mp_+(d,\bC)$, we use the factorization \eqref{HFactoriz} and write $\widehat S=\widehat U_1\widehat \Xi\widehat U_2$, with $\widehat U_1, \widehat U_2\in\Mp(d,\bR)$ and $\widehat \Xi$ atomic metaplectic contraction. For the rest of this section, we omit the phase factors defining metaplectic operators. We also recall that
		\begin{equation}
			\cF_2f(x,\xi)=({\id}_{L^2}\otimes\cF)f(x,\xi)=i^{-d/2}\int_{\rd}f(x,y)e^{-2\pi iy\xi}dy, \qquad f\in\cS(\rdd),
		\end{equation}
		the {\em partial Fourier transform with respect to the last $d$ variables}, belongs to $\Mp(2d,\bR)$ and has projection
		\begin{equation}\label{defAFT2}
			\cA_{FT2}=I_{2d}\otimes J=\begin{pmatrix}
				I_d & O_d & O_d & O_d \\
				O_d & O_d & O_d & I_d\\
				O_d & O_d & I_d & O_d\\
				O_d & -I_d & O_d & O_d 
			\end{pmatrix}.
		\end{equation} 
        
		The main result in this direction is the following. 
		\begin{theorem}\label{thmS1S2S3W}
			Under the notation above, $\widehat S=\widehat U_1\widehat\Xi\widehat U_2$,
			\begin{equation}
				W(\widehat Sf,\widehat Sg)=KW(f,g), \qquad f,g\in\cS(\rd),
			\end{equation}
			where the Wigner operator $K$ is in $\Mp_+(2d,\bC)$ with
			\begin{equation}\label{WignerOpMetappiu}
				K=(\cF_2\mathfrak{T}_{2^{-1/2}U_1 })^{-1}(\widehat \Xi\otimes\widehat \Xi)(\cF_2\mathfrak{T}_{2^{-1/2} U_2^{-1}}).
			\end{equation}
            Consequently, if $\widehat S=\widehat U\widehat Z$ is the polar decomposition of $\widehat S$, formula \eqref{WignerPolar} holds with $U=U_1U_2$ and
            \begin{equation}\label{WignerOpMetappiu2}
                K_Z=(\cF_2\mathfrak{T}_{2^{-1/2}U_2 })^{-1}(\widehat \Xi\otimes\widehat \Xi)(\cF_2\mathfrak{T}_{2^{-1/2} U_2^{-1}})
            \end{equation}
            and $K=\mathfrak{T}_{U^{-1}} K_Z$ is the polar decomposition of $K$.
		\end{theorem}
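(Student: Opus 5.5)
The plan is to reduce the whole statement to the factorization $W(f,g)=\cF_2\mathfrak{T}_{E_w}(f\otimes\bar g)$ from \eqref{intro.Wig}, combined with the tensor identity $\widehat Sf\otimes\overline{\widehat Sg}=(\widehat S\otimes\widehat{\widetilde S})(f\otimes\bar g)$.

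First I handle the real factors. For $\widehat U\in\Mp(d,\bR)$, symplectic covariance \eqref{intro.WigCovarianceProp} gives $W(\widehat Uf,\widehat Ug)=\mathfrak{T}_{U^{-1}}W(f,g)$, up to the normalization of $\mathfrak{T}$, which is harmless since $\det U=1$. So
\begin{equation}
W(\widehat Sf,\widehat Sg)=\mathfrak{T}_{U_1^{-1}}\,W(\widehat\Xi\widehat U_2f,\widehat\Xi\widehat U_2 g).
\end{equation}
This reduces the problem to computing the Wigner operator of an atomic metaplectic contraction.

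Second, I compute that operator. Set $\cW:=\cF_2\mathfrak{T}_{E_w}\in\Mp(2d,\bR)$, so that $W(\widehat\Xi f,\widehat\Xi g)=\cW(\widehat\Xi\otimes\overline{\widehat\Xi}\,\overline{\cdot})(f\otimes\bar g)$. I use that the atoms $\frp_{i\alpha}$ and $\frR_\vartheta$ have real kernels. For $\frp_{i\alpha}$ this is clear. For $\frR_\vartheta$, write $\frR_\vartheta=\frp_{i\tanh\vartheta}\mathfrak{T}_{1/\cosh\vartheta}\cF^{-1}\frp_{i\tanh\vartheta}\cF$; the composite $\cF^{-1}\frp_{i t}\cF$ is convolution with a real Gaussian, so it commutes with conjugation. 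Hence $\overline{\widehat\Xi\bar g}=\widehat\Xi g$, which is consistent with Theorem \ref{thmComplexConj}, and the $\widetilde{\ }$ of a purely positive $\Xi$ is $\Xi$ itself up to the block sign conventions. Therefore
\begin{equation}
W(\widehat\Xi f,\widehat\Xi g)=\cW(\widehat\Xi\otimes\widehat\Xi)\cW^{-1}W(f,g),
\end{equation}
and $\cW(\widehat\Xi\otimes\widehat\Xi)\cW^{-1}\in\Mp_+(2d,\bC)$. The tensor product lies in the semigroup because its projection $\Xi\otimes\Xi$ is purely positive in dimension $2d$.

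Third, I absorb $\widehat U_2$ and $\widehat U_1$. Since $W(\widehat U_2f,\widehat U_2g)=\mathfrak{T}_{U_2^{-1}}W(f,g)$ and $\mathfrak{T}_{U_1^{-1}}$ sits on the left, I get
\begin{equation}
K=\mathfrak{T}_{U_1^{-1}}\,\cW(\widehat\Xi\otimes\widehat\Xi)\cW^{-1}\,\mathfrak{T}_{U_2^{-1}}.
\end{equation}
It remains to rewrite $\cW^{-1}\mathfrak{T}_{U_2^{-1}}$ as $\cF_2\mathfrak{T}_{2^{-1/2}U_2^{-1}}$, and the left factor similarly. This is a matrix identification in $\Sp(2d,\bR)$ up to sign: $\cW$ projects to $(I\otimes J)\cD_{E_w}$, and one checks $\cW^{-1}=c\,\cF_2\mathfrak{T}_{E_w^{-1}}$ together with the explicit form of $E_w$. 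I expect this to be the main obstacle. The bookkeeping of which coordinate ordering $\mathfrak{T}_{2^{-1/2}U}$ refers to, and the factor $2^{-1/2}$ from $E_w$, must match the paper's convention exactly. I will verify it by comparing projections block by block, using Proposition \ref{propMPgen} and the fact that $\pi^{Mp}$ is a homomorphism, so equality holds up to $\pm$.

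Finally, for the polar statement, set $U=U_1U_2$. Then $K=\mathfrak{T}_{U^{-1}}\bigl[\mathfrak{T}_{U_2}\cW(\widehat\Xi\otimes\widehat\Xi)\cW^{-1}\mathfrak{T}_{U_2^{-1}}\bigr]$. The bracket is $K_Z$, since $\widehat Z=\widehat U_2^{-1}\widehat\Xi\widehat U_2$ by the proof of Theorem \ref{polar}, and I rewrite it in the form \eqref{WignerOpMetappiu2}. The operator $\widehat\Xi\otimes\widehat\Xi$ is positive self-adjoint, being a tensor product of commuting positive self-adjoint operators. The conjugating operator is unitary, so $K_Z$ is positive self-adjoint and $K_Z\in\Mp_0(2d,\bC)$. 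Moreover $\mathfrak{T}_{U^{-1}}\in\Mp(2d,\bR)$ is unitary. Uniqueness of the polar decomposition then shows that $K=\mathfrak{T}_{U^{-1}}K_Z$ is the polar decomposition of $K$.
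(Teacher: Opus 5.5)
Your first two steps are correct, and they give a cleaner derivation than the paper's. The atoms $\frp_{i\alpha}$ and $\frR_\vartheta$ commute with complex conjugation, so $\widehat\Xi f\otimes\overline{\widehat\Xi g}=(\widehat\Xi\otimes\widehat\Xi)(f\otimes\bar g)$. With $\cW=\cF_2\mathfrak{T}_{E_w}$ this yields at once $K=\mathfrak{T}_{U_1^{-1}}\cW(\widehat\Xi\otimes\widehat\Xi)\cW^{-1}\mathfrak{T}_{U_2^{-1}}$. The paper instead splits $\widehat\Xi=\frp_{i\Delta}\frR_\Theta$. It treats $\frp_{i\Delta}$ by exactly your conjugation argument (Lemma \ref{lemma1gg1}), imports the $\frR_\Theta$ case from Proposition \ref{propIntertWig2} (quoted, not proved), and merges the two through several commutation relations. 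Your polar-decomposition argument is also fine in your form, since conjugating a positive self-adjoint operator by a unitary keeps it positive self-adjoint.

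The gap is the final matching step, which you flagged yourself: the identification you plan to verify is false, so a block-by-block comparison will not close. Neither $\cW^{-1}=c\,\cF_2\mathfrak{T}_{E_w^{-1}}$ nor $\cW^{-1}=c\,X$ holds, where $X:=\cF_2\mathfrak{T}_{2^{-1/2}I_{2d}}$. Composing \eqref{projWig} with the projection of $X$ gives
\begin{equation*}
\pi^{Mp}(X\cW)\colon(x_1,x_2,\xi_1,\xi_2)\mapsto 2^{-1/2}\,(x_1+x_2,\;x_2-x_1,\;\xi_1+\xi_2,\;\xi_2-\xi_1).
\end{equation*}
So, up to a phase, $X\cW=\mathfrak{T}_R$ with $R=\frac{1}{\sqrt2}\begin{pmatrix} I_d & -I_d\\ I_d & I_d\end{pmatrix}$, a $45^\circ$ rotation mixing the two copies of $\rd$; this is not $\pm\id$. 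Hence you cannot rewrite $\cW^{-1}\mathfrak{T}_{U_2^{-1}}$ as $\cF_2\mathfrak{T}_{2^{-1/2}U_2^{-1}}$, nor $\mathfrak{T}_{U_1^{-1}}\cW$ as $(\cF_2\mathfrak{T}_{2^{-1/2}U_1})^{-1}$, one factor at a time.

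The missing idea is that only the conjugate matters. One has $\cW(\widehat\Xi\otimes\widehat\Xi)\cW^{-1}=X^{-1}\mathfrak{T}_R(\widehat\Xi\otimes\widehat\Xi)\mathfrak{T}_R^{-1}X$, and $\mathfrak{T}_R$ commutes with $\widehat\Xi\otimes\widehat\Xi$ because the same $\widehat\Xi$ sits in both slots. At the level of projections, $\Xi\otimes\Xi$ has $2d\times2d$ blocks of the form $\diag(M,M)$, while $\cD_R=\diag(R^\top,R^\top)$ has $d\times d$ sub-blocks that are multiples of $I_d$. At the operator level, the multiplier $e^{-\pi\alpha(s^2+t^2)}$ of $\frp_{i\alpha}\otimes\frp_{i\alpha}$ and the kernel of $\frR_\vartheta\otimes\frR_\vartheta$ are invariant under simultaneous rotations of each $(x_{1,j},x_{2,j})$-plane.

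With this commutation added, your expression equals \eqref{WignerOpMetappiu}, and conjugating by $\mathfrak{T}_{U_2}$ gives \eqref{WignerOpMetappiu2}. Combined with $\cF\frR_\Theta=\frR_\Theta\cF$, your argument also gives an independent proof of Proposition \ref{propIntertWig2}.
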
 
		
        The idea is to factorize $\widehat\Xi$ as in Remark \ref{remAMC}, $\widehat\Xi=\mathfrak{p}_{i\Delta}\frR_\Theta$.
		Moreover, note that if $K_\Xi$ is the Wigner operator of $\widehat \Xi$, then
		\begin{equation}\label{GGWigOp1}
			W(\widehat Sf,\widehat Sg)=W(\widehat U_1\widehat \Xi\widehat U_2f,\widehat U_1\widehat \Xi\widehat U_2g)=\mathfrak{T}_{U_1^{-1}}K_\Xi\mathfrak{T}_{U_2^{-1}}W(f,g).
		\end{equation}
		We need to compute $K_\Xi$. 
		The following lemma is proven in a recent, unpublished, manuscript by two of the authors \footnote{\label{fn1} See E. Cordero, G. Giacchi, and L. Rodino. Wigner and Gabor phase-space analysis of propagators for evolution equations. {\em arXiv:2511.19400}.}.
		\begin{lemma}\label{lemmatensor}
		Let $\widehat S_1,\widehat S_2\in\Mp_+(d,\bC)$. There exists a unique operator $\widehat S\in\Mp_+(2d,\bC)$ such that
		\begin{equation}
			\widehat S(f\otimes g)=\widehat S_1f\otimes \widehat S_2g, \qquad f,g\in L^2(\rd).
		\end{equation}
		Moreover, if $S_j=\pi^{Mp}_+(\widehat S_j)$, $j=1,2$, then $\pi^{Mp}_+(\widehat S)=S_1\otimes S_2$.
	\end{lemma}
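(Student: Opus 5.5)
Uniqueness is immediate. Finite linear combinations of tensor products $f\otimes g$, $f,g\in L^2(\rd)$, are dense in $L^2(\R^{2d})$, and every element of $\Mp_+(2d,\bC)$ is bounded by Theorem \ref{thmOscillator} $(iii)$. So two such operators that agree on tensor products coincide. The real work is existence, together with the identification of the projection. For this I would use the factorization $\widehat S_j=\widehat U_1^{(j)}\widehat\Xi^{(j)}\widehat U_2^{(j)}$ of Theorem \ref{thmOscillator} $(ii)$ and reduce to two cases: tensor products of real metaplectic operators, and tensor products of atomic contractions. Writing
\[
\widehat S_1\otimes\widehat S_2=(\widehat U_1^{(1)}\otimes\widehat U_1^{(2)})(\widehat\Xi^{(1)}\otimes\widehat\Xi^{(2)})(\widehat U_2^{(1)}\otimes\widehat U_2^{(2)})
\]
on tensor products, it suffices to show that each factor belongs to $\Mp_+(2d,\bC)$ with the expected projection. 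Then the semigroup property and the fact that $\pi^{Mp}_+$ is a homomorphism give the claim, since $(S\otimes S')(T\otimes T')=ST\otimes S'T'$ for the block convention \eqref{tensmat}.

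\textbf{Real factors.} Write $\widehat U\otimes\widehat U'=(\widehat U\otimes\id)(\id\otimes\widehat U')$. It is then enough to treat $\widehat U\otimes \id$ for $\widehat U$ a generator from Proposition \ref{propMPgen}, and symmetrically $\id\otimes\widehat U'$. One checks directly:
\begin{itemize}
\item $\cF\otimes\id$ is the partial Fourier transform in the first $d$ variables, with projection $J\otimes I_{2d}$;
\item $\frp_Q\otimes\id=\frp_{Q\oplus O}$, with projection $V_{Q\oplus O}=V_Q\otimes I$;
\item $\frT_E\otimes\id=\frT_{E\oplus I}$, with projection $\cD_{E\oplus I}=\cD_E\otimes I$.
\end{itemize}
Each of these lies in $\Mp(2d,\bR)$. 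Alternatively, and more quickly, one can verify the Schrödinger intertwining \eqref{defMetap} on $\R^{2d}$ for $\widehat U\otimes\widehat U'$ directly. The formula $\rho((z,z');\tau)=e^{-2\pi i\tau}\rho(z;\tau)\otimes\rho(z';\tau)$, with the coordinates reordered as in \eqref{tensmat}, shows that $\widehat U\otimes\widehat U'$ intertwines $\rho$ with $\rho_{U\otimes U'}$. Hence it equals $\widehat{U\otimes U'}$ up to a phase, and the phase ambiguity is harmless since we only need membership in the group generated.

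\textbf{Atomic factors.} Here $\widehat\Xi^{(1)}\otimes\widehat\Xi^{(2)}$ is, by the definition \eqref{tensorOp}, a tensor product of $2d$ one-dimensional atoms $\frp_{i\alpha}$, $\frR_\vartheta$ or $\id$. It is therefore itself an atomic metaplectic contraction on $L^2(\R^{2d})$, with projection $\bigotimes_{j}\Xi^{(1)}_j\otimes\bigotimes_j\Xi^{(2)}_j=\Xi^{(1)}\otimes\Xi^{(2)}$. The only care needed is with the ordering of the coordinates $(x,y,\xi,\eta)$ versus the $d$-fold convention. The convention \eqref{tensmat} is exactly consistent with iterating the one-dimensional tensorization, so no permutation $\cD_L$ is required.

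\textbf{Main obstacle.} The step needing care is bookkeeping rather than analysis. First, $\widehat S_1\otimes\widehat S_2$ must be well defined as a bounded operator from the three factors, which holds because each factor is bounded on $L^2(\R^{2d})$ and determined by its action on tensors. Second, the projection of the product must be $S_1\otimes S_2$ and not its negative. Because $\pi^{Mp}_+$ is only determined up to the two-fold cover (Theorem \ref{thmOscillator} $(i)$) and phases are being suppressed, I would fix the sign by noting that $\pi^{Mp}_+$ of the constructed operator is a product of the projections of the three factors, which is already computed exactly. As a cross-check, I would test the resulting operator on Gaussians via Theorem \ref{thmGG1}: the intertwining $\widehat S\rho(z;\tau)\f_M=\rho(Sz;\tau)\widehat S\f_M$ with $\f_M=\f_{M_1}\otimes\f_{M_2}$ pins down $S=S_1\otimes S_2$.
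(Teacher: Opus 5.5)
The paper gives no proof of this lemma; it defers to an unpublished manuscript by two of the authors, so there is no internal argument to compare against. Your proof is correct and fills that gap by the natural route. H\"ormander's factorization (Theorem~\ref{thmOscillator}~$(ii)$) reduces everything to real factors and atomic factors. The atomic step is immediate from the definition \eqref{tensorOp}: a tensor product of $2d$ one-dimensional atoms is an atomic contraction on $L^2(\bR^{2d})$, and identity atoms are legitimate since $\frR_0=\id$. This is the same tensorization mechanism the paper uses in Claim~2 of Lemma~\ref{lemma2}. The mixed-product rule you invoke holds for the direct-sum reading of $\otimes$ in the coordinates $(x,y,\xi,\eta)$, which is the reading the paper actually uses, e.g.\ in Lemma~\ref{lemma1gg1}. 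Read literally with the block labels written there, \eqref{tensmat} swaps the off-diagonal blocks and would not be multiplicative.

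There are two bookkeeping issues.

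\textbf{Phases of the real factors.} Your comment that the phase is harmless ``since we only need membership in the group generated'' is not right as stated. $\Mp(2d,\bR)$ contains only the two lifts $\pm\widehat W$ of $W=U\otimes U'$, so the Stone--von Neumann argument only yields $\widehat U\otimes\widehat U'=c\,\widehat W$ with $|c|=1$. The generator route does give $c=\pm1$, but only once you check the phases:
\begin{itemize}
\item $\frp_Q\otimes\id=\frp_{Q\oplus O}$ exactly;
\item $\frT_E\otimes\id$ and $\frT_{E\oplus I}$ differ at most by a sign;
\item the partial Fourier transform satisfies exactly
\[
\cF\otimes\id=\frp_{-I\oplus O}\,\cF_{2d}^{-1}\,\frp_{-I\oplus O}\,\cF_{2d}\,\frp_{-I\oplus O},
\]
where $\cF_{2d}$ is the normalized Fourier transform on $\bR^{2d}$. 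The factor $i^{-d/2}$ cancels the $e^{i\pi d/4}$ produced by the Fourier transform of $e^{i\pi|x|^2}$.
\end{itemize}
Since the paper suppresses phase factors throughout, this is a minor point.

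\textbf{Sign of the projection.} Your concern that the projection might come out as $-(S_1\otimes S_2)$ is misplaced. $\pi^{Mp}_+$ is a well-defined homomorphism, so your computation gives $S_1\otimes S_2$ exactly. The two-fold ambiguity lives on the operator side, where $\pm\widehat S$ have the same projection. The requirement $\widehat S(f\otimes g)=\widehat S_1f\otimes\widehat S_2g$ already selects one of the two lifts. The Gaussian cross-check via Theorem~\ref{thmGG1} is therefore unnecessary.
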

		\begin{lemma}\label{lemma1gg1}
			Under the notation above, 
			\begin{equation}\label{merge1}
				W(\mathfrak{p}_{i\Delta}f,\mathfrak{p}_{i\Delta}g)=(\mathfrak{p}_{2i\Delta}\otimes \mathfrak{m}_{i\Delta/2})W(f,g), \qquad f,g\in L^2(\rd),
			\end{equation}
			where $\mathfrak{m}_{i\Delta/2}$ is defined as in \eqref{defmP}.
		\end{lemma}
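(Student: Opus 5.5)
The plan is to verify \eqref{merge1} by direct computation on $f,g\in\cS(\rd)$, and then to extend it to $L^2(\rd)$ by density. Since $\Delta$ is real, $\overline{e^{-\pi\Delta t\cdot t}}=e^{-\pi\Delta t\cdot t}$. Using the definition \eqref{intro.defW}, the two Gaussian weights therefore combine inside the integral into
\begin{equation}
e^{-\pi\Delta(x+\frac y2)\cdot(x+\frac y2)}\,e^{-\pi\Delta(x-\frac y2)\cdot(x-\frac y2)}=e^{-2\pi\Delta x\cdot x}\,e^{-\frac{\pi}{2}\Delta y\cdot y}.
\end{equation}
The cross terms $\pm\pi\Delta x\cdot y$ cancel because $\Delta$ is symmetric. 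With $F(x,y)=f(x+y/2)\overline{g(x-y/2)}$, this gives
\begin{equation}
W(\mathfrak{p}_{i\Delta}f,\mathfrak{p}_{i\Delta}g)(x,\xi)=e^{-2\pi\Delta x\cdot x}\int_{\rd}F(x,y)\,e^{-\frac{\pi}{2}\Delta y\cdot y}\,e^{-2\pi iy\cdot\xi}\,dy .
\end{equation}

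The prefactor is exactly the multiplier $\Phi_{2i\Delta}(x)=e^{i\pi(2i\Delta)x\cdot x}$, so it is the action of $\mathfrak{p}_{2i\Delta}$ in the $x$-variable. For the remaining integral, fix $x$ and set $h_x(\xi)=W(f,g)(x,\xi)=\widehat{F(x,\cdot)}(\xi)$. By \eqref{defmP}, $\mathfrak{m}_{i\Delta/2}$ is the Fourier multiplier with symbol $\Phi_{i\Delta/2}(\eta)=e^{-\frac{\pi}{2}\Delta\eta\cdot\eta}$. Fourier inversion gives $\widehat{h_x}(\eta)=F(x,-\eta)$. The symbol is even, so
\begin{equation}
\mathfrak{m}_{i\Delta/2}h_x=\cF^{-1}\big(\Phi_{i\Delta/2}\,F(x,-\cdot)\big)=\widehat{\Phi_{i\Delta/2}F(x,\cdot)},
\end{equation}
which is precisely the integral above. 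The operators act on separate variables, so the tensor product $\mathfrak{p}_{2i\Delta}\otimes\mathfrak{m}_{i\Delta/2}$ is legitimate, and by Lemma \ref{lemmatensor} it is a well-defined element of $\Mp_+(2d,\bC)$. Combining the two steps gives \eqref{merge1} for Schwartz functions, with phase factors omitted as agreed in this section.

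To pass to $f,g\in L^2(\rd)$, note that both sides are continuous in $(f,g)$ from $L^2\times L^2$ into $L^2(\rdd)$. On the left, $\mathfrak{p}_{i\Delta}$ is a contraction and Moyal's identity \eqref{Moyal} gives $\norm{W(f,g)}_2=\norm{f}_2\norm{g}_2$. On the right, $\mathfrak{p}_{2i\Delta}\otimes\mathfrak{m}_{i\Delta/2}$ is bounded on $L^2(\rdd)$, being a tensor product of contractions. Density of $\cS(\rd)$ then concludes.

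The only delicate point is bookkeeping of signs and constants in the multiplier convention \eqref{defmP}: matching $-P$ versus $P$, the reflection $\eta\mapsto-\eta$, and the factor $1/2$ coming from $y/2$. I would double-check these by testing the formula on a one-dimensional Gaussian $f=g=\f_a$, where both sides are explicit via Proposition \ref{GaussianIntegrals}.
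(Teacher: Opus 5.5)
Your proof is correct, but it takes a different route from the paper's.

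The paper never touches the integral. It writes $W(\mathfrak{p}_{i\Delta}f,\mathfrak{p}_{i\Delta}g)=\widehat{\cA_{1/2}}(\mathfrak{p}_{i\Delta}\otimes\mathfrak{p}_{i\Delta})(f\otimes\overline g)$ and conjugates at the level of projections, obtaining $\cA_{1/2}(V_{i\Delta}\otimes V_{i\Delta})\cA_{1/2}^{-1}=V_{2i\Delta}\otimes V^\top_{-i\Delta/2}$. It then reads off the operator via Lemma \ref{lemmatensor} and Remark \ref{remConvolution}, implicitly using that $\pi^{Mp}_+$ determines a metaplectic operator up to a constant.

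You instead combine the exponents, $(x+\frac y2)\cdot\Delta(x+\frac y2)+(x-\frac y2)\cdot\Delta(x-\frac y2)=2\Delta x\cdot x+\frac12\Delta y\cdot y$. You then recognize the remaining partial Fourier transform as the multiplier with symbol $\Phi_{i\Delta/2}(\eta)=e^{-\frac{\pi}{2}\Delta\eta\cdot\eta}$. Your sign and constant bookkeeping is already right, so the Gaussian sanity check is unnecessary. The symbol is even, so the reflection $\eta\mapsto-\eta$ is harmless. Also, the projection $V^\top_{-i\Delta/2}$ that Remark \ref{remConvolution} assigns to $\mathfrak{m}_{i\Delta/2}$ is exactly the second factor in the paper's conjugated matrix.

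What your route buys:
\begin{itemize}
\item It is elementary and self-contained: the identity itself does not use the covering property or Lemma \ref{lemmatensor}, which you invoke only for membership in $\Mp_+(2d,\bC)$.
\item It gives the identity exactly, not merely up to a phase.
\item The price is a density step from $\cS(\rd)$ to $L^2(\rd)$, which you supply correctly via Moyal's identity and boundedness of the tensor product. The paper avoids that step because it works with bounded operator identities on $L^2(\rdd)$ from the start.
\end{itemize}

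What the paper's route buys is uniformity. The same projection-level conjugation is the template for the $\mathfrak{R}_\Theta$ case (Proposition \ref{propIntertWig2}) and for the general Wigner operator in Theorem \ref{thmS1S2S3W}. There, a direct kernel computation like yours would be considerably heavier.
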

		\begin{proof}
			We argue at the level of projections. We have
			\begin{equation}\label{Wigpp}
				W(\mathfrak{p}_{i\Delta}f,\mathfrak{p}_{i\Delta}g)=\widehat{\cA_{1/2}}(\mathfrak{p}_{i\Delta}\otimes \mathfrak{p}_{i\Delta})(f\otimes\overline g),
			\end{equation}
			where we used that $\overline{\mathfrak{p}_{i\Delta}g}=\mathfrak{p}_{i\Delta}\overline g$, and
			\begin{equation}
				\widehat{\cA_{1/2}}F(x,\xi)=\int_{\rd}F(x+y/2,x-y/2)e^{-2\pi i\xi y}dy, \qquad F\in\cS(\rdd)
			\end{equation}
			is a metaplectic operator in $\Mp(2d,\bR)$ with projection
			\begin{equation}\label{projWig}
				\cA_{1/2}=\begin{pmatrix}
					I_d/2 & I_d/2 & O_d & O_d\\
					O_d & O_d & I_d/2 & -I_d/2\\
					O_d & O_d & I_d & I_d\\
					-I_d & I_d & O_d & O_d
				\end{pmatrix}.
			\end{equation}
			At the level of projections, \eqref{Wigpp} is
			\begin{equation}
				\cA_{1/2}(V_{i\Delta}\otimes V_{i\Delta})=\cA_{1/2}(V_{i\Delta}\otimes V_{i\Delta})\cA_{1/2}^{-1}\cA_{1/2},
			\end{equation}
			whence
			\begin{equation}
				W(\mathfrak{p}_{i\Delta}f,\mathfrak{p}_{i\Delta}g)=\widehat\cB W(f,g),
			\end{equation}
			with
			\begin{equation}
				\cB=\cA_{1/2}(V_{i\Delta}\otimes V_{i\Delta})\cA_{1/2}^{-1}.
			\end{equation}
			A direct computation shows that
			\begin{align}
				\cB&=
				\begin{pmatrix}
					I_d/2 & I_d/2 & O_d & O_d\\
					O_d & O_d & I_d/2 & -I_d/2\\
					O_d & O_d & I_d & I_d\\
					-I_d & I_d & O_d & O_d
				\end{pmatrix}
				\begin{pmatrix}
					I_d & O_d & O_d & O_d\\
					O_d & I_d & O_d & O_d\\
					i\Delta & O_d & I_d & O_d\\
					O_d & i\Delta & O_d & I_d
				\end{pmatrix}
				\begin{pmatrix}
					I_d & O_d & O_d & -I_d/2\\
					I_d & O_d & O_d & I_d/2\\
					O_d & I_d & I_d/2 & O_d\\
					O_d & -I_d & I_d/2 & O_d
				\end{pmatrix}\\
				&=
				\begin{pmatrix}
					I_d & O_d & O_d & O_d\\
					O_d & I_d & O_d & -i\Delta/2\\
					2i\Delta & O_d & I_d & O_d\\
					O_d & O_d & O_d & I_d
				\end{pmatrix}=V_{2i\Delta}\otimes V_{-i\Delta/2}^\top.
			\end{align}
			The assertion follows by Lemma \ref{lemmatensor} and Remark \ref{remConvolution}.
		\end{proof}
		
		The equivalent of Lemma \ref{lemma1gg1} was proven in the unpublished work in Footnote \ref{fn1}.
		
		\begin{proposition}\label{propIntertWig2}
			Under the notation above,
			\begin{align}
				\label{merge2}
				W(\mathfrak{R}_\Theta f,\mathfrak{R}_\Theta g)&=
				\mathfrak{T}_{2^{1/2} I_{2d}}(\mathfrak{R}_{\Theta}\otimes \mathfrak{R}_{\Theta})\mathfrak{T}_{2^{-1/2} I_{2d}}W(f,g), \qquad f,g\in L^2(\rd)
			\end{align}
			up to a constant.
		\end{proposition}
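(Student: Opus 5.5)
The plan is to mimic the proof of Lemma \ref{lemma1gg1}: rewrite the cross-Wigner distribution as $\widehat{\cA_{1/2}}$ applied to a tensor product, move $\frR_\Theta\otimes\frR_\Theta$ through $\widehat{\cA_{1/2}}$ at the level of projections, and recognize the conjugated matrix as the projection of $\mathfrak{T}_{2^{1/2}I_{2d}}(\frR_\Theta\otimes\frR_\Theta)\mathfrak{T}_{2^{-1/2}I_{2d}}$. By Theorem \ref{thmOscillator}(i) this identifies the operators up to a constant, which is all the statement claims.

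\textbf{Step 1 (conjugation).} I first check that $\overline{\frR_\Theta \bar g}=\frR_\Theta g$, so that
\begin{equation}
W(\frR_\Theta f,\frR_\Theta g)=\widehat{\cA_{1/2}}(\frR_\Theta\otimes\frR_\Theta)(f\otimes\bar g).
\end{equation}
By tensorization it suffices to treat $d=1$. There I use $\frR_\vartheta=\frp_{i\tanh\vartheta}\mathfrak{T}_{1/\cosh\vartheta}\cF^{-1}\frp_{i\tanh\vartheta}\cF$ and check each factor.
\begin{itemize}
\item $\frp_{i\tanh\vartheta}$ multiplies by a real Gaussian, so it commutes with complex conjugation.
\item $\mathfrak{T}_{1/\cosh\vartheta}$ is a real dilation, so it also commutes with conjugation.
\item $\cF^{-1}\frp_{i\tanh\vartheta}\cF$ is a Fourier multiplier with a real, even symbol. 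Hence it also commutes with conjugation, with phase factors of $\cF$ cancelling.
\end{itemize}
As a consistency check, the blocks of $\cR_\vartheta$ give $\widetilde{\cR_\vartheta}=\cR_\vartheta$ in the sense of \eqref{defStildecomplex}.

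\textbf{Step 2 (projections).} By Lemma \ref{lemmatensor}, $\frR_\Theta\otimes\frR_\Theta\in\Mp_+(2d,\bC)$ with projection $\cR_\Theta\otimes\cR_\Theta$. Exactly as in Lemma \ref{lemma1gg1}, this gives $W(\frR_\Theta f,\frR_\Theta g)=\widehat\cB\, W(f,g)$ with
\begin{equation}
\cB=\cA_{1/2}(\cR_\Theta\otimes\cR_\Theta)\cA_{1/2}^{-1}.
\end{equation}
It then remains to prove
\begin{equation}
\cB=\cD_{2^{1/2}I_{2d}}(\cR_\Theta\otimes\cR_\Theta)\cD_{2^{-1/2}I_{2d}},
\end{equation}
with the appropriate convention relating $\mathfrak{T}_E$ and $\cD_E$.

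\textbf{Step 3 (structural factorization, the main obstacle).} Rather than multiplying the $4d\times4d$ matrices blindly, I would factor $\cA_{1/2}=\cD_{c I_{2d}}\,O$, with $c$ a power of $2^{1/2}$ and $O$ a real symplectic matrix that commutes with $\cR_\Theta\otimes\cR_\Theta$. The commutation rests on three facts.
\begin{itemize}
\item In coordinates $(x_1,x_2,\xi_1,\xi_2)$, the matrix $\cR_\Theta\otimes\cR_\Theta$ acts by the same $2\times2$ blocks $\cR_{\vartheta_j}$ on each pair $(x_{1,j},\xi_{1,j})$ and $(x_{2,j},\xi_{2,j})$.
\item Consequently it commutes with the orthogonal "45-degree rotation" mixing the two copies, $(x_1,x_2)\mapsto((x_1+x_2)/\sqrt2,(x_1-x_2)/\sqrt2)$, applied identically to the $\xi$'s.
\item A direct $2\times2$ check gives $J\cR_\vartheta J^{-1}=\cR_\vartheta$, so it also commutes with $I\otimes J$, the projection of $\cF_2$ acting on the second copy.
\end{itemize}
The delicate point is that $\mathfrak{T}_{E_w}$ is the rotation composed with the non-isotropic dilation $(x,y)\mapsto(\sqrt2 x, y/\sqrt2)$. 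The next move is to push $\cF_2$ through: the factor $1/\sqrt2$ on $y$ becomes $\sqrt2$ on $\xi$, and after regrouping, the leftover dilation should be isotropic and act on both copies. I expect to have to adjust powers of $2$ by inspection. If the bookkeeping gets messy, my fallback is a direct block computation in $d=1$, multiplying out $\cA_{1/2}(\cR_\vartheta\otimes\cR_\vartheta)\cA_{1/2}^{-1}$ as in the proof of Lemma \ref{lemma1gg1}, and then tensorizing over the coordinates $j=1,\ldots,d$.

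\textbf{Step 4 (conclusion).} With the factorization, $\cB=\cD_{cI}(\cR_\Theta\otimes\cR_\Theta)\cD_{cI}^{-1}$, which is the claimed projection. Theorem \ref{thmOscillator}(i) then yields \eqref{merge2} up to a multiplicative constant.
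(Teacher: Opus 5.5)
Your proposal is correct. The paper gives no proof of this proposition; it attributes it to the unpublished manuscript in the footnote. Its proof of the companion Lemma \ref{lemma1gg1} is exactly your fallback: write $W(f,g)=\widehat{\cA_{1/2}}(f\otimes\bar g)$, conjugate at the level of projections, and multiply out $\cA_{1/2}(\cR_\Theta\otimes\cR_\Theta)\cA_{1/2}^{-1}$ block by block.

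Your Step 3 replaces that multiplication by a factorization, and the bookkeeping you left to ``inspection'' closes cleanly. Set $\Omega=2^{-1/2}\begin{pmatrix} I_d & I_d\\ I_d & -I_d\end{pmatrix}$. Then $\cF_2\frT_{\diag(2^{1/2}I_d,\,2^{-1/2}I_d)}=\frT_{2^{1/2}I_{2d}}\cF_2$. Hence, up to phase, $\widehat{\cA_{1/2}}=\frT_{2^{1/2}I_{2d}}\cF_2\frT_\Omega$ and $\cA_{1/2}=\cD_{2^{1/2}I_{2d}}\,\cA_{FT2}\,\cD_\Omega$. So $c=2^{1/2}$ and $O=\cA_{FT2}\cD_\Omega$.

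The commutation can also be seen in one stroke. Group the coordinates by the index $j$. On each quadruple $(x_{1,j},x_{2,j},\xi_{1,j},\xi_{2,j})$, the matrix $\cR_\Theta\otimes\cR_\Theta$ acts as $\cosh\vartheta_j\,I_4-i\sinh\vartheta_j\,J_2$. Meanwhile $O$ acts on every quadruple by the same orthogonal symplectic $4\times4$ matrix, which therefore commutes with $J_2$.

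This gives $\cB=\cD_{2^{1/2}I_{2d}}(\cR_\Theta\otimes\cR_\Theta)\cD_{2^{1/2}I_{2d}}^{-1}$, which is the projection of the right-hand side of \eqref{merge2}. A direct multiplication in $d=1$ confirms it: $\cB$ acts on each pair of conjugate variables by $\begin{pmatrix}\cosh\vartheta & -\tfrac{i}{2}\sinh\vartheta\\ 2i\sinh\vartheta & \cosh\vartheta\end{pmatrix}$.

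Compared with the brute-force route, yours explains why only an isotropic dilation survives the conjugation. The orthogonal part of the Wigner transform commutes with $\cR_\Theta\otimes\cR_\Theta$, the same mechanism ($J\cR_\Theta=\cR_\Theta J$) the paper invokes in the next proof. Your argument also treats all $d$ at once. The direct computation is purely mechanical and needs a final tensorization over $j$.
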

		
		Equations \eqref{merge1} and \eqref{merge2} together give the operator $K_\Xi$ in \eqref{GGWigOp1}.
		
		\begin{proposition}
			Let $\widehat \Xi$ be an atomic metaplectic contraction. Then,
			\begin{equation}
				W(\widehat \Xi f,\widehat \Xi g)=K_\Xi W(f,g), \qquad f,g\in L^2(\rd),
			\end{equation}
			with
			\begin{equation}\label{GGWigOp2}
				K_\Xi=(\cF_2\mathfrak{T}_{2^{-1/2}I_d})^{-1}(\widehat \Xi\otimes \widehat \Xi)(\cF_2\mathfrak{T}_{2^{-1/2}I_d}).
			\end{equation}
		\end{proposition}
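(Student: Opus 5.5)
We will prove the identity in \eqref{GGWigOp2} by writing the Wigner distribution as a metaplectic operator acting on $f\otimes\overline g$ and then commuting $\widehat\Xi\otimes\widehat\Xi$ through it. Let $\widehat{\cA_{1/2}}$ be as in the proof of Lemma \ref{lemma1gg1}. Up to phase factors, $\widehat{\cA_{1/2}}=\cF_2\mathfrak{T}_{E_w}$. Here $\mathfrak{T}_{E_w}F(x,y)=F(x+y/2,x-y/2)$, and $E_w$ factors as $2^{-1/2}$ times an orthogonal (in fact symmetric) $2d\times 2d$ matrix $O$, built blockwise from $\frac{1}{\sqrt2}\begin{pmatrix}I&I\\I&-I\end{pmatrix}$ with a rescaling of the second variable. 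The first step is therefore to record that
\begin{equation}
W(f,g)=\cF_2\mathfrak{T}_{2^{-1/2}I_{2d}}\mathfrak{T}_{O}(f\otimes\overline g)
\end{equation}
up to harmless normalisations. Here $\mathfrak{T}_O$ mixes the two tensor factors by a real orthogonal rotation of $\bR^{2d}$ that acts coordinatewise, as $(x_j,y_j)\mapsto ((x_j+y_j)/\sqrt2,(x_j-y_j)/\sqrt2)$.

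The second step uses that $\widehat\Xi$ commutes with complex conjugation, since its kernel is real: $\frp_{i\alpha}$ and $\frR_\vartheta$ have real Gaussian kernels in $x$. Hence $\overline{\widehat\Xi g}=\widehat\Xi\overline g$, and by Lemma \ref{lemmatensor}
\begin{equation}
W(\widehat\Xi f,\widehat\Xi g)=\widehat{\cA_{1/2}}(\widehat\Xi\otimes\widehat\Xi)(f\otimes\overline g)=\widehat{\cA_{1/2}}(\widehat\Xi\otimes\widehat\Xi)\widehat{\cA_{1/2}}^{-1}W(f,g).
\end{equation}
It then remains to show that $\widehat\Xi\otimes\widehat\Xi$ commutes with $\mathfrak{T}_O$, so that only the factor $\cF_2\mathfrak{T}_{2^{-1/2}I_{2d}}$ survives the conjugation and we obtain \eqref{GGWigOp2}. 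Since $\widehat\Xi=\bigotimes_j\widehat\Xi_j$ and $O$ acts on each pair of coordinates $(x_j,y_j)$ separately, this reduces to dimension one. There we must check that $\widehat\Xi_j\otimes\widehat\Xi_j$ commutes with the rotation by $\pi/4$ on $L^2(\bR^2)$.

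This one-dimensional commutation is the main obstacle, and we will settle it at the level of projections, using the injectivity of $\pi^{Mp}_+$ up to sign (Theorem \ref{thmOscillator}(i)). For $\frp_{i\alpha}$ the operator $\frp_{i\alpha}\otimes\frp_{i\alpha}$ is multiplication by $e^{-\pi\alpha(x^2+y^2)}$, which is rotation invariant. For $\frR_\vartheta$ the Schwartz kernel of $\frR_\vartheta\otimes\frR_\vartheta$ depends only on $x\cdot\eta$ and $|x|^2+|\eta|^2$ in $\bR^2$, so it is $O(2)$-invariant. Equivalently, on projections, $\cR_\vartheta\otimes\cR_\vartheta$ has the form $\begin{pmatrix}aI&-bI\\bI&aI\end{pmatrix}$, which commutes with $\cD_O=\diag(O,O)$ for $O$ orthogonal. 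Then $\mathfrak{T}_O(\widehat\Xi\otimes\widehat\Xi)\mathfrak{T}_O^{-1}$ and $\widehat\Xi\otimes\widehat\Xi$ have the same projection, so they agree up to a sign. The sign can be fixed by testing on a Gaussian, using Lemma \ref{lemmaUGauss}.

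Finally, we collect the phase and normalisation constants. The statement holds up to a constant, consistent with the convention of omitting phase factors in this section. As a consistency check, for $\widehat\Xi=\frp_{i\Delta}$ the resulting conjugate should reproduce $\frp_{2i\Delta}\otimes\mathfrak{m}_{i\Delta/2}$ of Lemma \ref{lemma1gg1}, and for $\frR_\Theta$ it should reproduce Proposition \ref{propIntertWig2}.
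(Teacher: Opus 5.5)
\textbf{There is a genuine gap in your first step.} The identity $W(f,g)=\cF_2\mathfrak{T}_{2^{-1/2}I_{2d}}\mathfrak{T}_O(f\otimes\overline g)$ is false, and the discrepancy is not a harmless normalisation. The matrix $E_w=\begin{pmatrix} I_d & I_d/2\\ I_d & -I_d/2\end{pmatrix}$ satisfies $E_w^\top E_w=\diag(2I_d,I_d/2)$, so it is not $2^{-1/2}$ times an orthogonal matrix. Instead $E_w=2^{-1/2}O\,\diag(2I_d,I_d)$ with $O=2^{-1/2}\begin{pmatrix} I_d & I_d\\ I_d & -I_d\end{pmatrix}$, and a direct computation gives
\[
\cF_2\mathfrak{T}_{2^{-1/2}I_{2d}}\mathfrak{T}_O(f\otimes\overline g)(x,\xi)=c\,W(f,g)(x/2,\xi).
\]
The leftover anisotropic dilation does not commute with $\widehat\Xi\otimes\widehat\Xi$.

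Worse, even granting your identity, it gives the wrong operator. Set $X:=\cF_2\mathfrak{T}_{2^{-1/2}I_{2d}}$. Your identity amounts to $\widehat{\cA_{1/2}}=X\mathfrak{T}_O$, so your conjugation produces $K_\Xi=X(\widehat\Xi\otimes\widehat\Xi)X^{-1}$. But \eqref{GGWigOp2} is $X^{-1}(\widehat\Xi\otimes\widehat\Xi)X$, with the inverse on the other side. These differ, since up to a phase $X^2=\cF_2^2\,\mathfrak{T}_{\diag(I_d/2,I_d)}$, which does not commute with $\widehat\Xi\otimes\widehat\Xi$. Concretely, for $\widehat\Xi=\frp_{i\Delta}$ your recipe gives $\frp_{i\Delta/2}\otimes\mathfrak{m}_{i\Delta/2}$ instead of the $\frp_{2i\Delta}\otimes\mathfrak{m}_{i\Delta/2}$ of Lemma \ref{lemma1gg1}. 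The consistency check you planned at the end would have exposed this. The real difficulty is this bookkeeping, not the one-dimensional commutation you flagged as the main obstacle.

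\textbf{The repair.} Put the inverse on the correct side. Exactly, one has
\[
\cF_2\mathfrak{T}_{2^{-1/2}I_{2d}}W(f,g)(x,y)=i^{-d/2}f\bigl(\tfrac{x-y}{\sqrt2}\bigr)\overline{g\bigl(\tfrac{x+y}{\sqrt2}\bigr)}=i^{-d/2}\mathfrak{T}_{O'}(f\otimes\overline g)(x,y),
\]
where $O'$ is the rotation by $\pi/4$ in each plane $(x_j,y_j)$. Equivalently, $\widehat{\cA_{1/2}}=X^{-1}\mathfrak{T}_{O'}$ up to a phase. With this, the rest of your argument goes through:
\begin{itemize}
\item $\overline{\widehat\Xi g}=\widehat\Xi\overline g$, because the kernels are real;
\item $\widehat\Xi\otimes\widehat\Xi$ commutes with $\mathfrak{T}_{O'}$, because the kernels of $\frp_{i\alpha}\otimes\frp_{i\alpha}$ and $\frR_\vartheta\otimes\frR_\vartheta$ on $\bR^2$ are $O(2)$-invariant.
\end{itemize}
Together these give exactly $W(\widehat\Xi f,\widehat\Xi g)=X^{-1}(\widehat\Xi\otimes\widehat\Xi)XW(f,g)$, with all constants cancelling. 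Note the dilation is $I_{2d}$, as the paper's proof concludes, not the $I_d$ printed in the statement. For the commutation, the direct kernel check is simpler than passing through projections and then fixing the sign.

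\textbf{Comparison with the paper.} Once corrected, your route genuinely differs from the paper's. The paper splits $\widehat\Xi=\frp_{i\Delta}\frR_\Theta$, invokes Lemma \ref{lemma1gg1} and the imported Proposition \ref{propIntertWig2}, and then reassembles the formula by commuting dilations past chirps and using $\cF\frR_\Theta=\frR_\Theta\cF$. Your argument handles both kinds of atoms at once through rotation invariance and is self-contained. It even recovers Proposition \ref{propIntertWig2} as the special case $\widehat\Xi=\frR_\Theta$, since $\cF_2^{-1}(\frR_\Theta\otimes\frR_\Theta)\cF_2=\frR_\Theta\otimes\frR_\Theta$.
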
 
		\begin{proof}
			Let $f,g\in L^2(\rd)$. By \eqref{S2Vp}, we can write
			\begin{align}
				W(\widehat \Xi f,\widehat \Xi g)&=W(\mathfrak{p}_{i\Delta}\mathfrak{R}_{\Theta}f,\mathfrak{p}_{i\Delta}\mathfrak{R}_{\Theta}g).
			\end{align}
			By applying \eqref{merge1} and \eqref{merge2} in the order, we obtain
			\begin{align}
				W(\widehat \Xi f,\widehat \Xi g)&=(\mathfrak{p}_{2i\Delta}\otimes \mathfrak{m}_{i\Delta/2})W(\mathfrak{R}_{\Theta}f,\mathfrak{R}_{\Theta}g)\\
				\label{exp2gg12}
				&=(\mathfrak{p}_{2i\Delta}\otimes \mathfrak{m}_{i\Delta/2})\mathfrak{T}_{2^{1/2} I_{2d}}(\mathfrak{R}_{\Theta}\otimes \mathfrak{R}_{\Theta})\mathfrak{T}_{2^{-1/2} I_{2d}}W(f,g).
			\end{align}
			Observe that $\mathfrak{T}_{2^{1/2} I_{2d}}=\mathfrak{T}_{2^{1/2} I_{d}}\otimes \mathfrak{T}_{2^{1/2} I_{d}}$ and $\mathfrak{m}_{i\Delta/2}=\cF^{-1}\mathfrak{p}_{i\Delta/2}\cF$, whence
			\begin{align}
				W(\widehat\Xi f,\widehat \Xi g)&=\cF_2^{-1}(\mathfrak{p}_{2i\Delta}\otimes \mathfrak{p}_{i\Delta/2})\cF_2(\mathfrak{T}_{2^{1/2} I_{d}}\otimes \mathfrak{T}_{2^{1/2} I_{d}})(\mathfrak{R}_{\Theta}\otimes \mathfrak{R}_{\Theta})\mathfrak{T}_{2^{-1/2} I_{2d}}W(f,g)\\
				&=\cF_2^{-1}(\mathfrak{p}_{2i\Delta}\otimes \mathfrak{p}_{i\Delta/2})({\id}_{L^2}\otimes\cF)(\mathfrak{T}_{2^{1/2} I_{d}}\otimes \mathfrak{T}_{2^{1/2} I_{d}})(\mathfrak{R}_{\Theta}\otimes \mathfrak{R}_{\Theta})\mathfrak{T}_{2^{-1/2} I_{2d}}W(f,g)\\
				&=\cF_2^{-1}(\mathfrak{p}_{2i\Delta}\otimes \mathfrak{p}_{i\Delta/2})(\mathfrak{T}_{2^{1/2} I_{d}}\otimes \mathfrak{T}_{2^{-1/2} I_{d}})({\id}_{L^2}\otimes\cF)(\mathfrak{R}_{\Theta}\otimes \mathfrak{R}_{\Theta})\mathfrak{T}_{2^{-1/2} I_{2d}}W(f,g).
				\end{align}
				It is straightforward to check that $V_{2i\Delta}\cD_{2^{1/2}I_d}=\cD_{2^{1/2}I_d}V_{i\Delta}$ and similarly $V_{i\Delta/2}\cD_{2^{-1/2}I_d}=\cD_{2^{-1/2}I_d}V_{i\Delta}$, whence $\mathfrak{p}_{2i\Delta}\mathfrak{T}_{2^{1/2}I_d}=\mathfrak{T}_{2^{1/2}I_d}\mathfrak{p}_{i\Delta}$ and $\mathfrak{p}_{-i\Delta/2}\mathfrak{T}_{2^{-1/2}I_d}=\mathfrak{T}_{2^{-1/2}I_d}\mathfrak{p}_{-i\Delta/2}$. Consequently,
				\begin{align}
				W(\widehat \Xi f,\widehat \Xi g)&=\cF_2^{-1}(\mathfrak{T}_{2^{1/2} I_{d}}\otimes \mathfrak{T}_{2^{-1/2} I_{d}})
				(\mathfrak{p}_{i\Delta}\otimes \mathfrak{p}_{i\Delta})({\id}_{L^2}\otimes\cF)(\mathfrak{R}_{\Theta}\otimes \mathfrak{R}_{\Theta})\mathfrak{T}_{2^{-1/2} I_{2d}}W(f,g)\\
				&=(\mathfrak{T}_{2^{1/2} I_{d}}\otimes \mathfrak{T}_{2^{1/2} I_{d}})\cF_2^{-1}
				(\mathfrak{p}_{i\Delta}\otimes \mathfrak{p}_{i\Delta})({\id}_{L^2}\otimes\cF)(\mathfrak{R}_{\Theta}\otimes \mathfrak{R}_{\Theta})\mathfrak{T}_{2^{-1/2} I_{2d}}W(f,g)\\
				&=\mathfrak{T}_{2^{1/2} I_{2d}}\cF_2^{-1}
				(\mathfrak{p}_{i\Delta}\otimes \mathfrak{p}_{i\Delta})({\id}_{L^2}\otimes\cF)(\mathfrak{R}_{\Theta}\otimes \mathfrak{R}_{\Theta})\mathfrak{T}_{2^{-1/2} I_{2d}}W(f,g).
				\end{align}
				
				 It is also straightforward to verify that $J\mathcal{R}_\Theta=\mathcal{R}_\Theta J$, and consequently $\cF\mathfrak{R}_\Theta=\mathfrak{R}_\Theta \cF$, which yields
				
				\begin{align}
				W(\widehat \Xi f,\widehat \Xi g)&=\mathfrak{T}_{2^{1/2} I_{2d}}\cF_2^{-1}
				(\mathfrak{p}_{i\Delta}\otimes \mathfrak{p}_{i\Delta})(\mathfrak{R}_{\Theta}\otimes \mathfrak{R}_{\Theta})({\id}_{L^2}\otimes\cF)\mathfrak{T}_{2^{-1/2} I_{2d}}W(f,g)\\
				&=(\cF_2\mathfrak{T}_{2^{-1/2} I_{2d}})^{-1}(\widehat \Xi\otimes\widehat \Xi)(\cF_2\mathfrak{T}_{2^{-1/2} I_{2d}})W(f,g),
				\end{align}
				thereby concluding the proof.
				
		\end{proof}
		\begin{proof}[Proof of Theorem \ref{thmS1S2S3W}]
		The expression of $K$ in \eqref{WignerOpMetappiu} follows now by \eqref{GGWigOp1} and \eqref{GGWigOp2}. Formula \eqref{WignerOpMetappiu2} follows then directly from \eqref{WignerOpMetappiu}:

        \begin{align}
            K&=(\cF_2\mathfrak{T}_{2^{-1/2}U_1 })^{-1}(\widehat \Xi\otimes\widehat \Xi)(\cF_2\mathfrak{T}_{2^{-1/2} U_2^{-1}})\\
            &=(\mathfrak{T}_{U_1^{-1}}\mathfrak{T}_{U_2^{-1}})(\mathfrak{T}_{U_2}\mathfrak{T}_{2^{-1/2}}\cF_2^{-1})(\widehat \Xi\otimes\widehat \Xi)(\cF_2\mathfrak{T}_{2^{-1/2} U_2^{-1}}).
        \end{align}

        Finally, observe that $K_Z$ is in $\Mp_0(2d,\bC)$ by item $(ii)$ of Corollary \ref{CorPolar}, whereas $\mathfrak{T}_{U^{-1}}\in \Mp(2d,\bR)$, so that \eqref{WignerOpMetappiu} is the polar decomposition of $K$. The proof of Theorem \ref{thmS1S2S3W} is therefore concluded. 
        \end{proof}

		\subsection{Metaplectic operators and complex conjugation}\label{subsec:complexConjugation}
			In this subsection, we delve into the details of the intertwining relation between metaplectic operators and complex conjugation, thereby rephrasing \cite[Appendix A]{CG02} in our context and proving Theorem \ref{thmComplexConj}. The characterization of metaplectic operators which commute with complex conjugation is then provided in terms of the structure of their projections onto $\Sp_+(d,\bC)$.
			\begin{proof}[Proof of Theorem \ref{thmComplexConj}]
				As always, let us write $S=U_1\Xi U_2$ with $U_1,U_2\in\Sp(d,\bR)$ and $\Xi$ metaplectic atomic contraction. Since Gaussian products trivially commute with complex conjugation and also the atoms $\mathfrak{R}_\Theta$ commute with complex conjugation, as it can be verified directly from \eqref{defRtau}, we have by \eqref{S2Vp} that $\widehat \Xi \bar f=\overline{\widehat \Xi f}$. By applying Proposition \ref{PropCG02}, we get
				\begin{equation}
					Tf=\overline{\widehat{U}_1\widehat \Xi\widehat{U}_3\bar f}={\widehat{\widetilde U}_1\widehat \Xi\widehat{\widetilde U}_2 f}.
				\end{equation}
				Since $\widehat{\widetilde U}_{1,2}\in\Mp(d,\bR)$ and $\widehat \Xi$ is an atomic metaplectic contraction, $T=\widehat{\widetilde U}_1\widehat \Xi\widehat{\widetilde U}_2\in\Mp_+(d,\bC)$. Its projection is $\widetilde S=\widetilde{U}_1\Xi\widetilde {U}_2$. By writing explicitly $\Xi$ as in \eqref{S2Vp}, we see that $\Re(B_2)=\Re(C_2)=O_d$ and $\Im(A_2)=\Im(D_2)=O_d$, therefore if
				\begin{equation}
					U_1=\begin{pmatrix}
						A_1 & B_1\\
						C_1 & D_1
					\end{pmatrix}, \qquad
                    \Xi=\begin{pmatrix}
						A_2 & B_2\\
						C_2 & D_2
					\end{pmatrix},\qquad
                    U_2=\begin{pmatrix}
						A_3 & B_3\\
						C_3 & D_3
					\end{pmatrix},
				\end{equation}
				we can write
				\begin{equation}\label{expressiontildeS}
					\widetilde S=\begin{pmatrix}
						A_1 & -B_1\\
						-C_1 & D_1
					\end{pmatrix}
					\begin{pmatrix}
						\Re(A_2) & i\Im(B_2)\\
						i\Im(C_2) & \Re(D_2)
					\end{pmatrix}
					\begin{pmatrix}
						A_3 & -B_3\\
						-C_3 & D_3
					\end{pmatrix}.
				\end{equation}
				By comparing \eqref{expressiontildeS} with its analogue for $S$, obtained by replacing $-B_j,-C_j$ with $B_j$ and $C_j$ ($j=1,3$) in \eqref{expressiontildeS}, we get
				\begin{align}
					\Re(\widetilde S)&=\begin{pmatrix}
						A_1\Re(A_2)A_3+B_1\Re(D_2)C_3 & -A_1\Re(A_2)B_3-B_1\Re(D_2)D_3\\
						-C_1\Re(A_2)A_3-D_1\Re(D_2)C_3 & C_1\Re(A_2)B_3 +D_1\Re(D_2)D_3
					\end{pmatrix}\\
					&=\begin{pmatrix}
						\Re(A) & -\Re(B)\\
						-\Re(C) & \Re(D)
					\end{pmatrix}.
				\end{align}
				Similarly, we obtain
				\begin{align}
					\Im(\widetilde S)&=-\begin{pmatrix}
						B_1\Im(C_2)A_3 +A_1\Im(B_2)C_3 & -B_1\Im(C_2)B_3-A_1\Im(B_2)D_3\\
						-D_2\Im(C_2)A_3-C_1\Im(B_2)C_3 & D_2\Im(C_2)B_3 +C_1\Im(B_2)D_3
					\end{pmatrix}\\
					&=-\begin{pmatrix}
						\Im(A) & -\Im(B)\\
						-\Im(C) & \Im(D)
					\end{pmatrix}.
				\end{align}
				Therefore,
				\begin{align}
					\widetilde S&=\Re(\widetilde S)+i\Im(\widetilde S)=\begin{pmatrix}
						\Re(A) & -\Re(B)\\
						-\Re(C) & \Re(D)
					\end{pmatrix}-i\begin{pmatrix}
						\Im(A) & -\Im(B)\\
						-\Im(C) & \Im(D)
					\end{pmatrix}\\
					&=\begin{pmatrix}
						\Re(A)-i\Im(A) & -\Re(B)+i\Im(B)\\
						-\Re(C)+i\Im(C) & \Re(D)-i\Im(D)
					\end{pmatrix},
				\end{align}
				which agrees with \eqref{defStildecomplex}. This concludes the proof.
			\end{proof}

            It follows trivially by \eqref{defStildecomplex} that a matrix $S\in\Sp(d,\bC)$ satisfies $S=\widetilde S$ if and only if
            \begin{equation}\label{blockSconj}
                S=\begin{pmatrix}
                    A & iB\\
                    iC & D
                \end{pmatrix}, \qquad \text{with} \qquad S'=\begin{pmatrix}
                    A & -B\\
                    C & D
                \end{pmatrix}\in\Sp(d,\bR).
            \end{equation}
            Moreover, by multiplying two matrices in the form \eqref{blockSconj} it is straightforward to verify that 
            \begin{equation}
                \left\{ S\in\Sp(d,\bC) : S=\widetilde S \right\}
            \end{equation}
            is a subgroup of $\Sp(d,\bC)$. Therefore,
            \begin{equation}
                \left\{ S\in{\Sp}_+(d,\bC) : S=\widetilde S \right\}
            \end{equation}
            is a subsemigroup of $\Sp_+(d,\bC)$.
            We conclude this section by characterizing the operators in this subsemigroup and the corresponding projections. We need a preliminary structural lemma.
            \begin{lemma}\label{structuralLemma}
                Let $U\in\Sp(d,\bR)$ have blocks \eqref{blockS}. The following statements are equivalent.
                \begin{enumerate}[(i)]
                    \item $\mathrm{range}(C^\top B)\subseteq \mathrm{range}(C^\top A)$.
                    \item $A\in\GL(d,\bR)$.
                \end{enumerate}
            \end{lemma}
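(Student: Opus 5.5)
The implication (ii)$\Rightarrow$(i) is a one-line factorization. The converse I plan to prove by contraposition, using the symplectic relations \eqref{symp-rel1}--\eqref{symp-rel3} together with the fact that $C^\top A$ is symmetric. I do not expect to need Proposition \ref{propMO}.

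\emph{(ii)$\Rightarrow$(i).} If $A\in\GL(d,\bR)$, then $C^\top B=(C^\top A)(A^{-1}B)$. Hence $\mathrm{range}(C^\top B)\subseteq\mathrm{range}(C^\top A)$.

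\emph{(i)$\Rightarrow$(ii).} Assume $A$ is singular; I will exhibit a vector in $\mathrm{range}(C^\top B)$ that is not in $\mathrm{range}(C^\top A)$.

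First, by \eqref{symp-rel1}, $C^\top A=A^\top C=(C^\top A)^\top$ is symmetric. Therefore
\[
\mathrm{range}(C^\top A)=\ker\big((C^\top A)^\top\big)^\perp=\ker(C^\top A)^\perp .
\]
So it suffices to find $w\in\ker(C^\top A)$ and $u\in\rd$ with $C^\top Bu\cdot w\neq0$. Equivalently, I want $w\in\ker(C^\top A)$ with $B^\top Cw\neq0$.

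Second, pick $w\in\ker(A)\setminus\{0\}$; trivially $w\in\ker(C^\top A)$. Transposing \eqref{symp-rel3} gives $D^\top A-B^\top C=I$. Applying this to $w$ yields
\[
-B^\top Cw=w-D^\top Aw=w\neq0 .
\]
Taking $u=B^\top Cw$, we get $C^\top Bu\cdot w=|B^\top Cw|^2=|w|^2>0$. Thus $C^\top Bu\notin\ker(C^\top A)^\perp=\mathrm{range}(C^\top A)$, which contradicts (i).

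The only delicate point is the identification $\mathrm{range}(C^\top A)=\ker(C^\top A)^\perp$. This is where the symmetry of $C^\top A$ from \eqref{symp-rel1} is used; the rest is the transposed identity \eqref{symp-rel3}.
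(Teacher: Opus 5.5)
Your proof is correct, and it takes a different route from the paper's.

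The paper's route goes through an intermediate condition (ii'), namely $\mathrm{range}(B)\subseteq\mathrm{range}(A)$, and invokes Proposition \ref{propMO} twice:
\begin{itemize}
\item it proves (i)$\Leftrightarrow$(ii') using the isomorphism $A:\ker(C)\to\mathrm{range}(C)^\perp$;
\item it proves (ii')$\Rightarrow$(ii) using the isomorphism $B^\top:\mathrm{range}(A)^\perp\to\ker(A)$.
\end{itemize}

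You bypass both the intermediate condition and the cited structural result. The symmetry of $C^\top A$ from \eqref{symp-rel1} turns the range inclusion into an orthogonality test against $\ker(C^\top A)\supseteq\ker(A)$. The transposed relation $D^\top A-B^\top C=I$ then gives $B^\top Cw=-w$ on $\ker(A)$.

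This identity is exactly the piece of Proposition \ref{propMO} that the paper needs. Since $A^\top Cw=C^\top Aw=0$, the vector $Cw$ lies in $\mathrm{range}(A)^\perp$ and is sent by $B^\top$ to $-w$. So you are re-deriving inline the surjectivity of $B^\top:\mathrm{range}(A)^\perp\to\ker(A)$ from the block relations alone.

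Your argument is shorter and self-contained, relying only on \eqref{symp-rel1} and \eqref{symp-rel3}. The paper's version additionally records the equivalence with $\mathrm{range}(B)\subseteq\mathrm{range}(A)$. It mentions that equivalence in the proof of Theorem \ref{StructuralTheorem}, but only the invertibility of $A$ is actually used there.
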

            \begin{proof}
                We first prove that $(i)\Leftrightarrow(ii')$ and then $(ii')\Leftrightarrow(ii)$, where\\
                $(ii')$ $\mathrm{range}(B)\subseteq \mathrm{range}(A)$.\\
                The implication $(ii')\Rightarrow(i)$ is trivial, and thus we prove the converse. Assume that $\mathrm{range}(C^\top B)\subseteq \mathrm{range}(C^\top A)$. For $\zeta=Bx\in \mathrm{range}(B)$, $x\in\rd$, we must show that there exists $z\in\rd$ such that $\zeta=Az$. By the assumption, there exists $y\in\rd$ such that $C^\top Bx=C^\top Ay$. Then, there exists $v\in \ker(C^\top)=\mathrm{range}(C)^\perp$ such that $ Bx-Ay=v$. By Proposition \ref{propMO} $(i)$, there exists a unique $w\in \ker(C)$ such that $Aw=v$. It follows that $Bx=A(y+w)\in \mathrm{range}(A)$. We now turn to the equivalence $(ii')\Leftrightarrow(ii)$. Observe that if $A$ is invertible, then $(ii')$ is trivial. We need to check that $(ii')\Rightarrow(ii)$. If $\mathrm{range}(B)\subseteq \mathrm{range}(A)$, then $\mathrm{range}(A)^\perp\subseteq \mathrm{range}(B)^\perp=\ker(B^\top)$. By item $(ii)$ of Proposition \ref{propMO}, we obtain $\ker(A)=B^\top(\mathrm{range}(A)^\perp)=\{0\}$. This concludes the proof.
            \end{proof}
        
            We are ready to prove the main result of this subsection.
            
            \begin{theorem}\label{StructuralTheorem}
                Let $S\in\Sp(d,\bC)$ be such that $\widetilde S=S$. The following statements are equivalent.
                \begin{enumerate}[(i)]
                    \item $S\in\Sp_+(d,\bC)$.
                    \item If $S$ has blocks as in \eqref{blockSconj}, then
                    \begin{equation}
                       A\in\GL(d,\bR), \qquad A^\top C\geq0, \qquad \text{and} \qquad AB^\top\leq 0.
                    \end{equation}
                    \item If $S$ has blocks as in \eqref{blockS}, then $\Re(C)=\Re(B)=O_d$ and
                    \begin{equation}
                       A\in\GL(d,\bR),\qquad \Im(A^\top C)\geq0 \qquad \text{and} \qquad \Im(AB^\top)\leq0.
                    \end{equation}
                \end{enumerate}
            \end{theorem}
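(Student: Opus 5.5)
The plan is to first dispose of the trivial equivalence $(ii)\Leftrightarrow(iii)$ and then prove $(i)\Leftrightarrow(ii)$ through the positivity matrix $\cM_S$ of Proposition \ref{SmathcalM} together with Theorem \ref{theoremSchur}. For $(ii)\Leftrightarrow(iii)$, note that if $S=\widetilde S$, then \eqref{defStildecomplex} forces $A,D$ to be real and $B,C$ to be purely imaginary. Writing $S$ as in \eqref{blockSconj}, the blocks of \eqref{blockS} become $A$, $iB$, $iC$, $D$. Hence $\Im(A^\top(iC))=A^\top C$ and $\Im(A(iB)^\top)=AB^\top$, and $(ii)$ and $(iii)$ are literally the same conditions.

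For $(i)\Leftrightarrow(ii)$, write $S=S_R+iS_I$. Here $S_R=\diag(A,D)$ is block diagonal and $S_I=\begin{pmatrix}O&B\\ C&O\end{pmatrix}$ is block antidiagonal. The next step is to compute $\cM_S$ explicitly.

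The entry $S_R^\top JS_I$ equals $\begin{pmatrix}A^\top C&O\\ O&-D^\top B\end{pmatrix}$. The entry $S_I^\top JS_I$ equals $\begin{pmatrix}-C^\top B&O\\ O&B^\top C\end{pmatrix}$. Using the relations of $S'\in\Sp(d,\bR)$ (namely $A^\top C$ and $D^\top B$ symmetric, and $A^\top D+C^\top B=I$), these blocks decouple. After the permutation $\cK$ in \eqref{defKpermConj}, $\cM_S$ splits into two $2d\times2d$ symmetric blocks of the form
\begin{equation}
\begin{pmatrix} A^\top C & -C^\top B\\ B^\top C & A^\top C\end{pmatrix}
\quad\text{and}\quad
\begin{pmatrix} -D^\top B & B^\top C\\ -C^\top B & -D^\top B\end{pmatrix},
\end{equation}
possibly up to signs that I would check carefully. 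Positivity of $S$ is then equivalent to positive semidefiniteness of each block.

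I then apply Theorem \ref{theoremSchur}(ii) to the first block. It gives $A^\top C\geq0$, the range condition $\mathrm{range}(C^\top B)\subseteq\mathrm{range}(A^\top C)$, and a Schur complement inequality. This is exactly where Lemma \ref{structuralLemma} enters. For the real symplectic matrix $S'$, the blocks are $(A,-B,C,D)$, and the range condition says $\mathrm{range}(C^\top B)\subseteq\mathrm{range}(C^\top A)$ (using $A^\top C=C^\top A$). The lemma converts this into $A\in\GL(d,\bR)$.

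Once $A$ is invertible, the Schur complement can be rewritten with honest inverses. Using $D=A^{-\top}(I-C^\top B)$, I expect the condition $-D^\top B\geq0$ to be equivalent, after congruence by $A$, to $-AB^\top\geq0$. Indeed, $A(D^\top B)A^\top=AB^\top-AB^\top C^\top\cdots$, and the pieces involving $C$ should cancel against the Schur complement terms. This yields $AB^\top\leq0$.

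Conversely, assuming $(ii)$, I would avoid the Schur bookkeeping by factoring $S$. With $A$ real invertible, $S=V_{P}\,\cD_{A^{-1}}\,V^\top_{R}$ with $P=iCA^{-1}$ and $R=iA^{-1}B$, both symmetric. Then $\Im P=A^{-\top}(A^\top C)A^{-1}\geq0$ and $\Im R=A^{-1}(BA^\top... )$ is congruent to $AB^\top$ via $A^{-1}$, so $\Im R\leq0$. By Propositions \ref{prop51} and \ref{prop52GG} (or Remark \ref{remConvolution}), each factor lies in $\Sp_+(d,\bC)$, and so does the product.

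This factorization also suggests a cleaner necessity proof once invertibility of $A$ is known. In that case $S$ factors the same way, with $P,R$ having real part zero, and $\cM_S$ of a product is controlled via congruence. I therefore expect the main obstacle to be the necessity step: establishing invertibility of $A$ from positivity, that is, getting the exact form of $\cM_S$ and matching its Schur range condition with hypothesis $(i)$ of Lemma \ref{structuralLemma}. If the signs do not line up, I would instead use the lower-right block and the dual range condition, which is part $(iii)$ of Theorem \ref{theoremSchur}.
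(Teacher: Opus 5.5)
Your overall route is the paper's: compute $\cM_S$, conjugate by $\cK$, apply Theorem \ref{theoremSchur}(ii), get $A\in\GL(d,\bR)$ from the range condition via Lemma \ref{structuralLemma}, then reduce the Schur complement to $AB^\top\le0$. However, the explicit computation it rests on is wrong, and not only in signs. Since $JS_I=\diag(C,-B)$, you get $S_R^\top JS_I=\diag(A^\top C,-D^\top B)$, but $S_I^\top JS_I=\begin{pmatrix}O&-C^\top B\\ B^\top C&O\end{pmatrix}$, which is block off-diagonal. Consequently $\cK\cM_S\cK$ decouples along its first/fourth and second/third block rows and columns. Both pieces are congruent, via $\diag(I_d,-I_d)$, to
\[
N=\begin{pmatrix}A^\top C & C^\top B\\ B^\top C & -D^\top B\end{pmatrix}.
\]
Your blocks put $A^\top C$ (resp.\ $-D^\top B$) in both diagonal slots, so they are not symmetric, and no choice of signs repairs them. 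Write $q_S(z)=i^{-1}(\sigma(Sz,\overline{Sz})-\sigma(z,\bar z))$ for the form in \eqref{cond.pos}. For $d=1$, $A=C=1$, $B=-3$, $D=4$ one has $q_S(z)=2|z_1-3iz_2|^2+6|z_2|^2\ge0$, so $S$ is positive, yet $\begin{pmatrix}1&\pm3\\ \pm3&1\end{pmatrix}$ is indefinite. The range condition coming from $N$ is exactly the one you wrote, so your invertibility step does go through once you use $N$.

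Your last step fails as stated: $-D^\top B\ge0$ is not equivalent to $AB^\top\le0$. With $A$ invertible and $M=AB^\top$, one has $-A(D^\top B)A^\top=-M+M(CA^{-1})M$ with $CA^{-1}\ge0$, which is a strictly weaker condition. For example, $d=1$, $A=C=1$, $B=2$, $D=-1$ gives $A^\top C\ge0$ and $-D^\top B=2$. But $AB^\top=2>0$ and $\det N=-2$, so $S\notin\Sp_+(1,\bC)$.

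What you need is the full Schur complement of $N$, namely $-D^\top B-B^\top C(A^\top C)^+C^\top B\ge0$. Write $Bx=Ay$ and use $D^\top A+B^\top C=I_d$ together with $(A^\top C)^+(A^\top C)y\cdot(A^\top C)y=A^\top Cy\cdot y$. Then the $C$-terms cancel and the quadratic form becomes $-A^{-1}Bx\cdot x$. This is the cancellation you anticipated, but it only happens inside $N$.

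Your sufficiency argument via $S=V_{iCA^{-1}}\cD_{A^{-1}}V^\top_{iA^{-1}B}$ is correct, and it is a valid alternative to the paper's two-sided Schur equivalence. It also gives necessity once $A$ is invertible, though not by ``congruence''. Instead, the identity $q_{S_1S_2}(z)=q_{S_1}(S_2z)+q_{S_2}(z)$ gives $q_S(z)=2\,CA^{-1}w\cdot\bar w-2A^{-1}Bz_2\cdot\bar z_2$ with $w=A(z_1+iA^{-1}Bz_2)$. Testing $z_2=0$, and then $z_1=-iA^{-1}Bz_2$, yields $A^\top C\ge0$ and $A^{-1}B\le0$. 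Invertibility of $A$ must still come from the range condition in $N$.
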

            \begin{proof}
                Let $S$ have blocks \eqref{blockSconj} and $\cM_S$ be the corresponding matrix in \eqref{defMS}. By Proposition \ref{SmathcalM}, $S\in\Sp_+(d,\bC)$ if and only if
                \begin{equation}\label{EqGG1}
                    \cK\cM_S\cK=\left(\begin{array}{cc|cc}
                        A^\top C & O_d & O_d &-C^\top B \\
                        O_d &  A^\top C & C^\top B & O_d\\
                        \hline
                        O_d & B^\top C & -D^\top B & O_d\\
                        -B^\top C & O_d & O_d & -D^\top B
                    \end{array}\right)\geq0,
                \end{equation}
            where $\cK$ is defined as in \eqref{defKpermConj}.
			By Theorem \ref{theoremSchur}, condition \eqref{EqGG1} is equivalent to the simultaneous validity of the following:
            \begin{enumerate}[(1)]
                \item $A^\top C\geq0$.
                \item $\mathrm{range}(C^\top B)\subseteq \mathrm{range}(C^\top A)$.
                \item $-D^\top B-B^\top C(A^\top C)^+C^\top B\geq0$.
            \end{enumerate}
            Condition $\mathrm{range}(C^\top B)\subseteq \mathrm{range}(C^\top A)$ is equivalent either to $\mathrm{range}(B)\subseteq \mathrm{range}(A)$ or to $A\in\GL(d,\bR)$ by Lemma \ref{structuralLemma} applied to $S'$ in \eqref{blockSconj}. On the other hand, item (3) is equivalent to 
            \begin{equation}\label{EqGG2}
                -D^\top Bx\cdot x-(A^\top C)^+C^\top Bx\cdot C^\top Bx.
            \end{equation}
            Set $y=A^{-1}Bx$. Then, equation \eqref{EqGG2} reads as
            \begin{align}
                 0&\leq-D^\top Ay\cdot x-(A^\top C)^+C^\top Ay\cdot C^\top Ay=-D^\top Ay\cdot x-C^\top Ay\cdot y\\
                 &=-(I-B^\top C)y\cdot x-C^\top Ay\cdot y=-y\cdot x+B^\top Cy\cdot x-C^\top Ay\cdot y\\
                 &=-y\cdot x+Cy\cdot Ay-C^\top Ay\cdot y=-y\cdot x,
            \end{align}
            where we used that $D^\top A+B^\top C=I_d$ by \eqref{blockSconj}. Since $A$ is invertible we infer that (3) is equivalent to
            \begin{align}
                 0&\leq-y\cdot x=-A^{-1}Ay\cdot x=-A^{-1}Bx\cdot x.
            \end{align}
            We have proven that $\cM_S\geq0$ if and only if
            \begin{equation}
                A^\top C\geq0,\qquad A\in\GL(d,\bR), \qquad \text{and}\qquad 
                A^{-1}B\leq 0.
            \end{equation}
            Observe that $A^{-1}B\leq0$ if and only if $A^{-1}(BA^\top)A^{-\top}\leq0$ or, equivalently, $AB^\top\leq0$. Finally, $(iii)$ is a restatement of $(ii)$ in terms of the block decomposition \eqref{blockS}. This concludes the proof.

            \end{proof}

            \begin{corollary}\label{corSemigroup}
                Let $\widehat S\in\Mp_+(d,\bC)$ have projection \eqref{blockS}, then $\widehat S$ commutes with complex conjugation if and only if
                \begin{equation}\label{conjForm}
                    \widehat Sf(x)=c|\det(A)|^{-1/2}e^{i\pi CA^{-1}x\cdot x}\int_{\rd}\widehat f(\xi)e^{-i\pi A^{-1}B\xi\cdot\xi}e^{2\pi i\xi\cdot A^{-1}x}d\xi, \qquad f\in L^1(\rd),
                \end{equation}
                for some $c\in\bC$ with $|c|\leq1$.
            \end{corollary}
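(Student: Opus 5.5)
The plan is to reduce to Theorem \ref{StructuralTheorem} and then factorize. First, commuting with complex conjugation means $\overline{\widehat S\bar f}=\widehat Sf$, i.e. $\widehat{\widetilde S}=\widehat S$. By Theorem \ref{thmComplexConj} and the two-fold covering of Theorem \ref{thmOscillator} $(i)$, this forces $\widetilde S=S$. Conversely, if $\widetilde S=S$, then $\widehat{\widetilde S}=\pm\widehat S$, up to the normalizing constant. Hence, for the ``only if'' direction we may assume $S=\widetilde S\in\Sp_+(d,\bC)$. Theorem \ref{StructuralTheorem} $(iii)$ then gives $A\in\GL(d,\bR)$, $\Re(B)=\Re(C)=O_d$, $\Im(A^\top C)\geq0$ and $\Im(AB^\top)\leq0$. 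Since $D=\overline D$ and $D=A^{-\top}(I+C^\top B)$, everything is expressed through $A$, $B$, $C$.

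Next I would factorize
\begin{equation}
S=\begin{pmatrix} I & O\\ CA^{-1} & I\end{pmatrix}\begin{pmatrix} A & O\\ O & A^{-\top}\end{pmatrix}\begin{pmatrix} I & A^{-1}B\\ O & I\end{pmatrix}=V_{CA^{-1}}\,\cD_{A^{-1}}\,V^\top_{A^{-1}B}.
\end{equation}
This is a standard block $LDU$ identity, valid because $A$ is invertible; the $(2,2)$ entry $CA^{-1}B+A^{-\top}$ equals $D$ by the symplectic relations. The symmetry of $CA^{-1}$ and of $A^{-1}B$ follows from \eqref{symp-rel1}–\eqref{symp-rel2}.

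Positivity of each factor is checked directly. We have $\Im(CA^{-1})=A^{-\top}\Im(A^\top C)A^{-1}\geq0$, and $\Im(A^{-1}B)=A^{-1}\Im(AB^\top)A^{-\top}\leq0$. So $V_{CA^{-1}}\in\Sp_+$ with operator $\frp_{CA^{-1}}$ by Proposition \ref{prop52}. Likewise $V^\top_{A^{-1}B}\in\Sp_+$ with operator the Fourier multiplier $\mathfrak m_{A^{-1}B}$ of Remark \ref{remConvolution}, whose symbol is $e^{-i\pi A^{-1}B\xi\cdot\xi}$; I will double-check this sign against \eqref{defmP}. Finally, $\cD_{A^{-1}}$ corresponds to $\frT_{A^{-1}}f(x)=|\det A|^{-1/2}f(A^{-1}x)$. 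Since $\pi^{Mp}_+$ is a homomorphism and a two-fold cover, $\widehat S=c\,\frp_{CA^{-1}}\frT_{A^{-1}}\mathfrak m_{A^{-1}B}$. Writing this composition out for $f\in L^1$ gives exactly \eqref{conjForm}, with $|c|\leq1$ by the normalization convention after Theorem \ref{thmOscillator}.

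For the ``if'' direction, an operator of the form \eqref{conjForm} is the above product. By the same factorization, its projection $S$ has $A$ real and $\Re B=\Re C=0$, so $\widetilde S=S$. Moreover, each factor commutes with conjugation. For the multiplier this holds because $\overline{\mathfrak m_P\bar f}=\mathfrak m_{-\overline P}f$ and $-\overline{A^{-1}B}=A^{-1}B$ when $A^{-1}B$ is purely imaginary. For the chirp, $\overline{CA^{-1}}=-CA^{-1}$ gives the same conclusion.

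The main obstacle is the sign bookkeeping: matching the multiplier convention $\mathfrak m_{-P}$ with $V_P^\top$ in Remark \ref{remConvolution}, and fixing the correct phase/Maslov factor, so that the exponent in \eqref{conjForm} comes out as $-i\pi A^{-1}B\xi\cdot\xi$. I would settle this by a one-dimensional Gaussian check via Theorem \ref{thmGG1}.
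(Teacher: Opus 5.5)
Your ``only if'' direction is correct and follows the paper's route. Theorem \ref{thmComplexConj} forces $\widetilde S=S$, Theorem \ref{StructuralTheorem} then gives $A\in\GL(d,\bR)$, $\Re(B)=\Re(C)=O_d$, $\Im(A^\top C)\geq0$, $\Im(AB^\top)\leq0$, and one factorizes. The paper writes the factorization as $\cD_{A^{-1}}V_{A^\top C}V^\top_{A^{-1}B}$. This is the same matrix as yours, because $\cD_{A^{-1}}V_{A^\top C}=V_{CA^{-1}}\cD_{A^{-1}}$. Your ordering produces the chirp $e^{i\pi CA^{-1}x\cdot x}$ of \eqref{conjForm} directly, at the cost of the congruence $\Im(CA^{-1})=A^{-\top}\Im(A^\top C)A^{-1}$, which you supply. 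On the sign you worried about: by Remark \ref{remConvolution}, $V_P^\top$ corresponds to $\mathfrak m_{-P}$, whose multiplier is $e^{-i\pi P\xi\cdot\xi}$. So the factor is $\mathfrak m_{-A^{-1}B}$; your symbol is right and only the subscript is off.

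The ``if'' direction has a genuine gap at the sentence ``By the same factorization, its projection $S$ has $A$ real and $\Re B=\Re C=0$''. Nothing in the form \eqref{conjForm} forces $B$ and $C$ to be purely imaginary. For real invertible $A$, the factorization $S=V_{CA^{-1}}\cD_{A^{-1}}V^\top_{A^{-1}B}$ holds for every symplectic $S$. Its factors lie in $\Sp_+(d,\bC)$ as soon as $\Im(CA^{-1})\geq0$ and $\Im(A^{-1}B)\leq0$, with no constraint on real parts. A concrete counterexample is $\frp_Q$ with $Q\in\Sym(d,\bR)\setminus\{O_d\}$. It has the form \eqref{conjForm} with $A=I_d$, $B=O_d$, $C=Q$, $c=1$, yet $\overline{\frp_Q\bar f}=\frp_{-Q}f\neq\frp_Qf$.

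So the reverse implication, read literally, is false, and your argument cannot establish it. It holds only when the side conditions $\Re(B)=\Re(C)=O_d$ of Theorem \ref{StructuralTheorem} $(iii)$ are part of the hypothesis. That is how the paper's one-line proof, ``holding in $\Sp_+(d,\bC)$ if and only if the conditions on $S$ of Theorem \ref{StructuralTheorem} are met'', has to be read.

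With that hypothesis added, your factor-by-factor check closes the argument: $-\overline{CA^{-1}}=CA^{-1}$, $-\overline{A^{-1}B}=A^{-1}B$, and $A$ is real. One further point remains: the constant must also commute with conjugation. Since $\overline{c\,T\bar f}=\bar c\,\overline{T\bar f}$, you need $c\in\bR$. Alternatively, read ``commutes'' up to the unimodular factor $\bar c/c$, in line with the paper's convention of disregarding normalizing constants.
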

            \begin{proof}
                It follows by Theorem \ref{StructuralTheorem} and by the factorization
                \begin{equation}
                    S=\cD_{A^{-1}}V_{A^\top C}V^\top_{A^{-1}B},
                \end{equation}
                holding in $\Sp_+(d,\bC)$ if and only if the conditions on $S$ of Theorem \ref{StructuralTheorem} are met.
            \end{proof}
            
			The results in this subsection will be applied in Section \ref{sec.apptimefreq} below to the theory of time-frequency representations.
					
		\section{Applications to time-frequency analysis}\label{sec.apptimefreq}
		The notion of time-frequency representation lies at the foundation of time-frequency analysis. Across essentially every line of inquiry in the field, one is naturally led to metaplectic operators. 
        In the present section, we define the analogue of metaplectic Wigner distributions in the context of the metaplectic semigroup $\Mp_+(2d,\bC)$, and we characterize those that satisfy certain fundamental properties arising in operator theory, time-frequency analysis and quantum mechanics. 
			
		\begin{definition}
			Let $\widehat\cA\in \Mp_+(2d,\bC)$. The associated {\em metaplectic Wigner distribution}, or $\widehat\cA$-{\em Wigner distribution}, is the operator $W_\cA:L^2(\rd)\times L^2(\rd)\to L^2(\rdd)$ given by
			\begin{equation}
				W_\cA(f,g)=\widehat\cA(f\otimes\overline g), \qquad f,g\in L^2(\rd).
			\end{equation}
			If $f=g$, we write $W_\cA f=W_\cA(f,f)$.
		\end{definition}
		
		The reader may observe that this definition resembles \cite[Definition 4.1]{Cordero_Part_I}, with the difference that we now allow $\widehat\cA\in\Mp_+(2d,\bC)$. The phraseologies {\em metaplectic Wigner distribution} and {\em $\cA$-Wigner distribution} are used in the aforementioned work, and in the subsequent ones by the same authors, in the case $\widehat\cA\in\Mp(2d,\bR)$. This shall not cause confusion; therefore, we adopt the same terminology in our more general setting to avoid unnecessary burden on notation and terminology.
		
		\begin{example}
			Let us define, for $M\in\Sigma_{\geq0}(2d)$, the time-frequency representation
			\begin{equation}
				W_\cA(f,g)=\Phi_M\ast W(f,g), \qquad f,g\in L^2(\rd),
			\end{equation}
			where $\Phi_M(z)=e^{i\pi Mz\cdot z}$. If $\Im(M)>0$, this metaplectic Wigner distribution does not fall into the framework of \cite{Cordero_Part_I}. This is one of the reason that motivates us to consider the case of Wigner distributions $W_\cA$, with $\widehat\cA \in \Mp_+(2d,\mathbb{C})$.
		\end{example}
		
       We need the following lemma, whose proof is reported for the sake of completeness.

        \begin{lemma}\label{lemmaGaussians}
            Let $Q\in\Sym(d,\bR)\setminus\{O_d\}$, $Q\geq0$. Then, $\norm{\mathfrak{p}_{iQ}f}_2<\norm{f}_2$ for every Gaussian $f(x)=e^{-\pi Mx\cdot x}$, $M>0$. Consequently, if $\widehat Z\in\Mp_0(d,\bC)$ is not a multiple of the identity, then $\norm{\widehat Zf}_2<\norm{f}_2$ for every Gaussian $f$ as above.
        \end{lemma}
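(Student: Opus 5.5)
The first claim is a direct computation. Write $\norm{\mathfrak{p}_{iQ}f}_2^2=\int_{\rd}e^{-2\pi Qx\cdot x}e^{-2\pi Mx\cdot x}dx$. Since $Q\geq0$, the integrand is pointwise bounded by $e^{-2\pi Mx\cdot x}=|f(x)|^2$. Because $Q\neq O_d$, there is an open cone of directions $x$ with $Qx\cdot x>0$, and on this set of positive measure the inequality is strict. Hence $\norm{\mathfrak{p}_{iQ}f}_2<\norm{f}_2$. Alternatively, the Gaussian integral formula of Proposition \ref{GaussianIntegrals} gives the ratio
\[
\norm{\mathfrak{p}_{iQ}f}_2^2/\norm{f}_2^2=\det(M)^{1/2}\det(M+Q)^{-1/2}.
\]
This ratio is $<1$, since $M+Q\geq M$ with $M+Q\neq M$, and this forces $\det(M+Q)>\det(M)$ (diagonalize $M^{-1/2}QM^{-1/2}\geq0$, which is nonzero).

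For the second claim, let $\widehat Z\in\Mp_0(d,\bC)$. By Corollary \ref{CorPolar}(ii), $\widehat Z=\widehat V^{-1}\widehat\Xi\widehat V$ with $\widehat V\in\Mp(d,\bR)$ unitary and $\widehat\Xi$ atomic. Then $\norm{\widehat Zf}_2=\norm{\widehat\Xi\widehat Vf}_2$. By Lemma \ref{lemma1}, or by the explicit action of the generators, $\widehat Vf$ is again a (generalized) Gaussian $c\,e^{-\pi M'x\cdot x}$ with $M'$ complex symmetric, $\Re M'>0$, and $|c|$ fixed by unitarity. Since $\widehat Z$ is not a multiple of the identity, $\widehat\Xi$ is not the identity either, so at least one atom is nontrivial. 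The plan is to reduce to a factor $\mathfrak{p}_{iQ}$ as in the first part. By Remark \ref{remAMC}, $\widehat\Xi=\mathfrak{p}_{i\Delta}\frR_\Theta$. The factorization $\frR_\vartheta=\frp_{i\tanh\vartheta}\mathfrak{T}_{1/\cosh\vartheta}\cF^{-1}\frp_{i\tanh\vartheta}\cF$ then writes $\widehat\Xi$ as a product of contractions in which some factor $\mathfrak{p}_{iQ}$ with $Q\geq0$, $Q\neq0$, acts on a Gaussian. All remaining factors are contractions.

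It then suffices to show that the strict inequality in the first part persists for generalized Gaussians with complex $M'$, $\Re M'>0$. This follows from the same computation: $|e^{-\pi M'x\cdot x}|^2=e^{-2\pi\Re(M')x\cdot x}$, so only $\Re M'>0$ enters. Each intermediate function stays a generalized Gaussian: the chirps $\mathfrak{p}_{iQ}$, the Fourier transform, and the rescalings all map Gaussians to Gaussians by Proposition \ref{GaussianIntegrals}. So the chain of contractions gives $\norm{\widehat Zf}_2\leq\norm{\mathfrak{p}_{iQ}h}_2<\norm{h}_2\leq\norm{f}_2$ for the appropriate Gaussian $h$.

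The main obstacle is the bookkeeping in this last chain. I must make sure the strict factor is actually applied to a Gaussian and that the other factors do not increase the norm. The cleaner route, which I would try first, is to avoid the decomposition of $\frR_\vartheta$ altogether. Instead, tensorize $\widehat\Xi\widehat Vf$ after diagonalizing. Lemma \ref{lemma1} with a real orthogonal $\mathfrak{T}_\Sigma$ does not make a complex $\Re M'$ diagonal in general, so I would fall back on the factorization argument above. In that argument the strict contraction is used once, and all other steps are non-strict.
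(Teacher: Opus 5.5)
Your proposal is correct and follows essentially the paper's route. For the first claim you compare $e^{-2\pi Qx\cdot x}|f|^2$ with $|f|^2$ pointwise; for the second you write $\norm{\widehat Zf}_2=\norm{\widehat\Xi\widehat Vf}_2$ with $\widehat\Xi=\mathfrak{p}_{i(\Delta+P)}\mathfrak{T}_E\cF^{-1}\mathfrak{p}_{iP}\cF$ and apply the strict inequality to the leftmost chirp, which is nonzero because $\widehat\Xi\neq\id$. Your extra check that the intermediate function is a Gaussian with complex covariance, where only $\Re M'$ matters, covers a point the paper passes over; in fact, since $\ker(Q)$ is a null set, $\norm{\mathfrak{p}_{iQ}h}_2<\norm{h}_2$ already holds for every nonzero $h\in L^2(\rd)$, so that bookkeeping can be dropped.
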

        \begin{proof}
            If $Q\geq0$, the Gaussian $e^{-\pi Qx\cdot x}=1$ if and only if $x\in \ker(Q)$. 
            Let $\Gamma=\{y:\mathrm{dist}(y,\ker(Q))<1\}$. Then, if $f(x)=e^{-\pi Mx\cdot x}$, $M>0$,
            \begin{align}
                \norm{\mathfrak{p}_{iQ}f}_2^2&=\int_{\rd}e^{-2\pi Q y\cdot y}|f(y)|^2dy\\
                &=\int_{\Gamma}e^{-2\pi Qy\cdot y}|f(y)|^2dy+\int_{\rd\setminus\Gamma} e^{-2\pi Q y\cdot y}|f(y)|^2dy<\norm{f}_2^2.
            \end{align}
            Finally, we can always write $\widehat Z=\widehat V^{-1}\Xi\widehat V$, for $\widehat\Xi$ atomic metaplectic contraction and $\widehat V\in\Mp(d,\bR)$. Then,
            \begin{align}
                \norm{\widehat Zf}_2=\norm{\widehat\Xi\widehat Vf}_2.
            \end{align}
            Since $\widehat\Xi$ is an atomic metaplectic contraction, we can always write $\widehat\Xi$ as $\widehat\Xi  = \mathfrak{p}_{i\Delta}\mathfrak{R}_\Theta$,
            see Remark \ref{remAMC} for the details.
            Since $\mathfrak{R}_\Theta=\bigotimes_{j=1}^d\mathfrak{R}_{\vartheta_j}$ and for each $j=1,\ldots,d$, it holds \begin{equation}
                \mathfrak{R}_{\vartheta_j}=\mathfrak{p}_{i\tanh\vartheta_j}\mathfrak{T}_{1/\cosh\vartheta_j}\cF^{-1}\mathfrak{p}_{i\tanh\vartheta_j}\cF,
            \end{equation} 
            we obtain that
            \begin{equation}
                \mathfrak{R}_\Theta=\mathfrak{p}_{iP}\mathfrak{T}_{E}\cF^{-1}\mathfrak{p}_{iP}\cF,
            \end{equation}
            where $P=\diag(\tanh\vartheta_1,\ldots,\tanh\vartheta_d)\geq0$ and $E=\diag(1/\cosh\vartheta_1,\ldots,1/\cosh\vartheta_d)\in\GL(d,\bR)$. Therefore,
            \begin{equation}
                \widehat\Xi =\mathfrak{p}_{i(Q+P)}\mathfrak{T}_{E}\cF^{-1}\mathfrak{p}_{iP}\cF.
            \end{equation}
           Then, 
            \begin{equation}\label{Gauss2}
                \norm{\widehat\Xi\widehat Vf}_2=\norm{\mathfrak{p}_{i(Q+P)}\mathfrak{T}_{E}\cF^{-1}\mathfrak{p}_{iP}\cF\widehat Vf}_2.
            \end{equation}
            By construction, $Q+P=\diag(\mu_1,\ldots,\mu_d)$ where, if $\widehat\Xi=\bigotimes_{j=1}^d\widehat \Xi_j$, being $\widehat \Xi_j$ the identity of in one of the forms $\mathfrak{p}_{i\alpha_j}$,  or $\mathfrak{R}_{\vartheta_j}$, then
            \begin{equation}
                \mu_j=\begin{cases}
                    \alpha_j & \text{if $\widehat \Xi_j=\mathfrak{p}_{i\alpha_j}$},\\
                    \vartheta_j & \text{if $\widehat \Xi_j=\mathfrak{R}_{\vartheta_j}$},\\
                    0 & \text{otherwise}.
                \end{cases}
            \end{equation}
            Since $\widehat\Xi$ is not a multiple of the identity and $P+Q=O_d$ if and only if $\mu_j=0$ for every $j=1,\ldots,d$, we see that $\mathfrak{p}_{i(Q+P)}$ satisfies the assumptions of the first part of the Lemma \ref{lemmaGaussians}. Consequently, 
            
            \begin{equation}
                \norm{\widehat\Xi \widehat Vf}_2<\norm{\mathfrak{T}_{E}\cF^{-1}\mathfrak{p}_{iP}\cF\widehat Vf}_2\leq\norm{f}_2,
            \end{equation}
            and we are done.
        \end{proof}

        As detailed in the following sections, the transition from metaplectic Wigner distributions $W_\cU$ with $\widehat\cU \in \mathrm{Mp}(2d,\mathbb{R})$ to those $W_\cA$ with $\widehat\cA \in \mathrm{Mp}_+(2d,\mathbb{C})$ provides distributions that satisfy several properties which, in the real case, would only hold for a very restricted, and basically trivial, subclass of metaplectic Wigner distributions. 
        Continuity properties of metaplectic Wigner distributions follow by those of the operators in $\Mp_+(d,\bC)$. However, this comes at the cost of losing the isometry property on $L^2$ and Moyal's identity, as the next two results clarify.
        
		\begin{proposition}\label{prop64}
			Let $W_\cA$ be a a metaplectic Wigner distribution with $\cA\in\Sp_+(2d,\bC)$.
			\begin{enumerate}[(i)]
				\item $W_\cA:\cS(\rd)\times\cS(\rd)\to\cS(\rdd)$ is continuous.
				\item $W_\cA:L^2(\rd)\times L^2(\rd)\to L^2(\rdd)$ is bounded with
				\begin{equation}\label{L2WA}
					\norm{W_\cA(f,g)}_2\leq \norm{f}_2\norm{g}_2, \qquad f,g\in L^2(\rd),
				\end{equation}
				and equality holds for every $f,g\in L^2(\rd)$ if 
                and only if 
                $\widehat\cA\in\Mp(2d,\bR)$.
				\item $W_\cA:\cS'(\rd)\times\cS'(\rd)\to\cS'(\rdd)$ is continuous.
			\end{enumerate}
		\end{proposition}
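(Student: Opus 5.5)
Items $(i)$ and $(iii)$ follow from the mapping properties of $\Mp_+(2d,\bC)$ together with those of the tensor product. For $(i)$, the map $(f,g)\mapsto f\otimes\overline g$ is continuous from $\cS(\rd)\times\cS(\rd)$ to $\cS(\rdd)$. By item $(viii)$ of Theorem \ref{thmOscillator}, applied in dimension $2d$, $\widehat\cA$ restricts to a continuous operator on $\cS(\rdd)$; composing the two maps gives the claim. For $(iii)$, the same item $(viii)$ provides the continuous extension of $\widehat\cA$ to $\cS'(\rdd)$, defined by duality through $\widehat\cA^\ast=\widehat{\cA^\#}\in\Mp_+(2d,\bC)$. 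The tensor product $(f,g)\mapsto f\otimes\overline g$ is (separately) continuous from $\cS'(\rd)\times\cS'(\rd)$ to $\cS'(\rdd)$, and composing again concludes. This is exactly the argument used in the real case, with Theorem \ref{thmOscillator} replacing the corresponding facts for $\Mp(2d,\bR)$.

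For the bound \eqref{L2WA} in $(ii)$, I use item $(iii)$ of Theorem \ref{thmOscillator}: $\widehat\cA$ is a contraction on $L^2(\rdd)$. Hence $\norm{W_\cA(f,g)}_2\le\norm{f\otimes\overline g}_2=\norm{f}_2\norm{g}_2$, where the multiplicative constant $c$ with $|c|\le1$ is absorbed. If $\widehat\cA\in\Mp(2d,\bR)$, then it is unitary and equality holds, so the ``if'' direction is immediate.

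The main step is the converse: assume equality holds for all $f,g$ and show $\widehat\cA\in\Mp(2d,\bR)$. I would take the polar decomposition $\widehat\cA=\widehat U\widehat Z$ from Theorem \ref{polar}, with $\widehat U\in\Mp(2d,\bR)$ and $\widehat Z\in\Mp_0(2d,\bC)$. Since $\widehat U$ is unitary, $\norm{\widehat\cA F}_2=\norm{\widehat ZF}_2$. Now I test on Gaussians: with $f=g=\f_{M}$, $M>0$, the function $f\otimes\overline g=\f_{M\oplus M}$ is a Gaussian on $\rdd$ with positive-definite covariance. If $\widehat Z$ were not a multiple of the identity, Lemma \ref{lemmaGaussians} (in dimension $2d$) would give $\norm{\widehat Z(f\otimes\overline g)}_2<\norm{f}_2\norm{g}_2$, contradicting equality. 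Hence $\widehat Z=c\,\id_{L^2}$. Equality then forces $|c|=1$, so $\widehat\cA=c\widehat U$. Since its projection is $U\in\Sp(2d,\bR)$ and $\widehat\cA\in\Mp_+(2d,\bC)$ is normalised as a two-fold cover element, $\widehat\cA\in\Mp(2d,\bR)$; equivalently, $\widehat\cA$ is invertible, and item $(iv)$ of Theorem \ref{thmOscillator} applies. The delicate point I expect is bookkeeping of the constant $c$ (phase versus modulus): one must make sure that Lemma \ref{lemmaGaussians} is applied to the normalised positive part, and that a positive self-adjoint element of $\Mp_0$ that is a multiple of the identity is exactly the identity up to the admissible constant.
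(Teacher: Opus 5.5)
Your proposal is correct and follows essentially the same route as the paper. Items $(i)$ and $(iii)$, and the bound in $(ii)$, come from the mapping properties of $\Mp_+(2d,\bC)$ in Theorem \ref{thmOscillator}; the converse in $(ii)$ comes from the polar decomposition of Theorem \ref{polar} combined with Lemma \ref{lemmaGaussians}, tested on Gaussian tensors $f\otimes\overline g$. Your explicit handling of the degenerate case $\widehat Z=c\,\id_{L^2}$ (equality forces $|c|=1$, then item $(iv)$ of Theorem \ref{thmOscillator} applies) is slightly more careful than the paper's contrapositive argument, which leaves that case implicit.
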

        \begin{proof} Items $(i)$ and $(iii)$, along with the $L^2$ boundedness in $(ii)$, follow simply by the continuity properties of operators in $\Mp_+(2d,\bC)$. The only item that remains to verify is the ``only if" part of $(ii)$. 
            To prove it, we check that if $W_\cA$ has $\widehat\cA\in\Mp_+(d,\bC)\setminus\Mp(d,\bR)$, then there exist functions $f,g\in L^2(\rd)$ so that \eqref{L2WA} holds with a strict inequality. 
            By Theorem \ref{polar}, we may decompose $\widehat\cA=\widehat\cU\widehat\cZ$ with $\widehat\cU\in\Mp(2d,\bR)$ and $\widehat\cZ\in\Mp_0(2d,\bC)$, and $\widehat\cZ$ is not a multiple of the identity. The left-hand side of \eqref{L2WA} reads as
            \begin{align}
                \norm{W_\cA(f,g)}_2&=\norm{\widehat\cU\widehat\cZ(f\otimes \overline g)}_2=\norm{\widehat\cZ(f\otimes\overline g)}_2.
            \end{align}
            It follows by Lemma \ref{lemmaGaussians} that $\norm{\widehat\cZ(f\otimes\overline g)}_2<\norm{f\otimes\overline g}_2=\norm{f}_2\norm{g}_2$ if $f$ and $g$ are Gaussians. This concludes the proof.
            
        \end{proof}


        \begin{corollary}
            Let $W_\cA$ be a metaplectic Wigner distribution. Moyal's identity \eqref{Moyal} holds if and only if $\widehat\cA\in\Mp(2d,\bR)$.
        \end{corollary}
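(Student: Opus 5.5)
The "if" direction is classical. If $\widehat\cA\in\Mp(2d,\bR)$, then $\widehat\cA$ is unitary on $L^2(\rdd)$, and Moyal's identity \eqref{Moyal} follows from
\[
\la f_1\otimes\overline{g_1},f_2\otimes\overline{g_2}\ra=\la f_1,f_2\ra\overline{\la g_1,g_2\ra}.
\]
This is exactly the classical argument for $\cU$-Wigner distributions recalled in Section \ref{sec.pre}.

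For the "only if" direction, I will reduce to Proposition \ref{prop64} $(ii)$. Assume Moyal's identity holds for all $f_1,f_2,g_1,g_2\in L^2(\rd)$. Specializing to $f_1=f_2=f$ and $g_1=g_2=g$ gives
\[
\norm{W_\cA(f,g)}_2^2=\la W_\cA(f,g),W_\cA(f,g)\ra=\norm{f}_2^2\norm{g}_2^2 .
\]
So equality holds in \eqref{L2WA} for every pair $f,g$. By the characterization of the equality case in Proposition \ref{prop64} $(ii)$, this forces $\widehat\cA\in\Mp(2d,\bR)$.

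The argument is entirely formal. The only substantive input is the strict inequality for Gaussians when $\widehat\cA\notin\Mp(2d,\bR)$, and that was already established through the polar decomposition (Theorem \ref{polar}) and Lemma \ref{lemmaGaussians}. I therefore expect no step to be a real obstacle. The one point to check is that Proposition \ref{prop64} is applied in dimension $2d$ with $f\otimes\overline g$ Gaussian, which holds when $f$ and $g$ are Gaussians.
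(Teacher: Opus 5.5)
Your proposal is correct and matches the paper's proof. The ``if'' direction follows from unitarity of $\widehat\cA$ together with the tensor-product identity. For the ``only if'' direction, you specialize Moyal's identity to $f_1=f_2$, $g_1=g_2$ to get equality in \eqref{L2WA}, and then invoke the equality case of Proposition \ref{prop64} $(ii)$, exactly as the paper does.
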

        \begin{proof}
            The ``if" part is trivial. The ``only if" part follows then by Proposition \ref{prop64}, since Moyal's identity implies the equality in \eqref{L2WA}, which holds if and only if $\cA\in\Sp(2d,\bR)$.
            
        \end{proof}

		\subsection{Covariance and Cohen class}
		
		Let us start by recalling the definition of covariant time-frequency representation, stated hereafter in the framework of metaplectic Wigner distribution.
		
		\begin{definition}
			We say that a metaplectic Wigner distribution $W_\cA$ is {\em covariant} if for every $z\in\rdd$ and every $f,g\in L^2(\rd)$,
			\begin{equation}\label{covariance2}
				W_\cA(\pi(z)f,\pi(z)g)(w)=W_\cA(f,g)(w-z), \qquad w\in\rdd.
			\end{equation}
		\end{definition}

		Recall that the {\em Cohen class} consists of those time-frequency representations $Q(f,g)$ that can be written as
		\begin{equation}\label{CohenClass}
			Q(f,g)=k\ast W(f,g), \qquad f,g\in L^2(\rd),
		\end{equation}
		for a suitable kernel $k\in\cS'(\rdd)$. Because of the convolution nature of the Cohen class, time-frequency representations satisfying \eqref{CohenClass} are always covariant, see e.g., \cite[Proposition 1.3.19 (i)]{CorderoBook}. On the top of that, it turns out that in the case of metaplectic Wigner distributions the reverse relation also holds:
		\begin{equation}
			\text{$W_\cA$ covariant if and only if $W_\cA$ is in the Cohen class},
		\end{equation}
		that is the content of Theorem \ref{theo.maintimefreq}, that we now prove.
		
		\begin{theorem}\label{theo.maintimefreq}
			Let $W_\cA$ be a metaplectic Wigner distribution with projection 
			\begin{equation}\label{blockA}
				\cA=\pi^{Mp}_+(\widehat\cA)=\begin{pmatrix}
					A_{11} & A_{12} & A_{13} & A_{14}\\
					A_{21} & A_{22} & A_{23} & A_{24}\\
					A_{31} & A_{32} & A_{33} & A_{34}\\
					A_{41} & A_{42} & A_{43} & A_{44}
				\end{pmatrix}, \qquad A_{ij}\in\bC^{d\times d}, \; i,j=1,\ldots,4,
			\end{equation}
			in $\Sp_+(2d,\bC)$. Let
			\begin{equation}\label{defBA}
				B_\cA=\begin{pmatrix}
					A_{13} & \dfrac{1}{2}I_d-A_{11}\\
					\dfrac{1}{2}I_d-A_{11}^\top & -A_{21}
				\end{pmatrix}.
			\end{equation}
			The following statements are equivalent:
			\begin{enumerate}[(i)]
			\item $W_\cA$ is covariant.
			\item It holds
			\begin{equation}\label{charAcov}
				\cA=\begin{pmatrix}
					A_{11} & I_d-A_{11} & A_{13} & A_{13}\\
					A_{21} & -A_{21} & I_d-A_{11}^\top & -A_{11}^\top\\
					O_d & O_d & I_d & I_d\\
					-I_d & I_d & O_d & O_d
				\end{pmatrix}, \qquad A_{11},A_{13},A_{21}\in\bC^{d\times d},
			\end{equation}
			$A_{13},A_{21}\in\Sym(d,\bC)$ and $\Im(B_\cA)\leq0$.
			\item $W_\cA$ is the Cohen class, and
			\begin{equation}\label{usefulspectro1}
				W_\cA(f,g)=k_\cA\ast W(f,g)
			\end{equation}
			with $k_\cA=\cF^{-1}\Phi_{-B_\cA}$.
			\end{enumerate}
		\end{theorem}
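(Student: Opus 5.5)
The plan is to prove $(i)\Rightarrow(ii)\Rightarrow(iii)\Rightarrow(i)$. The last implication is the standard fact that Cohen class representations are covariant (convolution commutes with translations, and $W$ is covariant), so the work lies in the first two.

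\emph{Step 1: $(i)\Rightarrow(ii)$.} Write covariance at the operator level. With $\pi(z)$ and $\rho(z;\tau)$ differing only by a phase,
\[
\pi(z)f\otimes\overline{\pi(z)g}=\rho(\tilde z;\tau)(f\otimes\overline g),\qquad \tilde z=(x,x,\xi,-\xi),
\]
for $z=(x,\xi)$, with a suitable $\tau$ (possibly zero). Translation by $z$ in $\rdd$ is $\rho((z,0);0)$. Apply Theorem \ref{thmGG1} (or Corollary \ref{cor510}) to $\widehat\cA$ on tensor products of generalized Gaussians, which span a dense set. This gives
\[
\widehat\cA\rho(\tilde z)F=\rho(\cA\tilde z)\widehat\cA F .
\]
Covariance then reads $\rho(\cA\tilde z)\widehat\cA F=\rho((z,0))\widehat\cA F$, up to phase, for a dense set of $F$. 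Since $\widehat\cA F$ is a nonzero Gaussian-type function (injectivity, Theorem \ref{thmOscillator}(iii)), comparing the complex shift applied to a Gaussian forces
\[
\cA\tilde z=(x,\xi,0,0)\qquad\text{for all } z .
\]
To justify this, note that $\rho(w;\tau)\varphi$ for a Gaussian $\varphi$ determines $w\in\bC^{2d}$ and the phase: compare $\log$ of the Gaussian-type expression as a polynomial in $y$. Applying this to $\tilde z$ for all $x,\xi$ fixes the linear combinations
\[
A_{i1}+A_{i2},\quad A_{i3}-A_{i4}\qquad (i=1,\dots,4).
\]
Concretely, $A_{11}+A_{12}=I$, $A_{13}-A_{14}=O$, $A_{21}+A_{22}=O$, $A_{23}-A_{24}=I$, and the last two block rows annihilate $\tilde z$.

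The complex symplectic relations \eqref{symp-rel1}--\eqref{symp-rel3} for $\cA\in\Sp(2d,\bC)$ then pin down the remaining freedom. I expect they force the lower two block rows to be exactly $(O,O,I,I)$ and $(-I,I,O,O)$, the lower rows of $\cA_{1/2}$ up to the normalization used for $W$ in \eqref{intro.Wig}. They also give $A_{23}=I-A_{11}^\top$, together with the symmetry of $A_{13}$ and $A_{21}$.

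A cleaner route: set $\cB=\cA\cA_{1/2}^{-1}$. The covariance condition says $\cB$ fixes the vectors $(z,0)$. A symplectic matrix fixing the Lagrangian $\bC^{2d}\times\{0\}$ pointwise has the form $\begin{pmatrix} I & P\\ O & I\end{pmatrix}$ with $P$ symmetric, so $\cA=V_P^\top\cA_{1/2}$. Multiplying out recovers \eqref{charAcov}, with $P$ expressed through $A_{13}$, $A_{11}$, $A_{21}$, essentially $P=B_\cA$ up to sign and a block permutation.

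\emph{Step 2: positivity and $(ii)\Rightarrow(iii)$.} From $\cA=V_P^\top\cA_{1/2}$ with $\cA_{1/2}$ real, positivity of $\cA$ is equivalent to positivity of $V_P^\top$. Since $\cA_{1/2}$ is real symplectic it is invertible and preserves the form in \eqref{cond.pos}. By Proposition \ref{prop52GG} (or Remark \ref{remConvolution}), positivity of $V_P^\top$ is equivalent to $\Im P\le0$, which is $\Im(B_\cA)\le0$ after identifying $P$ with $B_\cA$.

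At the operator level, $\widehat\cA=c\,\mathfrak m_{-P}\widehat{\cA_{1/2}}$ by the two-fold cover property (Theorem \ref{thmOscillator}(i)). Hence
\[
W_\cA(f,g)=\mathfrak m_{-B_\cA}W(f,g)=\cF^{-1}\big(\Phi_{-B_\cA}\widehat{W(f,g)}\big)=k_\cA\ast W(f,g),
\]
with $k_\cA=\cF^{-1}\Phi_{-B_\cA}$; a Fourier multiplier is a convolution. Conversely, any $\cA$ of the form \eqref{charAcov} with $\Im(B_\cA)\le0$ factors this way and lies in $\Sp_+(2d,\bC)$, which closes the cycle.

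\emph{Main obstacle.} The hard step is Step 1. The intertwining holds only on Gaussian-type vectors and with complex shifts, so one must carefully extract the linear constraint $\cA\tilde z=(z,0)$ from an identity of functions. That requires the uniqueness of the complex shift parameter acting on a Gaussian, and tracking the phases $\tau$ so they do not spoil the comparison. After that, the remaining work is bookkeeping the sign and ordering conventions so that $P$ matches exactly $B_\cA$ as defined in \eqref{defBA}.
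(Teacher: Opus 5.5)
Your architecture matches the paper's: Theorem~\ref{thmGG1} on a Gaussian gives $\cA\tilde z=(z,0)$, then $\cA=V_{B_\cA}^\top\cA_{1/2}$ with Remark~\ref{remConvolution}, and covariance of the Cohen class. Two of your sub-steps are cleaner than the paper's block and $\cM_\cA$ computations. First, you read off \eqref{charAcov} from the fact that $\cB=\cA\cA_{1/2}^{-1}$ fixes $\bC^{2d}\times\{0\}$; multiplying out $V_P^\top\cA_{1/2}$ gives exactly $P=B_\cA$, with no sign or permutation. Second, you reduce positivity of $\cA$ to positivity of $V_P^\top$ using invariance of \eqref{cond.pos} under real symplectic matrices.

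The gap is in the justification of the key step. It is false that $\rho(w;\tau)\varphi$ determines $w\in\bC^{2d}$. For $\varphi(y)=e^{-\pi y^2}$ a direct computation gives $\rho((u,iu);0)\varphi=\varphi$ for every $u\in\bC$. More generally, take $G(Y)=c\,e^{i\pi NY\cdot Y}$ with $N\in\Sym(2d,\bC)$ and $\Im N>0$; this is the form of $G=\widehat\cA(\varphi\otimes\overline\varphi)$, since the generators in Proposition~\ref{prop52} preserve such Gaussians. Then
\begin{equation*}
\rho(\zeta_1,\zeta_2;0)G(Y)=c\exp\big(i\pi NY\cdot Y+2\pi i(\zeta_2-N\zeta_1)\cdot Y-i\pi\zeta_1\cdot\zeta_2+i\pi N\zeta_1\cdot\zeta_1\big).
\end{equation*}
So shifts along the complex Lagrangian $\{(u,Nu):u\in\bC^{2d}\}$ act on $G$ as scalars. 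Since $\cA\tilde z$ is genuinely complex, comparing linear terms in $\rho(\cA\tilde z;0)G=\rho((z,0);0)G$ only gives $b=N(a-z)$, where $\cA\tilde z=(a,b)$. Writing $a=(I+K)z$, this says $\cA\tilde z=\big((I+K)z,NKz\big)$ for some $K\in\bC^{2d\times 2d}$, not $K=O$. The paper passes over this point in one line, so this is exactly where your write-up must supply an argument.

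You can repair it with what you already have.
\begin{itemize}
\item The constant terms must also agree exactly: their difference is continuous in $z$, lies in $2\pi i\bZ$, and vanishes at $z=0$. After substituting $b=N(a-z)$, this reduces to $Nz\cdot Kz=0$ for all $z\in\bR^{2d}$, i.e.\ $NK$ is antisymmetric.
\item On the other hand $\cB(z,0)=\cA\tilde z$, so the first block column of $\cB$ is $\begin{pmatrix}I+K\\ NK\end{pmatrix}$. Relation \eqref{symp-rel1} for $\cB$ says that $(I+K)^\top NK=NK+K^\top NK$ is symmetric, i.e.\ $NK$ is symmetric.
\item Hence $NK=O$, and $K=O$ because $\Im N>0$ makes $N$ invertible.
\end{itemize}
Alternatively, apply covariance to the real phase-space shifts of $\varphi\otimes\overline\varphi$ (Corollary~\ref{cor510}) and use the Heisenberg commutation relation of complex shifts acting on Gaussians. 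This forces $\sigma(\cA\tilde z-(z,0),\cA w)\in\bZ$, hence $=0$, for all $w\in\bR^{4d}$, and invertibility of $\cA$ then gives $\cA\tilde z=(z,0)$. With either patch, the rest of your argument goes through as written.
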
 
		\begin{proof}
		To prove that $(ii)\Leftrightarrow(iii)$, observe that the matrix in \eqref{charAcov} satisfies
			\begin{equation}
				\cA=V_{B_\cA}^\top\cA_{1/2},
			\end{equation}
			where $V_{B_\cA}$ is defined in \eqref{defVQ} and $\cA_{1/2}$ is defined as in \eqref{projWig}, 
			and it is the projection of the Wigner distribution, i.e. $W_{\cA_{1/2}}(f,g)=W(f,g)$. Hence, by Remark \ref{remConvolution},
			\begin{equation}
				W_\cA(f,g)=\widehat\cA(f\otimes\overline g)=\cF^{-1}(e^{-i\pi B_\cA z\cdot z})\ast \widehat{\cA_{1/2}}(f\otimes\overline g)=\cF^{-1}(e^{-i\pi B_\cA z\cdot z})\ast W(f,g),
			\end{equation}
			up to a constant. The implication $(iii)\Rightarrow(i)$ follows by the fact that every quadratic time-frequency representation in the Cohen class is covariant. It remains to prove that $(i)\Rightarrow(ii)$. Assume that $(i)$ holds. By Theorem \ref{thmGG1} applied to $f(y)=e^{-\pi|y|^2}$ and $(x,\xi)\in\rdd$, we have
			\begin{equation}
				W_\cA(\pi(x,\xi)f)=W_\cA(\rho(x,\xi;0)f)=\widehat\cA(\rho(x,x,\xi,-\xi;0)(f\otimes\overline f))=\rho(\cA(x,x,\xi,-\xi);0)W_\cA f.
			\end{equation}
			Since $W_\cA$ is covariant, $W_\cA(\pi(x,\xi)f)=T_{(x,\xi)}W_\cA f$, and therefore it must be
			\begin{equation}\label{transl}
				\cA(x,x,\xi,-\xi)=(x,\xi,0,0), \qquad (x,\xi)\in\rdd.
			\end{equation}
			By writing the blocks \eqref{blockA} of $\cA$ explicitly and using \eqref{symp-rel1}--\eqref{symp-rel3}, equation \eqref{transl} is equivalent to $\cA$ having blocks \eqref{charAcov} with $A_{13},A_{21}\in\Sym(d,\bC)$. It remains to impose the condition for $\cA\in\Sp_+(2d,\bC)$. By writing $\cA=\Re(\cA)+i\Im(\cA)=\cA_R+i\cA_I$, it is straightforward to see that $\cA_I^\top J\cA_I=O_{4d}$, so the positive semi-definiteness condition \eqref{defMS} reads for $\cA$ as
			\begin{equation}
				\mathcal{M}_\cA=\begin{pmatrix}
					\cA_R^\top J\cA_I & O_{4d}\\
					O_{4d} & \cA_R^{\top}J\cA_I 
				\end{pmatrix}\geq0,
			\end{equation}
			i.e., $\cA_R^\top J\cA_I\geq0$. By writing $\cA_R$ and $\cA_I$ in terms of the blocks of $\cA$, we can express
			\begin{equation}
				\cA_R^\top J\cA_I=\Im\left(\begin{array}{cc|cc}
				A_{21} & -A_{21} & -A_{11}^\top & -A_{11}^\top\\
				-A_{21} & A_{21} & A_{11}^\top & A_{11}^\top\\
				\hline
				-A_{11} & A_{11} & -A_{13} & -A_{13}\\
				-A_{11} & A_{11} & -A_{13} & -A_{13}
				\end{array}\right),
			\end{equation}
			If
			\begin{equation}\label{SigmaPerRedRemark}
				\Sigma=\begin{pmatrix}
					I_d & I_d & O_d & O_d\\
					O_d & O_d & I_d & I_d\\
					-I_d & I_d & O_d & O_d\\
					O_d & O_d & -I_d & I_d
				\end{pmatrix},
			\end{equation}
			then,
			\begin{equation}
				\cA_R^\top J\cA_I=\Sigma^\top \Im\begin{pmatrix}
					O_d & O_d & O_d & O_d\\
					O_d & -4A_{13} & 4A_{11} & O_d\\
					O_d & 4A_{11}^\top & 4A_{21} & O_d\\
					O_d & O_d & O_d & O_d
				\end{pmatrix}\Sigma=\Sigma^\top 
				\left(\begin{array}{c|c|c}
					O_d & O_{d\times 2d} & O_d\\
					\hline
					O_{2d\times d} & -4\Im (B_\cA) & O_{2d\times d}\\
					\hline
					O_d & O_{d\times2d}& O_d
				\end{array}\right)\Sigma.
			\end{equation}
			Consequently, 
			\begin{equation}
				\begin{array}{ccccc}
			\cA\in\Sp_+(2d,\bC) & \Longleftrightarrow & \cA_R^{\top}J\cA_I\geq0 &\Longleftrightarrow&\Im (B_\cA)\leq0.
			\end{array}
			\end{equation}
			Thus, $(i)\Rightarrow(ii)$, and the theorem is proven.
			
		\end{proof}
        
        
		\subsection{Generalized spectrograms}
		Our next goal is to characterize metaplectic distributions that are {\em generalized spectrograms}, in terms of the block decomposition \eqref{blockA} of $\cA$. Recall that a generalized spectrogram is a time-frequency representation in the form
		\begin{equation}
			\mathrm{Spec}_{\phi,\psi}(f,g)=V_\phi f\overline{V_\psi g}, \qquad f,g\in\cS'(\rd),
		\end{equation}
		where $\phi,\psi\in\cS'(\rd)$ are fixed distributional windows. If $f=g$ and $\phi=\psi$, we write $\mathrm{Spec}_\phi f=\mathrm{Spec}_{\phi,\phi} (f,f)$, the {\em spectrogram of $f$ with window $\phi$}. Observe that spectrograms are always non-negative real-valued time-frequency representations, since
		\begin{equation}
			\mathrm{Spec}_{\phi}f=|V_\phi f|^2.
		\end{equation}
		An important instance of a spectrogram is the so-called {\em Husimi distribution} \cite{Husimi}, given by 
		\begin{equation}
			Hf=\mathrm{Spec}_\phi f=|V_\phi f|^2,
		\end{equation} 
		with $\phi(x)=c\cdot 2^{d/4}e^{-\pi|x|^2}$, $|c|=1$. Moreover, we recall that generalized spectrograms are always in the Cohen class, as it is evident by the formula
		\begin{equation}\label{usefulspectro2}
		\mathrm{Spec}_{\phi,\psi}(f,g)=W(\cI \phi,\cI \psi) \ast W(f,g), 
		\end{equation}
		where $\cI F(x)=F(-x)$ is the flip operator, see \cite{BDO1}. In particular, generalized spectrograms are covariant. 
		
		The following result adapts \cite[Corollary 4.7]{CG03} to the complex case. By allowing complex matrices, we are finally able to include time-frequency representations, such as the Husimi distribution, in the framework of metaplectic Wigner distributions.
        
		\begin{theorem}\label{theo.spectrogram}
		Let $W_\cA$ be a metaplectic Wigner distribution, with $\cA \in \Sp_+(2d,\mathbb{C})$ having block as in \eqref{blockA} and assume $A_{13}\in\GL(d,\bC)$. Then the following statements are equivalent. 
		\begin{enumerate}[(i)]
		\item There exist $\psi,\phi \in \mathcal{S}'(\R^d)$ such that
		\begin{equation}\label{spectrogramcA}
		W_\cA(f,g)=\mathrm{Spec}_{\phi,\psi}(f,g), \quad f,g \in \mathcal{S}'(\R^d),
		\end{equation} 
		\item The matrix $\cA$ has blocks as in \eqref{charAcov}, with $A_{13},A_{21}\in\Sym(d,\bC)$, $\Im(A_{13})\leq 0$. Additionally
		\begin{align}\label{DTeq1}
		& A_{21}+A_{11}^\top A_{13}^{-1}(A_{11}-I_d)=O_d \\
		\label{DTeq2}
		&\Im(A_{11}^\top A_{13}^{-1}) \geq 0 \\
		\label{DTeq3}
		&\Im(A_{13}^{-1}(A_{11}-I_d))\leq 0, 
		\end{align}
		and 
\begin{equation}\label{finestregenspect}
\begin{split}
\phi (x)&=c_1 e^{-i\pi A_{13}^{-1}(A_{11}-I_d)x\cdot x},\\
\psi (y)&=c_2 e^{-i \pi \overline{A_{11}^\top A_{13}^{-1}} y \cdot y},
\end{split}
\end{equation}
	for constants $c_1,c_2$ satisfying $c_1c_2=i^{-d/2}\det(A_{13})^{-1/2}$.
	\end{enumerate}
\end{theorem}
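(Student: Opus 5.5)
We reduce the statement to Theorem \ref{theo.maintimefreq}, which rests on two facts. First, generalized spectrograms are covariant by \eqref{usefulspectro2}. Second, covariant metaplectic Wigner distributions are exactly those with projection \eqref{charAcov}, for which $W_\cA(f,g)=k_\cA\ast W(f,g)$ with $k_\cA=\cF^{-1}\Phi_{-B_\cA}$. The whole problem is therefore to compare two Cohen kernels, $k_\cA$ and $W(\cI\phi,\cI\psi)$.

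For $(i)\Rightarrow(ii)$ we proceed as follows.

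\emph{Step 1.} Since $\mathrm{Spec}_{\phi,\psi}$ is covariant, Theorem \ref{theo.maintimefreq} gives the block form \eqref{charAcov}, with $A_{13},A_{21}$ symmetric and $\Im(B_\cA)\le0$.

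\emph{Step 2.} Injectivity of the map sending a kernel to its convolution with $W(f,g)$ (the Wigner distributions span a dense set) yields
\[
\cF^{-1}\Phi_{-B_\cA}=W(\cI\phi,\cI\psi).
\]

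\emph{Step 3.} Take the symplectic Fourier transform, equivalently the ambiguity function. The right-hand side becomes a tensor-type expression $\cI\phi\otimes\overline{\cI\psi}$ composed with the linear change of variables $\cA_{1/2}$-type. The condition then says that
\[
\cF_2^{-1}\mathfrak{T}_{E_w}^{-1}\big(\cF^{-1}\Phi_{-B_\cA}\big)
\]
factors as a tensor product $F_1(x)F_2(y)$.

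\emph{Step 4.} Compute this function explicitly with Proposition \ref{GaussianIntegrals}. Inverting the partial Fourier transform in the $\xi$-variable is a Gaussian integral whose quadratic coefficient is $A_{13}$; this is exactly where $A_{13}\in\GL(d,\bC)$ is used, and distributional validity needs $\Im(A_{13})\le0$, which also follows from $\Im(B_\cA)\le0$. The outcome is a Gaussian $c\,e^{i\pi Nw\cdot w}$ in $w=(x,y)$, where $N$ is built from $A_{13}^{-1}$, $A_{11}$, $A_{21}$ via a Schur complement.

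\emph{Step 5.} Such a Gaussian is a tensor product if and only if the off-diagonal block of $N$ vanishes. We expect this vanishing to be precisely \eqref{DTeq1}. The diagonal blocks are then $-A_{13}^{-1}(A_{11}-I_d)$ and $\overline{A_{11}^\top A_{13}^{-1}}$ after conjugation, up to the flips. Reading them off gives \eqref{finestregenspect}, and the normalisation of the Gaussian integral gives $c_1c_2=i^{-d/2}\det(A_{13})^{-1/2}$.

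\emph{Step 6.} Conditions \eqref{DTeq2} and \eqref{DTeq3} should follow because $\phi,\psi$ must be tempered, i.e.\ the chirps must have imaginary parts of the appropriate sign. Alternatively, with \eqref{DTeq1} in force, they come from splitting $\Im(B_\cA)\le0$ via Theorem \ref{theoremSchur}.

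For $(ii)\Rightarrow(i)$, reverse the computation. From \eqref{DTeq1} and \eqref{finestregenspect}, compute $W(\cI\phi,\cI\psi)$ by Proposition \ref{GaussianIntegrals}; this is legitimate in $\cS'$ thanks to \eqref{DTeq2}–\eqref{DTeq3}. Check that it equals $\cF^{-1}\Phi_{-B_\cA}$, then conclude by \eqref{usefulspectro1} and \eqref{usefulspectro2}.

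The main obstacle is the explicit Gaussian bookkeeping in Steps 4–5. The bulk of the work is inverting the $2d\times2d$ symmetric matrix $B_\cA$ through its $A_{13}$ block and verifying that the Schur complement condition is exactly \eqref{DTeq1}, with consistent signs and conjugations. To keep this manageable I would first run the whole argument in $d=1$ with the Husimi case as a sanity check, and only then do the general block computation.
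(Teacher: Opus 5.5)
Your outline is essentially the paper's proof: reduce to the covariant form \eqref{charAcov} via Theorem \ref{theo.maintimefreq}, equate the Cohen kernels, and undo the Wigner transform through a Gaussian integral in the variable paired with $A_{13}$ (note $(\cF_2\mathfrak{T}_{E_w})^{-1}=\mathfrak{T}_{E_w}^{-1}\cF_2^{-1}$, not the order you wrote in Step 3); then \eqref{DTeq1} is the vanishing of the $x$--$y$ cross term and \eqref{DTeq2}--\eqref{DTeq3} are temperedness of the two chirps. One caveat, inherited from the paper's \eqref{usefulspectro2}: $\mathrm{Spec}_{\phi,\psi}(f,g)=V_\phi f\,\overline{V_\psi g}$ is antilinear in $\phi$ and linear in $\psi$, so its Cohen kernel is $W(\cI\psi,\cI\phi)=\overline{W(\cI\phi,\cI\psi)}$ and Step 2 should read $\cF^{-1}\Phi_{-B_\cA}=W(\cI\psi,\cI\phi)$; carrying this through interchanges $\phi$ and $\psi$ in \eqref{finestregenspect}, while \eqref{DTeq1}--\eqref{DTeq3} are unchanged.
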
		
\begin{proof}
Since generalized spectrograms are covariant, we may assume a priori that $W_\cA$ is covariant, with $\cA$ as in \eqref{charAcov}. By comparing \eqref{usefulspectro1} and \eqref{usefulspectro2}, we have that \eqref{spectrogramcA} holds if and only if
\begin{equation*}
W(\cI \phi, \cI \psi)(\zeta)=\mathcal{F}^{-1}(e^{-i\pi B_\cA z \cdot z})(\zeta), \quad \zeta \in \R^{2d},
\end{equation*} 
which in turn holds if and only if (with $\cI_2F(x,y)=F(x,-y)$)
\begin{equation*}
 \cI\phi(x) \otimes \overline{\cI\psi(y)}=\mathfrak{T}_W^{-1}\cI_2\mathcal{F}_1^{-1}(e^{-i\pi B_\cA z \cdot z})(x,y).
\end{equation*}
By writing explicitly the right-hand side, we can write 
\begin{equation*}
\begin{split}
\phi(-x)\overline{\psi(-y)}&=\int e^{2\pi i u ((x+y)/2)}e^{-i\pi B_\cA (u, y-x) \cdot (u, y-x)}du \\
&=e^{i\pi A_{21}(y-x)\cdot (y-x)}\int_{\R^d}e^{-2\pi i u\bigl((\frac{1}{2}I_d-A_{11})(y-x)-\frac{x+y}{2}\bigr)}e^{-i\pi A_{13} u \cdot u} du \\
&=e^{i\pi A_{21}(y-x)(y-x)} \int_{\R^d}e^{2\pi i \bigl((A_{11}-I_d)x-A_{11}y\bigr)u}e^{-i\pi A_{13} u \cdot u} du. 
\end{split}
\end{equation*}
Hence, under the hypotheses $A_{13}\in\GL(d,\bC)$ and $\Im(A_{13})\leq 0$, by Proposition \ref{GaussianIntegrals}, we have that \eqref{spectrogramcA} is equivalent to 
\begin{equation}\label{intStepGG1}
\begin{split}
\phi(-x)\overline{\psi(-y)}&= (\mathrm{det}(iA_{13}))^{-1/2} e^{i \pi A_{21}y \cdot y}e^{i \pi A_{21} x \cdot x}e^{-2\pi i A_{21} y \cdot x}e^{i\pi (A_{11}^\top-I_d)A_{13}^{-1}(A_{11}-I_d)x \cdot x } \times \\
& \times e^{i \pi A_{11}^\top A_{13}^{-1}A_{11}y \cdot y}e^{-2\pi i A_{11}^\top A_{13}^{-1}(A_{11}-I_d)x\cdot y}.
\end{split}
\end{equation}
The right-hand side of the previous equation is a tensor product if and only if \eqref{DTeq1} holds,
which implies that $A_{11}^\top A_{13}^{-1}$ is symmetric. Together with \eqref{DTeq2} and \eqref{DTeq3}, which hold if and only if     
\begin{equation}
\begin{split}
\phi (x)&=c_1 \cdot e^{-i\pi A_{13}^{-1}(A_{11}-I_d)x\cdot x},\\
\psi (y)&=c_2 \cdot e^{-i \pi \overline{A_{13}^{-1}A_{11}} y \cdot y},
\end{split}
\end{equation}
belong to $\mathcal{S}'(\R^d)$, for constants $c_1$, $c_2$ satisfying $c_1c_2=\det(iA_{13})^{-1/2}=i^{d/2}\det(A_{13})^{-1/2}$, this concludes the proof. 
\end{proof}

	As far as spectrograms are concerned, we can state the following result, which follows by \eqref{intStepGG1}, by choosing $\phi=\psi$.
	
	\begin{corollary}\label{corHusimi}
		Let $W_\cA$ be a metaplectic Wigner distribution, with $\cA \in \Sp_+(2d,\mathbb{C})$ having blocks as in \eqref{blockA} and assume $A_{13}\in\GL(d,\bC)$. The following statements are equivalent.
		\begin{enumerate}[(i)]
			\item $W_\cA$ is a spectrogram, in the sense that
			\begin{equation}\label{nonPolSpect}
				W_\cA f=|V_\phi f|^2, \qquad f\in\cS'(\rd),
			\end{equation}
			for some $\phi\in\cS'(\rd)$.
			\item The matrix $\cA$ has blocks as in \eqref{charAcov}, with $A_{13},A_{21}\in\Sym(d,\bC)$, $\Im(A_{13})<0$ satisfying
			\begin{align}
				\label{GGeq1-1}
				& \Re(A_{11})=\frac{1}{2}I_d,\\
				\label{GGeq1-2}
				& \Re(A_{13})=O_d,\\
				\label{GGeq1-3}
				& A_{21}=\frac{1}{4}A_{13}^{-1} +\Im(A_{11})^\top A_{13}^{-1}\Im(A_{11}).
			\end{align}
		\end{enumerate}
		In this case,
		\begin{equation}\label{finestraSpect}
			\phi(x)=\det(iA_{13})^{-1/4}e^{-i\pi A_{13}^{-1}(A_{11}-I_d)x\cdot x}.
		\end{equation}
	\end{corollary}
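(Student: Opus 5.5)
The plan is to specialise Theorem \ref{theo.spectrogram} to $\psi=\phi$ and $g=f$. A spectrogram is a generalized spectrogram, so under the assumption $A_{13}\in\GL(d,\bC)$ condition $(i)$ of the corollary already gives $(i)$ of Theorem \ref{theo.spectrogram}. Hence $\cA$ has the form \eqref{charAcov}, with $A_{13},A_{21}$ symmetric, and \eqref{DTeq1}--\eqref{DTeq3} and \eqref{intStepGG1} hold.

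One caveat concerns the reduction from the quadratic identity \eqref{nonPolSpect} to the sesquilinear one. I will use polarization: both $W_\cA(f,g)$ and $V_\phi f\overline{V_\phi g}$ are sesquilinear, and by the polarization identity two sesquilinear forms that agree on the diagonal agree everywhere. So \eqref{nonPolSpect} is equivalent to $W_\cA=\mathrm{Spec}_{\phi,\phi}$.

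The core step is to impose $\phi=\psi$ in \eqref{finestregenspect}, up to constants. This requires
\begin{equation}
A_{13}^{-1}(A_{11}-I_d)=\overline{A_{11}^\top A_{13}^{-1}}.
\end{equation}
Using the symmetry of $A_{11}^\top A_{13}^{-1}$, which follows from \eqref{DTeq1}, I will rewrite $A_{13}^{-1}(A_{11}-I_d)=A_{11}^\top A_{13}^{-1}-A_{13}^{-1}$. Setting $N=A_{11}^\top A_{13}^{-1}$, the condition becomes $N-A_{13}^{-1}=\overline N$, that is, $2i\Im(N)=A_{13}^{-1}$. So $A_{13}^{-1}$ is purely imaginary, which gives \eqref{GGeq1-2}.

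Next, write $A_{13}=-iT$ with $T$ real and symmetric. Then $A_{13}^{-1}=iT^{-1}$, and $N=A_{11}^\top(iT^{-1})$ has imaginary part $\Re(A_{11})^\top T^{-1}$. The identity $2\Im(N)=T^{-1}$ then forces $\Re(A_{11})=\frac12 I_d$, which is \eqref{GGeq1-1}. For \eqref{GGeq1-3}, I will plug $A_{11}=\frac12 I_d+i\Im(A_{11})$ into \eqref{DTeq1}, solve for $A_{21}$, and expand. This is routine algebra: the cross terms should cancel by the symmetry of $N$, leaving $\frac14A_{13}^{-1}+\Im(A_{11})^\top A_{13}^{-1}\Im(A_{11})$.

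For the strict inequality $\Im(A_{13})<0$, I note that $\Im(A_{13})\le0$ is already given. Invertibility plus $\Re(A_{13})=O_d$ makes $\Im(A_{13})$ nonsingular, hence negative definite. The conditions \eqref{DTeq2}--\eqref{DTeq3} must then be checked for consistency: with $\phi=\psi$ both reduce to the single condition $\Im(A_{13}^{-1}(A_{11}-I_d))\le0$. Since $\Im(A_{13}^{-1}(A_{11}-I_d))=\Im(\overline N)=-\frac12T^{-1}$ and $T>0$, this is automatic.

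For the converse, I will start from \eqref{GGeq1-1}--\eqref{GGeq1-3} and reverse the computation. They imply \eqref{DTeq1}--\eqref{DTeq3} and $\phi=\psi$, so Theorem \ref{theo.spectrogram} gives the spectrogram with $c_1c_2=\det(iA_{13})^{-1/2}$. Choosing $c_1=c_2$ requires $\overline{c_2}=c_1$ in the tensor $\phi\otimes\overline\psi$, which comes from the conjugate in \eqref{intStepGG1}. Since $iA_{13}=T$ is real and positive, $\det(iA_{13})^{-1/4}$ is real, which yields \eqref{finestraSpect}.

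I expect the main obstacle to be the bookkeeping of conjugations and transposes in matching $\phi$ with $\psi$, in particular verifying the symmetry of $N$ and deriving \eqref{GGeq1-3} cleanly.
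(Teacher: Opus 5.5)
Your proof of (i)$\Rightarrow$(ii) is correct and is essentially the paper's argument. You specialize Theorem \ref{theo.spectrogram} to $\psi=\phi$ and use the symmetry of $N=A_{11}^\top A_{13}^{-1}$ furnished by \eqref{DTeq1} to turn the matching condition into $A_{13}^{-1}=2i\Im(N)$. From this you read off $\Re(A_{13})=O_d$, $\Re(A_{11})=\frac12I_d$ and \eqref{GGeq1-3}. Polarization in place of the paper's density argument is fine. Your handling of $\Im(A_{13})<0$ and of the constant (which is really $c_1\overline{c_2}$, so that $|c|^2=\det(iA_{13})^{-1/2}>0$) is, if anything, more careful than the paper's.

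The gap is in the converse: the claim that \eqref{GGeq1-1}--\eqref{GGeq1-3} imply \eqref{DTeq1} and $\phi=\psi$ fails. Write $A_{11}=\frac12I_d+iF$ with $F=\Im(A_{11})$, and take $A_{21}$ from \eqref{GGeq1-3}. Your own expansion then gives
\begin{equation}
A_{21}+A_{11}^\top A_{13}^{-1}(A_{11}-I_d)=\frac{i}{2}\bigl(A_{13}^{-1}F-F^\top A_{13}^{-1}\bigr).
\end{equation}
So \eqref{DTeq1} holds if and only if $F^\top A_{13}^{-1}$ is symmetric. The same symmetry is what makes the exponents of $\phi$ and $\psi$ in \eqref{finestregenspect} coincide.

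In the forward direction you extracted this symmetry from \eqref{DTeq1}; in the converse that is circular, and nothing in (ii) provides it. First, \eqref{GGeq1-3} makes $A_{21}$ symmetric for every real $F$. Second, positivity imposes nothing: with $A_{13}=-iT$, $T>0$, the Schur complement of $T$ in $-\Im(B_\cA)$ is $\frac14T^{-1}>0$, so $\cA\in\Sp_+(2d,\bC)$ (cf. Theorem \ref{theo.maintimefreq}).

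Concretely, take $d=2$, $A_{13}=-iI_2$, $F=\begin{pmatrix}0&1\\0&0\end{pmatrix}$ and $A_{21}=\frac{i}{4}I_2+iF^\top F$. All of (ii) holds, yet the left-hand side of \eqref{DTeq1} equals $\frac12(F^\top-F)\neq O_2$. By the necessity part of Theorem \ref{theo.spectrogram}, $W_\cA$ is then not even a generalized spectrogram. Hence (ii)$\Rightarrow$(i) is false as written and cannot be obtained by reversing the computation.

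The fix is to add to (ii) the symmetry of $\Im(A_{11})^\top A_{13}^{-1}$ (equivalently, $A_{11}^\top A_{13}^{-1}\in\Sym(d,\bC)$). This is exactly the fact the paper uses, without listing it, when it derives \eqref{GGeq1-3}; its proof only runs the forward chain. With that condition added, your reversal works:
\begin{itemize}
\item \eqref{DTeq1} follows from the identity above;
\item \eqref{DTeq2}--\eqref{DTeq3} hold because $\Im(A_{11}^\top A_{13}^{-1})=-\Im\bigl(A_{13}^{-1}(A_{11}-I_d)\bigr)=\frac12T^{-1}>0$;
\item $\phi$ and $\psi$ coincide up to constants.
\end{itemize}
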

	The reader may observe that conditions \eqref{GGeq1-2} and \eqref{GGeq1-3} together give that also $\Re(A_{21})=O_d$ for metaplectic Wigner distribution that are spectrograms.
	\begin{proof}[Proof of Corollary \ref{corHusimi}]
		Again, we may restrict to covariant metaplectic Wigner distributions, for generalized spectrograms are covariant. Let us consider the polarized version of \eqref{nonPolSpect}:
		\begin{equation}\label{startCoroGG2}
			W(\cI\phi) \ast W(f,g)=\cF^{-1}\Phi_{-B_\cA}\ast W(f,g).
		\end{equation}
		Since the span of $\{W(f,g):f,g\in\cS'(\rd)\}$ is dense in $\cS'(\rdd)$, the correspondence \eqref{startCoroGG2} holds if and only if $W(\cI\phi)=\cF^{-1}\Phi_{-B_\cA}$. So, by repeating the computations in the proof of Theorem \ref{theo.spectrogram}, we obtain
		\begin{equation}
			\begin{split}
\phi(-x)\overline{\phi(-y)}&= (\mathrm{det}(iA_{13}))^{-1/2} e^{i \pi A_{21}y \cdot y}e^{i \pi A_{21} x \cdot x}e^{-2\pi i A_{21} y \cdot x}e^{i\pi (A_{11}^\top -I_d)A_{13}^{-1}(A_{11}-I_d)x \cdot x } \times \\
& \times e^{i \pi A_{11}^\top A_{13}^{-1}A_{11}y \cdot y}e^{-2\pi i A_{11}^\top A_{13}^{-1}(A_{11}-I_d)x\cdot y}.
\end{split}
		\end{equation}
		We thereby retrieve the same conditions \eqref{DTeq1}, \eqref{DTeq2} and \eqref{DTeq3}. By imposing $\phi=\psi$, we obtain
		\begin{equation}
			A_{13}^{-1}(A_{11}-I_d)=\overline{A_{13}^{-1}A_{11}},
		\end{equation}
		giving
		\begin{equation}\label{GGintStep3}
			A_{13}^{-1}=A_{13}^{-1}A_{11}-\overline{A_{13}^{-1}A_{11}}=2i\Im(A_{13}^{-1}A_{11}).
		\end{equation}
		In particular, it must by $A_{13}=iP$ for some $P<0$, in view of the conditions $A_{13}\in\GL(d,\bC)$ and $\Im(A_{13})\leq 0$, entailing condition \eqref{GGeq1-2}. By plugging this expression in \eqref{GGintStep3} and by writing $A_{11}=E+iF$, $E,F\in\bR^{d\times d}$, we have
		\begin{equation}
			-iP^{-1}=2i\Im(-iP^{-1}(E+iF))=2i\Im(P^{-1}F-iP^{-1}E)=2i(-iP^{-1}E)=-2iP^{-1}E,
		\end{equation}
		which gives $2E=I_d$, whence \eqref{GGeq1-1}. Finally, by \eqref{DTeq1}, we have
		\begin{align}
			A_{21}&=-A_{11}^\top A_{13}^{-1}(A_{11}-I_d)\\
			&=-\left(\frac{1}{2}I_d +iF^\top\right)(-iP^{-1})\left(-\frac{1}{2}I_d +iF\right)\\
			&=\frac 1 4 (-iP^{-1})+\frac{P^{-1}F-F^\top P^{-1}}{2}+F^\top(-iP^{-1})F\\
			&=\frac 1 4 A_{13}^{-1}+\Im(A_{11})^\top A_{13}^{-1}\Im(A_{11}),
		\end{align}
		where we used that $P^{-1}F-F^\top P^{-1}=O_d$, since $A_{11}^\top A_{13}^{-1}=F^\top P^{-1}-iP^{-1}/2$ is symmetric. This proves \eqref{GGeq1-3}. The form \eqref{finestraSpect} of the window $\phi$ is then a restatement of \eqref{finestregenspect} under the further condition $\phi=\psi$, and we are done.
	\end{proof}
	
	\begin{example}
	Let us express the Husimi distribution as a metaplectic Wigner distribution. Let $\cA \in \Sp_+(2d,\mathbb{C})$ satisfying the conditions of Corollary \ref{corHusimi}, with 
		\begin{equation}
		A_{11}=\frac{1}{2}I_d, \qquad A_{13}=-\frac{i}{2} I_d,
		\end{equation}
		and $A_{21}$ defined through \eqref{GGeq1-3}. In this case $B_\cA=-i I_{2d}/2$ and then, as for the $\cA$-Wigner distribution associated, we have
		\begin{equation*}
		W_\cA f= \cF^{-1}(e^{-\pi z^2/2})\ast W f=|V_\phi f|^2, \quad f \in \mathcal{S}'(\R^d)
		\end{equation*}
		with $\phi(x)=c\cdot e^{-\pi |x|^2}$. By choosing $c=(2i)^{d/4}$, we retrieve the Husimi distribution.
	\end{example}
		
		\subsection{Metaplectic Wigner distributions and complex conjugation}\label{subsec:63}
			Alongside covariance, a fundamental property of the classical Wigner distribution, having implications in the theory of Wigner microlocal analysis, is its behavior with respect to complex conjugation. Indeed, for $f,g\in L^2(\rd)$
			\begin{equation}\label{Wignerconj}
				W(f,g)=\overline{W(g,f)}.
			\end{equation}
			Among other consequences, \eqref{Wignerconj} implies that $Wf$ is a real-valued continuous function for every $f\in L^2(\rd)$. Here, we use the results of Section \ref{subsec:complexConjugation} to characterize those metaplectic Wigner distributions $W_\cA$ with the property that
			\begin{equation}\label{AWignerconj}
				W_\cA(f,g)=\overline{W_\cA(g,f)}, \qquad f,g\in L^2(\rd).
			\end{equation}
			The real case was studied in an unpublished contribution \footnote{See G. Giacchi, G., and L. T. Minh. The symplectic Wigner distribution associated to quadratic Fourier transforms: Heisenberg's uncertainty inequality and applications. {\em 10.36227/techrxiv.176472679.93143839/v1}, 2025.}, where it was proven that \eqref{AWignerconj} holds only for the classical Wigner distributions, up to rescalings. This suggests that the classical Wigner distribution is, among metaplectic Wigner distributions with $\widehat\cU\in\Mp(2d,\bR)$, the only quantization tailored to study interaction between time-evolving signals, as it is shown in a recent, unpublished manuscript by two of the authors \footnote{\label{footoneboh} See G. Giacchi and D. Tramontana. On the microlocal phase-space concentration of Wigner distributions associated with Schr\"odinger evolutions. {\em arXiv preprint arXiv:2511.19733v2}, 2025.}. In the complex framework, the situation is more involved. Actually, this complication paves the way to many new quantization rules satisfying covariance and \eqref{AWignerconj} within the metaplectic framework.
			
			\begin{theorem}\label{thmImportant}
				Let $W_\cA$ be a metaplectic Wigner distribution, let $\cA$ be the projection of $\widehat\cA$ with blocks as in \eqref{blockA}. The following statements are equivalent.
				\begin{enumerate}[(i)]
					\item $W_\cA$ satisfies \eqref{AWignerconj}.
					\item $\cA\in\Sp_+(2d,\bC)$ and
					\begin{equation}\label{charAconj}
						\cA=\begin{pmatrix}
							A_{11} & \overline{A_{11}} & A_{13} & -\overline{A_{13}}\\
							A_{21} & \overline{A_{21}} & A_{23} & -\overline{A_{23}}\\
							A_{31} & -\overline{A_{31}} & A_{33} & \overline{A_{33}}\\
							A_{41} & -\overline{A_{41}} & A_{43} & \overline{A_{43}}
						\end{pmatrix}.
					\end{equation}
					\item $W_\cA(f,g)=\widehat\cB W(f,g)$, where
					\begin{equation}
						\cB=\begin{pmatrix}
							2\Re(E_\cA) & i\Im(E_\cA)J\\
							2i\Im(F_\cA) & \Re(F_\cA)J
						\end{pmatrix},
					\end{equation}
					where $J$ is the standard symplectic matrix, whereas
					\begin{equation}
						E_\cA=\begin{pmatrix}
							A_{11} & A_{13}\\
							A_{21} & A_{23}
						\end{pmatrix} \qquad \text{and} \qquad 
						F_\cA=\begin{pmatrix}
							A_{31} & A_{33}\\
							A_{41} & A_{43}
						\end{pmatrix},
					\end{equation}
                    with $\Re(E_\cA)\in\GL(2d,\bR)$, $\Re(E_\cA)^\top\Im(F_\cA)\geq0$ and $\Re(E_\cA)J\Im(E_\cA)^\top\geq0$.
				\end{enumerate}
                Consequently, if any of the previous conditions holds, then $W_\cA f$ is a real-valued function for every $f\in L^2(\rd)$.
			\end{theorem}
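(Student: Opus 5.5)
The plan is to turn \eqref{AWignerconj} into an identity between two operators in $\Mp_+(2d,\bC)$ and then compare their projections. Let $\widehat{\cP}\in\Mp(2d,\bR)$ be the swap operator $\widehat{\cP}F(x,y)=F(y,x)$. Its projection $\cP$ exchanges the first two and the last two $d$-dimensional coordinate blocks. Set $F=f\otimes\overline g$. Then $g\otimes\overline f=\widehat{\cP}\,\overline F$, and so
\begin{equation}
	\overline{W_\cA(g,f)}=\overline{\widehat\cA\widehat{\cP}\,\overline F}=\widehat{\widetilde{\cA\cP}}\,F,
\end{equation}
by Theorem \ref{thmComplexConj} applied to $\cA\cP\in\Sp_+(2d,\bC)$ in dimension $2d$. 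Hence \eqref{AWignerconj} says that $\widehat\cA$ and $\widehat{\widetilde{\cA\cP}}$ agree on all tensors $f\otimes\overline g$. Both operators are bounded and the span of such tensors is dense in $L^2(\rdd)$, so they agree as operators.

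\emph{Step (i)$\Rightarrow$(ii).} Taking projections via Theorem \ref{thmOscillator} $(i)$ gives $\cA=\widetilde{\cA\cP}$. I then compute this block by block. Right multiplication by $\cP$ swaps columns $1\leftrightarrow2$ and $3\leftrightarrow4$. The operation $\widetilde{\,\cdot\,}$, read on $4d\times4d$ matrices, conjugates every entry and changes the sign of the upper-right and lower-left $2d\times2d$ blocks. Equating entries gives exactly the pattern \eqref{charAconj}: $A_{j2}=\overline{A_{j1}}$ for $j=1,2$, $A_{j2}=-\overline{A_{j1}}$ for $j=3,4$, $A_{j4}=-\overline{A_{j3}}$ for $j=1,2$, and $A_{j4}=\overline{A_{j3}}$ for $j=3,4$.

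\emph{Step (ii)$\Rightarrow$(i).} If \eqref{charAconj} holds, then $\cA=\widetilde{\cA\cP}$. So $\widehat\cA=cT$ with $TF=\overline{\widehat\cA\widehat{\cP}\overline F}$ and $c$ a constant. Applying the identity twice and using $\widehat{\cP}^2=\id$ gives $\widehat\cA=|c|^2\widehat\cA$, so $|c|=1$. Replacing $\widehat\cA$ by $\lambda\widehat\cA$ changes $c$ into $c\lambda/\overline\lambda$, so with the natural choice of phase factor (the convention fixed after Theorem \ref{thmOscillator}) we may take $c=1$. This is the step I expect to be the main obstacle: one must check that the statement is meant modulo this phase normalization. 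If it is not, the precise claim should read ``for a suitable phase factor of $\widehat\cA$''.

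\emph{Step (ii)$\Leftrightarrow$(iii).} Since $W(f,g)=\widehat{\cA_{1/2}}(f\otimes\overline g)$, we get $W_\cA(f,g)=\widehat\cB W(f,g)$ with $\cB=\cA\cA_{1/2}^{-1}$. The inverse $\cA_{1/2}^{-1}$ is written explicitly in the proof of Lemma \ref{lemma1gg1}. Multiplying it by a matrix of the form \eqref{charAconj} should give the displayed $\cB$, with entries $2\Re$ and $i\Im$ arising from sums $A_{j1}\pm\overline{A_{j1}}$. The classical Wigner distribution satisfies \eqref{Wignerconj}, so \eqref{AWignerconj} for $W_\cA$ is equivalent to $\widehat\cB$ commuting with complex conjugation. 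On projections this is $\cB=\widetilde\cB$, which matches the displayed form. Positivity $\cB\in\Sp_+(2d,\bC)$ is then characterized by Theorem \ref{StructuralTheorem} in dimension $2d$. There $\Re(E_\cA)$ plays the role of the upper-left block, so invertibility of the upper-left block becomes $\Re(E_\cA)\in\GL(2d,\bR)$. The conditions $A^\top C\geq0$ and $AB^\top\leq0$ translate, after absorbing the factor $J$ in the right-hand column, into $\Re(E_\cA)^\top\Im(F_\cA)\geq0$ and $\Re(E_\cA)J\Im(E_\cA)^\top\geq0$. The sign bookkeeping from $J$ needs care here.

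Finally, taking $f=g$ in \eqref{AWignerconj} gives $W_\cA f=\overline{W_\cA f}$, so $W_\cA f$ is real-valued.
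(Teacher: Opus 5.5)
\textbf{Verdict.} Your route matches the paper's, but the (ii)$\Leftrightarrow$(iii) step you left uncomputed fails: item (iii) as displayed is off by a sign.

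\textbf{The part that goes through.} Your treatment of (i)$\Leftrightarrow$(ii) is the paper's argument. Your swap is the paper's $\mathfrak{T}_L$, whose projection is $\cD_L$. Since $\widetilde{S_1S_2}=\widetilde S_1\widetilde S_2$ and $\widetilde{\cD_L}=\cD_L$, your identity $\cA=\widetilde{\cA\cD_L}$ is the paper's $\cA=\widetilde\cA\cD_L$, and your entrywise conclusions are correct. Your remark on (ii)$\Rightarrow$(i) is a genuine refinement. Equal projections only give $\widehat\cA=cT$ with $|c|=1$, and the paper silently normalizes this constant away.

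\textbf{The gap in (ii)$\Leftrightarrow$(iii).} You assert that $\cA\cA_{1/2}^{-1}$ ``should give the displayed $\cB$''. You also assert that Theorem \ref{StructuralTheorem} ``translates'' into $\Re(E_\cA)J\Im(E_\cA)^\top\geq0$. Both claims fail once the computation is done.
\begin{itemize}
\item Take $\cA_{1/2}^{-1}$ from the proof of Lemma \ref{lemma1gg1}. The $j$-th block row of $\cA\cA_{1/2}^{-1}$ is $(A_{j1}+A_{j2},\,A_{j3}-A_{j4},\,\tfrac12(A_{j3}+A_{j4}),\,\tfrac12(A_{j2}-A_{j1}))$.
\item Under \eqref{charAconj} this equals $(2\Re A_{j1},\,2\Re A_{j3},\,i\Im A_{j3},\,-i\Im A_{j1})$ for $j=1,2$. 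For $j=3,4$ it equals $(2i\Im A_{j1},\,2i\Im A_{j3},\,\Re A_{j3},\,-\Re A_{j1})$.
\item Since $(X\ Y)J=(-Y\ X)$, the right-hand column is $-i\Im(E_\cA)J=i\Im(E_\cA)J^\top$ over $-\Re(F_\cA)J=\Re(F_\cA)J^\top$. So $J$ must be replaced by $J^\top$.
\item Apply Theorem \ref{StructuralTheorem} in dimension $2d$ with $A=2\Re(E_\cA)$, $B=\Im(E_\cA)J^\top$, $C=2\Im(F_\cA)$. It gives $\Re(E_\cA)\in\GL(2d,\bR)$ and $\Re(E_\cA)^\top\Im(F_\cA)\geq0$, as claimed. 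But it also gives $AB^\top=2\Re(E_\cA)J\Im(E_\cA)^\top\leq0$, which is the opposite inequality.
\end{itemize}

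\textbf{A concrete check.} Take the Gaussian Fourier multiplier of Remark \ref{remConvolution}, with $\cB=\begin{pmatrix}I_{2d}&-iM\\O_{2d}&I_{2d}\end{pmatrix}$ and $M>0$ real.
\begin{itemize}
\item It is positive and commutes with conjugation, so $\cA=\cB\cA_{1/2}$ satisfies (i).
\item Here $\Re(E_\cA)=\tfrac12 I_{2d}$ and $F_\cA=J$, so $\Re(E_\cA)J\Im(E_\cA)^\top=-M/2$. This violates the stated ``$\geq0$''.
\item The displayed formula would give a lower-right block $\Re(F_\cA)J=-I_{2d}$. The resulting matrix is not even symplectic.
\end{itemize}

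\textbf{What this means.} Item (iii) as displayed cannot be proved. The paper's own proof calls this step a ``simple restatement'' and carries the same sign slip. What your argument actually establishes is (iii) with $J^\top$ in place of $J$ in $\cB$, and with $\Re(E_\cA)J\Im(E_\cA)^\top\leq0$. The ``sign bookkeeping from $J$'' you flagged is exactly where the argument breaks.
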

			\begin{proof}
                Let $W_\cA$ be a metaplectic Wigner distribution and $\cA$ have blocks \eqref{blockA}. We have
                \begin{align}
                   \widehat\cA(f\otimes\overline g)=W_\cA(f,g)=\overline{W_\cA(g,f)}=\overline{\widehat\cA(g\otimes\overline f)}=\widehat{\widetilde\cA}(\overline g\otimes f)=\widehat{\widetilde\cA}\mathfrak{T}_{L}(f\otimes\overline g),
                \end{align}
                where
                \begin{equation}
                    L=\begin{pmatrix}
                        O_d & I_d\\
                        I_d & O_d
                    \end{pmatrix}.
                \end{equation}
                By linearity and the density of $\mbox{span}\{f\otimes\overline g:f,g\in L^2(\rd)\}$, it follows that \eqref{AWignerconj} holds if and only if $\widehat\cA=\widehat{\widetilde\cA}\mathfrak{T}_L$. From the perspective of symplectic projections, this holds if and only if $\cA=\widetilde\cA\cD_L$. By writing the blocks of $\cA$, $\widetilde\cA$ and $\cD_L$ explicitly, the equivalence $(i)\Leftrightarrow(ii)$ follows. Let $\widehat\cB:=\widehat\cA\widehat{\cA}_{1/2}^{-1}$, where $\widehat\cA_{1/2}$ is the metaplectic operator defining the Wigner distribution, i.e., $W(f,g)=\widehat\cA_{1/2}(f\otimes\overline g)$, whose projection $\cA_{1/2}$ is given by \eqref{projWig}. The equivalence $(ii)\Longleftrightarrow(iii)$ is then a simple restatement of $\widehat\cA=\widehat\cB\widehat\cA_{1/2}$. Since $\cA_{1/2}\in\Sp(2d,\bR)$, $\cA\in\Sp_+(2d,\bC)$ if and only if $\cB\in\Sp_+(2d,\bC)$. Moreover, $\cB$ is a matrix in the form \eqref{blockSconj}. It follows by Theorem \ref{StructuralTheorem} that $\cB\in\Sp_+(2d,\bC)$ if and only if $\Re(E_\cA)\in\GL(2d,\bR)$, $\Re(E_\cA)^\top\Im(F_\cA)\geq0$ and $\Re(E_\cA)(\Im(E_\cA)J)^\top\leq0$. Since $J^\top=-J$, the latter condition is equivalent to $\Re(E_\cA)J\Im(E_\cA)^\top\geq0$, and we are done.
                
			\end{proof}
            
            \begin{remark}
                It follows by Theorem \ref{thmImportant} that $W_\cA(f,g)$ satisfies \eqref{AWignerconj} if and only if $W_\cA(f,g)=\widehat\cB W(f,g)$ for some $\widehat\cB\in\Mp_+(2d,\bC)$ satisfying $\overline{\widehat\cB f}=\widehat\cB \bar f$ for every $f\in L^2(\rdd)$. The ``if" part is trivial, whereas the ``only if" part is the content of Theorem \ref{thmImportant}. Corollary \ref{corSemigroup} is then a recipe to construct metaplectic Wigner distributions satisfying \eqref{AWignerconj}. 
            \end{remark}

            A direct comparison between \eqref{charAcov} and \eqref{charAconj} yields the characterization of metaplectic Wigner distributions satisfying both \eqref{covariance2} and \eqref{AWignerconj}. 

            \begin{corollary}
                Let $W_\cA$ be a metaplectic Wigner distribution with $\cA$. Then, the following statements are equivalent.
                \begin{enumerate}[(i)]
                    \item $W_\cA$ is covariant and satisfies \eqref{AWignerconj}.
                    \item The projection $\cA$ of $\widehat\cA$ has blocks
                    \begin{equation}\label{matrixA}
                       \cA=\begin{pmatrix}
                           I_d/2 & I_d/2 & A_{13} & A_{13}\\
                           A_{21} & -A_{21} & I_d/2 & -I_d/2\\
                           O_d & O_d & I_d & I_d\\
                           -I_d & I_d & O_d & O_d
                       \end{pmatrix},
                    \end{equation}
                    with $\Re(A_{13}),\Re(A_{21})=O_d$, $\Im(A_{13})\leq0$ and $\Im(A_{21})\geq0$.
                    \item Up to a constant, $W_\cA(f,g)=\cF^{-1}(\Phi_{-B_\cA})\ast W(f,g)$. Here, if $\cA$ is as in \eqref{matrixA}, then 
                    $B_\cA=\Im\big(\diag(A_{13},-A_{21})\big)\leq0$, $\Re(B_\cA)=O_{2d}$.
                \end{enumerate}
            \end{corollary}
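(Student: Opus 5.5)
The plan is to intersect the two characterizations already available: Theorem \ref{theo.maintimefreq} for covariance and Theorem \ref{thmImportant} for \eqref{AWignerconj}. I will compare the block forms \eqref{charAcov} and \eqref{charAconj} entry by entry, and then translate the positivity conditions.

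First, assume (i). Covariance gives $\cA$ in the form \eqref{charAcov} with $A_{13},A_{21}\in\Sym(d,\bC)$ and $\Im(B_\cA)\leq0$. The conjugation property gives the pattern \eqref{charAconj}, in which the second block column is the conjugate of the first (up to sign on the lower rows) and similarly for the fourth versus the third. I will match these as follows.
\begin{itemize}
\item Row 1, columns 1--2: $I_d-A_{11}=\overline{A_{11}}$, so $\Re(A_{11})=I_d/2$.
\item Row 1, columns 3--4: $A_{13}=-\overline{A_{13}}$, so $\Re(A_{13})=O_d$.
\item Row 2, columns 1--2: $-A_{21}=\overline{A_{21}}$, so $\Re(A_{21})=O_d$.
\item Row 2, columns 3--4: $-A_{11}^\top=-\overline{I_d-A_{11}^\top}$, which again gives $\Re(A_{11})=I_d/2$.
\item Rows 3--4: the real entries $O_d,\pm I_d$ are automatically compatible with \eqref{charAconj}.
\end{itemize}
What \eqref{matrixA} additionally requires is $A_{11}=I_d/2$, that is, $\Im(A_{11})=O_d$. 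This must come from positivity. Writing $A_{11}=I_d/2+iF$, the block $B_\cA$ in \eqref{defBA} becomes
\begin{equation}
B_\cA=\begin{pmatrix} A_{13} & -iF\\ -iF^\top & -A_{21}\end{pmatrix},
\end{equation}
so that, since $A_{13}$ and $A_{21}$ are purely imaginary,
\begin{equation}
\Im(B_\cA)=\begin{pmatrix}\Im A_{13} & -F\\ -F^\top & -\Im A_{21}\end{pmatrix}.
\end{equation}

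The condition $\Im(B_\cA)\leq0$ alone does not force $F=O_d$. I therefore expect the main obstacle to be extracting $F=O_d$ from the extra positivity information in Theorem \ref{thmImportant}(iii): $\Re(E_\cA)J\Im(E_\cA)^\top\geq0$ and $\Re(E_\cA)^\top\Im(F_\cA)\geq0$. Here
\begin{equation}
E_\cA=\begin{pmatrix} I_d/2+iF & A_{13}\\ A_{21} & I_d/2-iF^\top\end{pmatrix},
\end{equation}
so $\Re(E_\cA)=I_{2d}/2$, and $\Im(F_\cA)=O$. The first condition is therefore $J\Im(E_\cA)^\top\geq0$. A real matrix of the form $J\,\cdot$ is positive semidefinite only if it is symmetric. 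Computing it blockwise, the diagonal blocks are $\pm F$-type terms and the off-diagonal blocks involve $\Im A_{13}$ and $\Im A_{21}$. Symmetry and semidefiniteness should force the diagonal blocks $-F^\top$ and $F$ (or similar) to be simultaneously positive and negative semidefinite, hence $F=O_d$.

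The same computation should also give $\Im(A_{13})\leq0$ and $\Im(A_{21})\geq0$, consistently with $\Im(B_\cA)=\diag(\Im A_{13},-\Im A_{21})\leq0$. This proves (i)$\Rightarrow$(ii).

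For (ii)$\Rightarrow$(iii), note that \eqref{matrixA} is a special case of \eqref{charAcov} with $A_{11}=I_d/2$. Theorem \ref{theo.maintimefreq} then gives $W_\cA=\cF^{-1}\Phi_{-B_\cA}\ast W$, and reading off \eqref{defBA} yields $B_\cA=\diag(A_{13},-A_{21})$, which is purely imaginary with $\Im(B_\cA)\leq0$.

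For (iii)$\Rightarrow$(i), covariance follows from membership in the Cohen class. For \eqref{AWignerconj}, a purely imaginary $B_\cA$ makes $\Phi_{-B_\cA}$ a real, even Gaussian, so $k_\cA=\cF^{-1}\Phi_{-B_\cA}$ is real-valued. Combined with \eqref{Wignerconj}, this gives $k_\cA\ast W(f,g)=\overline{k_\cA\ast W(g,f)}$.
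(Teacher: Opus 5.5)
The gap is exactly the step you flag as the main obstacle, and it cannot be closed. First, the blockwise computation does not go the way you anticipate. Writing $A_{11}=\tfrac12 I_d+iF$, one finds
\begin{equation*}
J\,\Im(E_\cA)^\top=\begin{pmatrix}\Im A_{13} & -F\\ -F^\top & -\Im A_{21}\end{pmatrix}=\Im(B_\cA).
\end{equation*}
This matrix is automatically symmetric, and $F$ sits in the off-diagonal blocks, so there is no ``$F$ and $-F^\top$ on the diagonal'' mechanism. Second, take the condition $\Re(E_\cA)J\Im(E_\cA)^\top\geq0$ of Theorem \ref{thmImportant}(iii) at face value. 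It then reads $\tfrac12\Im(B_\cA)\geq0$, and together with $\Im(B_\cA)\leq0$ from Theorem \ref{theo.maintimefreq} it forces $\Im(B_\cA)=O_{2d}$, hence also $A_{13}=A_{21}=O_d$. Only the Wigner distribution would survive. That contradicts your own chain (ii)$\Rightarrow$(iii)$\Rightarrow$(i), and also the Husimi example, for which $\Re(E_\cA)J\Im(E_\cA)^\top=-\tfrac14 I_{2d}$. The printed sign is wrong. Recomputing $\cB=\cA\cA_{1/2}^{-1}$ gives the upper-right block $-i\Im(E_\cA)J$, so Theorem \ref{StructuralTheorem} actually yields $\Re(E_\cA)J\Im(E_\cA)^\top\leq0$. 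Here this is just $\tfrac12\Im(B_\cA)\leq0$, which adds nothing to covariance.

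In fact $F=O_d$ does not follow from (i) at all, so (i)$\Rightarrow$(ii) is false as stated. Your own (iii)$\Rightarrow$(i) argument never uses block-diagonality. It shows that every $\cA$ of the form \eqref{charAcov} with $\Re(B_\cA)=O_{2d}$ and $\Im(B_\cA)\leq0$ satisfies (i), and $\Re(B_\cA)=O_{2d}$ is exactly what your entry-by-entry comparison produced. For a concrete counterexample, take $d=1$, $A_{11}=\frac{1+i}{2}$, $A_{13}=-i$, $A_{21}=i$. Then \eqref{charAconj} holds and $\Im(B_\cA)=\begin{pmatrix}-1&-1/2\\-1/2&-1\end{pmatrix}<0$, so $\cA\in\Sp_+(2,\bC)$ and $W_\cA=k\ast W$ with $k$ a real, correlated Gaussian. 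Thus $W_\cA$ is covariant and satisfies \eqref{AWignerconj}, yet $A_{11}\neq\frac12$.

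The paper's own argument, a direct comparison of \eqref{charAcov} and \eqref{charAconj}, has the same gap: that comparison only yields $\Re(A_{11})=\frac12 I_d$ and $\Re(A_{13})=\Re(A_{21})=O_d$. The correct statement keeps $A_{11}=\frac12 I_d+iF$ with $F$ real and requires $\begin{pmatrix}\Im A_{13}&-F\\-F^\top&-\Im A_{21}\end{pmatrix}\leq0$. Equivalently, it is (iii) with $B_\cA$ purely imaginary and $\Im(B_\cA)\leq0$, but not necessarily block diagonal. Your directions (ii)$\Rightarrow$(iii)$\Rightarrow$(i) are fine, provided the suppressed constant is real; only the closing implication (i)$\Rightarrow$(ii) fails.
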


            \section{Applications to evolution equations with complex quadratic Hamiltonians}\label{sec.evo}
		H\"ormander's theory has been used to investigate
the propagation of concentration for the Wigner distribution associated with solutions of evolution equations of the form
		\begin{equation}\label{Schreq}
		\begin{cases}
		\frac{1}{2\pi}\partial_t u+\mathrm{Op}^\mathrm{w}(a)u=0 \quad \text{in} \ \ \R^+ \times \R^d \\
		u(0,\cdot)=u_0 \in L^2(\R^d),
		\end{cases}
		\end{equation}
		with $a(z)=z \cdot \cQ z$, $z \in \R^{2d}$  \textit{complex} quadratic form associated with a matrix $\cQ \in \mathbb{C}^{2d\times 2d}$ satisfying $\Re(\cQ) \geq 0$.
        Let us point out that the imaginary unit $i$ does not appear in the formulation \eqref{Schreq}, emphasizing that the equation is of mixed heat/Schr\"odinger type rather than a purely Schr\"odinger equation.

        We shall fix attention on the metaplectic operators in $\Mp_+(d,\bC)$ representing the propagator $e^{-2\pi t\Op^\w(a)}$ of \eqref{Schreq}, emphasizing applications to some relevant equations.
        
        \subsection{The Weyl perspective and modulation spaces}
            Consider $\widehat Z\in\Mp_0(d,\bC)$, and write $\widehat Z=\widehat V\widehat\Xi\widehat V^{-1}$, where $\widehat V\in\Mp(d,\bR)$ and $\widehat{\Xi}$ is an atomic metaplectic contraction.

            \begin{theorem}\label{propSymbS000}
                Under the notation above, with $\widehat\Xi=\bigotimes_{j=1}^d\widehat\Xi_j$ as in Remark \ref{remAMC}, 
                let $\Sigma$ be the $2d\times2d$ diagonal matrix with diagonal entries
                \begin{equation}
                    \Sigma_{j,j}=\begin{cases}
                        \alpha_j & \text{if $\widehat\Xi_j=\frp_{i\alpha_j}$,}\\
                        0 & \text{if $\widehat\Xi_j=\id_{L^2}$,}\\
                        2\tanh(\vartheta_j/2) & \text{if $\widehat\Xi_j=\frR_{\vartheta_j}$}.
                    \end{cases}, \qquad \Sigma_{j+d,j+d}=\begin{cases}
                        0 & \text{if $\widehat\Xi_j=\frp_{i\alpha_j}$,}\\
                        0 & \text{if $\widehat\Xi_j=\id_{L^2}$,}\\
                        2\tanh(\vartheta_j/2) & \text{if $\widehat\Xi_j=\frR_{\vartheta_j}$},
                    \end{cases}
                \end{equation}
                $j=1,\ldots,d$.
                Then, $\widehat Z=\Op^\w(a)$, where \begin{equation}\label{symbMp0}
                    a(z)=\prod_{j=1}^d \cosh(\vartheta_j/2)^{-1}e^{-\pi V^{-\top} \Sigma V^{-1}z\cdot z}, \qquad z\in\rdd,
                \end{equation} 
                where it is implied $\vartheta_j=0$ if $\widehat\Xi_j$ is not in the form $\mathfrak{R}_{\vartheta_j}$ for $\vartheta_j>0$. Consequently, if $\widehat Z\in \Mp_0(d,\bC)$, then $\widehat Z:M^{p,q}_{v_s}(\rd)\to M^{p,q}_{v_s}(\rd)$ is bounded for every $1\leq p,q\leq\infty$ and $s\geq0$, with
                \begin{equation}\label{Norm1}
                    \norm{\widehat Z}_{op}\lesssim c(s)\prod_{j=1}^d\cosh(\vartheta_j/2)^{-1}(1+\sigma_{\mathrm{max}}(\Sigma^{1/2}V^{-1})^2)^{s/2},
                \end{equation}
                where $\sigma_{\mathrm{max}}(\Sigma^{1/2}V^{-1})$ is the largest singular value of $\Sigma^{1/2}V^{-1}$ and $c(s)=\int_{\rdd}e^{-\pi|x|^2}v_s(x)dx$.
            \end{theorem}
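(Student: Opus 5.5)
The plan has three steps: compute the Weyl symbols of the one-dimensional atoms, tensorize, conjugate by $\widehat V$ using Theorem \ref{theo.metaweyl}, and then read off the modulation-space bound from Theorem \ref{thm20}.

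\textbf{Step 1: the atoms.} For $\frp_{i\alpha}$ this is immediate. It is a multiplication operator by $e^{-\pi\alpha x^2}$, so its Weyl symbol is $a(x,\xi)=e^{-\pi\alpha x^2}$. This matches $\Sigma_{j,j}=\alpha_j$, $\Sigma_{j+d,j+d}=0$, with prefactor $1$. The identity has symbol $1$.

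For $\frR_\vartheta$ we use the kernel representation. Writing $\widehat f(\eta)=\int f(y)e^{-2\pi i y\eta}dy$ in \eqref{defRtau} and performing the Gaussian $\eta$-integral with Proposition \ref{GaussianIntegrals} gives a Gaussian Schwartz kernel $k(x,y)$, namely a Mehler kernel. Inserting $k$ into \eqref{relak} and doing one more Gaussian integral in $y$ should yield
\begin{equation}
a(x,\xi)=\cosh(\vartheta/2)^{-1}e^{-2\pi\tanh(\vartheta/2)(x^2+\xi^2)}.
\end{equation}
A shortcut cross-check is the following. $\frR_\vartheta$ is rotation-invariant, since it commutes with $\cF$ because $J\cR_\vartheta=\cR_\vartheta J$. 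Hence it should be a function of the harmonic oscillator $H=\Op^\w(\pi(x^2+\xi^2))$, namely $e^{-\vartheta H'}$ for a suitably normalized $H'$. The classical Mehler formula for $e^{-tH}$ then gives exactly the symbol $\cosh(t/2)^{-1}e^{-2\tanh(t/2)\cdot\text{(quadratic form)}}$. This identifies the factor $2\tanh(\vartheta_j/2)$ on both diagonal entries.

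The main obstacle of the proof is this Mehler-type symbol computation, in particular matching constants and normalization.

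\textbf{Step 2: tensorize and conjugate.} Next I tensorize. The Weyl symbol of a tensor product of operators is the tensor product of the symbols, and this gives
\begin{equation}
\widehat\Xi=\Op^\w\Big(\prod_j\cosh(\vartheta_j/2)^{-1}e^{-\pi\Sigma z\cdot z}\Big).
\end{equation}
Then I apply Theorem \ref{theo.metaweyl} with $\widehat U=\widehat V^{-1}$:
\begin{equation}
\widehat V\,\Op^\w(b)\,\widehat V^{-1}=\Op^\w(b\circ V^{-1}),
\end{equation}
and $b(V^{-1}z)=e^{-\pi V^{-\top}\Sigma V^{-1}z\cdot z}$ times the prefactor. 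This is \eqref{symbMp0}.

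\textbf{Step 3: modulation-space bounds.} The symbol $a$ is a real Gaussian with positive semidefinite form, so all its derivatives are bounded and $a\in S^0_{0,0}$. Theorem \ref{thm20} then gives boundedness on $M^{p,q}_{v_s}$ with constant $\lesssim\norm{a}_{M^{\infty,1}_{1\otimes v_s}}$.

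To estimate this norm, take a Gaussian window. With $P=V^{-\top}\Sigma V^{-1}$ we have $a=c\,e^{-\pi Pz\cdot z}$, so $|V_Ga(z,\zeta)|$ is bounded uniformly in $z$ by the $L^1$ norm in the appropriate variable. Computing the STFT of a degenerate Gaussian, the $\zeta$-decay is governed by $P$. Changing variables $\zeta\mapsto$ a variable in $\mathrm{range}(\Sigma^{1/2}V^{-1})^{\ast}$ and using $v_s(\zeta)\le(1+\sigma_{\max}(\Sigma^{1/2}V^{-1})^2)^{s/2}v_s(w)$ gives the factor in \eqref{Norm1}. The remaining integral is the constant $c(s)$, and the prefactor $\prod_j\cosh(\vartheta_j/2)^{-1}$ carries through.
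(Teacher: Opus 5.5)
Your plan is correct and follows the paper's proof essentially step by step. You compute the atoms' Weyl symbols from the kernel of $\frR_\vartheta$ via \eqref{relak}, tensorize, conjugate with Theorem \ref{theo.metaweyl}, and bound the Sj\"ostrand norm with a Gaussian window, where the substitution $\zeta=(I+P)^{1/2}w$ cancels the factor $\det(I+P)^{\pm1/2}$ and gives exactly your estimate. (Only the Mehler cross-check is loosely argued: commuting with $\cF$ alone does not make $\frR_\vartheta$ a function of the oscillator, but $\cR_\vartheta=e^{-i\vartheta J}$ commutes with every rotation $e^{tJ}$, which suffices.)
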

            \begin{proof}
                Let us write $\widehat{Z}=\widehat{V}\widehat\Xi\widehat{V}^{-1}$, as in Corollary \ref{CorPolar}, with 
                $\widehat\Xi=\bigotimes_{j=1}^d\widehat\Xi_j$, as in Remark \ref{remAMC}. Let $a\in\cS'(\rdd)$ be the Weyl symbol of $\widehat Z$, then by \eqref{intertOpwMetap}, 
                \begin{equation}\label{symbTens1}
                    a(z)=\left(\bigotimes_{j=1}^da_j\right)(V^{-1}z),
                \end{equation}
                where, for $j=1,\ldots,d$, $a_j\in\cS'(\bR^2)$ is the Weyl symbol of $\widehat{\Xi}_j$.
                If $\widehat{\Xi}_j=\mathrm{id}_{L^2}$, then its Weyl symbol is $a_{0,j}(z)=1$. Moreover, the Weyl symbol of $\frp_{i\alpha_j}$ is the Gaussian $a_{1,j}(z)=e^{-\pi \alpha_j z^2}$. Assume $\vartheta_j>0$ and $f\in\cS(\bR)$, then by writing the Fourier transform of $f$ in \eqref{defRtau}, we obtain
                \begin{align}
                    \mathfrak{R}_{\vartheta_j}f(x)=\sinh\vartheta_j^{-1/2}\int_{-\infty}^\infty f(y)e^{-\pi\tanh\vartheta_j^{-1}(x^2+y^2)}e^{2\pi\sinh\vartheta_j^{-1}xy}dy,
                \end{align}
                and thus the kernel of $\mathfrak{R}_{\vartheta_j}$ is
                \begin{equation}
                    k(x,y)=\sinh\vartheta_j^{-1/2}e^{-\pi\tanh\vartheta_j^{-1}(x^2+y^2)}e^{2\pi\sinh\vartheta_j^{-1}xy}.
                \end{equation}
                By \eqref{relak}, we can compute the symbol $a_{2,j}(x,\xi)$ of $\mathfrak{R}_{\vartheta_j}$
                \begin{align}
                    a_{2,j}(x,\xi)&=\sinh\vartheta_j^{-1/2}\int_{-\infty}^\infty k(x+y/2,x-y/2)e^{-2\pi i\xi y}dy\\
                    &=\sinh\vartheta_j^{-1/2}\int_{-\infty}^\infty e^{-2\pi (\tanh\vartheta_j^{-1}-\sinh\vartheta_j^{-1})x^2}e^{-\pi(\tanh\vartheta_j^{-1}+\sinh\vartheta_j^{-1})y^2/2}e^{-2\pi i\xi y}dy.
                \end{align}
                By Proposition \ref{GaussianIntegrals},
                 \begin{align}
                    a_{2,j}(x,\xi)&= c_j\cdot e^{-2\pi (\tanh\vartheta_j^{-1}-\sinh\vartheta_j^{-1})x^2}e^{-2\pi (\tanh\vartheta_j^{-1}+\sinh\vartheta_j^{-1})^{-1}\xi^2},
                \end{align}
                with 
                \begin{align}
                    c_j&=2^{1/2}\sinh\vartheta_j^{-1/2}(\tanh\vartheta_j^{-1}+\sinh\vartheta_j^{-1})^{-1/2}
                    =2^{1/2}(1+\cosh\vartheta_j)^{-1/2}=\cosh(\vartheta_j/2)^{-1}.
                \end{align}
                Since
                \begin{equation}
                    \tanh\vartheta_j^{-1}-\sinh\vartheta_j^{-1}=(\tanh\vartheta_j^{-1}+\sinh\vartheta_j^{-1})^{-1}=\tanh(\vartheta_j/2),
                \end{equation}
                we obtain
                \begin{equation}\label{weylsymbolrtheta}
                    a_{2,j}(x,\xi)=\cosh(\vartheta_j/2)^{-1}e^{-2\pi\tanh(\vartheta_j/2)(x^2+\xi^2)}.
                \end{equation}
                Then, \eqref{symbMp0} follows by choosing $a_j$ as $a_{0,j}$, $a_{1,j}$ and $a_{2,j}$ into \eqref{symbTens1}, according to the form of the corresponding atom $\widehat{\Xi}_j$.

                Next, we compute the operator norm of $\widehat Z\in\Mp_0(d,\bC)$. By Theorem \ref{thm20},
                \begin{equation}
                    \norm{\widehat Z}_{op}\lesssim\norm{a}_{M^{\infty,1}_{1\otimes v_s}},
                \end{equation}
                so we just compute the Sj\"ostrand norm of $a$. For the purpose, we use the Gaussian window $\Phi(z)=e^{-\pi |z|^2}$. 
                \begin{align}
                    \norm{a}_{M^{\infty,1}_{1\otimes v_s}}=\norm{V_\Phi a}_{L^{\infty,1}_{1\otimes v_s}}=\norm{\zeta\mapsto\norm{V_\Phi a(\cdot,\zeta)}_{\infty}v_s(\zeta)}_{1}.
                \end{align}
                A straightforward computation using the Gaussian integral formula of Proposition \ref{GaussianIntegrals} shows that
                \begin{align}
                    \norm{V_\Phi a(\cdot,\zeta)}_{\infty}=\prod_{j=1}^d\cosh(\vartheta_j/2)^{-1}\det(I+V^{-\top}\Sigma V^{-1})^{-1/2}e^{-\pi(I+V^{-\top}\Sigma V^{-1})^{-1}\zeta\cdot\zeta}.
                \end{align}
                It follows that
                \begin{align}
                    \norm{a}_{M^{\infty,1}_{1\otimes v_s}}&=\prod_{j=1}^d\cosh(\vartheta_j/2)^{-1}\det(I+V^{-\top}\Sigma V^{-1})^{-1/2}\int_{\rdd}e^{-\pi (I+V^{-\top}\Sigma V^{-1})^{-1}\zeta\cdot\zeta}v_s(\zeta)d\zeta\\
                    &=\prod_{j=1}^d\cosh(\vartheta_j/2)^{-1}\int_{\rdd}e^{-\pi|\zeta'|^2}v_s((I+V^{-\top}\Sigma V^{-1})^{1/2}\zeta')d\zeta'.
                \end{align}
                Let us write $M=(I+V^{-\top}\Sigma V^{-1})^{1/2}$, and $\sigma_{\mathrm{max}}(M)$ be the largest eigenvalue of $M$. Then,
                \begin{align}
                    v_s(M\zeta')=(1+|M\zeta'|^2)^{s/2}\leq(1+\lambda^2|\zeta'|^2)^{s/2}\leq\sigma_{\mathrm{max}}(M)^sv_s(\zeta'),
                \end{align}
                whence
                 \begin{align}
                    \norm{a}_{M^{\infty,1}_{1\otimes v_s}}&\leq c\prod_{j=1}^d\cosh(\vartheta_j/2)^{-1}\sigma_{\mathrm{max}}(M)^s,
                \end{align}
                where
                \begin{equation}
                    c=\int_{\rdd}e^{-\pi|\zeta'|^2}v_s(\zeta')d\zeta'.
                \end{equation}
                By writing the singular value decomposition of $\Sigma^{1/2}V^{-1}=W\delta W^\top$, we obtain
                \begin{align}
                    (I+V^{-\top}\Sigma V^{-1})^{1/2}&=(I+W\delta^2 W^\top)^{1/2}=W(I+\delta^2)^{1/2}W^\top,
                \end{align}
                whose largest singular value is $(1+\sigma_{\mathrm{max}}(\Sigma^{1/2}V^{-1})^2)^{1/2}$, being $\sigma_{\mathrm{max}}(\Sigma^{1/2}V^{-1})$ the largest diagonal entry of $\delta$, that is the largest singular value of $\Sigma^{1/2}V^{-1}$. 
                This concludes the proof.
            \end{proof}
            
            \begin{corollary}\label{CorollaryS000}
                Every $\widehat{Z}\in\Mp_0(d,\bC)$ is a Weyl-quantized pseudodifferential operator with symbol in $S^0_{0,0}(\rdd)$. Consequently, every $\widehat S\in \Mp_+(d,\bC)$ can be uniquely written as
                \begin{equation}\label{OpwMp}
                    \widehat S=\Op^\w(a)\widehat U=\widehat U\Op^\w(a\circ U),
                \end{equation}
                for $a\in S^0_{0,0}(\rdd)$ as in Theorem \ref{propSymbS000}.
            \end{corollary}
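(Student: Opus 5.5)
The plan is to combine Theorem \ref{propSymbS000} with the polar decomposition of Theorem \ref{polar} and the metaplectic covariance of the Weyl calculus in Theorem \ref{theo.metaweyl}.

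\emph{First claim.} Let $\widehat Z\in\Mp_0(d,\bC)$. By Corollary \ref{CorPolar}(ii) we may write $\widehat Z=\widehat V\widehat\Xi\widehat V^{-1}$ for some $\widehat V\in\Mp(d,\bR)$ and some atomic metaplectic contraction $\widehat\Xi$. Theorem \ref{propSymbS000} then gives $\widehat Z=\Op^\w(a)$, where $a$ is the explicit Gaussian \eqref{symbMp0}, namely $a(z)=c\,e^{-\pi Pz\cdot z}$ with $P=V^{-\top}\Sigma V^{-1}$. Since $\Sigma\geq0$ is diagonal, $P$ is real, symmetric and positive semi-definite.

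To place $a$ in $S^0_{0,0}(\rdd)$, we need uniform bounds on all its derivatives. Diagonalise $P=O^\top\diag(\mu_k)O$ with $O$ orthogonal and $\mu_k\geq0$. In the rotated variables $a$ becomes a product of one-variable functions $e^{-\pi\mu_k t^2}$, each of which is either constant or a Gaussian. All derivatives of a one-variable Gaussian are bounded, so $\partial^\alpha a$ is bounded for every multi-index $\alpha$. Rotating back, $\partial_z^\alpha$ becomes a linear combination of derivatives in the rotated variables, so the bounds persist. This step is routine.

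\emph{Decomposition.} Given $\widehat S\in\Mp_+(d,\bC)$, Corollary \ref{CorPolar}(iii) gives $\widehat S=\widehat Z'\widehat U$ with $\widehat Z'\in\Mp_0(d,\bC)$ and $\widehat U\in\Mp(d,\bR)$. By the first claim, $\widehat Z'=\Op^\w(a)$ with $a\in S^0_{0,0}(\rdd)$. Theorem \ref{theo.metaweyl} gives $\widehat U^{-1}\Op^\w(a)\widehat U=\Op^\w(a\circ U)$, hence $\Op^\w(a)\widehat U=\widehat U\Op^\w(a\circ U)$. Since $U$ is real and linear, $a\circ U$ is again a real Gaussian of the same type and so also lies in $S^0_{0,0}(\rdd)$.

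\emph{Uniqueness.} This is the delicate point. Suppose $\Op^\w(a)\widehat U=\Op^\w(a')\widehat U'$ with $a,a'$ symbols of elements of $\Mp_0$ and $\widehat U,\widehat U'\in\Mp(d,\bR)$. Then
\[
\widehat S=\widehat U\bigl(\widehat U^{-1}\Op^\w(a)\widehat U\bigr).
\]
The operator $\widehat U^{-1}\Op^\w(a)\widehat U$ is positive and self-adjoint, because it is a unitary conjugate of $\Op^\w(a)=\widehat Z'$. By the uniqueness in Theorem \ref{polar} (equivalently, the bijection of Corollary \ref{CorPolar}(i)), $\widehat U$ is therefore determined up to the sign ambiguity $\pm$ of the double cover, which is absorbed into phase conventions. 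In turn $\Op^\w(a)=\widehat S\widehat U^{-1}$ is determined, and so is $a$, since the Weyl symbol of an operator is unique.

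One caveat: the symbol is unique, but the pair $(\widehat V,\widehat\Xi)$ representing it is not. The statement's phrase ``$a$ as in Theorem \ref{propSymbS000}'' should be read with this in mind.
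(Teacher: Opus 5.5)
Your proposal is correct and follows the route the paper leaves implicit. It combines the explicit Gaussian Weyl symbol of Theorem \ref{propSymbS000}, the factorization $\widehat S=\widehat Z'\widehat U$ of Corollary \ref{CorPolar}(iii) and the metaplectic covariance of Theorem \ref{theo.metaweyl}; uniqueness, which you rightly restrict to factors $\Op^\w(a)\in\Mp_0(d,\bC)$, then comes from uniqueness of the polar decomposition in Theorem \ref{polar}. Your sign hedge is unnecessary: replacing $\widehat U$ by $-\widehat U$ would force $-\widehat Z$, which is not positive, so both polar factors, and hence $a$, are determined exactly.
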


            \begin{remark}
            Corollary \ref{CorollaryS000} allows to frame the theory of $\Mp_+(d,\bC)$ in the context of {\em generalized metaplectic operators}, that are operators in the form $T=\Op^\w(a)\widehat U$, with $\widehat U\in\Mp(d,\bR)$ and $a\in S^0_{0,0}(\rdd)$. The interest in these operators is justified by the fact that the propagator of the evolution problem
            \begin{equation}
                \begin{cases}
                    i\frac{1}{2\pi}\partial_tu(t,x)=\Op^\w(b)u(t,x)+\Op^\w(c)u(t,x), & \text{$t\in\bR$, $x\in\rd$},\\
                u(0,x)=u_0(x),
                \end{cases}
            \end{equation}
            with $u_0\in\cS(\rd)$, $\Op^\w(b)$ Weyl quantization of a real quadratic form $b$, and $c\in S^0_{0,0}(\rdd)$, is a one-parameter subgroup of generalized metaplectic operators.
            The benefits of this perspectives cover different areas of harmonic analyis, with contributions also in applied and computational sciences, due to the characterization of generalized metaplectic operators in terms of their Gabor matrix decay, the infinite dimensional equivalent of sparsity. This approach lies beyond the scope of the present manuscript and will be addressed in future work. The reader may refer to \cite{CGNRgen,Trapasso} as main references.
            \end{remark}
            
            As a straightforward example of the insights this approach could provide, we mention a first boundedness result within modulation spaces for operators in $\Mp_+(d,\bR)$, which follows by writing $\widehat S=\Op^\w(a)\widehat U$ as in \eqref{OpwMp}, and by applying Theorems \ref{thm20}
                and \ref{thmFS} in the order. Item $(i)$ is instead classical, and it has been proven in \cite{CGNRgen} in the framework of generalized metaplectic operators.
            \begin{corollary}\label{CorMpq}
                Let $\widehat S\in\Mp_+(d,\bC)$ have polar decomposition $\widehat S=\widehat U\widehat Z$, $\widehat U\in \Mp(d,\bR)$ and $\widehat Z\in \Mp_0(d,\bC)$. Then:
                \begin{enumerate}[(i)]
                    \item $\widehat S:M^{p}_{v_s}(\rd)\to M^{p}_{v_s}(\rd)$ is bounded for every $1\leq p,q\leq\infty$ and $s\geq0$.
                    \item If the projection $U\in\Sp(d,\bR)$ of $\widehat U$ is upper-block triangular, i.e., $C=O_d$ in \eqref{blockS}, then $\widehat S:M^{p,q}_{v_s}(\rd)\to M^{p,q}_{v_s}(\rd)$ is bounded for every $1\leq p,q\leq\infty$ and $s\geq0$.
                \end{enumerate}
                In every case, under the notations of Theorems \ref{thmFS} and \ref{propSymbS000},
                \begin{equation}\label{combEstimates}
                    \norm{\widehat S f}_{M^{p,q}_{v_s}}\lesssim_{p,q,g,s} c\cdot \norm{f}_{M^{p,q}_{v_s}},
                \end{equation}
                where $g$ is the window used to measure the $M^{p,q}_{v_s}$-norm and, understood $|\det(A)|^{1/p-1/q}=1$ for $p=q$,
                \begin{equation}
                    c=|\det(A)|^{1/p-1/q}{\sigma_{\mathrm{max}}(U)^{2s}}\prod_{j=1}^d\Big(\frac{\sigma_j}{1+\sigma_j^2}\Big)^{-1/2}\cosh(\vartheta_j/2)^{-1}(1+\sigma_{\mathrm{max}}(\Sigma^{1/2}V^{-1}))^{s/2}.
                \end{equation}
            \end{corollary}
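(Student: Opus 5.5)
The plan is to factor $\widehat S$ into two pieces whose modulation-space behaviour is already known, and multiply the two bounds. By Corollary \ref{CorPolar}$(iii)$ and Corollary \ref{CorollaryS000}, we write
\begin{equation}
\widehat S=\widehat U\widehat Z=\widehat Z'\widehat U, \qquad \widehat Z'=\widehat U\widehat Z\widehat U^{-1}\in\Mp_0(d,\bC).
\end{equation}
By Theorem \ref{theo.metaweyl}, $\widehat Z'=\Op^\w(a\circ U^{-1})$ whenever $\widehat Z=\Op^\w(a)$. The simplest order is to keep $\widehat S=\widehat U\widehat Z$ and estimate
\begin{equation}
\norm{\widehat Sf}_{M^{p,q}_{v_s}}\le \norm{\widehat U}_{M^{p,q}_{v_s}\to M^{p,q}_{v_s}}\,\norm{\widehat Z}_{M^{p,q}_{v_s}\to M^{p,q}_{v_s}}\,\norm{f}_{M^{p,q}_{v_s}}.
\end{equation}
The first factor is handled by Theorem \ref{thmFS} and the second by Theorem \ref{propSymbS000}.

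The steps, in order, are as follows.
\begin{enumerate}[(1)]
\item By Theorem \ref{propSymbS000}, $\widehat Z=\Op^\w(a)$ with $a$ the Gaussian symbol \eqref{symbMp0}. Hence $a\in S^0_{0,0}(\rdd)$, and Theorem \ref{thm20} gives boundedness of $\widehat Z$ on every $M^{p,q}_{v_s}(\rd)$ with norm bounded by \eqref{Norm1}. No assumption on $U$ is needed here.
\item For item $(i)$ ($p=q$), Theorem \ref{thmFS}$(i)$ shows that $\widehat U$ is an isomorphism of $M^p_{v_s}(\rd)$, with norm bounded by \eqref{normUB} where the determinant factor equals $1$.
\item For item $(ii)$, assume $C=O_d$ in the projection $U$. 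Then Theorem \ref{thmFS}$(ii)$ gives boundedness of $\widehat U$ on $M^{p,q}_{v_s}(\rd)$ with the factor $|\det(A)|^{1/p-1/q}$.
\item Composing the two bounds gives \eqref{combEstimates}. The constant $c$ is the product of the constant in \eqref{normUB} and the constant in \eqref{Norm1}. The window-dependent constants $c(g,s,d,p,q)$ and $c(s)$ are absorbed into $\lesssim_{p,q,g,s}$.
\end{enumerate}

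One bookkeeping point needs care. Theorem \ref{propSymbS000} is stated for $\widehat Z=\widehat V\widehat\Xi\widehat V^{-1}$, so the $\vartheta_j$, $\Sigma$ and $V$ appearing in $c$ must be those of the decomposition of this particular $\widehat Z$. This decomposition exists by Corollary \ref{CorPolar}$(ii)$. The $\sigma_j$ and $A$ in $c$ refer to $U$, the projection of $\widehat U$.

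The only real subtlety is item $(ii)$. There the triangularity hypothesis sits on $U$ alone, not on $Z$. This is why the factorization must isolate $\widehat U$ as a separate factor, rather than appealing to any structure of the full projection $S$. Both orderings $\widehat U\widehat Z$ and $\Op^\w(a)\widehat U$ work, because $\widehat Z$ and $\widehat Z'$ are bounded on all $M^{p,q}_{v_s}$ regardless of $U$.

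A minor discrepancy should be checked rather than fixed. The stated constant contains $(1+\sigma_{\max}(\Sigma^{1/2}V^{-1}))^{s/2}$, whereas \eqref{Norm1} has the square inside. The two are comparable up to a constant depending only on $s$, via $(1+x^2)^{s/2}\le(1+x)^{s}$. So I would either note that the claimed form follows up to adjusting the exponent, or simply carry \eqref{Norm1} verbatim into the final bound.
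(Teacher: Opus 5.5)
Your proposal is correct and is essentially the paper's argument. The paper writes $\widehat S=\Op^\w(a)\widehat U$ via Corollary \ref{CorollaryS000} and applies Theorems \ref{thm20} and \ref{thmFS}, i.e.\ it multiplies the bounds \eqref{Norm1} and \eqref{normUB}; your ordering $\widehat U\widehat Z$ differs only in which $V$ (that of $\widehat Z$ rather than of $\widehat U\widehat Z\widehat U^{-1}$) enters the constant.

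One correction to your last paragraph: $(1+x)^{s/2}$ is not comparable to $(1+x^2)^{s/2}\asymp(1+x)^{s}$ uniformly in $x$. So neither your argument nor the paper's yields the stated factor $(1+\sigma_{\max}(\Sigma^{1/2}V^{-1}))^{s/2}$. Carrying \eqref{Norm1} verbatim, i.e.\ using $(1+\sigma_{\max}(\Sigma^{1/2}V^{-1})^2)^{s/2}$, is the right choice, and the missing square in the statement should be read as a typo.
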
 
            \begin{proof}
                It follows directly by combining \eqref{normUB} and \eqref{Norm1}.
            \end{proof}
            
            \begin{remark}
            An estimate similar to \eqref{combEstimates} was obtained in \cite{Trapasso} for the one-parameter semigroup of propagators of heat-type equations \eqref{intro.eq1} with complex quadratic Hamiltonians, under additional non-degeneracy assumptions on the geometry of the Hamiltonians. Trapasso obtains the remarkable exponential decay in time
               \begin{equation}\label{TrapassoEq}
                   \norm{\widehat{S_t}}_{op}\lesssim e^{-\varepsilon t},
               \end{equation}
               for a suitable $\varepsilon>0$. 
               Our computations go beyond this limitation and extend to the whole metaplectic semigroup $\Mp_+(d,\bC)$. The reader may observe that \eqref{TrapassoEq} reflects the products of the hyperbolic cosines in \eqref{Norm1}.
               Nevertheless, the relation between \eqref{intro.eq1} and the corresponding propagator, expressed as a one-parameter subsemigroup of $\Mp_+(d,\bC)$, is rather implicit, and further analysis is required to obtain explicit results in this direction. This topic will be treated in future work.
            \end{remark}

            \begin{example}
                Consider the complex Hermite equation
                \begin{equation}\label{ComplexHermite}
                    \begin{cases}
                        \partial_tu(t,x)=(\vartheta+i\mu)\left(\frac 1 {4\pi}\Delta_x-\pi|x|^2\right)u(t,x), & x\in\rd,\, t>0,\\
                        u(0,x)=u_0(x),
                    \end{cases}
                \end{equation}
                where we may assume $u_0\in\cS(\rd)$. The propagator of \eqref{ComplexHermite} has been computed in the unpublished work at Footnote \ref{fn1}, but it is also computable by means of Mehler fomulas. It has polar decomposition
                \begin{equation}\label{ex1}
                    \widehat{S_t}=\widehat{U}(t)\widehat{Z}(t),
                \end{equation}
                where $\widehat U(t)$ is the $\mu t$-fractional power of the Fourier transform. When $\mu t\neq k\pi$ ($k\in \bZ$), it is given by
                \begin{equation}
                    \widehat U(t)f(\xi)=(1-i\cot(\mu t))^{d/2}\int_{\rd}f(x)e^{i\pi \cot(\mu t)(|x|^2+|\xi|^2)}e^{-2\pi ix\xi/\sin(\mu t)}dx,
                \end{equation}
                whereas $\widehat Z(t)$ is
                \begin{equation}
                    \widehat Z(t)=\cosh(\vartheta t)^{-d/2}\int_{\rd}\widehat f(\eta)e^{-\pi\tanh(\vartheta t)(|x|^2+|\eta|^2)}e^{2\pi ix\eta/\cosh(\vartheta t)}d\eta.
                \end{equation}
                If $\mu=0$, only $\widehat Z(t)$ appears in \eqref{ex1}, whence we retrieve the $M^{p,q}_{v_s}$-boundedness for every $1\leq p,q<\infty$ and $s\geq0$, with
                \begin{equation}
                    \norm{\widehat Su_0}_{M^{p,q}_{v_s}}\leq c(s) (1+2\tanh(\vartheta t/2))^{s/2} \cosh(\vartheta t/2)^{-d}\norm{u_0}_{M^{p,q}_{v_s}},
                \end{equation}
                by \eqref{Norm1}, where $V=I_{2d}$. If $\mu\neq0$, for $\mu t\neq k\pi$ ($k\in\bZ$), the projection of $\widehat U(t)$ is not upper-block triangular. By \eqref{normUB} with $p=q$, we obtain the exponential decay of $\norm{\widehat{S_t}}_{op}$:
                \begin{align}
                    \norm{\widehat{S_t}u_0}_{M^p_{v_s}}&=\norm{\widehat U(t)\widehat Z(t)u_0}_{M^p_{v_s}}\lesssim \norm{\widehat Z(t)u_0}_{M^p_{v_s}}\lesssim \cosh(\vartheta t/2)^{-d}(1+2\tanh(\vartheta t))^{s/2}\norm{u_0}_{M^p_{v_s}}\\
                    &\lesssim e^{-d\vartheta t/2}\norm{u_0}_{M^p_{v_s}}.
                \end{align}
        
            \end{example}

            \begin{example}
                Consider the complex heat equation \eqref{HeatEq}, whose solution is expressed in terms of metaplectic opertors as in \eqref{propEqCalore} and the projection is as in \eqref{subsemigroup1}. The polar decomposition of the propagator $\widehat{S_t}=\mathfrak{m}_{P_t}$ is
                \begin{equation}
                   \widehat{S_t}= \widehat U(t)\widehat Z(t)=\mathfrak{m}_{4\pi\alpha tI_d}\mathfrak{m}_{4\pi i\beta  tI_d},
                \end{equation}
                with upper-block triangular projections
                \begin{equation}
                    U(t)=\begin{pmatrix}
                        I_d & -4\pi\alpha t I_d\\
                        O_d & I_d
                    \end{pmatrix}, \qquad
                    Z(t)=\begin{pmatrix}
                        I_d & -4\pi i\beta t I_d\\
                        O_d & I_d
                    \end{pmatrix}=J^{-1}\begin{pmatrix}
                        I_d & O_d\\
                        4\pi i\beta tI_d & O_d
                    \end{pmatrix}J.
                \end{equation}
                The singular values of $U(t)$ are all $\geq1$ and coincide with the largest of them 
                \begin{equation}
                    \sigma_{\mathrm{max}}(U(t))=\sqrt{1+2\pi^2\alpha^2t^2+\sqrt{(1+2\pi^2\alpha^2 t^2)^2-1}}=\sqrt{1+\pi^2\alpha^2t^2}+\pi|\alpha t|.
                \end{equation}
                With the notation of Theorem \ref{propSymbS000},
                \begin{equation}
                    \Sigma^{1/2}V^{-1}={2\pi \beta t}I_{2d}\cdot J,
                \end{equation}
                whose singular values are all equal to $\mu=2\pi \beta t$. Therefore,
                \begin{align}
                    \norm{\widehat{S_t}u_0}_{M^{p,q}_{v_s}}&
                    \lesssim  \sigma_{\mathrm{max}}(U(t))^{2s}
                    \frac{(1+ \sigma_{\mathrm{max}}(U(t))^2)^{d/2}}{\sigma_{\mathrm{max}}(U(t))^{d/2}}
                    \norm{\widehat{Z(t)}u_0}_{M^{p,q}_{v_s}}\\
                    &\lesssim \sigma_{\mathrm{max}}(U(t))^{2s-d/2}(1+ \sigma_{\mathrm{max}}(U(t))^2)^{d/2} (1+4\pi^2\beta^2 t^2)^{s/2} \norm{u_0}_{M^{p,q}_{v_s}}\\
                    &\lesssim (1+|t|)^{3s+d/2}\norm{u_0}_{M^{p,q}_{v_s}}.
                \end{align}
            \end{example}

            \begin{remark}
                Recall that by Remark \ref{Shubinrem}, the modulation space $M^2_{v_s}(\rd)$ coincides with the Shubin-Sobolev space of order $s$, $Q_s(\rd)$. Applied to this particular setting, Corollary \ref{CorMpq} and the following examples, provide therefore time-dependent estimates for the $Q_s$-norm of evolution operators.
            \end{remark}
    
        \subsection{Wigner microlocal analysis}
        
		The solution of the problem \eqref{Schreq} has the form (see \cite{Hormander3}) 
		\begin{equation}\label{eq.solution}
		u(t,\cdot)=e^{-2\pi t\mathrm{Op}^{\mathrm{w}}(a)}u_0=\widehat{S}_t u_0,
		\end{equation}
		where $S_t=e^{-itF} \in \Sp_+(d,\mathbb{C})$ and $F$ is the Hamilton map of $a$, i.e. $F=JQ$. 
        
        Our aim is to study the propagation of concentration in terms of Wigner distributions from a microlocal point of view, by using the $\cA$-wigner wave front set, whose definition is given as follows.
        \begin{definition}\label{def.WFS}
            Let $\cA \in \Mp_+(2d,\mathbb{C})$ with projection $\cA \in \Sp_+(2d,\mathbb{C})$ and let $f \in L^2(\R^d)$. We say that $0 \neq z_0 \notin {\cW\cF}_\cA(f) \subseteq \R^d \times (\R^d \setminus \lbrace 0 \rbrace)$ if there exists a conic neighborhood $\Gamma_{z_0}$ of $z_0$ such that 
            \begin{equation*}
                \int_{\Gamma_{z_0}}v_{s}(z)^2|W_\cA f(z)|^2dz <+\infty, \quad \forall s \geq 0.
            \end{equation*}
        \end{definition}

 \subsubsection{A prototypical example}
 We first investigate the propagation of microlocal concentration of the $(\cA$-)Wigner distribution associated with the evolution 
\begin{equation}\label{eq.Har}
		\begin{cases}
		\frac{1}{2\pi}\partial_t u+Hu=0 \quad \text{in} \ \ \R^+ \times \R^d \\
		u(0,\cdot)=u_0 \in \mathcal{S}'(\R^d),
		\end{cases}
		\end{equation}
        where 
        \begin{equation}\label{HamiltonianHar}
            H=H_R+iH_I=-\frac{1}{8\pi^2}\Delta_{x'}+\frac 1 2 |x'|^2+i\left(-\frac{1}{8\pi^2}\Delta_{x''}+ \frac 1 2 |x''|^2\right),
        \end{equation}
with $x',\xi'\in \mathbb{R}^{d_1}$,  $x'',\xi''\in \mathbb{R}^{d_2}$ and $d_1,d_2 \in \mathbb{N}$ satisfy $d_1+d_2\leq d$. For simplicity, let us assume $d_1+d_2=d$. 

We shall now compute $\widehat S_t^H$ from the Hamiltonian flow $S_t^H=\exp(-itF)$. By \eqref{HamiltonianHar}, we infer
	\begin{equation}
		F=\left(\begin{array}{cc|cc}
			O_{d_1} & O_{d_1\times d_2} & I_{d_1} & O_{d_1\times d_2}\\
			O_{d_2\times d_1} & O_{d_2} & O_{d_2\times d_1} & iI_{d_2}\\
			\hline
			-I_{d_1} & O_{d_1\times d_2} & O_{d_1} & O_{d_1\times d_2}\\
			O_{d_2\times d_1} & -iI_{d_2} & O_{d_2\times d_1} & O_{d_2}
		\end{array}\right).
	\end{equation}
	To facilitate reading, we omit all the dimension-related subscripts in the following. Then,
	\begin{equation}
		S_t^H=\sum_{k=0}^\infty \frac{(-i tF)^k}{k!}=\sum_{k=0}^\infty \frac{(-i t)^k}{k!}F^k. 
	\end{equation}
	By denoting
	\begin{equation}
			L=\left(\begin{array}{cc|cc}
				-I & O & O & O\\
				O & I & O & O\\
				\hline
				O & O & -I & O\\
				O & O & O & I
			\end{array}\right),
		\end{equation}
		and observing that $F^{2k}=L^k$ and $F^{2k+1}=L^kF$, we may split the sum into
		\begin{equation}
			S_t^H=\underbrace{\underset{k\equiv_40}{\sum_{k\geq0}}\frac{(-i t)^k}{k!}}_{=:f_0(t)}I
			+\underbrace{\underset{k\equiv_41}{\sum_{k\geq0}}\frac{(-i t)^k}{k!}}_{=:f_1(t)}F
			+\underbrace{\underset{k\equiv_42}{\sum_{k\geq0}}\frac{(-i t)^k}{k!}}_{=:f_2(t)}L
			+\underbrace{\underset{k\equiv_43}{\sum_{k\geq0}}\frac{(-i t)^k}{k!}}_{=:f_3(t)}LF.
		\end{equation}
		Using that
		\begin{align}
			&f_0(t)+f_2(t)=\cos(t),\\
			&f_0(t)-f_2(t)=\cosh(t),\\
			&f_1(t)+f_3(t)=-i\sin(t),\\
			&f_1(t)-f_3(t)=-i\sinh(t),
		\end{align}
		we obtain the explicit expressions of $f_0,f_1,f_2,f_3$ and conclude
		\begin{equation}\label{StH}
			S_t^H=
			\left(\begin{array}{cc|cc}
				\cosh(t) I & O & -i\sinh(t)I & O\\
				O & \cos(t)I & O &\sin(t) I\\
				\hline
				i\sinh(t) I & O & \cosh(t) I & O\\
				O & -\sin(t)I & O & \cos(t)I
			\end{array}\right),
		\end{equation}
    from which we infer
    \[
    S_t^H=S_{I,t}S_{R,t},
    \]
    with
    \begin{align}
    S_{I,t}=\left(\begin{array}{cc|cc}
				I & O & O & O\\
				O & \cos(t)I & O &\sin(t) I\\
				\hline
				O  & O &  I & O\\
				O & -\sin(t)I & O & \cos(t)I
			\end{array}\right)
    \quad \quad 
     S_{R,t}=\left(\begin{array}{cc|cc}
				\cosh(t)I & O & -i\sinh(t)I & O\\
				O & I & O & O\\
				\hline
				i\sinh(t)I  & O &  \cosh(t)I & O\\
				O & O & O & I
			\end{array}\right).
    \end{align}
    Hence,
    \[
    \widehat S_t^H=\widehat S_{I,t} \widehat S_{R,t}=\widehat{S}_{I,t}\mathrm{Op}^{\w}(a_R(t)), 
    \]
    where, by \eqref{weylsymbolrtheta} (cf. \cite{Hormander3} p. 425),
    \[
    a_R(t)=e^{-2\pi(|x'|^2+|\xi'|^2)\tanh(t/2)}/(\cosh(t/2))^{d_1} \in S_{0,0}^0(\R^{2d}),
    \]

    Our aim is to give the propagation result for \emph{covariant} Wigner distributions. To this end, recall that (see \cite{Cordero_Part_II}) if $\cA \in \Sp(2d,\bR)$ is covariant there exists $k_\cA \in \mathcal{S}'(\R^{2d})$ such that 
\[
W_\cA(f,g)=k_\cA\ast W(f,g)=:Q_{k_\cA}(f,g), \quad f,g \in \mathcal{S}(\mathbb{R}^d).
\]
Moreover, if we define $k_\cA(t)(z):=k_\cA(S_t z)$, we denote by $W_{\cA_t}(f,g)=Q_{k_\cA(t)}(f,g)=k_{\cA_t}\ast W(f,g)$ the corresponding Wigner distribution (cf. \cite{Cordero_Part_II}). Under these notations we state the next theorem.
\begin{theorem}\label{theo.H}
    Let $\widehat \cA \in \Mp(2d,\R)$ be covariant with projection $\cA \in \Sp(2d,\R)$ and let $u$ be the solution of \eqref{eq.Har}.
    Then, for any fixed $t>0$, we have
    \begin{equation}
        \cW\cF_\cA(u(t,\cdot))\subseteq S_{I,t}\cW\cF_{\cA_t}(u_0).
    \end{equation}
\end{theorem}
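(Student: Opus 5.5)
We use the factorization $\widehat S_t^H=\widehat S_{I,t}\,\mathrm{Op}^\w(a_R(t))$, with $\widehat S_{I,t}\in\Mp(d,\bR)$ and $a_R(t)\in S^0_{0,0}(\rdd)$, and treat the two factors separately. Set $v=\mathrm{Op}^\w(a_R(t))u_0=\widehat S_{R,t}u_0$, so that $u(t,\cdot)=\widehat S_{I,t}v$. Two facts are needed: the real factor moves the $\cA$-Wigner wave front set by $S_{I,t}$, and the pseudodifferential factor does not enlarge it.

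\emph{Real factor.} Since $W_\cA$ is covariant, it has the form $W_\cA(f,g)=k_\cA\ast W(f,g)$. By the symplectic covariance \eqref{intro.WigCovarianceProp},
\[
W(\widehat S_{I,t}v)(z)=Wv(S_{I,t}^{-1}z).
\]
Hence
\[
W_\cA(\widehat S_{I,t}v)(z)=\int k_\cA(z-w)\,Wv(S_{I,t}^{-1}w)\,dw=\int k_\cA\big(S_{I,t}(S_{I,t}^{-1}z-w')\big)\,Wv(w')\,dw',
\]
using $\det S_{I,t}=1$. This equals $(k_\cA(S_{I,t}\cdot)\ast Wv)(S_{I,t}^{-1}z)=W_{\cA_t}v(S_{I,t}^{-1}z)$, where the kernel $k_{\cA_t}=k_\cA(S_t\,\cdot)$ is understood with $S_t$ the real symplectic part $S_{I,t}$. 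Therefore the integral of $v_s^2|W_\cA u(t)|^2$ over a cone $\Gamma$ equals, after the change of variables $z=S_{I,t}w$, the integral of $v_s(S_{I,t}w)^2|W_{\cA_t}v(w)|^2$ over $S_{I,t}^{-1}\Gamma$. Linear maps send cones to cones and $v_s(S_{I,t}w)\asymp v_s(w)$, so
\[
\cW\cF_\cA(u(t))=S_{I,t}\,\cW\cF_{\cA_t}(v).
\]

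\emph{Pseudodifferential factor.} We must show $\cW\cF_{\cA_t}(\mathrm{Op}^\w(a_R)u_0)\subseteq\cW\cF_{\cA_t}(u_0)$. The symbol $a_R(t)$ is a Gaussian in $(x',\xi')$ and is constant in $(x'',\xi'')$. Moreover $\widehat S_{R,t}$ is the tensor product $\frR_{t}^{\otimes d_1}\otimes\id$, i.e.\ a Mehler/Hermite smoothing in the $x'$ variables only.

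\begin{itemize}
\item Our first approach is to use Theorem \ref{thmS1S2S3W}: the Wigner operator $K$ of $\widehat S_{R,t}$ is the conjugate of $\widehat\Xi\otimes\widehat\Xi$ by $\cF_2\frT_{2^{-1/2}}$, which is itself a Gaussian-type operator in $\Mp_0(2d,\bC)$. Since $k_{\cA_t}\ast$ is a Fourier multiplier, it commutes with any operator that is a Fourier multiplier or a convolution, but $K$ is not of that type in the $x'$ directions.
\item So we would instead argue directly. By Theorem \ref{propSymbS000}, $\widehat S_{R,t}$ is a Weyl operator with a symbol in $S^0_{0,0}$ that is Schwartz in $(x',\xi')$. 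Its Schwartz kernel is Gaussian in $(x',y')$ times $\delta(x''-y'')$. Substituting this kernel into $W_{\cA_t}v$ expresses it as an integral operator acting on $W_{\cA_t}u_0$ whose kernel is Gaussian-localized in the $(x',\xi')$ directions and is the identity in the $(x'',\xi'')$ directions.
\item Such an operator maps functions that are weighted-$L^2$ on a cone $\Gamma$ to functions that are weighted-$L^2$ on any smaller cone $\Gamma'\Subset\Gamma$. The error from outside $\Gamma$ decays rapidly, because the Gaussian kernel in $(x',\xi')$ decays faster than any power of the separation from the cone. Combined with the $L^2$ continuity from Proposition \ref{prop64}, this gives the required weighted bounds.
\end{itemize}

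This second step is the main obstacle. The $x''$-block is harmless, but we must make sure that the convolution with $k_{\cA_t}$ interacts well with the Gaussian localization. We would try to exploit the Cohen-class structure, writing $\widehat S_{R,t}$ as a product of the commuting operators $\frp_{i\tanh}$, $\frT$ and $\cF^{-1}\frp_{i\tanh}\cF$, and verify the cone estimate for each factor separately. Multiplication by a Gaussian in $x'$ and the Fourier multiplier by a Gaussian in $\xi'$ each yield microlocal weighted estimates on cones. Combining the two steps gives
\[
\cW\cF_\cA(u(t,\cdot))\subseteq S_{I,t}\,\cW\cF_{\cA_t}(u_0).
\]
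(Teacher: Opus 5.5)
Your first step is correct and coincides with the paper's first identity, $W_\cA(\widehat S_{I,t}v)(z)=W_{\cA_t}v(S_{I,t}^{-1}z)$, with $k_{\cA_t}=k_\cA(S_{I,t}\,\cdot)$, which is the only sensible reading. The gap is in the second step, which carries the real content of the theorem. You never prove $\cW\cF_{\cA_t}(\mathrm{Op}^{\mathrm{w}}(a_R(t))u_0)\subseteq\cW\cF_{\cA_t}(u_0)$, and in your first bullet you abandon the route that works for the wrong reason.

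The Wigner operator $K$ of $\widehat S_{R,t}$ does not need to commute with convolution by $k_{\cA_t}$. Since $\cA$ is real and covariant, $F\mapsto k_{\cA_t}\ast F$ is the Fourier multiplier by a real chirp, i.e.\ an operator $\widehat{\mathcal V}\in\Mp(2d,\bR)$. On the other hand, $K=\mathrm{Op}^{\mathrm{w}}(b)$ on $\bR^{2d}$ with $b\in S^0_{0,0}(\bR^{4d})$. This follows from Theorem \ref{thmS1S2S3W} together with Theorem \ref{propSymbS000}, because $K$ is a real-metaplectic conjugate of $\widehat\Xi\otimes\widehat\Xi$. Metaplectic covariance of the Weyl calculus (Theorem \ref{theo.metaweyl}, applied on $\bR^{2d}$) gives $\widehat{\mathcal V}\,\mathrm{Op}^{\mathrm{w}}(b)=\mathrm{Op}^{\mathrm{w}}(b\circ\mathcal V^{-1})\,\widehat{\mathcal V}$. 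Hence
\[
W_{\cA_t}(\mathrm{Op}^{\mathrm{w}}(a_R(t))u_0)=\widehat{\mathcal V}\,K\,Wu_0=\mathrm{Op}^{\mathrm{w}}(c(t))\,W_{\cA_t}u_0,\qquad c(t)=b\circ\mathcal V^{-1}\in S^0_{0,0}(\bR^{4d}),
\]
which is exactly the paper's intermediate identity. The inclusion then follows because $S^0_{0,0}$ operators do not enlarge the conic weighted-$L^2$ wave front set, a fact the paper imports from \cite{Cordero_Part_I}.

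Your alternative sketches do not fill this gap.
\begin{itemize}
\item The product structure in your second bullet (Gaussian in $(x',\xi')$, identity in $(x'',\xi'')$) belongs to $K$, not to the operator acting on $W_{\cA_t}u_0$. The chirp defining $k_{\cA_t}$ generally couples primed and double-primed variables, so $b\circ\mathcal V^{-1}$ also depends on the double-primed frequency variables. The decay heuristic in your third bullet is therefore unsupported. Only membership in $S^0_{0,0}(\bR^{4d})$ survives, and that, together with the cited microlocality, is what must be used.
\item The factor-by-factor fallback fails outright. The operators $\frp_{iP}$, $\frT_E$ and $\cF^{-1}\frp_{iP}\cF$ do not commute, and $\frT_E$ is not an $S^0_{0,0}$ operator. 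By your own Step~1 computation, $W_{\cA_t}(\frT_E f)(z)=(k_{\cA_t}(\cD_E\,\cdot)\ast Wf)(\cD_E^{-1}z)$. Chaining factor estimates therefore only places $\cW\cF_{\cA_t}(\widehat S_{R,t}u_0)$ inside $\cD_E$ applied to the wave front set built from the different kernel $k_{\cA_t}(\cD_E\,\cdot)$, with $E$ acting as $(\cosh t)^{-1}$ on the primed variables. That is not the inclusion into $\cW\cF_{\cA_t}(u_0)$ you need.
\end{itemize}
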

\begin{proof}
    Let $t>0$ be fixed.  We have, see Footnote \ref{footoneboh},
    \begin{equation}\label{eq.awignerH}
        W_\cA(u(t,\cdot))(z)=W_\cA(\widehat S_{I,t} \mathrm{Op}^{\mathrm{w}}(a_R(t))u_0)(z)=W_{\cA_t}(\mathrm{Op}^{\mathrm{w}}(a_R(t))u_0)(S_{I,t}^{-1}z), \ z \in \R^{2d}.
    \end{equation}
    Hence, since $a_R(t) \in S^{0}_{0,0}(\R^{2d})$, we obtain 
    \begin{equation*}
        W_\cA(u(t,\cdot)(z)=\mathrm{Op}^{\mathrm{w}}(c(t))W_{\cA_t}u_0(S_{I,t}^{-1}z), \quad z \in \R^{2d},
    \end{equation*}
    where $c(t) \in S_{0,0}^0(\R^{4d})$. Therefore, the proof follows as in  \cite[Theorem 1.6]{Cordero_Part_I} (cf. \cite[Theorem 7.7]{Cordero_Part_II}).
    \end{proof}

    \begin{remark}
        As a future direction, we stress that Definition \ref{def.WFS} can be extended to encode the microlocal interactions between two states $f$ and $g$. This was done for metaplectic Wigner distributions $W_\cA$ with $\widehat\cA\in\Mp(2d,\bR)$ in the recent unpublished contribution at Footnote \ref{footoneboh}. In particular, by defining the (cross-)Wigner wave front sets $\cW\cF_{\mathcal A}(f,g)$ one can investigate results analogous to those obtained in the aforementioned manusctipt.
    \end{remark}

    As far as the boundedness on modulation spaces is concerned, a straightforward application of Corollary \ref{CorMpq} to the propagator of \eqref{eq.Har} yields the following consequence.

    \begin{corollary}
        Under the notation of this Section and the assumption $d_1+d_2= d$, assume $u_0\in M^{p,q}_{v_s}(\rd)$ for $1\leq p,q\leq\infty$ and $s\geq0$.
        \begin{enumerate}[(i)]
            \item If $p=q$, then $u(t,\cdot)\in M^{p}_{v_s}(\rd)$ for every $t>0$.
            \item If $p\neq q$ and $d_2=0$, then $u(t,\cdot)\in M^{p,q}_{v_s}(\rd)$.
        \end{enumerate}
    \end{corollary}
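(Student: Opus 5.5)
The plan is to apply Corollary \ref{CorMpq} to the propagator $\widehat S_t^H$ of \eqref{eq.Har}, for which we already have the explicit factorization $\widehat S_t^H=\widehat S_{I,t}\widehat S_{R,t}$. First I will check that this factorization is the polar decomposition in the sense of Theorem \ref{polar}. The factor $S_{I,t}$ is real symplectic: it is the identity in the $x'$ variables and a rotation (fractional Fourier transform) in the $(x'',\xi'')$ variables, so $\widehat S_{I,t}\in\Mp(d,\bR)$. The factor $S_{R,t}$ is $\cR_t$ tensorized over the first $d_1$ coordinates and the identity on the remaining $d_2$. Hence $\widehat S_{R,t}=\frR_{\Theta}$ with $\Theta=(t,\dots,t,0,\dots,0)$, which is an atomic metaplectic contraction and therefore lies in $\Mp_0(d,\bC)$. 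By uniqueness in Theorem \ref{polar}, $\widehat U=\widehat S_{I,t}$ and $\widehat Z=\widehat S_{R,t}$. Alternatively, one can bypass uniqueness and compose Theorem \ref{thm20} with Theorem \ref{thmFS} directly, using $\widehat Z=\Op^\w(a_R(t))$ with $a_R(t)\in S^0_{0,0}$.

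For item (i), take $p=q$. Part (i) of Corollary \ref{CorMpq}, equivalently Theorem \ref{thm20} followed by Theorem \ref{thmFS}(i), gives boundedness of $\widehat S_t^H$ on $M^p_{v_s}$. Hence $u(t,\cdot)=\widehat S_t^H u_0\in M^p_{v_s}(\rd)$, with the bound \eqref{combEstimates}.

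For item (ii), take $d_2=0$. Then $S_{I,t}=I_{2d}$, which is trivially upper-block triangular ($C=O_d$). Part (ii) of Corollary \ref{CorMpq} then yields boundedness on $M^{p,q}_{v_s}$ for all $p,q$. In fact $\widehat S_t^H=\Op^\w(a_R(t))$ in this case, so Theorem \ref{thm20} alone suffices.

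I expect the main obstacle to be the identification of $\widehat S_{R,t}$ with the atomic contraction $\frR_\Theta$ at the operator level, including phase and normalization, rather than just at the level of projections. This is harmless because of the two-fold cover in Theorem \ref{thmOscillator}: any constant ambiguity does not affect boundedness. A secondary point is that for $d_2>0$ the rotation block has $C=-\sin(t)I\neq O$, which is exactly why mixed-norm boundedness is only claimed when $d_2=0$.
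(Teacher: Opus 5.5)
Your proposal is correct and follows the paper's argument. Both identify $\widehat S_{I,t}\widehat S_{R,t}$ as the polar decomposition of the propagator and invoke Corollary \ref{CorMpq}, with item (ii) reducing to the observation that $S_{I,t}=I_{2d}$ is upper-block triangular when $d_2=0$. Your explicit check of the polar decomposition via uniqueness, and your remark that Theorem \ref{thm20} alone suffices when $d_2=0$, are harmless additions.
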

    \begin{proof}
        We just need to justify $(ii)$, which follows simply by observing that under the assumption $d_2=0$, the propagator $e^{-2\pi tH}$ of \eqref{Schreq} has polar decomposition $e^{-2\pi tH}=\widehat S_{I,t}\Op^\w(a_R(t))$ with symbol $a_R(t)\in\cS(\rdd)$ and $S_{I,t}=I$, which is upper-block triangular. The assertion follows then by Corollary \ref{CorMpq}.
    \end{proof}
    
    \subsubsection{The general case}
    We now come back to the general case, that is we consider the propagation of $(\cA)$-Wigner distributions associated with evolutions governed by \eqref{Schreq}. 
    
    Corollary \ref{CorollaryS000} makes now straightforward to prove Theorem \ref{theo.evo} concerning the propagation of concentration for covariant (real) Wigner distributions associated with evolution equations of the form \eqref{Schreq}.
    
    \begin{theorem}\label{theo.evo}
    Let $W_\cA$ be covariant and $\cA \in \Sp(2d,\R)$ be the corresponding projection. Let $u(t,\cdot)=e^{-2\pi t\mathrm{Op}^{\mathrm{w}}(a)}u_0$ be the solution of \eqref{Schreq}. Let $e^{-2\pi t\mathrm{Op}^{\mathrm{w}}(a)}=\widehat U(t)\Op^\w(b(t))$ be its polar decomposition given in Theorem \ref{intro.thm2}. Then, for any fixed $t>0$, we have
    \begin{equation}\label{eq.finres}
        \cW\cF_\cA(u(t,\cdot))\subseteq U(t) \cW\cF_{\cA_t}(u_0),
    \end{equation}
        where $W_{\cA_t}$ is a suitable covariant metaplectic Wigner distribution.
    \end{theorem}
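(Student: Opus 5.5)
The plan is to follow the scheme of Theorem \ref{theo.H}, replacing the explicit factorization $\widehat S^H_t=\widehat S_{I,t}\Op^\w(a_R(t))$ with the general polar decomposition. Fix $t>0$. By Theorem \ref{polar} and Corollary \ref{CorollaryS000}, the propagator $\widehat S_t=e^{-2\pi t\Op^\w(a)}\in\Mp_+(d,\bC)$ can be written as $\widehat S_t=\widehat U(t)\widehat Z(t)$ with $\widehat U(t)\in\Mp(d,\bR)$ and $\widehat Z(t)\in\Mp_0(d,\bC)$. Theorem \ref{propSymbS000} identifies $\widehat Z(t)=\Op^\w(b(t))$ with a Gaussian symbol $b(t)\in S^0_{0,0}(\rdd)$. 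The solution is therefore $u(t,\cdot)=\widehat U(t)\,\Op^\w(b(t))u_0$.

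The first step is to move $\widehat U(t)$ through $W_\cA$. Since $W_\cA=k_\cA\ast W$ is covariant with $\cA$ real, the symplectic covariance \eqref{intro.WigCovarianceProp} of $W$ gives
\[
W_\cA(\widehat U(t)f)(z)=\big(k_\cA\ast W f(U(t)^{-1}\cdot)\big)(z)=(k_\cA(U(t)\cdot)\ast Wf)(U(t)^{-1}z).
\]
This follows from the change of variables $w\mapsto U(t)w$ and $\det U(t)=1$. Defining $k_{\cA_t}=k_\cA\circ U(t)$ produces a new Cohen-class distribution $W_{\cA_t}$, and this is exactly the identity cited in Footnote \ref{footoneboh} and used in \eqref{eq.awignerH}. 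We still need $W_{\cA_t}$ to be a metaplectic Wigner distribution with real projection. For this, note that $k_\cA=\cF^{-1}\Phi_{-B_\cA}$ by Theorem \ref{theo.maintimefreq}. Composing with the real symplectic $U(t)$ replaces $B_\cA$ by a real symmetric matrix congruent to it via $U(t)^{-\top}$, which keeps $\Im=0$. By (ii)$\Leftrightarrow$(iii) of Theorem \ref{theo.maintimefreq}, $W_{\cA_t}$ is then covariant metaplectic with real projection. Equivalently, $\widehat{\cA_t}=\widehat\cA(\widehat U(t)\otimes\widehat{\widetilde{U}(t)})\,\mathfrak{T}$-conjugated, using Proposition \ref{PropCG02}.

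The second step handles the pseudodifferential factor. We need $W_{\cA_t}(\Op^\w(b)f)=\Op^\w(c)W_{\cA_t}f$ on $\cS$, for some $c\in S^0_{0,0}(\R^{4d})$. Write $W_{\cA_t}(Tf)=\widehat{\cA_t}(T\otimes\overline{T})(f\otimes\bar f)$, where $\overline T=\Op^\w(\overline{b(-\cdot\,\text{suitably})})$. The tensor product $\Op^\w(b)\otimes\overline{\Op^\w(b)}$ is a Weyl operator on $\R^{2d}$ with symbol $b\otimes\tilde b\in S^0_{0,0}(\R^{4d})$. Conjugating by $\widehat{\cA_t}\in\Mp(2d,\bR)$ and applying Theorem \ref{theo.metaweyl} gives symbol $c=(b\otimes\tilde b)\circ\cA_t^{-1}$, which stays in $S^0_{0,0}$ because it is a linear change of variables. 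Hence
\[
W_\cA(u(t,\cdot))(z)=\Op^\w(c(t))\big(W_{\cA_t}u_0\big)(U(t)^{-1}z).
\]

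Finally, pseudodifferential operators with $S^0_{0,0}$ symbols are microlocal for the conic weighted $L^2$ condition of Definition \ref{def.WFS}. This is the argument of \cite[Theorem 1.6]{Cordero_Part_I} and \cite[Theorem 7.7]{Cordero_Part_II}, which we invoke as in the proof of Theorem \ref{theo.H}. It yields $\cW\cF_\cA(u(t))\subseteq U(t)\cW\cF_{\cA_t}(u_0)$, using that the real linear map $U(t)$ sends conic neighborhoods to conic neighborhoods and that $v_s(U(t)^{-1}z)\asymp v_s(z)$. The main obstacle is the step showing that $W_{\cA_t}$ is still a legitimate real covariant metaplectic Wigner distribution, together with checking that $\overline{\Op^\w(b)}$ is again in $S^0_{0,0}$. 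The latter is immediate here because $b$ is a real Gaussian-type symbol up to a constant. For the former, I would first try the explicit kernel computation above via $B_\cA$.
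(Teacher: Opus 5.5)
Your proposal is correct and follows the paper's own route. You write the propagator as $\widehat U(t)\Op^\w(b(t))$ via Corollary \ref{CorollaryS000}, and use the covariance identity \eqref{eq.awignerH} to obtain $W_{\cA_t}(\Op^\w(b(t))u_0)(U(t)^{-1}z)$. You then express the Weyl factor as an $S^0_{0,0}(\bR^{4d})$ operator $\Op^\w(c(t))$ acting on $W_{\cA_t}u_0$, and conclude with the microlocalization argument of \cite[Theorem 1.6]{Cordero_Part_I}.

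You also supply details the paper leaves implicit: the kernel $k_\cA\circ U(t)=\cF^{-1}\Phi_{-U(t)^{-1}B_\cA U(t)^{-\top}}$, which shows $W_{\cA_t}$ is again real, covariant and metaplectic, and $c(t)$ via Theorem \ref{theo.metaweyl}. The only slip is your ``$\mathfrak{T}$-conjugated'' aside: it should read $\widehat{\cA_t}=\mathfrak{T}_{U(t)}\widehat\cA\,(\widehat U(t)\otimes\widehat{\widetilde U(t)})$, which is a left composition rather than a conjugation.
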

    
    \begin{proof}
    In the first place, by Corollary \ref{CorollaryS000} we may write (recall \eqref{eq.solution})
    \begin{equation*}
        u(t,\cdot)=e^{-2\pi t\mathrm{Op}^{\mathrm{w}}(a)}u_0=\widehat U(t)\mathrm{Op}^{\mathrm{w}}(b(t)) u_0,
    \end{equation*}
    for $\widehat U(t) \in \Mp(d,\R)$ and $b(t) \in S_{0,0}^0(\R^{2d})$. 
    Hence, by arguing as in \eqref{eq.awignerH}, we have 
    \[
    W_\cA(u(t,\cdot))(z)=W_\cA(\widehat U(t)\mathrm{Op}^{\mathrm{w}}(b(t))u_0)(z)=W_{\cA_t}(\mathrm{Op}^{\mathrm{w}}(b(t)u_0))(U(t)^{-1}z), \ z \in \R^{2d},
    \]
    which yields \eqref{eq.finres}, and once again the proof follows by a microlocalization argument as in \cite[Theorem 1.6]{Cordero_Part_I}.
        
    \end{proof}

\section*{Acknowledgments}
The  three authors have been supported by the Gruppo Nazionale per l’Analisi Matematica, la Probabilità e le loro Applicazioni (GNAMPA) of the Istituto Nazionale di Alta Matematica (INdAM). Gianluca Giacchi has been supported by the SNSF starting grant ``Multiresolution methods for unstructured data” (TMSGI2 211684).

\section*{Conflict of Interest}
The authors declare that they have no conflict of interest.

\section*{Data Availability}
This manuscript does not report on or involve the use of any datasets. 
Accordingly, no data are available.

\bibliographystyle{abbrv}
\bibliography{bibliography2.bib}

\end{document}